\newcommand{\myitem}[1]{%
\item[#1]\protected@edef\@currentlabel{#1}%
}
\renewcommand{\paragraph}{%
  \@startsection{paragraph}{4}%
  {\z@}{1ex \@plus 1ex \@minus .2ex}{-.5em}%
  {\normalfont\normalsize\bfseries}%
}
\newtheorem{theorem}{Theorem}[section]
\newtheorem{lemma}[theorem]{Lemma}
\newtheorem{proposition}[theorem]{Proposition}
\newtheorem{corollary}[theorem]{Corollary}
\newtheorem{example}[theorem]{Example}
\newtheorem{definition}[theorem]{Definition}
\newtheorem{remark}[theorem]{Remark}
\newcommand{\be}{\begin{equation}}
\newcommand{\ee}{\end{equation}}
\newcommand{\bthm}{\begin{theorem}}
\newcommand{\ethm}{\end{theorem}}
\newcommand{\blem}{\begin{lemma}}
\newcommand{\elem}{\end{lemma}}
\newcommand{\bpof}{\begin{proof}}
\newcommand{\epof}{\end{proof}}
\newcommand{\bcor}{\begin{corollary}}
\newcommand{\ecor}{\end{corollary}}
\newcommand{\bprop}{\begin{proposition}}
\newcommand{\eprop}{\end{proposition}}
\newcommand{\id}{\mathrm{id}}
\newcommand{\diag}{\mathrm{diag}}
\newcommand{\mc}[1]{\mathcal{#1}}
\newcommand{\mbb}[1]{\mathbb{#1}}
\newcommand{\mfk}[1]{\mathfrak{#1}}
\newcommand{\mscr}[1]{\mathscr{#1}}
\newcommand{\msf}[1]{\mathsf{#1}}
\newcommand{\argmin}[1]{\underset{#1}{\operatorname{argmin}}}
\newcommand{\D}{\mathrm{D}}
\newcommand{\RR}{\mathbb{R}}
\newcommand{\NN}{\mathbb{N}}
\newcommand{\CC}{\mathbb{C}}
\newcommand{\vct}[1]{\mathbb{#1}}  % command for vector spaces
\newcommand{\cvx}[1]{\mathcal{#1}} % command for convex sets
\providecommand{\keywords}[1]
{
  \small	
  \textbf{Keywords:} #1
}
\newcommand{\new}[1]{#1}
\title{\new{Dimension-}Free Descriptions of Convex Sets}
\date{June 12, 2024}
\author{Eitan Levin$^\dag$ and Venkat Chandrasekaran$^{\dag,\ddag}$ \thanks{Email: \texttt{\{eitanl, venkatc\}@caltech.edu}} \vspace{0.25in} \\ $^\dag$ Department of Computing and Mathematical Sciences\\ $^\ddag$ Department of Electrical Engineering \\ California Institute of Technology \\ Pasadena, CA 91125}
\begin{document}

\maketitle

\begin{abstract}
Convex sets arising in a variety of applications are well-defined for every relevant dimension. Examples include the simplex and the spectraplex that correspond to probability distributions and to quantum states; combinatorial polytopes and their relaxations such as the cut polytope and the elliptope in integer programming; and unit balls of regularizers such as the $\ell_p$ and Schatten norms in inverse problems. 
Moreover, these sets are often specified using conic descriptions that can be obviously instantiated in any dimension. 
We develop a systematic framework to study such \new{dimension-}free descriptions of convex sets. 
We show that \new{dimension-}free descriptions arise from a recently-identified phenomenon in algebraic topology called representation stability, which relates invariants across dimensions in a sequence of group representations. 
Our framework yields structural results for \new{dimension-}free descriptions pertaining to the relations between the sets they describe across dimensions, extendability of a single set in a given dimension to a freely-described sequence, and continuous limits of such sequences. We also develop a procedure to obtain parametric families of freely-described convex sets whose structure is adapted to a given application; illustrations are provided via examples that arise in the literature as well as new families that are derived using our procedure. 
We demonstrate the utility of our framework in two contexts. First, we develop an algorithm for a \new{dimension-}free analog of the convex regression problem, where a convex function is fit to input-output data; by searching over our parametric families, we can fit a function to low-dimensional inputs and extend it to any other dimension. Second, we prove that many sequences of symmetric conic programs can be solved in constant time, which unifies and strengthens several results in the literature.

\end{abstract}

\keywords{cone programming, convex optimization, free spectrahedra, graphons, representation theory}

\tableofcontents

\section{Introduction}\label{sec:intro}
%Convex sets are central objects of interest in optimization, combinatorics, inverse problems, statistical inference, and (quantum) information theory. The convex sets and functions appearing in these areas are usually defined in every dimension of relevance. For example, graph parameters are defined for graphs of any size (e.g., the max-cut value and its standard relaxation), regularizers for inverse problems are defined for vectors and matrices of all sizes (e.g., the $\ell_p$ and Schatten norms), and (quantum) information-theoretic quantities are defined for distributions on any number of (qu)bits. Thus, convex sets in these and many other areas should be viewed as sequences indexed by dimension. In this paper, we develop a systematic framework to study such sequences of convex sets, and obtain parametric families of them.
%Our framework is fully algorithmic, and enables us to computationally parametrize such sequences of sets and fit them to data.
%\TODO{cite NN paper}

Convex sets play a central role in numerous areas of the mathematical sciences such as optimization, statistical inference, control, inverse problems, and information theory.  
%In domains such as inverse problems or integer programming, convex sets are of interest for algorithmic reasons as one seeks to optimize various functionals over these sets.  
%In quantum information or in control, the geometric properties of convex sets are of interest as they are used to describe fundamental objects such as collections of quantum states and channels or of controllers. 
%In all these domains, both the problems and the convex sets arising pertaining to them arise in different dimensions.
The convex sets arising in all these domains are often well-defined in every relevant dimension.  Indeed, unit balls of standard regularizers used in inverse problems (e.g., $\ell_p$ or Schatten norms) are defined for vectors and matrices of any size. 
Polytopes associated to graph problems and their relaxations (e.g., the cut polytope and the elliptope approximating it) are defined for graphs of any size. 
Information-theoretic quantities (e.g., relative entropy) as well as their quantum analogues, are defined for distributions and states on any number of (qu)bits. 
%A common feature of convex sets appearing in these areas is that they are often defined in every relevant dimension.  For instance, unit balls of regularizers in inverse problems (e.g., the $\ell_p$ or Schatten norms), polytopes associated to graph problems and their relaxations (e.g., elliptope approximations of cut polytopes), controllers (e.g., in dynamical systems in which the system block-diagram structure is valid for every state dimension), and families of quantum states arise in every (meaningful) dimension.  
Consequently, all these convex sets and many others should be viewed not as single sets but as sequences indexed by dimension. 
Furthermore, there is often a single ``\new{dimension-}free'' description of all the sets in such a sequence, as the following example illustrates. 
\begin{example}\label{ex:free_descr_intro}
Here are a few examples of sequences of convex sets and their descriptions.
\begin{enumerate}[(a)]
    \item The simplex in $n$ dimensions is $\Delta^{n-1} = \{x\in\RR^n:x\geq0,\ \mathbbm{1}_n^\top x=1\}$, which is the set of probability distributions over $n$ items. Here $x\geq 0$ denotes an entrywise nonnegative vector $x$, and $\mathbbm{1}_n\in\RR^n$ is the vector of all-1's. 
    
    \item The spectraplex, or the set of density matrices, of size $n$ is $\mc D^{n-1} = \{X\in\mbb S^n: X\succeq0,\ \mathrm{Tr}(X)=1\}$. It is the set of density matrices describing mixed states in quantum mechanics~\cite[\S2.4]{nielsen_chuang_2010}. Here $X\succeq0$ denotes a symmetric positive-semidefinite (PSD) matrix $X$.
    
    \item The $\ell_2$ ball in $\RR^n$ is given by $\cvx B_{\ell_2}^n = \left\{x\in\RR^n: \begin{bmatrix} 1 & x^\top \\ x & I_n\end{bmatrix}\succeq0\right\}$ where $I_n$ denotes the $n\times n$ identity. 
    
    \item The elliptope of size $n$ is $\{X\in\mbb S^n: X\succeq0, \mathrm{diag}(X)=\mathbbm{1}_n\}$. It arises in a standard relaxation of the max-cut problem~\cite{goemans1995improved}.
\end{enumerate}
\end{example}
Note that all of the above descriptions are clearly defined for any relevant dimension. 
Indeed, these descriptions are composed of elements such as $\mathbbm{1}_n$ and $I_n$, linear maps such as $\mathrm{diag}(\cdot)$ and $\mathrm{Tr}(\cdot)$, and inequalities such as $\geq$ and $\succeq$, all of which are \emph{``\new{dimension-}free''} in the sense that they are well-defined in every relevant dimension. 
%Conversely, any description of convex sets that is instantiatable in any dimension yields a sequence of sets as above. \VC{This preceding sentence is confusing as the logic feels a bit off.  I think a simpler and more easily appreciated statement might be something like `Consequently, each of the preceding descriptions can be instantiated in any desired dimension, thus yielding sequences of convex sets.'}
In this paper, we develop a systematic framework to study such \emph{freely-described convex sets}.

One motivation for this effort is to facilitate structural understanding of convex sets that can be instantiated in any dimension and of sequences of optimization problems over such sets.
Such sequences arise in several applications. For example, in extremal combinatorics~\cite{raymond2018symmetric, raymond2018symmetry} it is of interest to certify inequalities between graph homomorphism densities involving graphs of every size. In quantum information and control theory~\cite{huber2021positive, klep2018positive}, many problems reduce to optimizing (traces of) polynomials in matrix variables of every size.  
Despite the ubiquity of sequences of convex sets in these problem domains, the existence and interplay between the sets in different dimensions is typically not explicitly discussed or exploited in the literature.  
%One body of work in which relations between convex sets in different dimensions play a prominent role is the theory of matrix convexity and free convex algebraic geometry.  This theory studies matrix convex sets, which are sequences of convex sets closed under matrix convex combinations that relate the sets in different dimensions~\cite{kriel2019introduction,helton02,HM12,evert2018extreme}.  
In the present paper, we explicitly consider sequences of convex sets that are related in a concrete fashion across dimensions, and we present a framework to derive several structural results about such sets as well as optimization programs over them; as one notable illustration, we show that sequences of invariant conic programs (including some arising in the above applications) can be solved in time independent of dimension.

%\VC{In the preceding paragraph, I think we should make a big change.  I was previously in favor of motivating matters via `structural understanding', links to matrix convexity, etc.  Instead, I now feel that our mathematical motivation should concern `free optimization' -- situations in which we wish to obtain inequalities that hold for all dimensions.  We are actually able to address this motivation in the present paper, whereas `structural understanding' feels vague and could be mistaken for something else.  What do you think?} \TODO{I attempted such a change above.}\VC{This is closer to what I had in mind, but I'm a bit wary of using the word `geometry'.  We do neither kind of `convex geometry' in this paper -- not the Brunn-Minkowski kind nor the boundary structure kind.  Better to stick to `optimization'.  Also some of the sentences seem mildly disconnected.  I made a few tweaks -- please take a look.}

A second motivation for our effort stems from the growing interest in learning solution methods for various problem families from data. 
Here one is typically given input-output data, and the goal is to fit a mapping that approximately fits the data.
%In this paradigm, one identifies a procedure to solve a class of problems from input-output data -- inputs representing problem instances and outputs specifying solutions -- rather than handcrafting one based on a detailed specification of the particular problem class.  
This framework has been fruitfully applied to domains including integer programming, inverse problems, and numerical solvers for PDEs~\cite{kristo2020case,GNN_substruct_count,GNN_maxcut,ongie2020deep, alg_unrolling,learn_branch}.  
However, a fundamental limitation in much of this literature is that the mappings learned from data are typically only defined for inputs of the same dimension as the ones in the training data, and extension to inputs of different dimensions is handled on a case-by-case basis.  
In contrast, we wish to learn \emph{algorithms} that should be defined for inputs of any relevant size, i.e., we aim to identify a sequence of solution maps, one for each input size.  Identifying a freely-described convex set offers a convenient approach to learning an algorithm specified as linear optimization over convex sets (convex programs).  
%\VC{Reworded the preceding couple of sentences; wanted to make clear that freely-described convex sets offer a convenient approach to solving our problem rather than simply stating that we want to fit a freely-described convex set.} \TODO{changed the last sentence above slightly to make it more concise}
Further, to facilitate numerical search over a family of freely-described convex sets---for example, to fit an element of this family to training data---it is natural to seek \emph{finitely-parametrized} families of such sets, such as the following examples.
\begin{example}\label{ex:free_descr_intro_param_reorg}
The following are examples of finitely-parametrized families of freely-described convex sets. %\VC{Can you make sure that the sets below fit within the margins?  Also, can you put the third example first, as it is the simplest to parse?}
\begin{enumerate}[(a)]
    \item The sequence of subsets $\left\{\alpha_1 \mathrm{diag}(X) + \alpha_2 X\mathbbm{1}_n: \allowbreak X\succeq0,\ \alpha_3\mathrm{Tr}(X)\mathbbm{1}_n + \alpha_4 \mathrm{diag}(X) + \alpha_5 X\mathbbm{1}_n = \alpha_6\mathbbm{1}_n\right\}$ of $\RR^n$ is parametrized by $\alpha\in\RR^6$.

    \item Free spectrahedra are sequences of the form $\left\{(X_1,\ldots,X_d)\in(\mbb S^n)^d: L_0\otimes I_n \allowbreak + \sum_{i=1}^dL_i\otimes X_i \succeq 0\right\}$ that are parametrized by $L_0,\ldots,L_d\in \mbb S^k$ for $k \in \NN$.  They arise in the theory of matrix convexity and free convex algebraic geometry~\cite{free_AG_chap,kriel2019introduction}.

    \item The sequence $\cvx C_n = \Bigg\{X\in\mbb S^n:\frac{\mathbbm{1}_n^\top X\mathbbm{1}_n}{n^2} L_1\otimes\mathbbm{1}_n\mathbbm{1}_n^\top + \frac{\mathrm{Tr}(X)}{n}L_2\otimes \mathbbm{1}_n\mathbbm{1}_n^\top + L_3\otimes\frac{1}{n}\left(X\mathbbm{1}_n\mathbbm{1}_n^\top + \mathbbm{1}_n\mathbbm{1}_n^\top X\right) + L_4\otimes \left(\diag(X)\mathbbm{1}_n^\top + \mathbbm{1}_n\diag(X)^\top\right) + L_5\otimes X + L_6\otimes \mathbbm{1}_n\mathbbm{1}_n^\top + L_7\otimes (nI_n) \succeq0\Bigg\}$ is parametrized by $L_1,\ldots,L_7\in\mbb S^k$. 
    We obtain this sequence from our framework in Section~\ref{sec:graphons} by viewing a matrix $X\in\mbb S^n$ representing a weighted graph as a step \emph{graphon}~\cite{lovasz2012large}, and deriving a parametric family of \new{dimension-}free descriptions extending continuously to more general spaces of graphons.
\end{enumerate}
Each example here is a parametric family, and constitutes a freely-described sequence of convex sets for each value of its associated parameters.
\end{example}
%As in Example~\ref{ex:free_descr_intro}, the sequences of convex sets in %Example~\ref{ex:free_descr_intro_param} are freely described, but   
%But additionally, each sequence is an element of a finitely-parametrized family.  
Note that the parametric \new{dimension-}free descriptions in Example~\ref{ex:free_descr_intro_param_reorg} are composed of linear combinations of the \new{dimension-}free elements and linear maps that appear in the descriptions of Example~\ref{ex:free_descr_intro}. In fact, the parameters above are simply the coefficients in these linear combinations. 
Thus, we can obtain finitely-parametrized families of freely-described convex sets by deriving \emph{finite-dimensional spaces} of \new{dimension-}free elements.

The central thesis of this paper is that the \new{dimension-}free elements which constitute \new{dimension-}free descriptions arise from a recently-identified phenomenon in algebraic topology called \emph{representation stability}, which relates invariants across dimensions in a sequence of group representations~\cite{CHURCH2013250}.  
We use these relations to formally define the \new{dimension-}free elements that constitute our \new{dimension-}free descriptions, and investigate various structural properties of such descriptions as well as the associated sequences of optimization problems.
Our definitions together with results from representation stability also yield finite-dimensional spaces of \new{dimension-}free elements, which we use to derive parametric families of freely-described convex sets adapted to specific applications and to fit elements of these families to data.
We outline the contributions of this paper in the remainder of the introduction.  We do not assume prior familiarity with representation stability, and the relevant background is presented in Section~\ref{sec:backgrnd_rep_stab}.

\subsection{Our Framework and Contributions}

This paper consists of four main contributions.  
First, we formally define \new{dimension-}free descriptions of convex sets by generalizing the insights derived from Examples~\ref{ex:free_descr_intro} and \ref{ex:free_descr_intro_param_reorg} using representation stability.  
Second, we prove structural results pertaining to freely-described sequences of convex sets, the relationship between sets in such a sequence in different dimensions, and limits of such sequences. 
Our proofs combine concepts from convex analysis and representation theory.  
%\VC{The phrase `with our framework for \new{dimension-}free descriptions' is a bit odd and confusing.}
Third, we derive consequences of our framework for invariant conic programs, showing that sequences of such programs of growing dimension can often be solved in constant time. Our results unify and generalize existing work in the literature.  
Finally, we apply our framework and its structural results to derive an algorithm for fitting a freely-described convex function to given data in different dimensions, a problem we call \new{dimension-}free convex regression. The latter two contributions are aimed at addressing our mathematical and algorithmic motivations above.

%\VC{I want to have a word about fonts -- we use the same fonts for linear maps, for convex sets and cones, and for vector spaces.  I find this hard to parse.}

\subsubsection{Freely-described Convex Sets}
%To frame our discussion concretely, 
To formalize \new{dimension-}free descriptions of convex sets, we begin by briefly reviewing \emph{conic descriptions}, which express a convex set as an affine section of a convex cone.  
Conic descriptions are the most popular approach to specifying convex sets in the optimization literature, and they have played a central role in the development of modern convex optimization~\cite{BN_lects}. Indeed, we often classify convex sets based on their conic descriptions---polyhedra are affine sections of nonnegative orthants and spectrahedra are affine sections of PSD cones.  
Formally, if $\vct V,\vct W,\vct U$ are (finite-dimensional) vector spaces and $\cvx K\subseteq \vct U$ is a convex cone, then a convex subset $\cvx C\subseteq \vct V$ can be described using linear maps $A\colon \vct V\to \vct U$, $B\colon \vct W\to \vct U$ and a vector $u\in \vct U$ as follows:
\begin{equation}\label{eq:conic_descrip_preim}\tag{Conic}
    \cvx C = \{x\in \vct V: \exists y\in \vct W \textrm{ s.t. } Ax + By + u\in \cvx K\}.
\end{equation}
We will refer to the spaces $\vct W$ and $\vct U$ as the \emph{description spaces} associated to the conic description.  If the cone $\cvx K$ is a nonnegative orthant (resp., PSD cone), then linear optimization over $\cvx C$ is a linear (resp., semidefinite) program.  
The type of the cone as well as its dimension determine the computational complexity of optimization over $\cvx C$.

Each sequence of convex sets $\cvx C_n \subseteq \vct V_n$ in Examples~\ref{ex:free_descr_intro} and \ref{ex:free_descr_intro_param_reorg} is given by~\eqref{eq:conic_descrip_preim} for suitable sequences of description spaces $\vct W_n, \vct U_n$, of vectors $u_n \in \vct U_n$, of linear maps $A_n\colon \vct V_n \rightarrow \vct U_n$, $B_n\colon \vct W_n \rightarrow \vct U_n$, and of convex cones $\cvx K_n \subseteq \vct U_n$.  
%In particular, all the sequences of cones $\cvx K_n$ in those examples are standard sequences such as the nonnegative orthants and PSD cones. 
%Expressing convex sets in terms of such standard sequences has the benefit that optimization over these sets can be performed using standard off-the-shelf software.  
%Thus, all the sequences of cones, as well as the vector spaces containing them, appearing in this paper come from such standard sequences.
%Thus, we shall assume in what follows that our cones as well as the vector spaces containing them come from such standard sequences, and more specifically, can be instantiated in any desired dimension.  (The properties of these cones play a role in the structural results described in the sequel.)
%
%Having fixed sequences of vector spaces and cones, we 
\new{The vector spaces and cones are typically fixed in advance and are dictated by the application domain.}
\new{Having fixed the vector spaces and cones,} we seek to formalize the vectors $u_n$ and linear maps $A_n,B_n$ that are defined for any dimension, such as those appearing in Examples~\ref{ex:free_descr_intro} and~\ref{ex:free_descr_intro_param_reorg}. Furthermore, we wish to obtain finite-dimensional spaces of these \new{dimension-}free elements, which yield finitely-parametrized families of \new{dimension-}free descriptions. 
To that end, we observe that these \new{dimension-}free elements are sequences of invariants under sequences of groups related in a particular way across dimensions.  
For example, the all-1's vector $\mathbbm{1}_n$ of length $n$ is invariant under the group of permutations on $n$ letters acting by permuting coordinates. Further, the all-1's vectors of different lengths are related to each other: extracting the first $n$ entries of $\mathbbm{1}_{n+1}$ yields $\mathbbm{1}_n$.  Similarly, the $n\times n$ identity matrix $I_n$ is invariant under the orthogonal group of size $n$ acting by conjugation, and extracting the top left $n\times n$ submatrix of $I_{n+1}$ yields $I_n$.  
Thus, to give a formal definition for \new{dimension-}free vectors and linear maps, we consider sequences of groups acting on sequences of vector spaces, and we require the spaces in the sequence to be related to each other -- specifically, we embed lower-dimensional spaces into higher-dimensional ones and project higher-dimensional spaces onto lower-dimensional ones. 
Such sequences of group representations are called \emph{consistent sequences} and they were first defined in the seminal paper~\cite{CHURCH2013250} introducing representation stability.

%\VC{I like the new notation being used thus far.  But for the groups $\msf G_n$ below, could some other font type be used?  In particular, $\msf G_n$ looks similar to linear maps such as $A_n, B_n$.}

\begin{definition}[Consistent sequences~\cite{CHURCH2013250}]\label{def:consistent_seqs}
Fix a family of compact\footnote{Our theory can be extended to reductive groups.} groups $\mscr G=\{\msf G_n\}_{n\in\NN}$ such that $\msf G_n\subseteq \msf G_{n+1}$. 
A \emph{consistent sequence} of $\mscr G$-representations is a sequence $\mscr V = \{(\vct V_n,\varphi_n)\}_{n\in\NN}$ satisfying the following properties:
\begin{enumerate}[(a)]
    \item $\vct V_n$ is an orthogonal $\msf G_n$-representation;
    \item $\varphi_n\colon \vct V_n\hookrightarrow \vct V_{n+1}$ is a linear $\msf G_n$-equivariant isometry.
\end{enumerate}
Unless we want to emphasize the embeddings $\varphi_n$, we shall identify $\vct V_n$ with its image inside $\vct V_{n+1}$. We then write $\mscr V = \{\vct V_n\}$ and take $\varphi_n$ to be inclusions $\vct V_n\subseteq \vct V_{n+1}$.\footnote{Formally, we obtain such inclusions inside the direct limit of the sequence.} 
\end{definition}

As $\varphi_n$ is an isometry, its adjoint $\varphi_n^*$ defines the orthogonal projection $\mc P_{\vct V_n}$ of higher-dimensional spaces onto lower-dimensional ones.  
We now formally define \new{dimension-}free vectors or linear maps as sequences of invariants projecting onto each other.

\begin{definition}[Freely-described elements]\label{def:freely_descr_elt}
    A \emph{freely-described element} in a consistent sequence $\{(\vct V_n,\varphi_n)\}$ of $\{\msf G_n\}$-representations is a sequence $\{v_n\in \vct V_n^{\msf G_n}\}$ of invariants satisfying $\varphi_n^*(v_{n+1})=v_n$ for all $n$.
\end{definition}
%In words, a freely-described element is a sequence of invariants projecting onto each other.  
%

Importantly, the set of freely-described elements in a given consistent sequence $\mscr V$ is naturally a linear space,\footnote{Formally, the space of freely-described elements is the inverse limit $\varprojlim_n \vct V_n^{\msf G_n}$.} because if $\{v_n\}$ and $\{v_n'\}$ are freely-described elements then so is $\{\alpha v_n+\beta v_n'\}$ for any $\alpha,\beta\in\RR$.  
Furthermore, fundamental results in representation stability imply that these spaces of freely-described elements are often finite-dimensional. More precisely, when the consistent sequence $\mscr V$ is \emph{finitely-generated}, the restricted projections $\mathcal P_{\vct V_n}|_{\vct V_{n+1}^{\msf G_{n+1}}}\colon \vct V_{n+1}^{\msf G_{n+1}}\to \vct V_n^{\msf G_n}$ become isomorphisms for all $n$ exceeding the \emph{presentation degree} of the sequence, see Section~\ref{sec:backgrnd_rep_stab} for precise definitions.
%The upshot in our context is that the space of freely-described elements is finite-dimensional, which yields a convenient approach for specifying freely-described elements using a basis for an invariant space $V_n^{\msf G_n}$ for sufficiently large $n$.

Definition~\ref{def:freely_descr_elt} also defines freely-described linear maps. Indeed, if $\mscr V=\{(\vct V_n,\varphi_n)\}$ and $\mscr U = \{(\vct U_n,\psi_n)\}$ are consistent sequences of $\{\msf G_n\}$-representations, then the sequence of spaces of linear maps between them can be naturally identified with $\{(\vct V_n\otimes \vct U_n,\varphi_n\otimes\psi_n)\}$ where $\otimes$ is the tensor (or Kronecker) product, and this is also a consistent sequence. A freely-described element of $\mscr V\otimes\mscr U$ is a sequence of equivariant maps $\{A_n\in\mc L(\vct V_n,\vct U_n)^{\msf G_n}\}$ satisfying $\psi_n^*A_{n+1}\varphi_n = A_n$. When $\varphi_n,\psi_n$ are inclusions, this simplifies to $\mc P_{\vct U_n}A_{n+1}|_{\vct V_n} = A_n$ where $\mc P_{\vct U_n}$ is orthogonal projection onto $\vct U_n$. 
From results in representation stability, it follows that for a large class of consistent sequences, the sequence $\mscr V\otimes\mscr U$ is finitely-generated when $\mscr V,\mscr U$ are, and the presentation degree of the former can be bounded in terms of those of the latter; see Theorem~\ref{thm:calc_for_pres_degs_FIW}.  Thus, the space of freely-described linear maps is often finite-dimensional as well.
%\VC{Excellent idea to have this preceding paragraph!  I tweaked things a bit.} \TODO{I reorganized a bit} \VC{Looks great!}

\begin{example}[Vectors with zero-padding]\label{ex:basic_consist_seq}
Let $V_n=\RR^n$ with the standard inner product, and let $\varphi_n(x)=[x^\top, 0]^\top$ correspond to padding a vector with a zero. This is a consistent sequence for many standard sequences $\{\msf G_n\}$ of groups, including the groups of $n\times n$ orthogonal matrices, and (signed) permutation matrices. Here $\msf G_n$ is embedded in $\msf G_{n+1}$ by sending $g\in \msf G_n$ represented as an $n\times n$ matrix to $\mathrm{blkdiag}(g,1)$. 
For each of these sequences of groups, we get a consistent sequence $\mscr V=\{(\vct V_n,\varphi_n)\}$.  When $\msf G_n=\msf{S}_n$ is the group of permutations on $n$ letters, the space of freely-described elements in $\mscr V$ is one-dimensional and consists of sequences $\{\alpha\mathbbm{1}_n\}$ for $\alpha\in\RR$. The space of freely-described linear maps from $\vct V_n$ to itself is two-dimensional, and consists of sequences $\{\alpha_1\mathbbm{1}_n\mathbbm{1}_n^\top + \alpha_2 I_n\}$ for $\alpha\in\RR^2$.
\end{example}

Having formalized freely-described vectors and linear maps, we are ready to define \new{dimension-}free descriptions of convex sets.  These are just sequences of conic descriptions specified using freely-described elements.
\begin{definition}[\new{Dimension-}free conic descriptions]\label{def:free_descriptions}
    Let $\mscr V=\{\vct V_n\},\ \mscr W= \{\vct W_n\},\ \mscr U=\{\vct U_n\}$ be consistent sequences of $\{\msf G_n\}$-representations, and $\{\cvx K_n\subseteq \vct U_n\}$ be a sequence of convex cones. A sequence of conic descriptions
    \begin{equation}\tag{ConicSeq}\label{eq:preim_seq}
        \cvx C_n = \left\{x\in \vct V_n: \exists y\in \vct W_n \textrm{ s.t. } A_nx + B_ny + u_n\in \cvx K_n\right\},
    \end{equation}
    %or 
    %\begin{equation}\tag{ProjSeq}\label{eq:proj_seq}
    %    C_n = \left\{B_n^*z\in V_n: z_n\in K_n,\ B_n^*z = w_n\right\},
    %\end{equation}
    is called \emph{dimension-free} if $\{A_n\},\{B_n\}$, and $\{u_n\}$ are freely-described elements of the consistent sequences $\mscr V\otimes\mscr U$, $\mscr W\otimes\mscr U$, and $\mscr U$, respectively.
\end{definition}
All the descriptions in Examples~\ref{ex:free_descr_intro} and \ref{ex:free_descr_intro_param_reorg} become \new{dimension-}free when the relevant sequences of vector spaces are endowed with natural consistent sequence structure.
Moreover, when $\mscr V\otimes\mscr U,\mscr W\otimes\mscr U,\mscr U$ are all finitely-generated, Definition~\ref{def:free_descriptions} yields a finitely-parametrized family of \new{dimension-}free descriptions, obtained by choosing bases for the spaces of freely-described $\{A_n\},\{B_n\},\{u_n\}$ and viewing the coefficients in these bases as the parameters. 
These parametric families generalize Example~\ref{ex:free_descr_intro_param_reorg}.  More broadly, they can be adapted to specific applications via the choice of embeddings and group actions involved in the consistent sequences, which formally relate instances of different sizes and their symmetries.
%It further follows from results in representation stability that for a large class of consistent sequences, the sequences $\mscr V\otimes\mscr U$ and $\mscr W\otimes \mscr U$ are finitely-generated when $\mscr V,\mscr W,\mscr U$ are, and the presentation degrees of the former can be bounded in terms of those of the latter, see Theorem~\ref{thm:calc_for_pres_degs_FIW}.
%This allows us to parametrize \new{dimension-}free descriptions computationally, by computing a basis for invariants in dimension exceeding this presentation degree bound and extending it to higher dimensions. We employ such an algorithm in Section~\ref{sec:numerical_examples} when fitting a freely-described convex set to data. \VC{I think these last two sentences ``This allows us ... convex set to data.'' can be deleted.  They are a bit confusing at this stage, and they are more prominently and precisely stated in Section 1.1.4.}

We can use \new{dimension-}free descriptions to describe convex \emph{functions} as well as sets, using the many correspondences between convex sets and functions from convex analysis. For example, given a freely-described sequence of convex sets we can consider the sequence of their support and gauge functions, and given a sequence of convex functions we can consider \new{dimension-}free descriptions for the sequence of their epigraphs~\cite[\S4]{rockafellar1970convex}. Thus, Definition~\ref{def:free_descriptions} yields finitely-parametrized infinite sequences of convex functions as well.

\begin{example}[Parametric convex graph invariants]\label{ex:graph_params}
    A convex graph invariant is a convex function over symmetric matrices (viewed as adjacency matrices of weighted graphs) that is invariant under conjugation of its argument by permutation matrices (viewed as relabelling the vertices of the graphs); for example, the max-cut value of a (weighted) graph is a convex graph invariant~\cite{chandrasekaran2012convex}. These examples and others are defined for graphs of any size, and therefore they correspond to a sequence of convex functions $\{f_n\colon \mbb S^n\to\RR\}$ such that $f_n$ is invariant under conjugation of its argument by permutation matrices for each $n$. 

    Our framework yields parametric families of convex graph invariants as support or gauge functions of parametric freely-described convex sets. 
    Set $\vct V_n=\mbb S^n$ with the action of $\msf G_n=\msf{S}_n$ by conjugation, since we seek convex subsets of symmetric matrices invariant under this group action. We choose embeddings $\varphi_n\colon \mbb S^n\hookrightarrow \mbb S^{n+1}$ by padding with a zero row and column, which corresponds to appending isolated vertices to a graph. This yields the consistent sequence $\mscr V=\{(\vct V_n,\varphi_n)\}$.
    
    To obtain convex sets, we also need to choose description spaces and cones. For simplicity, we choose description spaces $\vct W_n=0$ and $\vct U_n=\vct V_n$ with the same embeddings and group actions as for $\mscr V$, and the positive semidefinite cones $\cvx K_n = \mbb S^n_+$.
    Once the relevant consistent sequences and cones have been chosen, the parametric family of freely-described sets, parametrized in this case by $\alpha\in\RR^{11}$, are obtained transparently from our framework:
    \begin{equation}\begin{aligned}\label{eq:parametric_graph_param}
        \cvx C_n &= \Bigg\{X\in\mbb S^n: \alpha_1(\mathbbm{1}_n^\top X\mathbbm{1}_n)\mathbbm{1}_n\mathbbm{1}_n^\top + \alpha_2(\mathbbm{1}_n^\top X\mathbbm{1}_n)I_n + \alpha_3 \mathrm{Tr}(X)\mathbbm{1}_n\mathbbm{1}_n^\top + \alpha_4\mathrm{Tr}(X)I_n + \alpha_5\Big(X\mathbbm{1}_n\mathbbm{1}_n^\top + \mathbbm{1}_n\mathbbm{1}_n^\top X\Big)\\ &+  \alpha_6\Big(\mathrm{diag}(X)\mathbbm{1}_n^\top + \mathbbm{1}_n\mathrm{diag}(X)^\top\Big) + \alpha_7 X + \alpha_8 \mathrm{diag}(X\mathbbm{1}_n) +  \alpha_9I\odot X + \alpha_{10}\mathbbm{1}_n\mathbbm{1}_n^\top + \alpha_{11}I_n \succeq0\Bigg\}.
    \end{aligned}\end{equation}
    We can obtain a larger parametric family by enlarging the description space. For example, taking $k$ copies of the description spaces and cones above yields convex sets described by $k$ linear matrix inequalities of the form~\eqref{eq:parametric_graph_param}, depending on $11k$ parameters.
\end{example}

%\VC{give a nice, illuminating example here -- for example, from graph invariants -- that is easily understood and has potential connections to applications; some points to make -- what has to be chosen in advance (sequence of groups, description spaces, etc) and why this is reasonable, why the description spaces are consistent wrt the same group as the space in which the set lives}

\subsubsection{Structural Aspects of \new{dimension-}free descriptions}\label{sec:intro_structural}
%\VC{Start smoothly here and polish the rest of this subsubsection.  Highlight that compatibility is where the cones need to come in and that it leads to interaction between \new{dimension-}free elements in a description and the cones.}

We have defined freely-described convex sets in Definition~\ref{def:free_descriptions} by relating the \emph{descriptions} of the convex sets in a sequence $\{\cvx C_n\}$ across dimensions. 
However, in many applications it is further desirable to directly relate the sets $\cvx C_n$ themselves or the functions that arise from them across dimensions. 
We study the following two relations (which are dual to each other by~\cite[Cor.~16.3.2]{rockafellar1970convex}).
\begin{definition}[Compatibility conditions]\label{def:compatibility_conds}
    Let $\{\vct V_n\}$ be a nested sequence of vector spaces and let $\{\cvx C_n\subseteq \vct V_n\}$ be a sequence of convex sets (respectively, let $\{f_n\colon \vct V_n\to\RR\cup\{\infty\}\}$ be a sequence of convex functions). We say that $\{\cvx C_n\}$ (resp., $\{f_n\}$) satisfies 
    \begin{enumerate}[align=left, font=\emph]
        \item[Intersection compatibility] if $\cvx C_{n+1}\cap \vct V_n = \cvx C_n$ (resp., $f_{n+1}|_{\vct V_n}=f_n$);
        \item[Projection compatibility] if $\mc P_{\vct V_n}\cvx C_{n+1} = \cvx C_n$ (resp., $\mc P_{\vct V_n}f_{n+1}=f_n$).
    \end{enumerate}
    %\VC{This definition is a bit off in that the `consistent sequence' bit doesn't play a role.  Instead, all it seems you need is a nested sequence of vector spaces.  Is that correct?  If so, it might be worth stating the simplest definition, and then stating later that you'll be invoking this definition in the context of consistent sequences.}
\end{definition} 
Here $(\mc P_{\vct V_n}f_{n+1})(x)=\inf_{x'\in\mc P_{\vct V_n}^{-1}(x)}f_{n+1}(x')$ defines a convex function on $\vct V_n$~\cite[Thm.~5.7]{rockafellar1970convex}.
One can check that a sequence of convex functions is intersection or projection compatible if and only if its sequence of epigraphs is correspondingly compatible, and a similar correspondence holds for gauge and support functions of compatible sequences of sets; see Section~\ref{sec:intro_notation}. In what follows, the nested sequence of spaces in Definition~\ref{def:compatibility_conds} will always be a consistent sequence.

The conditions in Definition~\ref{def:compatibility_conds} are natural in a variety of applications.
For example, many graph parameters remain unchanged when an isolated vertex is appended to the graph, such as the max cut value; hence these are intersection-compatible with respect to the embeddings in Example~\ref{ex:graph_params}. 
Other parameters, such as the stability number, are nonincreasing when taking induced subgraphs, and any small graph occurs as an induced subgraph of a larger one that has the same parameter value. 
Hence such parameters are projection-compatible with respect to the same embeddings.  %\VC{I'm a bit nervous about stability number, as it isn't obvious how to define it on on weighted graphs.  But we should have a simple illustration of projection compatibility.  Do you have other ideas?} \TODO{The stability number can be defined for any weighted graph by a copositive program, namely, $\alpha(G)=\inf_t \textrm{ s.t. } t(A_G + I) - \mathbbm{1}\mathbbm{1}^\top\in\mathrm{COP}.$}\VC{Done!}
%Moreover, the value of such parameters on an input graph is their infimae over values on larger graphs containing the input as an induced subgraph. 
%Such graph parameters arise as gauge functions of projection-compatible sets or support functions of intersection-compatible ones.
In inverse problems, it is desirable to use regularizers that are both intersection and projection compatible. Indeed, if $\{\vct V_n\}$ is a consistent sequence and we are given data about a signal $x\in\vct V_N$ that only depends on its projection $\mc P_{\vct V_n}x$ onto $\vct V_n$ for $n<N$, then compatibility of the regularizer ensures that the recovered signal also lies in $\vct V_n$, see Section~\ref{sec:inverse_probs}.  %\VC{This preceding sentence is confusing due to the word `low-resolution' -- not sure what this means.} \TODO{does replacing it with `low-dimensional' make it clear? Perhaps remove the phrase entirely and just say ``few components''?} \VC{Expand a bit using precise notation.}\TODO{how's this?}\VC{Made a minor tweak -- please take a look!}
Compatibility plays a central role in the analysis of approximations for the copositive cones, and the failure of a variant of intersection compatibility for the natural sums-of-squares relaxation of copositive cones underlies several results in this area~\cite{gvozdenovic2007semidefinite,laurent2023exactness}.
Compatibility conditions also arise in noncommutative geometry, where matrix-convex sets are defined as sequences of sets of matrices of each size related across dimensions by conditions stronger, in general, than Definition~\ref{def:compatibility_conds} (see Proposition~\ref{prop:compatible_cones_mtx_cvx}); in particular, the free spectrahedra in Example~\ref{ex:free_descr_intro_param_reorg} satisfy both intersection and projection compatibility. 
A freely-described sequence of convex sets need not satisfy either intersection or projection compatibility; see Example~\ref{ex:bad_cube} below.
Therefore, we investigate freely-described convex sets that additionally satisfy compatibility conditions, and we obtain three structural results pertaining to such sequences of sets.
%A significant consequence of imposing compatibility conditions on a freely-described sequence of convex sets is that the cones $K_n$ constrain the parameters -- the freely-described elements -- in the underlying \new{dimension-}free description.   
%We obtain three results on freely-described sets that satisfy compatibility conditions. 
%The first gives conditions under which a freely-described sequence satisfies compatibility conditions. The second gives conditions under which a convex set in a fixed dimension extends to a freely-described and compatible sequence. Finally, the third result shows that certain \new{dimension-}free descriptions of compatible sequences of sets extend to a description of a limiting infinite-dimensional convex set. 
Each of these results is obtained by combining the relevant concepts from representation stability along with notions from convex analysis. We now describe these three results in more detail.

%\VC{With the specific pointers for examples in the preceding paragraph, either forward pointers or references to earlier examples could also be helpful.}

Our first structural result gives conditions under which \new{dimension-}free descriptions \emph{certify} compatibility, i.e., under which the sequence of sets derived from these descriptions is evidently compatible.  Consider a freely-described sequence $\{\cvx C_n\}$ of convex sets \eqref{eq:preim_seq}.
Assuming that the sequence of cones $\{\cvx K_n\}$ underlying the description of $\{\cvx C_n\}$ is both intersection and projection compatible, we prove in Proposition~\ref{prop:compatible_descriptions} that the sequence of convex sets $\{\cvx C_n\}$ satisfies compatibility conditions if the underlying freely-described elements in~\eqref{eq:preim_seq} lie in a convex cone.
%Free descriptions satisfying our conditions \emph{certify} compatibility, in the sense that they in the same way that symmetric descriptions of convex sets make group invariance apparent~\cite{gouveia2013lifts}.
%For standard cones such the nonnegative orthant or the positive semidefinite cone and for consistent sequences exhibiting representation stability, a consequence is that we are able to \emph{certify} that a freely described convex set satisfies compatibility conditions using either linear or semidefinite programming.  
This result serves as the foundation for our subsequent developments---we use it to show in Section~\ref{sec:examples} that many sets arising in applications naturally admit \new{dimension-}free descriptions certifying their compatibility. We also use this result in Section~\ref{sec:numerical_examples} to design an algorithm fitting a freely-described and compatible sequence of sets to data.

Our second structural result is central to the development of our computational framework in Section~\ref{sec:numerical_examples} and it addresses the following question. Given a convex set with a conic description~\eqref{eq:conic_descrip_preim} in a fixed dimension $n_0$, when does this description \emph{extend} to a \new{dimension-}free description of a sequence of sets satisfying compatibility conditions?  As a concrete example, suppose we are given a convex relaxation for a combinatorial optimization problem, or a convex regularizer for an inverse problem, in a particular problem dimension; under what conditions can these be extended to problems in any desired dimension?  Leveraging our preceding result along with properties of presentation degrees of consistent sequences, 
we give conditions in Theorem~\ref{thm:extending_compatible_descriptions} on the dimension $n_0$ and the conic description in that dimension ensuring the desired extendability.  
We use this result in Section~\ref{sec:numerical_examples} to computationally extend a set fitted to data in a fixed dimension to any other dimension.

Finally, our third structural result develops a notion of a limiting object for a freely-described convex set. 
Given a consistent sequence $\{\vct V_n\}$ of $\msf G_n$-representations, consider the vector space $\vct V_\infty = \cup_n \vct V_n$ viewed as a representation of\footnote{Formally, these are the direct limits $\varinjlim \vct V_n$ and $\varinjlim \msf G_n$.} $\msf G_\infty = \cup_n \msf G_n$ and let $\overline{\vct V_\infty}$ denote the completion of $\vct V_\infty$ with respect to some norm. 
Consider now a freely-described sequence of convex sets $\{\cvx C_n\}$ that is intersection- and projection-compatible. The set $\cvx C_\infty = \cup_n \cvx C_n$ is a convex subset of $\vct V_\infty$, and we would like to describe its continuous limit $\overline{\cvx C_{\infty}}$ inside $\overline{\vct V_{\infty}}$. This is natural in many applications where problems of different sizes can be naturally viewed as suitable finite-dimensional discretizations of infinite-dimensional problems. Examples include finite graphs obtained via discretization of continuous graphons,  vectors obtained via discretizations of continuous-time signals, and matrices obtained via discretizations of operators between infinite dimensional spaces.
%\VC{Why would we like to do this?} \TODO{how's this motivation?} \VC{Looks good, I made a couple of minor tweaks -- please take a look!}
We show in Theorem~\ref{thm:descriptions_of_lims} that if the \new{dimension-}free description underlying the sequence $\{\cvx C_n\}$ certifies our compatibility conditions (in the sense of Proposition~\ref{prop:compatible_descriptions}) and if the freely-described elements constituting the description extend continuously to their respective limits, then the \new{dimension-}free description extends to a description of a dense subset of $\overline{\cvx C_{\infty}}$.
In particular, this result yields an infinite-dimensional conic program for optimizing a continuous linear functional over $\overline{\cvx C_{\infty}}$. 
%The following are two examples of the consequences of Theorem~\ref{thm:descriptions_of_lims}.
\begin{example}[Convex graphon parameters]\label{ex:graphon_limit_set}
    Consider the freely-described sequence in Example~\ref{ex:free_descr_intro_param_reorg}(c), which is intersection- and projection-compatible with respect to the graphon consistent sequence (see Section~\ref{sec:graphons}). The elements of $\vct V_{\infty}$ are step functions on $[0,1]^2$ known as step graphons, and we endow them with the $L_{\infty}$ norm as is common in the literature~\cite{borgs2019graphons}. The completion $\overline{\vct V_{\infty}}$ then consists of certain piecewise-continuous graphons, and Theorem~\ref{thm:descriptions_of_lims} shows that the limit of the sequence $\{\cvx C_n\}$ in Example~\ref{ex:free_descr_intro_param_reorg}(c) with $L_7=0$ is
    \begin{equation*}\begin{aligned}
        \overline{\cvx C_{\infty}} = &\Bigg\{W\in \overline{\vct V_{\infty}}: 0\preceq \Big[(L_1)_{i,j}\int_{[0,1]^2}W(s,t)\, ds\, dt + (L_2)_{i,j}\int_{[0,1]}W(t,t)\, dt\\ &+ (L_3)_{i,j}\int_{[0,1]}[W(x,t) + W(t,y)]\, dt + (L_4)_{i,j}[W(x,x) + W(y,y)] + (L_5)_{i,j}W(x,y)\Big]_{i,j=1}^k + L_6\Bigg\}.
    \end{aligned}\end{equation*}
    Here positive-semidefiniteness of a matrix-valued function on $[0,1]^2$ is meant in the sense of matrix-valued kernels~\cite{6788407}, see~\eqref{eq:psd_kernel} for a definition. We require $L_7=0$ because the corresponding term does not extend continuously with respect to the $L_{\infty}$ norm. We derive this limiting description in Proposition~\ref{prop:graphon_descrip_prop}. 
\end{example}

We also apply Theorem~\ref{thm:descriptions_of_lims} to permutahedra and Schur--Horn orbitopes in Section~\ref{sec:DS_permuta} to obtain descriptions of limits in analogy to Example~\ref{ex:graphon_limit_set}. The resulting infinite-dimensional conic descriptions yield an infinite-dimensional generalization of the Schur--Horn theorem, which we give in Proposition~\ref{prop:SH_theorem_special}. In fact, we obtain limiting descriptions and a Schur--Horn theorem more generally in approximately finite-dimensional (AF) algebras in Appendix~\ref{apdx:SH_in_AF_algebras}.

%\VC{Give example of graphons and maybe Schur--Horn as well.}

\subsubsection{Invariant Conic Programs}\label{sec:intro_conic_progs}
Our framework yields structural results not only for descriptions of convex sets but also for sequences of optimization problems over them. 
Specifically, we use representation stability to unify and generalize a number of previous results which show that various sequences of invariant conic programs indexed by dimension can be solved in constant time.
Such sequences of programs arise in several applications including extremal combinatorics~\cite{raymond2018symmetric, raymond2018symmetry} and quantum information~\cite{huber2021positive, klep2018positive}.  Concretely, the programs that arise in certifying homomorphism density inequalities over graphs are invariant under symmetric groups of increasing sizes that relabel the vertices of the graphs involved.  Similarly, optimizing traces of matrix polynomials yields programs that are invariant under the unitary or orthogonal groups of increasing sizes that conjugate the matrices involved.
%They are used in extremal combinatorics~\cite{raymond2018symmetric, raymond2018symmetry} to certify inequalities between graph homomorphism densities involving graphs of every size. The resulting programs are invariant under symmetric groups of increasing sizes that relabel the vertices of the graphs involved. They are also used in quantum information and control theory~\cite{huber2021positive, klep2018positive} to optimize (traces of) polynomials in matrix variables of every size. Here the programs are invariant under the unitary or orthogonal groups of increasing sizes that conjugate the matrices involved. \TODO{this paragraph can be shortened if you agree with my changes to the ``first motivation''} \VC{I agree with shortening this a bit -- some context might still be needed, but we can do it using fewer words.}
%We proceed to briefly define invariant conic programs and explain how their constant-sized reductions can be obtained from representation stability.
We now recall symmetry reductions of invariant programs and explain how constant-sized reductions arise from representation stability.

Consider optimizing a linear functional over a convex subset $\cvx C \subseteq \vct V$ specified as an affine section of a convex cone $\cvx K$ as in~\eqref{eq:conic_descrip_preim}. 
If the linear functional, the affine section, and the cone are invariant under the action of a group $\msf G$, then we can further restrict the constraint set to the invariant subspace $\vct V^{\msf G}$, thereby reducing the size of the program~\cite[\S3]{GATERMANN200495}.
%
%Invariant conic programs often arise in sequences indexed by dimension. Indeed, the convex sets in a freely-described sequence are often invariant under a sequence of groups, hence optimizing invariant linear functionals over such sets gives sequences of invariant conic programs. 
%
%\VC{Can we motivate conceptually why we care about sequence of invariant conic programs here?  Something about graph density inequalities needing to hold for all $n$?  This would magnify the impact of this section.}
%
Consider now a freely-described sequence of convex subsets $\{\cvx C_n\}$ of a consistent sequence $\{\vct V_n\}$ of $\{\msf G_n\}$-representations given by~\eqref{eq:preim_seq}.
The vectors and linear maps in~\eqref{eq:preim_seq} are $\msf G_n$-invariant as they are given by freely-described elements; if in addition the cones $\cvx K_n$ are $\msf G_n$-invariant, then the convex sets $\cvx C_n$ are also $\msf G_n$-invariant.  
When $\{\vct V_n\}$ is finitely-generated, so that the spaces of invariants in the sequence are all eventually isomorphic, optimizing a $\msf G_n$-invariant linear functional over $\cvx C_n$ for sufficiently large $n$ reduces as above to optimization over these constant-dimensional spaces of invariants.
However, even though the dimensionality of the variables in the symmetry-reduced programs stabilize, the complexity of the constraints might still grow with $n$.

To establish that the complexity of the constraints also stabilizes with $n$, we show that the invariant sections $\{\cvx K_n^{\msf G_n}\}$ of the cones have a constant-sized description in the following precise sense.
\begin{definition}[Constant-sized descriptions]\label{def:const_sized_descrp}
    Let $\{\vct U_n\}$ be a sequence of $\{\msf G_n\}$ representations and $\{\cvx K_n\subseteq \vct U_n\}$ a sequence of convex cones.
    For $t\in\NN$, we say that the sequence $\{\cvx K_n^{\msf G_n}\subseteq \vct U_n^{\msf G_n}\}$ admits a \emph{constant-sized description} for $n\geq t$ if there exists a single \new{finite-dimensional} vector space $\vct U$ containing a cone $\cvx K$, linear maps $T_n\colon \vct U\to \vct U_n^{\msf G_n}$, and subspaces $\vct L_n\subseteq \vct U$ such that $\cvx K_n^{\msf G_n}=T_n(\cvx K\cap \vct L_n)$ for all $n\geq t$. 
\end{definition}
Note that Definition~\ref{def:const_sized_descrp} does not require $\{\vct U_n\}$ to be a consistent sequence, nor in particular that $\vct U_n$ is embedded into $\vct U_{n+1}$. 
%However, our derivations of constant-sized descriptions do cruically use such embeddings. \VC{The preceding two sentences are confusing -- the first sentence says that you do not require embeddings, but the second one states that you use such structure.}\TODO{I meant that the definition makes sense without embeddings but our derivations of these descriptions uses embeddings. I tried clarifying above. This point is important for the SOS theorem, where the $U_n$'s do not form a consistent sequence}\VC{Remove second sentence.}
Proofs of constant-sized symmetry reductions in the literature have implicitly proceeded in a case-by-case manner by showing that the relevant cones, including symmetric PSD and relative entropy cones, have a constant-sized description in the sense of our Definition~\ref{def:const_sized_descrp}, see~\cite{riener2013exploiting,raymond2018symmetric,debus2020reflection,moustrou2021symmetry}.
In Section~\ref{sec:const_sized_progs}, we explain how these constant-sized descriptions can be generalized and derived systematically from an interplay between representation stability and the structure of the cones in question. To that end, we use a stronger form of representation stability known as uniform representation stability, which shows that the whole decomposition into irreducibles of the sequences of representations involved stabilize, rather than only their spaces of invariants; see Section~\ref{sec:uniform_rep_stab}. 
Our approach allows us to prove \new{the existence of constant-sized descriptions for invariant sections of the following cones under the actions of the classical Weyl groups. The latter consist of the sequence of permutation groups $\{\msf S_n\}$, signed permutation groups $\{\msf B_n\}$, and even-signed permutation groups $\{\msf D_n\}$, see Table~\ref{tab:gps_and_cones} for definitions.}
\begin{theorem}[Informal, see Theorem~\ref{thm:sym2_const_size}]\label{thm:informal}
	Consider the consistent sequence $\mscr V_0=\{\RR^n\}$ with embeddings by zero-padding and the standard actions of one of the sequences of classical Weyl groups as in Example~\ref{ex:basic_consist_seq}.  
    For any consistent sequence $\mscr V=\{\vct V_n\}$ obtained from $\mscr V_0$ by taking finitely-many direct sums, tensor products, symmetric algebras, or skew-symmetric algebras, the invariant sections of the cones $\{\mathrm{Sym}^2_+(\vct V_n)\}$ admit constant-sized descriptions for $n\geq d+k$, where $d$ and $k$ are the generation and presentation degrees of $\mscr V$, respectively. These degrees can be bounded using the calculus in Theorem~\ref{thm:calc_for_pres_degs_FIW}.
\end{theorem}
%We refer the reader to Theorem~\ref{thm:sym2_const_size} for the formal statement and its proof.
The sequences of classical Weyl groups are the groups of permutations $\{\msf{S}_n\}$, signed permutations $\{\msf{B}_n\}$, or even signed permutations $\{\msf{D}_n\}$.
Here $\mathrm{Sym}^2_+(\vct V)$ denotes the collection of nonnegative quadratic forms on $\vct V$. For example, Theorem~\ref{thm:informal} shows that the sections of the PSD cones $\{\mbb S^n_+\}$ invariant under the usual action of the classical Weyl groups by conjugation admit constant-sized descriptions.  
Theorem~\ref{thm:informal} is used to derive constant-sized descriptions for cones of invariant sums-of-squares in Theorem~\ref{thm:sym_sos_const_size} below. 
%\VC{Are the above theorem and the next result connected?  Do you use the first to prove the second?  If so, please make it explicit here.} \TODO{Done} \VC{Discussed!} 
In the following result, we consider polynomials in \new{$\binom{n+k-1}{k}$} variables identified with $k$-\new{multisets} of $n$ letters, and show that the cones of sums of squares of such polynomials modulo any sequence of ideals admit constant-sized descriptions. The proof of the following result is given in Section~\ref{sec:psd_cone_const_size}.
%\TODO{change $\bigwedge^k\RR^n$ to $\mathrm{Sym}^k\RR^n$, since we want $(1,2)\cdot x_{1,2}=x_{1,2}$ rather than $-x_{1,2}$; none of the bounds change, purely notation}
%\TODO{do we want quadratic modules?}
\begin{theorem}\label{thm:sym_sos_const_size}
	Let $\{\msf G_n\}$ be one of the sequences of classical Weyl groups acting as usual on $\RR^n$. Let
	\begin{equation*}
	\mc I_n\subseteq \bigoplus_{d\geq 0}\mathrm{Sym}^d\left(\new{\mathrm{Sym}^k}\RR^n\right) \cong \RR[x_{i_1,\ldots,i_k}]_{\new{1\leq i_1\leq \cdots\leq i_k\leq n}} =:\vct V_n
	\end{equation*}
	be $\msf G_n$-invariant ideals, and let $\vct U_n=\mathrm{Sym}^{\leq 2d}(\new{\mathrm{Sym}^k}\RR^n)/\mc I_n$.  Consider the sums-of-squares cones $\mathrm{SOS}_{\vct U_n} = \{f\in \vct U_n: f \textrm{ is a sum of squares mod } \mc I_n\}$. Then the sequence $\{\mathrm{SOS}_{\vct U_n}^{\msf G_n}\}$ admits a constant-sized description for $n\geq 2kd$ if $\msf G_n=\msf S_n$ or $\msf B_n$, and for $n\geq 2kd+1$ if $\msf G_n=\msf D_n$.
\end{theorem}
\new{Theorem~\ref{thm:sym_sos_const_size} generalizes a number of results from the literature.
\begin{itemize}
    \item Setting $k=1$, $\mc I_n=(0)$, and $\msf G_n=\msf S_n$, we recover~\cite[Thms.~4.7, 4.10]{riener2013exploiting}. 
    \item Setting $k=1$, $\mc I_n=(0)$, and $\msf G_n=\msf B_n$ or $\msf D_n$, we recover~\cite[Cor.~3.23]{debus2020reflection}.
    \item For $k\geq 2$, setting $\mc I_n=(x_I-x_I^2, x_J)_{\substack{I\in\binom{[n]}{k}\\ J\not\in \binom{[n]}{k}}}$ where $\binom{[n]}{k}$ denotes the set $\{(i_1<\ldots<i_k): i_j\in[n]\}$, and setting $\msf G_n=\msf S_n$, we recover~\cite[Thm.~2.4]{raymond2018symmetric}.
\end{itemize}}
Theorem~\ref{thm:sym_sos_const_size} generalizes all of these to include any of the classical Weyl groups and any sequence of invariant ideals.

\new{Theorem~\ref{thm:sym_sos_const_size} can be applied to derive} constant-sized SDPs to certify graph homomorphism density inequalities~\cite{raymond2018symmetric,raymond2018symmetry}. 
Many problems in extremal combinatorics can be recast as proving polynomial inequalities between homomorphism densities of graphs, which is the fraction of maps between the vertex sets of two graphs that define graph homomorphisms. 
A simple example is Mantel's theorem, which states that the maximum number of edges in a triangle-free graph is $\lfloor n^2/4\rfloor$. Razborov proposed a method of certifying such inequalities using \emph{flag algebras}~\cite{razborov_2007}, which were shown in~\cite{raymond2018symmetric,raymond2018symmetry} to be sums-of-squares certificates of certain symmetric polynomial inequalities. 
Razborov's flags are interpreted in~\cite[\S3]{raymond2018symmetric} as ``free'' spanning sets for spaces of symmetric polynomials. Formally, they are freely-described elements in the sense of Definition~\ref{def:freely_descr_elt}. Our framework shows that both the existence of such freely-described spanning sets and the resulting constant-sized SDPs are consequences of representation stability.

We obtain similar results for invariant sections of cones derived from relative entropy inequalities that have recently played a role in polynomial optimization~\cite{sage,murray2021signomial}. Our main result for these cones is stated in Theorem~\ref{thm:sage_cones_const_size}, which generalizes~\cite[Thm.~5.3]{moustrou2021symmetry} from permutation groups to the other classical Weyl groups.

\subsubsection{\new{Dimension-}Free Convex Regression}\label{sec:intro_cvx_regr}
%\VC{I did a significant rewrite as I was confused by the previous description.  Please take a closer look, in particular, at the second paragraph.}\TODO{made some changes}\VC{Looks good, I only made a minor tweak.}

As our final contribution, we apply our framework to develop an algorithm for a \new{dimension-}free analog of the convex regression problem.  In the classic setting of this problem, the objective is to fit a convex function $f\colon\vct V\to\RR$ to a dataset $\{(x_i,y_i)\}_{i=1}^D$ of inputs $x_i\in \vct V$ and outputs $y_i\in \RR$ such that $f(x_i)\approx y_i$. 
In our \new{dimension-}free extension of this problem, we have a sequence $\{\vct V_n\}$ of vector spaces, usually of growing dimension, and data $\{(x_i,y_i)\in \vct V_{n_i}\oplus\RR\}$ in finitely-many dimensions $n_i$. We then seek an \emph{infinite} sequence of convex functions $\{f_n\colon \vct V_n\to\RR\}$ satisfying $f_{n_i}(x_i)\approx y_i$ in the dimensions in which data is available \new{and which, moreover, is freely-described.}

%We address this problem by leveraging the framework described above to fit a freely-described convex function to the given data.  
\new{To fit a freely-described sequence of functions}, we begin by endowing the vector spaces $\{\vct V_n\}$ containing the training data with the structure of a consistent sequence based on the symmetries of the problem at hand and the relations between problem instances in different dimensions.  
We then select description spaces $\{\vct W_n\},\{\vct U_n\}$ and cones $\{\cvx K_n\subseteq \vct U_n\}$ with respect to which our convex functions are to be described as in~\eqref{eq:preim_seq}, with freely-described vectors and linear maps parametrizing the desired family of freely-described functions; richer description spaces generally yield more expressive families of convex functions, but fitting a function from such a family is more expensive and requires data in higher dimensions by Theorem~\ref{thm:extending_compatible_descriptions}.  

Having chosen description spaces and cones as above, we have completely specified our family of freely-described functions and turn to the problem of fitting an element of this family to data. 
We do so in a fully algorithmic fashion, without needing to write down an explicit formula for members of our parametric family, using the following three steps. 
First, we compute a basis for invariant vectors and linear maps in the dimensions in which we have data using the algorithm of~\cite{pmlr-v139-finzi21a}. 
Second, we numerically identify coefficients in this basis that fit the data using an alternating minimization procedure based on convex duality. 
Third, we extend the invariant vectors and linear maps we identified in the data dimensions to other dimensions by solving linear systems arising from Definition~\ref{def:freely_descr_elt}. 
Our procedure is summarized in Algorithm~\ref{algo:free_descr} and detailed in Section~\ref{sec:numerical_examples}, and our implementation is publicly available at \url{https://github.com/eitangl/anyDimCvxSets}.
To summarize, once the choice of consistent sequence structure and description spaces is made based on the structure underlying an application, the dimensions of the available data, and the desired richness of the family of functions, the remainder of the procedure is fully computational.

We demonstrate our approach by obtaining semidefinite approximations of two non-semidefinite representable functions: the $\ell_{\pi}$ norm $\|x\|_{\pi}=\left(\sum_i|x_i|^{\pi}\right)^{1/\pi}$, and the following (nonnegative and positively homogeneous) variant of the quantum entropy function:
\begin{equation}\label{eq:quant_ent_mod}
f(X) = \mathrm{Tr}[(X + \mathrm{Tr}(X)I) \log(X/\mathrm{Tr}(X)+I)].
\end{equation}
The $\ell_{\pi}$ norm of a vector is defined for vectors of any length, is invariant under signed permutations, remains unchanged under zero-padding of its input, and can only increase if we append any other entry to a given input. It is therefore both intersection and projection compatible with respect to the embeddings and projections of Example~\ref{ex:basic_consist_seq}.
The function~\eqref{eq:quant_ent_mod} is well-defined for (positive-semidefinite) inputs of all sizes, is invariant under conjugation by orthogonal matrices, and its value is unchanged if we zero-pad its input.  Thus, it is intersection-compatible on $\{\mbb S^n\}$ with embeddings given by zero-padding.  Given evaluations of these two functions on low-dimensional inputs, we use our procedure to fit semidefinite approximations to them by choosing sequences of PSD cones for our descriptions, and fitting a freely-described and compatible sequence of convex functions. Thus, we ensure that our approximations are also group-invariant and satisfy the same compatibility properties as the underlying ground-truth functions.  
\new{Concretely, we fit an approximation to the $\ell_{\pi}$ norm using its values on 50 random vectors of length at most 2, and fit an approximation to~\eqref{eq:quant_ent_mod} using its values on 200 random PSD matrices of size at most $4\times 4$. To evaluate the quality of our approximations, we further generate $10^3$ random points in dimensions up to 20 and compare the value of our approximation to the ground truth functions on these points. We detail our description spaces, our fitting algorithm, and our data generation process in Section~\ref{sec:numerical_results}.}
Figure~\ref{fig:regr_results} shows the error in our semidefinite approximations for different dimensions when we search over all freely-described sets in our family when fitting to data, and when we search only over the smaller family of compatible sequences.  We see that imposing compatibility ensures that the error increases gracefully when extending to higher dimensions.

\begin{figure}[t]
    \centering
    \subfloat[\centering Learning $\ell_{\pi}$.]{{\includegraphics[width=.47\linewidth]{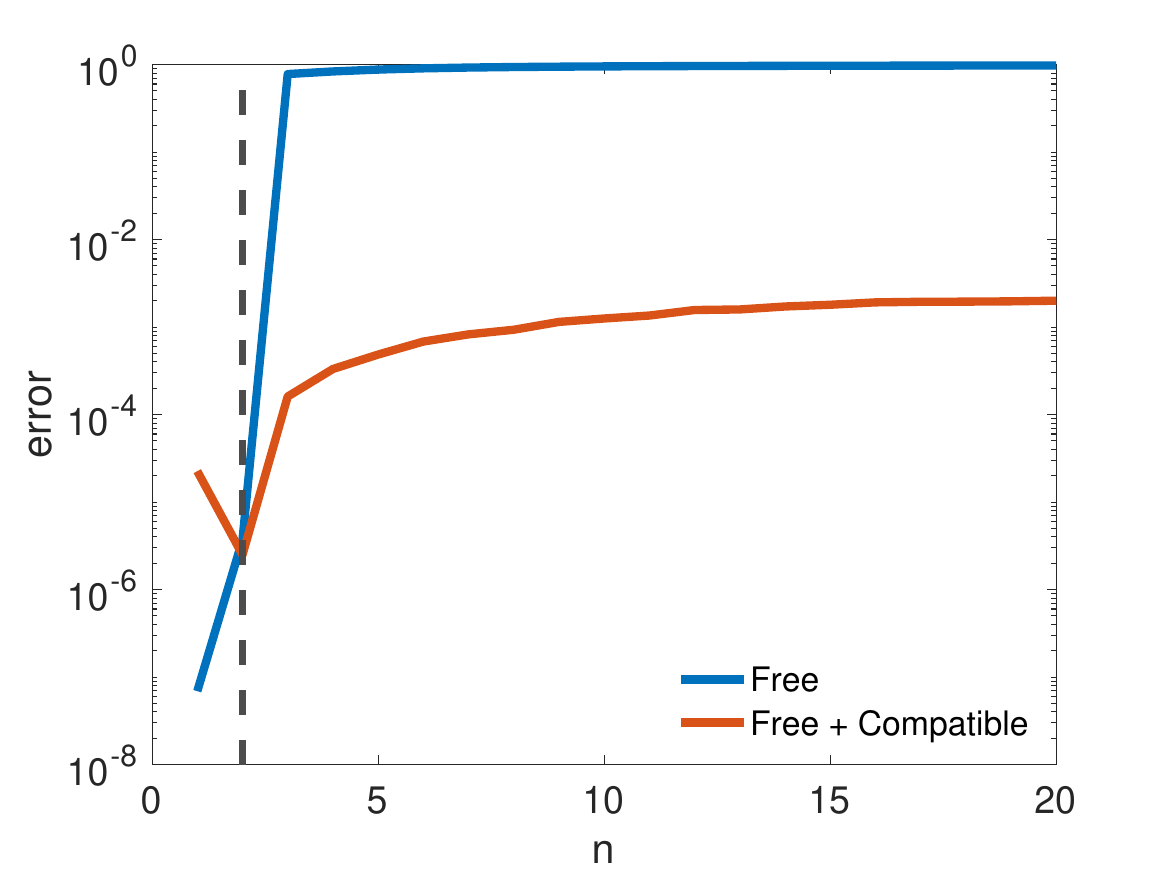} }}%
    \hfill
    \subfloat[\centering Learning quantum entropy.]{{\includegraphics[width=.47\linewidth]{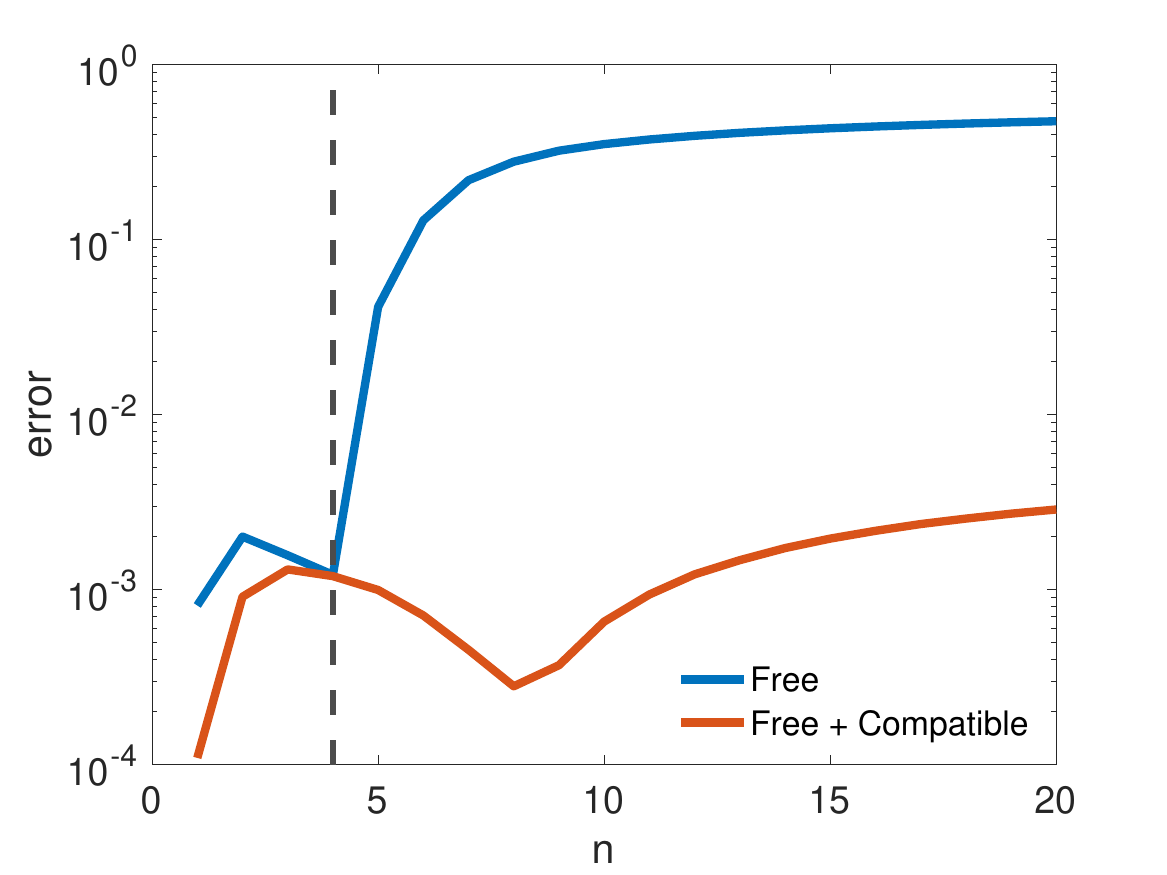} }}%
    \caption{Errors for learning the $\ell_{\pi}$ norm and the quantum entropy variant~\eqref{eq:quant_ent_mod}. The dashed vertical lines denote the max $n$ for which data is available.}%
    \label{fig:regr_results}%
\end{figure}

Obtaining semidefinite approximations for quantum information theoretic quantities such as~\eqref{eq:quant_ent_mod} facilitates the use of semidefinite programming solvers in convex optimization problems involving quantum entropy~\cite{fawzi2022geometry,fawzi2022optimal,fawzi2023certified,fawzi2024entropy}. We mention that the authors of~\cite{fawzi2019semidefinite} derived semidefinite approximations of the quantum entropy and related functions from an analytic perspective. We show in Section~\ref{sec:spectral_funcs} that their description is \new{dimension-}free but does not certify compatibility, in contrast to the description we obtain from data using the preceding procedure.
%We also give dimension counts for the parametric family of \new{dimension-}free descriptions using their description spaces.

\subsection{Related Work}\label{sec:related_work}
We briefly survey several related areas.
\noindent\paragraph{Extended formulations of convex sets:} There is a large literature as well as a systematic framework on extended formulations in which conic descriptions of a convex set in a fixed dimension are investigated; see \cite{lifts_review} for a review.  The goal in this body of work is to express `complicated' convex sets in $\RR^d$ (e.g., polyhedra with many facets) as linear images of affine sections of `simple' convex cones in a space that is not much larger than $d$.  This framework is also applied to study \emph{equivariant lifts} of group-invariant convex sets, which are descriptions of the form~\eqref{eq:conic_descrip_preim} consisting of group-invariant cones, vectors, and linear maps, i.e., conic descriptions that make evident the group invariance of the convex set; see~\cite{equivariant_lifts} and~\cite[\S4.3]{lifts_review}.  These are precisely the type of descriptions we consider for each of the convex sets in our sequences.  Moreover, while the literature on extended formulations is typically articulated in the setting of a convex set in a \emph{fixed} dimension, many results in the area implicitly concern descriptions of a \emph{sequence} $\{\cvx C_n\}$ of convex sets and the complexity of these descriptions as a function of $n$~\cite{YANNAKAKIS1991441,equivariant_lifts,regular_polygons,sdp_of_rots}.  Thus, there are several points of contact with the present paper.  In fact, many (though not all) descriptions proposed in the literature on extended formulations can in fact be instantiated in any dimension, and are moreover \new{dimension-}free in our sense as we show in Section~\ref{sec:examples}. More broadly, to the best of our knowledge, descriptions of convex sets that arise from a systematic consideration of relations between dimensions have not been studied previously.  
By bringing such considerations to the fore, the present effort elucidates the representation-theoretic phenomena and their interaction with the convex-geometric aspects underpinning convex sets that can be instantiated in any dimension.

\noindent\paragraph{Free spectrahedra, noncommutative convex algebraic geometry:} 
A broad research program pursued in several areas involves the study of ``matrix'' or noncommutative analogues of classical ``scalar'' or commutative objects. 
Examples include random matrix theory and free probability~\cite{mingo2017free} in which matrix-valued random variables and their limits are the object of study as opposed to scalar-valued ones; and noncommutative algebraic geometry~\cite{free_AG_chap}, in which polynomials in noncommuting variables and their evaluations on matrices are studied as opposed to standard polynomials in commuting variables that are evaluated on scalars~\cite{NPA,burgdorf2016optimization,gribling2018bounds,gribling2019lower}.
Applying this program to convex sets yields matrix-convex sets and free spectrahedra as in Example~\ref{ex:free_descr_intro_param_reorg}(b). We refer the reader to~\cite{free_AG_chap, kriel2019introduction} for surveys and~\cite{freeSpectr_app1,freeSpectr_app2} for some applications.
In analogy to the present paper, results in this area explicitly pertain to sequences of sets which are ``freely-described'', in the sense that their description can be instantiated in any dimension.
For example, free spectrahedra are sequences of sets described by a single linear matrix inequality, and free algebraic varieties are defined by the same noncommutative polynomials instantiated on matrices of any size.
Another point of contact with our work is the consideration of relations between the sets in the sequence across dimensions, such as matrix-convex combinations which have been formalized and studied in this literature. 
Our notion of \new{dimension-}free descriptions is more general than the ones in this literature however, and it allows us to derive more flexible families of freely-described sets which are adapted to the structure underlying a broader range of applications. Further, the relations between sets in different dimensions we consider in this paper are less restrictive than matrix convexity, and they yield more general families of sets than free spectrahedra (free spectrahedra may be obtained in our framework via particular instantiations of description spaces, see Section~\ref{sec:free_spectr}).

%In fact, free spectrahedra are precisely the parametric families our framework yields with appropriate choices of description spaces, see Section~\ref{sec:free_spectr}. 
%In fact, free spectrahedra naturally arise in our framework, as we show in Section~\ref{sec:free_spectr}.
%Matrix-convex sets, including free spectrahedra, are sequences of sets satisfying certain compatibility conditions across dimensions which are stronger than ours.

\noindent\paragraph{Representation stability:} 
Representation stability arose out of the observation that the decomposition into irreducibles stabilizies for many sequences of representations.
%More precisely, for standard families of groups such as permutations, general linear or orthogonal groups, there is a labelling of the irreducibles of the different groups in the family that associates irreducibles of one group to irreducibles of others. 
%Under such labelling, the irreducibles that appear in the decompositions of such sequences of representations as well as their multiplicities stabilize. 
This phenomenon has been formalized in~\cite{CHURCH2013250} using consistent sequences, and it has been further studied in~\cite{FImods,WILSON2014269,GADISH2017450} from a categorical perspective and in~\cite{sam_snowden_2015,sam2016gl,sam2017grobner} from a limits-based perspective. 
We relate the categorical and limits-based formalisms to our setting in appendix~\ref{sec:cat_reps} and Section~\ref{sec:lims_of_consist_seqs}, respectively, and we refer the reader to~\cite{farb2014representation,wilson2018introduction,sam2020structures} for introductions to this area.
%In particular, results in this area imply that representation stability is ubiquitous among commonly-encountered sequences of representations, as we also observe in this paper. 
%We refer the reader to~\cite{farb2014representation,wilson2018introduction,sam2020structures} for introductions to this area.

Representation stability has been used to study sequences of polyhedral cones and their infinite-dimensional limits~\cite{van2021theorems}, as well as sequences of algebraic varieties, their defining equations, and their infinite-dimensional limits~\cite{Draisma2014,chiu2022sym,alexandr2023moment}. 
An important distinction between these works and ours is our application of representation stability to \emph{descriptions} of convex sets rather than to their extreme points or rays as in~\cite{van2021theorems}. Thus, we are able to study non-polyhedral sets such as spectrahedra and sets defined by relative entropy programs. 
Similarly, our study of infinite-dimensional limits in Section~\ref{sec:lim_of_sets} focuses on limiting descriptions and not just on limits of the sets themselves.
%The authors there apply results from representation stability to the set of rays generating their cones, and consider when such cones and their duals are generated by finitely-many rays up to symmetry, and how many rays are needed to represent a given element in the cone.
%In contrast, we apply representation stability to \emph{descriptions} of convex sets, which allows us to consider non-polyhedral sets as well as to describe their duals.
%The limiting perspective in~\cite{van2021theorems} is closely related to our limits-based interpretation of our compatibility conditions in Section~\ref{sec:lims_of_sets}.
%Representation stability has also been used to study sequences of algebraic varieties, their defining equations, and their infinite-dimensional limits~\cite{Draisma2014,chiu2022sym,alexandr2023moment}. 
%Once again, embeddings and projections relating different dimensions and infinite-dimensional limiting perspectives appear both in this line of work and in ours.

\subsection{Notation and Basics}\label{sec:intro_notation}
\begin{table}[ht]
    \centering
    \begin{tabular}{@{}ll@{}}
        \toprule
        %\textbf{Name} & \textbf{Notation} \\ \toprule
        Symmetric group & $\msf S_n = \{g\in\RR^{n\times n}: g \textrm{ is a permutation matrix}.\}$\\ \arrayrulecolor{black!30}\midrule
        Signed symmetric group & $\msf B_n = \{g\in\RR^{n\times n}: g \textrm{ is a signed permutation matrix}\}$ \\ \midrule
        Even-signed symmetric group & $\msf D_n = \{g\in B_n: g \new{\textrm{ has an even number of } -1 \textrm{ entries}}\}$\\ \midrule
%        Cyclic group & $\mathrm{Cyc}_n = \{s^r: s\in\RR^{n\times n} \textrm{ sends } se_i = e_{i+1\mod n} \textrm{ and } r\in[n]\}$.\\ \midrule
        %General linear group & $\mathrm{GL}(n)=\{g\in\RR^{n\times n}: \det g\neq 0\}$\\ \midrule
        Orthogonal group & $\msf O_n = \{g\in\RR^{n\times n}: g^\top g=I_n\}$ \\ \midrule
        Space of linear maps & $\mc L(\vct V,\vct U) = \{A\colon \vct V\to \vct U \textrm{ linear}\}$;\qquad  $\mc L(\vct V) = \mc L(\vct V,\vct V)$.\\ \midrule 
        Direct sum & $\vct V\oplus \vct U = \vct V\times \vct U=\{(v,u): v\in \vct V, u\in \vct U\}$.\\ \midrule
        Direct powers & $\vct V^k = \vct V^{\oplus k} = \underbrace{\vct V\oplus\cdots\oplus \vct V}_{k \textrm{ times}}$.\\\midrule
        Tensor product & $\vct V\otimes \vct U = \mathrm{span}\{v\otimes u: v\in \vct V, u\in \vct U\}\cong \mc L(\vct V,\vct U)$. \\ \midrule
        Tensor power & $\vct V^{\otimes k} = \underbrace{\vct V\otimes\cdots\otimes \vct V}_{k \textrm{ times}}$.\\ \midrule 
        Symmetric algebra & $\begin{alignedat}{2} &\mathrm{Sym}^k(\vct V) &&= \mathrm{span}\{v_1\cdots v_k: v_i\in \vct V\}\\ &&&= \{\textrm{polynomials of degree } =k \textrm{ on } \vct V\}\\ &&&= \{\textrm{symmetric tensors of order } k \textrm{ over } \vct V\}\\ &\mathrm{Sym}^{\leq k}(\vct V)&&=\bigoplus_{i=0}^k\mathrm{Sym}^i(\vct V).\end{alignedat}$\\ \midrule 
        Alternating algebra & $\begin{alignedat}{2} &\bigwedge\nolimits^k\vct V &&= \mathrm{span}\{v_1\wedge\cdots \wedge v_k: v_i\in \vct V\}\\ &&&= \{\textrm{skew-symmetric tensors of order } k \textrm{ over } \vct V\}.\end{alignedat}$\\ \midrule
        Symmetric matrices & $\mbb S^n = \{X\in\RR^{n\times n}: X^\top = X\} = \mathrm{Sym}^2(\RR^n)$.\\ \midrule 
        Skew-symmetric matrices & $\mathrm{Skew}(n) = \{X\in\RR^{n\times n}: X^\top = -X\} = \bigwedge^2\RR^n.$ \\ \midrule 
        Spaces of invariants & $\begin{aligned} &\vct V^{\msf G} = \{v\in \vct V: g\cdot v = v \textrm{ for all } g\in \msf G\},\\ &\mc L(\vct V,\vct U)^{\msf G} = \{A\in\mc L(\vct V,\vct U): gA=Ag \textrm{ for all } g\in \msf G\}.\end{aligned}$
        \\\arrayrulecolor{black}\bottomrule
    \end{tabular}
    \caption{Commonly-used groups and vector spaces. Here $\vct V,\vct U$ are finite-dimensional vector spaces.}
    \label{tab:gps_and_cones}
\end{table}
We assume familiarity with the basics of representation theory and convex analysis, and we refer the reader to~\cite{fulton2013representation,serre1977linear} and~\cite{rockafellar1970convex}, respectively, for background. In what follows, we review a few basic notions from these areas and introduce notation used throughout the paper. We list several standard groups and constructions involving vector spaces in Table~\ref{tab:gps_and_cones}. 
\noindent\paragraph{Basics:} 
We \new{use $[n]$ to denote $\{1,\ldots,n\}$}. For $i\leq j$ we denote by $(i,j)$ the transposition permuting letters $i$ and $j$. For real numbers $a<b$, we \new{use $[a,b)$ to denote $\{x\in\RR:a\leq x<b\}$}.
For $i\in[n]$, we denote by $e_i\in\RR^n$ the $i$th standard basis vector with a 1 in the $i$th entry and zero everywhere else, and we write $e_i^{(n)}$ when we wish to emphasize the dimension. 
If $x\in\RR^n$, we denote by $\mathrm{diag}(x)\in\mbb S^n$ the diagonal matrix with $x$ on the diagonal. If $X\in\RR^{n\times n}$, we denote by $\mathrm{diag}(X)\in\RR^n$ the vector of its diagonal elements. 
All vector spaces in this paper are finite-dimensional real vector spaces equipped with an inner product $\langle\cdot,\cdot\rangle$ unless stated otherwise. We emphasize that some of the inner products we use are nonstandard, so the transpose of a matrix and the adjoint of the linear operator it represents may differ. 
%\VC{Where do we use transpose of a matrix that does not correspond to the adjoint of the associated linear operator?} \TODO{in the permutahedra, Schur--Horn, and grahpon examples, at least} \VC{Discussed!}
Given a subspace $\vct W\subseteq \vct V$, we denote by $\mc P_{\vct W}\colon \vct V\to \vct W$ the orthogonal projection onto $\vct W$. 
We denote by $\RR^n_+$ the cone of entrywise nonnegative length-$n$ vectors, and by $\mbb S^n_+$ the cone of PSD $n\times n$ matrices. If $\vct V$ is a vector space, we let $\mathrm{Sym}^2_+(\vct V)\cong \mbb S^{\dim \vct V}_+$ denote the cone of PSD linear maps in $\mc L(\vct V)$. 

\noindent\paragraph{Representation theory:}
A (linear) action of a group $\msf G$ on a finite-dimensional vector space $\vct V$ is given by a group homomorphism $\rho\colon \msf G\to \mathrm{GL}(\vct V)$. Usually $\rho$ is clear from context and we omit it, writing $g\cdot v = \rho(g)v$ for $g\in \msf G$ and $v\in \vct V$ instead. All the groups we consider are compact and all group actions are orthogonal, meaning $\langle g\cdot x,g\cdot y\rangle=\langle v,v'\rangle$ for all $x,y\in \vct V$.
The action of $\msf G$ on $\vct V$ induces an action on $\vct V^{\otimes k}$ and $\mathrm{Sym}^k(\vct V)$ by setting $g\cdot v_1\otimes\cdots\otimes v_k=(gv_1)\otimes\cdots\otimes(gv_k)$ and $g\cdot (v_1\cdots v_k)=(gv_1)\cdots(gv_k)$ and extending by linearity. If $\vct V$ and $\vct U$ are both representations of $\msf G$, we have an action of $\msf G$ on $\vct V\otimes \vct U$ by $g\cdot(v\otimes u)=(gv)\otimes(gu)$ (and extending by linearity) and on $\mc L(\vct V,\vct U)$ by $g\cdot A = gAg^{-1}$, thus making the representations $\vct V\otimes \vct U$ and $\mc L(\vct V,\vct U)$ isomorphic. Linear maps invariant under this preceding group action are also called \emph{equivariant} or \emph{intertwining}, since they are precisely the linear maps commuting with the group elements. 
We denote the group ring of $\msf G$ by $\RR[\msf G] = \mathrm{span}\{e_g\}_{g\in \msf G}$, where $e_g$ is a basis element indexed by the group element $g$. Note that a representation of $\msf G$ is the same as a module over the ring $\RR[\msf G]$. 
    
If $\msf H\subseteq \msf G$ is a subgroup and $\vct V$ is a representation of $\msf H$, the \emph{induced representation} of $\msf G$ from $\vct V$ is $\mathrm{Ind}_{\msf H}^{\msf G}(\vct V)=\RR[\msf G]\otimes_{\RR[\msf H]}\vct V$. We have $\dim\mathrm{Ind}_{\msf H}^{\msf G}(\vct V)=|\msf G/\msf H|\dim \vct V$, and we apply this notion only when $\msf H$ has finite index in $\msf G$. If $g_1=\mathrm{id},g_2,\ldots,g_k$ are coset representatives for $\msf G/\msf H$, we have 
\begin{equation}\tag{Ind}\label{eq:ind_def}
    \mathrm{Ind}_{\msf H}^{\msf G}(\vct V) = \bigoplus_{i=1}^kg_i\vct V,    
\end{equation}
together with the following action of $\msf G$: \new{For each $g\in \msf G$ and $i\in[k]$, there is a unique $j\in[k]$ and $h\in\msf H$ satisfying $gg_i=g_jh$, and we let $g$ act on $g_iv$ for $v\in\vct V$ by $g\cdot g_iv = g_j(h\cdot v)$}. This construction is independent of the choice of coset representatives.

As vector spaces, we have an isomorphism $\mathrm{Ind}_{\msf H}^{\msf G}(\vct V)\cong \vct V^{|\msf G/\msf H|}$. Hence, an $\msf H$-invariant inner product $\langle\cdot,\cdot\rangle$ on $\vct V$ induces a $\msf G$-invariant inner product on $\vct V^{|\msf G/\msf H|}$ by setting $\langle g_iv,g_ju\rangle = \delta_{i,j}\langle v,u\rangle$ for $v,u\in \vct V$ and $i,j\in[|\msf G/\msf H|]$. Here $\delta_{i,j}=1$ if $i=j$ and zero otherwise.
We have an isomorphism $(\mathrm{Ind}_{\msf H}^{\msf G}\vct V)^{\msf G}\cong \vct V^{\msf H}$ sending $v\in \vct V^{\msf H}$ to $\sum_ig_iv\in (\mathrm{Ind}_{\msf H}^{\msf G}\vct V)^{\msf H}$ and $\widetilde v\in (\mathrm{Ind}_{\msf H}^{\msf G}\vct V)^{\msf H}$ to $\mc P_{\vct V}\widetilde v\in \vct V^{\msf H}$.

If $\msf H\subseteq \msf H'$ and $\msf G\subseteq \msf G'$ such that $\msf H'\cap \msf G=\msf H$, then we have the inclusion $\msf G/\msf H\hookrightarrow \msf G'/\msf H'$ sending $g\msf H\mapsto g\msf H'$; this in turn yields an inclusion $\mathrm{Ind}_{\msf H}^{\msf G}\vct V\hookrightarrow \mathrm{Ind}_{\msf H'}^{\msf G'}\vct V$ between induced representations by completing a set of coset representatives for $\msf G/\msf H$ to representatives for $\msf G'/\msf H'$. Here $\vct V$ is assumed to be an $\msf H'$-representation.

If $\vct V,\vct U$ are $\msf H$-representations and $A\in\mc L(\vct V,\vct U)^{\msf H}$, we can extend $A$ to a map $\mathrm{Ind}(A)\colon \mathrm{Ind}_{\msf H}^{\msf G}\vct V\to\mathrm{Ind}_{\msf H}^{\msf G}\vct U$ defined by $\mathrm{Ind}(A)(g_iv)=g_i(Av)$ where $g_i$ is one of the above coset representatives and $v\in \vct V$.
If $\vct V$ is a $\msf G$-representation and $\vct W\subseteq \vct V$ is an $\msf H$-subrepresentation, there is a $\msf G$-equivariant linear map $\mathrm{Ind}_{\msf H}^{\msf G}\vct W\to \vct V$ sending $g\otimes w\mapsto g\cdot w$ whose image is precisely $\RR[\msf G]\vct W = \mathrm{span}\{g\cdot w\}_{g\in \msf G, w\in \vct W}$.

\noindent\paragraph{Convex sets and functions:} The epigraph of a convex function $f\colon\vct V\to\RR\cup\{\infty\}$ is the convex set $\{(x,t)\in\vct V\oplus\RR: f(x)\leq t\}$. If $\cvx C\subseteq \vct V$ is a convex set then its gauge function (also called Minkowski functional) is $\gamma_{\cvx C}(x) = \inf\{t: x\in t\cvx C\}$ and its support function is $h_{\cvx C}(x) = \sup\{\langle y,x\rangle: y\in\cvx C\}$. 

Our compatibility conditions for convex sets in Definition~\ref{def:compatibility_conds} imply compatibility for convex functions derived from them using the above correspondences, and vice versa. Indeed, it is easy to check that a sequence of convex functions is intersection (resp., projection) compatible if and only if the sequence of their epigraphs is intersection (projection) compatible. Similarly, if a sequence of convex sets is intersection (resp., projection) compatible, then the sequence of their gauge functions is intersection (resp., projection) compatible. 
The correspondences between compatibility conditions between sets and their support functions is a bit subtler. If a sequence of sets is projection-compatible, then the sequence of their support functions is intersection-compatible. If a sequence of \emph{compact} sets is intersection-compatible, then their support functions are projection-compatible.

\section{Background on Representation Stability}\label{sec:backgrnd_rep_stab}
We review some fundamental definitions and results from the representation stability literature, which studies consistent sequences $\{\vct V_n\}$ of $\{\msf{G}_n\}$-representations as in Definition~\ref{def:consistent_seqs}.
We further require a notion of maps between consistent sequences, which enables us to define embeddings, quotients, and isomorphisms of consistent sequences. 
\begin{definition}[Morphisms of sequences]\label{def:morphisms_of_seqs}
    If $\mscr V=\{(\vct V_n,\varphi_n)\}$ and $\mscr U = \{(\vct U_n,\psi_n)\}$ are two consistent sequences of $\{\msf{G}_n\}$-representations, then a \emph{morphism of consistent sequences} $\mscr A\colon \mscr V\to\mscr U$ is a collection of linear maps $\mscr A=\{A_n\colon \vct V_n\to \vct U_n\}$ such that the following hold for each $n$:
    \begin{enumerate}[(a)]
        \item $A_n$ is $\msf{G}_n$-equivariant;
        \item $A_{n+1}\varphi_n = \psi_n A_n$.
    \end{enumerate}
\end{definition}
If $\varphi_n$ and $\psi_n$ are inclusions, condition (b) above becomes $A_{n+1}|_{\vct V_n}=A_n$. Note that morphisms are freely-described elements (as in Definition~\ref{def:freely_descr_elt}) of $\mscr V\otimes\mscr U$, but the converse may not hold.
Morphisms of sequences have appeared in the representation stability literature as the natural notion of maps between sequences, see~\cite[Def.~2.1.1]{FImods} and~\cite[\S3.2]{WILSON2014269}. 
%They also arise when imposing compatibility on convex sets in different dimensions (Theorem~\ref{thm:compatible_descriptions}), and have a natural interpretation in terms of limits (Section~\ref{sec:backgrnd_lims_rep_stab}).

%Note also that the collection of morphisms between two sequences $\mscr V\to\mscr U$ forms a linear space, because if $\{A_n\}$ and $\{A_n'\}$ are morphisms then so is $\{\alpha A_n + \beta A_n'\}$ for any $\alpha,\beta\in \RR$.
%\VC{The sentence below and the subsequent remark feel abrupt.  Is this the best place for them?}\TODO{I agree but after several attempts to move them I couldn't find a better place. This feels too technical for the intro, so moving it to after Definition~\ref{def:consistent_seqs} feels inappropriate. We could move it to the first place an indexing by a poset makes sense, which is Sec.~\ref{sec:perm_mods} on permutehedra and Schur--Horn, but that'd burry it. Thoughts?} \VC{Discussed -- please move to permutahedron section!}

\subsection{Generation Degree}\label{sec:gen_deg}
To relate invariants and equivariants across dimensions, we need canonical isomorphisms between spaces of invariants in a consistent sequence. Proposition~\ref{prop:gen_imp_surj} below shows that the projections $\mc P_{\vct V_n}$ are such isomorphisms, using the following parameter introduced in~\cite{FImods} to control the complexity of a consistent sequence.
\begin{definition}[Generation degree]\label{def:gen_deg}
    A consistent sequence $\mscr V=\{\vct V_n\}$ of $\{\msf{G}_n\}$-representations is \emph{generated in degree $d$} if $\RR[\msf{G}_n]\vct V_d = \vct V_n$ for all $n\geq d$. The smallest $d$ for which this holds is called the \emph{generation degree} of the sequence. A subset $\mc S\subseteq \vct V_d$ is called a \emph{set of generators} for $\mscr V$ if $\RR[\msf{G}_n]\mc S=\vct V_n$ for all $n\geq d$. 
    A sequence is \emph{finitely-generated} if it is generated in degree $d$ for some $d<\infty$.
\end{definition}
Note that $\RR[\msf{G}_n]\mc S = \mathrm{span}\{gx\}_{g\in \msf{G}_n,x\in\mc S}$, so that $\mscr V$ is generated in degree $d$ if the span of the $\msf{G}_n$-orbit of $\vct V_d$, when embedded in $\vct V_n$, is all of $\vct V_n$ for any $n\geq d$.
Note also that if $\mscr V$ is generated in degree $d$ then $\vct V_d$ is a set of generators for $\mscr V$.
%Finite generation gives us canonical isomorphisms between invariants in different dimensions, used in Definition~\ref{def:freely_descr_elt} to formalize freely-described elements appearing in free descriptions of convex sets.
\begin{proposition}\label{prop:gen_imp_surj}
    Suppose $\mscr V=\{(\vct V_n,\varphi_n)\}$ is a consistent sequence of $\{\msf{G}_n\}$-representations generated in degree $d$. Then the restrictions of the projections $\varphi_n^*|_{\vct V_{n+1}^{\msf G_{n+1}}}\colon \vct V_{n+1}^{\msf{G}_{n+1}}\to \vct V_n^{\msf{G}_n}$ to spaces of invariants are injective for all $n\geq d$, and are therefore isomorphisms for all large enough $n$.
\end{proposition}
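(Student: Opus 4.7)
The plan is to establish injectivity by a direct orthogonality argument that exploits the group-invariance of the relevant vectors together with the generation hypothesis, and then deduce the isomorphism claim by a dimension-stabilization argument.

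First I would verify that $\varphi_n^*=\mc P_{V_n}$ actually maps $V_{n+1}^{G_{n+1}}$ into $V_n^{G_n}$: since the embedding $\varphi_n$ is $G_n$-equivariant, so is its adjoint $\varphi_n^*$, and hence projecting a $G_{n+1}$-invariant (in particular $G_n$-invariant) vector gives a $G_n$-invariant vector. So the map is well-defined.

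For injectivity when $n\geq d$, suppose $v\in V_{n+1}^{G_{n+1}}$ satisfies $\mc P_{V_n}v=0$, equivalently $v\perp V_n$ in $V_{n+1}$. The key step is to promote this orthogonality from $V_n$ to the whole orbit $G_{n+1}\cdot V_n$. For any $g\in G_{n+1}$ and $w\in V_n$, using orthogonality of the action and $G_{n+1}$-invariance of $v$,
\[
\langle v, g\cdot w\rangle \;=\; \langle g^{-1}\cdot v, w\rangle \;=\; \langle v, w\rangle \;=\; 0.
\]
Hence $v\perp \RR[G_{n+1}]V_n$. Now invoke the generation hypothesis: since $n\geq d$, we have $V_d\subseteq V_n$ (identifying via the $\varphi$'s), so
\[
\RR[G_{n+1}]V_n \;\supseteq\; \RR[G_{n+1}]V_d \;=\; V_{n+1},
\]
the last equality holding because $\mscr V$ is generated in degree $d$ and $n+1\geq d$. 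Therefore $v\perp V_{n+1}$, forcing $v=0$, which proves injectivity of $\varphi_n^*$ for all $n\geq d$.

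For the ``large enough $n$'' part, observe that injectivity implies the sequence of nonnegative integers $\{\dim V_n^{G_n}\}_{n\geq d}$ is nonincreasing, so it must eventually stabilize at some value, say for all $n\geq N\geq d$. Beyond that threshold, $\varphi_n^*$ is an injective linear map between finite-dimensional spaces of equal dimension, hence an isomorphism.

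The only subtle point is the promotion step $v\perp V_n \Rightarrow v\perp \RR[G_{n+1}]V_n$, which is really the crux: it is where both the $G_{n+1}$-invariance of $v$ and the orthogonality of the action are used simultaneously. The rest is essentially bookkeeping with the definitions of consistent sequence and generation degree, plus the elementary fact that a monotone bounded integer sequence stabilizes.
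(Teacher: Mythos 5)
Your proof is correct and follows essentially the same route as the paper: both establish injectivity by pairing a $G_{n+1}$-invariant vector in $\ker\varphi_n^*$ against an arbitrary element of $V_{n+1}$ written via generators in $V_n$ (equivalently, promoting $v\perp V_n$ to $v\perp \RR[G_{n+1}]V_n = V_{n+1}$ using invariance of $v$ and orthogonality of the action), and then both deduce the isomorphism claim from stabilization of the nonincreasing dimension sequence.
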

\begin{proof}
    First, the map $\varphi_n^*$ is $\msf{G}_n$-equivariant because $\msf{G}_n$ acts orthogonally and $\msf{G}_n\subseteq \msf{G}_{n+1}$, so it maps $\msf{G}_{n+1}$-invariants in $\vct V_{n+1}$ to $\msf{G}_n$-invariants in $\vct V_n$. Second, suppose $\varphi_n^*(v)=0$ for some $v\in \vct V_{n+1}^{\msf{G}_{n+1}}$ with $n \geq d$. For any $u\in \vct V_{n+1}$, write $u = \sum_ig_i \varphi_n(u_i)$ where $u_i\in \vct V_n$ and $g_i\in \msf{G}_{n+1}$. Because $v$ is $\msf{G}_{n+1}$-invariant, we have $\langle v, u\rangle = \langle \varphi_n^*(v), \sum_iu_i\rangle = 0$. As $u\in \vct V_{n+1}$ was arbitrary, we conclude that $v=0$.  Thus, $\varphi_n^*$ maps $\vct V_{n+1}^{\msf{G}_{n+1}}$ injectively into $\vct V_n^{\msf G_n}$, so that $\dim \vct V_n^{\msf{G}_n}\geq \dim \vct V_{n+1}^{\msf{G}_{n+1}}$, for all $n\geq d$. Therefore, the sequence of dimensions $\dim \vct V_n^{\msf{G}_n}$ eventually stabilizes, at which point $\varphi_n^*$ becomes an isomorphism.
\end{proof}
Note that $\varphi_n^*=\mc P_{\vct V_n}$ is precisely the orthogonal projection onto $\vct V_n$.
Proposition~\ref{prop:gen_imp_surj} is stated in the representation stability literature in terms of the adjoints of the projections, viewed as maps between \emph{co}invariants, see~\cite[\S3]{FImods} for example.

\subsection{Presentation Degree}\label{sec:pres_deg}
While boundedness of the generation degree of a consistent sequence ensures that the projections eventually become isomorphisms, providing a precise quantification of this phenomenon requires a more sophisticated concept, namely the \emph{presentation degree}.  We describe this concept after giving some preliminary definitions.  Our presentation here is brief, and we refer the reader to Appendix~\ref{apdx:pres_deg} for a more detailed derivation of these notions motivated by our computational considerations.

%More precisely, the projections become isomorphisms starting from the \emph{presentation} degree of the consistent sequence, which we proceed to define after some preliminary definitions. These definitions are more technical, and we refer the reader to Appendix~\ref{apdx:pres_deg} for a derivation of these notions motivated by our computational considerations.

\begin{definition}[Centralizing subgroups]\label{def:stab_subgps}
Let $\mscr V = \{\vct V_n\}$ be a consistent sequence of $\{\msf{G}_n\}$-representations. For any $d\leq n$, define the \emph{centralizing subgroups} of $\vct V_d$ by
\begin{equation*}
    \msf{H}_{n,d} = \{g\in \msf{G}_n: g\cdot v = v \textrm{ for all } v\in \vct V_d\}.
\end{equation*}
\end{definition}
%\VC{I suggested previously the use of $\msf G$ instead of $G$ for a group.  I would suggest the same for the centralizing subgroups, i.e., to use $\msf \msf{H}_{n,d}$ instead of $\msf{H}_{n,d}$}

Note that the subgroup generated by $\msf{G}_d$ and $\msf{H}_{n,d}$ inside $\msf{G}_n$ is the set of products $\msf{G}_d\msf{H}_{n,d}=\{gh: g\in \msf{G}_d,\ h\in \msf{H}_{n,d}\}$ since $ghg^{-1}\in \msf{H}_{n,d}$ for all $g\in \msf{G}_d$ and $h\in \msf{H}_{n,d}$. 
\begin{definition}[$\mscr V$-modules]\label{def:V_mods}
    Let $\mscr V=\{\vct V_n\}$ and $\mscr U=\{\vct U_n\}$ be consistent sequences of $\{\msf{G}_n\}$-representations, and let $\{\msf{H}_{n,d}\}_{d\leq n}$ be the centralizing subgroups of $\mscr V$ as in Definition~\ref{def:stab_subgps}.
    We say that $\mscr U$ is a \emph{$\mscr V$-module} if $\vct U_d\subseteq \vct U_n^{\msf{H}_{n,d}}$ for all $d\leq n$.
\end{definition}

%\VC{Is it possible to give brief conceptual justification here for why you need modules, i.e., why just working with just the `base' object $\mscr V$ is not enough?} \TODO{None that would fit here, as far as I can tell... All the reasons I can think of are too complicated to describe here. Hopefully the derivation in Appendix~\ref{apdx:pres_deg} gives one source of intuition for this.} \VC{Done!}

\begin{definition}[Induction and algebraically free\footnote{Freeness here is meant in the algebraic sense of being generated by generators with no nontrivial relations between them (see Appendix~\ref{apdx:pres_deg}), in contrast to Definition~\ref{def:free_descriptions} where it is meant in the sense of dimension-free descriptions.} sequences]\label{def:free_mods}
Let $\mscr V$ be a consistent sequence of $\{\msf{G}_n\}$-representations, and for $d\leq n$ let $\msf{H}_{n,d}\subseteq \msf{G}_n$ be its centralizing subgroups. 
\begin{enumerate}[(a)]
    \item Fix $d\in\NN$ and a $\msf{G}_d$-representation $\vct W$ on which $\msf{H}_{d,d}$ acts trivially. For each $n\geq d$, we view $\vct W$ as a $\msf{G}_d\msf{H}_{n,d}$-representation on which $\msf{H}_{n,d}$ acts trivially.  The associated \emph{$\mscr V$-induction sequence} is defined as:
    \begin{equation*}
        \mathrm{Ind}_{\msf{G}_d}(\vct W) = \left\{\mathrm{Ind}_{\msf{G}_d\msf{H}_{n,d}}^{\msf{G}_n}\vct W\right\}_n,
    \end{equation*}
    where the induced representation is taken to be 0 when $n<d$. This is a $\mscr V$-module.

    \item A consistent sequence $\mscr F$ is an \emph{algebraically free $\mscr V$-module} if it is a direct sum of $\mscr V$-induction sequences. The sequence $\mscr V$ itself is \emph{algebraically free} if it is an algebraically free $\mscr V$-module.
\end{enumerate}
\end{definition}
\begin{definition}[Relation and presentation degrees]\label{def:relations}
Let $\mscr V$ be a consistent sequence of $\{\msf{G}_n\}$-representations.
    We say that a $\mscr V$-module $\mscr U$ is \emph{generated in degree $d$, related in degree $r$, and presented in degree $k=\max\{d,r\}$} if there exists an algebraically free $\mscr V$-module $\mscr F$ generated in degree $d$, and a surjective morphism of sequences $\mscr F\to \mscr U$ whose kernel is generated in degree $r$. The smallest $k$ for which this holds is called the \emph{presentation degree} of $\mscr U$.
\end{definition}
Note that the presentation degree is at least as large as the generation degree (cf.\ Definition~\ref{def:gen_deg}). 

\begin{example}\label{ex:basic_consist_seq_degs}
Let $\vct V_n=\RR^n$ with embeddings by zero-padding as in Example~\ref{ex:basic_consist_seq}. Recall that this is a consistent sequence for each of the sequences of groups $\msf{G}_n = \msf O_n,\msf B_n,\msf D_n,\msf S_n$ acting by their standard $n\times n$ matrix representations. Here $\msf{H}_{n,d}$ is the subgroup of $n\times n$ orthogonal or signed permutation matrices fixing the first $d$ coordinates.

This sequence is generated in degree 1 for all the groups listed above. Indeed, any of the canonical basis vectors $e_i$ are obtained from the first one $e_1$ via the action of $\msf S_n$.

If $\msf{G}_n=\msf B_n$ or $\msf S_n$ then this sequence is algebraically free and hence presented in degree 1 as well, while if $\msf{G}_n=\msf D_n$ then it is not free but presented in degree 2. Indeed, we have $|\msf D_n/\msf D_1\msf{H}_{n,1}|=2n$ when $n\geq 2$ with coset representatives $(1,i)s^p$ for $p\in\{0,1\}$, $i\in[n]$ where $s=\diag(-1,-1,1\ldots,1)$. Hence~\eqref{eq:ind_def} yields $\mathrm{Ind}_{\msf D_1\msf{H}_{n,1}}^{\msf D_n}\RR = \RR^n\oplus\RR^n$ on which $\sigma\in \msf S_n\subseteq\msf D_n$ acts by $\sigma(x,y)=(\sigma x,\sigma y)$ and $s(x,y)=([y_1,y_2,x_3,\ldots,x_n]^\top,[x_1,x_2,y_3,\ldots,y_n]^\top)$. We have equivariant linear maps $\mathrm{Ind}_{\msf D_1\msf{H}_{n,1}}^{\msf D_n}\RR\to\RR^n$ sending $(x,y)\mapsto x-y$ giving a morphism of sequences $\mathrm{Ind}_{\msf D_1}\RR\to \{\RR^n\}$ with kernel generated in degree 2.
\end{example}
The presentation degree enables us to strengthen Proposition~\ref{prop:gen_imp_surj} and to quantify more precisely when the projections there become isomorphisms.
\begin{proposition}\label{prop:presentation_implies_isom_of_canon}
    Let $\mscr V$ be a consistent sequence of $\{\msf{G}_n\}$-representations and $\mscr U$ be a $\mscr V$-module presented in degree $k$. Then the maps $\mc P_{\vct U_n}\colon \vct U_{n+1}^{\msf{G}_{n+1}}\to \vct U_n^{\msf{G}_n}$ are isomorphisms for all $n\geq k$.
\end{proposition}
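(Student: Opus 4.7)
The plan is to split the required conclusion into injectivity and surjectivity. Injectivity of $\mc P_{U_n}$ on $G_{n+1}$-invariants for $n \geq k$ is already supplied by Proposition~\ref{prop:gen_imp_surj} applied to $\mscr U$, since the presentation degree dominates the generation degree. It remains to prove surjectivity for $n \geq k$, which I would reduce to an algebraically free base case through a snake-lemma argument on $G_n$-invariants.

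The main obstacle is that the projections $\mc P_{V_n} = \varphi_n^*$ do not commute with a morphism $\{A_n\}$ in general: the morphism condition $A_{n+1}\varphi_n = \psi_n A_n$ only involves embeddings, and typically $\psi_n^* A_{n+1} \neq A_n \varphi_n^*$. To sidestep this I would pass to the adjoint ``averaging-up'' map $\bar\varphi_n \colon V_n^{G_n} \to V_{n+1}^{G_{n+1}}$ defined by $\bar\varphi_n(v) = \mathrm{av}_{G_{n+1}}(\varphi_n(v))$, where $\mathrm{av}_{G_{n+1}}$ denotes orthogonal projection onto $V_{n+1}^{G_{n+1}}$ realized by averaging over the compact group $G_{n+1}$. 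A direct calculation using $G_{n+1}$-invariance of the inner product shows $\bar\varphi_n$ is adjoint to $\mc P_{V_n}|_{V_{n+1}^{G_{n+1}}}$, so by finite-dimensionality the two are simultaneously isomorphisms. In contrast to $\mc P$, the map $\bar\varphi_n$ does commute with morphisms ($\bar\psi_n A_n = A_{n+1} \bar\varphi_n$), because averaging commutes with equivariant maps.

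For the base case I would handle a single induction $\mscr F = \mathrm{Ind}_{G_d}(W)$ (the general algebraically free case follows by taking direct sums). Fixing coset representatives $\{g_i\}_{i \in I_n}$ for $G_n / G_dH_{n,d}$ with $I_n \subseteq I_{n+1}$, Frobenius reciprocity identifies $F_n^{G_n}$ with $W^{G_dH_{n,d}} = W^{G_d}$ via $w \mapsto \sum_{i \in I_n} g_iw$. The inner product on the induced representation makes the summands $g_iW$ pairwise orthogonal and the inclusion $F_n \hookrightarrow F_{n+1}$ isometric, so the projection just restricts the sum, $\mc P_{F_n}\bigl(\sum_{i \in I_{n+1}} g_iw\bigr) = \sum_{i \in I_n} g_iw$. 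Under the identifications with $W^{G_d}$ this is the identity, so $\mc P_{F_n}|_{\mathrm{inv}}$, and hence also $\bar\varphi_n^F$, is an isomorphism whenever $\mscr F$ is an algebraically free $\mscr V$-module generated in degree at most $n$.

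For the general case, by Definition~\ref{def:relations} and Proposition~\ref{prop:ind_surj_vs_fg} there is a presentation $0 \to \mscr N \to \mscr F \to \mscr U \to 0$ with $\mscr F$ algebraically free generated in degree $d \leq k$ and $\mscr N$ generated in degree $r \leq k$. Since $G_n$ is compact, the $G_n$-invariants functor is exact, and applying it yields a commutative ladder of short exact sequences linked by the upward maps $\bar\psi_n^N, \bar\psi_n^F, \bar\psi_n^U$. For $n \geq k$ the base case gives that $\bar\psi_n^F$ is an isomorphism, and Proposition~\ref{prop:gen_imp_surj} applied to $\mscr N$ gives injectivity of $\mc P_{N_n}|_{\mathrm{inv}}$, equivalently surjectivity of $\bar\psi_n^N$. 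A snake-lemma chase then forces $\ker \bar\psi_n^U = \mathrm{coker}\,\bar\psi_n^U = 0$, so $\bar\psi_n^U$ is an isomorphism, and taking adjoints gives the desired isomorphism $\mc P_{U_n}\colon U_{n+1}^{G_{n+1}} \to U_n^{G_n}$.
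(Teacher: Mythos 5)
Your argument is correct and follows essentially the same route as the paper: both reduce to an algebraically free presentation $0\to\mscr N\to\mscr F\to\mscr U\to 0$ with $\mscr F$ and $\mscr N$ generated in degree at most $k$, use exactness of taking $G_n$-invariants, identify $F_n^{G_n}$ with $\bigoplus_j W_{d_j}^{G_{d_j}}$ via the coset decomposition of the induced representations, and invoke Proposition~\ref{prop:gen_imp_surj} for the kernel (and for $\mscr U$). The only difference is the endgame: the paper just observes that $\dim F_n^{G_n}$ is constant while $\dim N_n^{G_n}$ and $\dim U_n^{G_n}$ are nonincreasing and applies rank--nullity, whereas you run a (correct) diagram chase on the ladder of invariants using the adjoint averaging-up maps, whose compatibility with morphisms and identity-like behavior on induction sequences you rightly verify -- a slightly heavier but equivalent way of extracting the same conclusion.
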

\begin{proof}
    As $\mscr U$ is presented in degree $k$, there exists an algebraically free $\mscr V$-module $\mscr F=\{\vct F_n\}$ and a surjective morphism $\mscr F\to \mscr U$ such that both its kernel $\mscr K=\{\vct K_n\}$ and $\mscr F$ itself are generated in degree $k$. 
    Because each map $\vct F_n\to \vct U_n$ is a $\msf{G}_n$-equivariant surjection with kernel $\vct K_n$, its restriction to invariants $\vct F_n^{\msf{G}_n}\to \vct U_n^{\msf{G}_n}$ is surjective with kernel $\vct K_n^{\msf{G}_n}$. 

    As $\mscr F$ is an algebraically free $\mscr V$-module, there exist integers $d_j$ and $\msf{G}_{d_j}$-representations $\vct W_{d_j}$ satisfying $\mscr F = \bigoplus_j\mathrm{Ind}_{\msf{G}_{d_j}}\vct W_{d_j}$. Such $\mscr F$ has generation degree $\max_jd_j\leq k$. Therefore, letting $\{\msf{H}_{n,d}\}$ be the centralizing subgroups of $\mscr V$, we have for $n\geq k$ (see Section~\ref{sec:intro_notation})
    \begin{equation*}
        \vct F_n^{\msf{G}_n} = \bigoplus\nolimits_j\Big(\mathrm{Ind}_{\msf{G}_{d_j}\msf{H}_{n,d_j}}^{\msf{G}_n}(\vct W_{d_j})\Big)^{\msf{G}_n} \cong \bigoplus\nolimits_j\vct W_{d_j}^{\msf{G}_{d_j}},    
    \end{equation*}
    Thus, $\dim \vct F_n^{\msf{G}_n}$ is constant for $n\geq k$.
    Moreover, by Proposition~\ref{prop:gen_imp_surj} and the fact that $\mscr K$ and $\mscr U$ are generated in degree $k$, we have $\dim \vct K_n^{\msf{G}_n} \geq \dim \vct K_{n+1}^{\msf{G}_{n+1}}$ and similarly $\dim \vct U_n^{\msf{G}_n}\geq \dim \vct U_{n+1}^{\msf{G}_{n+1}}$ for all $n\geq k$. 
    
    By the rank-nullity theorem, we have $\dim \vct U_n^{\msf{G}_n} = \dim \vct F_n^{\msf{G}_n} - \dim \vct K_n^{\msf{G}_n}$. As $\dim \vct F_n^{\msf{G}_n}$ is constant while both $\dim \vct U_n^{\msf{G}_n}$ and $\dim \vct K_n^{\msf{G}_n}$ are nonincreasing for $n\geq k$, we conclude that they are all constant for $n\geq k$. 
    To conclude, we note that $\mc P_{\vct U_n}$ is injective when restricted to $\vct U_{n+1}^{\msf{G}_{n+1}}$ for all $n\geq k$ by Proposition~\ref{prop:gen_imp_surj}.
\end{proof}

\begin{remark}[$\mscr V$-modules vs.\ centralizing subgroups]\label{rmk:V-mods_vs_subgps}
The definition of presentation degree assumes a ``base'' consistent sequence $\mscr V$.
Note however that it depends only on the centralizing subgroups $\{\msf{H}_{n,d}\}$ of $\mscr V$.
In fact, any sequence of subgroups $\{\msf{H}_{n,d}\subseteq \msf{G}_n\}_{d\leq n}$ satisfying $\msf{H}_{n+1,d}\supseteq \msf{H}_{n,d}$, $\msf{H}_{n,d+1}\subseteq \msf{H}_{n,d}$, and $\msf{H}_{n+1,d}\cap \msf{G}_n=\msf{H}_{n,d}$ for $d\leq n$ arise as centralizing subgroups of such a consistent sequence. 

The centralizing subgroups play a central role because they determine embeddings $\{g\varphi_{n-1}\cdots\varphi_d\}_{g\in \msf{G}_n}\allowbreak \cong \msf{G}_n/\msf{H}_{n,d}$ of $\vct V_d$ into $\vct V_n$, and the combinatorics of these embeddings yields Theorem~\ref{thm:calc_for_pres_degs_FIW}. See the proof of~\cite[Prop.~2.3.6]{FImods} and Appendix~\ref{sec:cat_reps} for example.
We formulate our results in terms of $\mscr V$-modules rather than the subgroups $\{\msf{H}_{n,d}\}$ directly because the sequences we use are often constructed from the same base sequence as in Section~\ref{sec:consist_seq_construct} below, easing the application of our results. 
%\VC{Could some of this language be given earlier, right after the definition of $\mscr V$-modules?  Moreover, could this entire remark be stated right after the example.  It seems that the one of those groups would serve as an example of what you're trying to say here, correct?} \TODO{This remark is purely cultural and I do not use its contents anywhere, hence I opted to put it at the end for the experts wondering why I chose the terminology that I did. I also don't think any of the examples above really clarifies this point, the importance of these embeddings to proofs in past papers can only be appreciated after reading them. I'd vote to keep this remark here.} \VC{O agree -- this is fine here!}
\end{remark}

\subsection{Constructions of Consistent Sequences}\label{sec:consist_seq_construct}
Expressive families of freely described convex sets require complex description spaces, and in turn complex consistent sequences.  In this section, we describe common operations that yield complicated consistent sequences from simpler ones, along with a calculus for bounding the generation and presentation degrees of the resulting sequences.
%To handle more complex description spaces, and thereby obtain more expressive families of sets, it is useful to form more complex consistent sequences from simpler ones. The following remark addresses these aims.

%\begin{remark}[Sums, tensors, polynomials]\label{rmk:ops_with_seqs}

Fix a family of groups $\mscr G=\{\msf{G}_n\}_{n\in\NN}$ such that $\msf{G}_n\subseteq \msf{G}_{n+1}$. Suppose $\mscr V = \{(\vct V_n,\varphi_n)\}$ and $\mscr U = \{(\vct U_n,\psi_n)\}$ are consistent sequences of $\mscr G$-representations. Then the following are also consistent sequences of $\mscr G$-representations:
\begin{enumerate}[align=left, font=\emph]
    \item[(Sums)] The \emph{direct sum} of $\mscr V$ and $\mscr U$ is $\mscr V\oplus \mscr U =\{(\vct V_n\oplus \vct U_n,\varphi_n\oplus\psi_n)\}$. 

    If $\vct W$ is a fixed vector space, viewed as a trivial $\msf{G}_n$-representation for all $n$, denote $\mscr V\oplus \vct W = \{(\vct V_n\oplus \vct W,\varphi_n\oplus\mathrm{id}_{\vct W})\}$.
        
    \item[(Tensors)] The \emph{tensor product} of $\mscr V$ and $\mscr U$ is $\mscr V\otimes \mscr U = \{(\vct V_n\otimes \vct U_n,\varphi_n\otimes\psi_n)\}$. 

    This is also the sequence of spaces of \emph{linear maps} $\mc L(\vct V_n,\vct U_n)\cong \vct V_n\otimes \vct U_n$, where we embed $A_n\colon \vct V_n\to \vct U_n$ to $(\varphi_n\otimes\psi_n)A_n=\psi_nA_n\varphi_n^*\colon \vct V_{n+1}\to \vct U_{n+1}$.

    The \emph{order-$k$ tensors over $\mscr V$} is $\mscr V^{\otimes k}$.

    If $\vct W$ is a fixed vector space, viewed as a trivial $\msf{G}_n$-representation for all $n$, denote $\mscr V\otimes \vct W = \{(\vct V_n\otimes \vct W,\varphi_n\otimes\mathrm{id}_{\vct W})\}$. \new{Since $\vct V_n\otimes\vct W\cong \vct V_n^{\oplus\dim\vct W}$ as $\msf G_n$-representations, we can identify $\mscr V\otimes\vct W$ with $\mscr V^{\oplus \dim\vct W}$.}
    
    \item[(Polynomials)] The \emph{degree-$k$ polynomials over $\mscr V$} is $\mathrm{Sym}^k\mscr V=\{(\mathrm{Sym}^k\vct V_n,\varphi_n^{\otimes k})\}$, which is also the sequence of order-$k$ symmetric tensors over $\mscr V$. Here we view $\mathrm{Sym}^k\vct V_n\subseteq \vct V^{\otimes k}$ and restrict $\varphi_n^{\otimes k}$ to that subspace.
    The sequence of polynomials of degree at most $k$ is denoted $\mathrm{Sym}^{\leq k}\mscr V = \bigoplus_{j=1}^k\mathrm{Sym}^j\mscr V$.

    Similarly, we can form the sequence of $k$th exterior powers $\bigwedge^k\mscr V$.
    
    \item[(Moments)] The sequence of \emph{moment matrices of order $k$ over $\mscr V$} is $\mathrm{Sym}^2(\mathrm{Sym}^{\leq k}\mscr V)$. 
    Its elements can be viewed as symmetric matrices whose rows and columns are indexed by monomials of degree at most $k$ in basis elements for $\mscr V$. 

    \item[(Images and Kernels)] If $\mscr A=\{A_n\in\mc L(\vct V_n,\vct U_n)^{\msf{G}_n}\}$ is a morphism mapping $\mscr V\to \mscr U$, then the images $\mathrm{Im}\mscr A = \{(A_n(\vct V_n),\psi_n)\}$ and kernels $\mathrm{ker}\mscr A = \{(\ker A_n,\varphi_n)\}$ form consistent sequences.
\end{enumerate}
If $\mscr V,\mscr U$ are $\mscr V_0$-modules for some common consistent sequence $\mscr V_0$, then all the above are $\mscr V_0$-modules as well.
%\end{remark}

%\VC{The remark environment felt a bit weird, especially since the content of the remark felt like the point of the subsection rather than a `remark'.  Plus, it was very long.  So I removed the environment.  When this remark is referenced later, I would suggest just changing the reference to this subsection rather than the remark.}

The group actions above are given in Section~\ref{sec:intro_notation}.
The following theorem gives a calculus that allows us to bound the presentation degrees of sequences constructed from certain simpler ones with known presentation degrees.
The following theorem is a consequence of results in~\cite{FImods,WILSON2014269,GADISH2017450} concerning calculus for generation degrees. We combine these results to obtain the following analogous calculus for presentation degrees, whose proof is given in Appendix~\ref{sec:cat_reps}. 
\begin{theorem}[Calculus for generation and presentation degrees]\label{thm:calc_for_pres_degs_FIW}
    Let $\mscr V$ be a consistent sequence of $\{\msf{G}_n\}$-representations and let $\mscr W,\mscr U$ be $\mscr V$-modules generated in degrees $d_W,d_U$ and presented in degrees $k_W,k_U$, respectively. Then
    \begin{enumerate}[align=left, font=\emph]
        \item[(Sums)] $\mscr W\oplus \mscr U$ is generated in degree $\max\{d_W,d_U\}$ and presented in degree $\max\{k_W,k_U\}$. 
        \item[(Images and kernels)] \new{If $\mscr A\colon\mscr W\to\mscr U$ is a morphism consisting of linear maps $\mscr A=\{A_n\in\mc L(\vct W_n,\vct U_n)^{\msf G_n}\}$, and if the sequence of their adjoints $\mscr A^*=\{A_n^*\in\mc L(\vct U_n,\vct W_n)^{\msf G_n}\}$ is a morphism $\mscr U\to\mscr W$}, then $\mathrm{im}\mscr A$ and $\ker\mscr A$ are generated in degree $d_W$ and presented in degree $k_W$.
    \end{enumerate}
    If $\mscr V=\{\RR^n\}$ with $\msf{G}_n=\msf B_n,\msf D_n,$ or $\msf S_n$ as in Example~\ref{ex:basic_consist_seq}, we further have
    \begin{enumerate}[align=left, font=\emph]
        \item[(Tensors)] $\mscr W\otimes\mscr U$ is generated in degree $d_W+d_U$ and presented in degree $\max\{k_W+d_U, k_U + d_W\}$.
        \item[(Sym and $\bigwedge$)] $\mathrm{Sym}^{\ell}\mscr W,\ \bigwedge^{\ell}\mscr W$ are generated in degree $\ell d_W$ and presented in degree $(\ell-1)d_W+k_W$.
    \end{enumerate}
\end{theorem}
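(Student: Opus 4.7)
The plan is to establish each of the four parts of the calculus in turn, leveraging the corresponding results for generation degrees already in the representation stability literature and carefully translating between generation and presentation degrees.

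For the (Sums) case, the argument is constructive. I would choose algebraically free $\mscr V$-modules $\mscr F_W,\mscr F_U$ generated in degrees $d_W,d_U$ together with surjective morphisms $\mscr F_W\twoheadrightarrow \mscr W$ and $\mscr F_U\twoheadrightarrow \mscr U$ whose kernels $\mscr K_W,\mscr K_U$ are generated in degrees $k_W,k_U$. Their direct sum $\mscr F_W\oplus\mscr F_U$ is again algebraically free (a direct sum of $\mscr V$-induction sequences) and generated in degree $\max\{d_W,d_U\}$, and the induced map $\mscr F_W\oplus \mscr F_U\twoheadrightarrow \mscr W\oplus\mscr U$ is surjective with kernel $\mscr K_W\oplus\mscr K_U$, which is generated in degree $\max\{k_W,k_U\}$ by the generation-degree version of the same claim.

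For (Images and kernels), the hypothesis that both $\mscr A$ and $\mscr A^*$ are morphisms yields at each level the orthogonal decomposition $W_n=\ker A_n\oplus \im A_n^*$, and both summands are preserved under the embeddings: the morphism condition on $\mscr A$ sends $\ker A_n$ into $\ker A_{n+1}$, while the dual condition $\varphi_n A_n^*=A_{n+1}^*\psi_n$ sends $\im A_n^*$ into $\im A_{n+1}^*$. Consequently $\mscr W\cong \ker \mscr A\oplus \im \mscr A^*$ as consistent $\mscr V$-modules, and $\mscr A$ restricts to an isomorphism between $\im\mscr A^*$ and $\im\mscr A$. A direct summand of a consistent sequence generated in degree $d_W$ is itself generated in that degree, since orthogonal projection of a generating set onto a summand generates the summand. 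For the presentation degree bound, I would pull back a presentation $\mscr F\twoheadrightarrow \mscr W$ with kernel generated in degree $k_W$ through the splitting, obtaining presentations of $\ker\mscr A$ and $\im\mscr A\cong\im\mscr A^*$ whose kernels are direct summands (or preimages through the splitting) of the original kernel and hence also generated in degree $k_W$ by the summand argument.

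For the last two items, (Tensors) and (Sym and $\bigwedge$), the generation-degree bounds for $\mscr W\otimes\mscr U$, $\mathrm{Sym}^\ell\mscr W$, and $\bigwedge^\ell\mscr W$ over the classical Weyl groups are precisely the content of the $\mathrm{FI}$-, $\mathrm{FI}_{BC}$-, and $\mathrm{FI}_D$-module calculus developed in \cite{CHURCH2013250,FImods,WILSON2014269,GADISH2017450}. To pass to presentation degrees, I would use exact sequences: starting from surjections $\mscr F_W\twoheadrightarrow \mscr W$ and $\mscr F_U\twoheadrightarrow\mscr U$ with kernels $\mscr K_W,\mscr K_U$ generated in degrees $k_W,k_U$, the induced map $\mscr F_W\otimes \mscr F_U\twoheadrightarrow \mscr W\otimes\mscr U$ is surjective with kernel $\mscr K_W\otimes \mscr F_U+\mscr F_W\otimes \mscr K_U$. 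Applying the tensor generation-degree bound to each summand yields that this kernel is generated in degree $\max\{k_W+d_U,\,d_W+k_U\}$. Iterating gives the presentation degree of $\mscr W^{\otimes\ell}$, and the bounds transfer to $\mathrm{Sym}^\ell\mscr W$ and $\bigwedge^\ell\mscr W$ via the (anti)symmetrization projectors, which in characteristic zero realize these as direct summands of $\mscr W^{\otimes\ell}$ in a manner compatible with the consistent sequence structure. The main obstacle will be verifying that $\mscr F_W\otimes\mscr F_U$ remains algebraically free in our sense and that the double-coset combinatorics underlying the tensor-product generation bound translate correctly through the dictionary between consistent sequences and the categorical $\mathrm{FI}$-module formalism, a translation that is carried out in Appendix~\ref{sec:cat_reps}.
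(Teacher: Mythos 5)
Your overall architecture matches the paper's: sums are handled by taking direct sums of presentations; tensors by the surjection $\mscr F_W\otimes\mscr F_U\twoheadrightarrow\mscr W\otimes\mscr U$ with kernel $\mscr K_W\otimes\mscr F_U+\mscr F_W\otimes\mscr K_U$, together with the freeness of tensor products of free modules and the tensor generation-degree bound imported from the $\mathrm{FI}$/$\mathrm{FI}_{BC}$/$\mathrm{FI}_D$ literature (this is exactly the paper's property (TFG) and Proposition~\ref{prop:pres_calc_for_tensors}); and $\mathrm{Sym}^\ell$, $\bigwedge^\ell$ by realizing them inside $\mscr W^{\otimes\ell}$ via (anti)symmetrizers, which the paper does with Young symmetrizers $c_\lambda$, noting $c_\lambda$ and $c_\lambda^*$ are both morphisms.

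The genuine gap is in how you transfer \emph{presentation} degrees to direct summands, which you use both in (Images and kernels) and in the $\mathrm{Sym}/\bigwedge$ step. If $\mscr W\cong\ker\mscr A\oplus\operatorname{im}\mscr A^*$ and $\mscr F\twoheadrightarrow\mscr W$ has kernel $\mscr K$ generated in degree $k_W$, then the presentation of the summand $\ker\mscr A$ you obtain is the composite $\mscr F\twoheadrightarrow\mscr W\twoheadrightarrow\ker\mscr A$, whose kernel is \emph{not} a direct summand of $\mscr K$: it is the preimage $\mscr B^{-1}(\operatorname{im}\mscr A^*)$, which strictly contains $\mscr K$, and the ``summand argument'' (projecting a generating set) says nothing about its generation degree. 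What is needed is the separate lemma that if $\mscr B\colon\mscr F\to\mscr W$ is a surjective morphism with $\ker\mscr B$ generated in degree $k$ and $\mscr X\subseteq\mscr W$ is a submodule generated in degree $d$, then $\mscr B^{-1}(\mscr X)$ is generated in degree $\max\{k,d\}$; one proves this by lifting generators of $\mscr X_d$ through $\mscr B_d$ (the paper's Lemma~\ref{lem:gen_deg_of_preim}, proved with a pseudoinverse), and then uses $k_W\ge d_W$ to get the bound $k_W$. With that lemma your splitting argument closes (it is essentially the paper's Proposition~\ref{prop:pres_deg_of_im_morph}), and the same repair is needed when you pass from $\mscr W^{\otimes\ell}$ to its summands $\mathrm{Sym}^\ell\mscr W$ and $\bigwedge^\ell\mscr W$. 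The remaining ingredients you flag (freeness of $\mscr F_W\otimes\mscr F_U$ and the double-coset combinatorics) are indeed supplied by the cited literature through the categorical dictionary, just as in the paper.
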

\begin{proof}
    This follows from Theorem~\ref{thm:calculus_for_degrees} in the appendix.
\end{proof}
\begin{example}
    Suppose $\{\RR^n\}$ as in Example~\ref{ex:basic_consist_seq} with $\msf{G}_n=\msf B_n,\msf D_n,$ or $\msf S_n$. Then $(\RR^n)^{\otimes k}$ consists of $n\times\cdots\times n$-sized tensors with embeddings by zero padding and $\mathrm{Sym}^k\RR^n$ consists of homogeneous polynomials of degree $k$ in $n$ variables. These are generated in degree $k$ and presented in degree $k$ if $\msf{G}_n=\msf B_n,\msf S_n$ and in degree $k+1$ if $\msf{G}_n=\msf D_n$. 
    
%    In this case, we also have $\mathrm{Sym}^2\RR^n = \mbb S^n$ and $\bigwedge^2\RR^n = \mathrm{Skew}(n)$. The space $\mathrm{Sym}^2(\mathrm{Sym}^{\leq k}\RR^n)$ can be viewed as symmetric matrices whose rows and columns are indexed by monomials of degree up to $k$ in $n$ variables. These sequences arises in optimization problems involving tensors, sums-of-squares polynomials, and moment sequences~\cite{SOS_chapter,Gandy_2011}.    
\end{example}
\begin{remark}[Other groups]
    Note that the last two conclusions in Theorem~\ref{thm:calc_for_pres_degs_FIW} fail without the restriction to $\msf{G}_n=\mathrm{B}_n,\mathrm{D}_n$, or $\mathrm{S}_n$. For example, \new{consider the sequence of cyclic groups $\msf G_n=\mathrm{Cyc}_{2^n}$ embedded as usual in each other. Let $\mscr V=\{\vct V_n=\RR^{2^n}\}$ be the consistent sequence with embeddings $\varphi_n(x)=x\otimes [1,0]^\top$ and the standard action of $\msf G_n=\mathrm{Cyc}_{2^n}$ by cyclically shifting coordinates. Then $\mscr V$ is generated in degree 1 but $\mscr V^{\otimes 2}$ is not finitely-generated since $\dim\mc L(\RR^{2^n})^{\mathrm{Cyc}_{2^n}}=2^n$ does not stabilize. }
\end{remark}
%The above consistent sequences appear in most of the free descriptions encountered in the literature, as we illustrate with a range of examples in Section~\ref{sec:examples}.

\subsection{Permutation Modules}\label{sec:perm_mods}
We introduce a class of particularly simple consistent sequences on which the group acts by permuting basis elements. These consistent sequences arise in our study of relative entropy cones and their constant-sized descriptions in Section~\ref{sec:rel_ent_cones}. If a group $\msf G$ acts on a (finite) set $\mc A$, define $\RR^{\mc A}=\bigoplus_{\alpha\in\mc A}e_{\alpha}$ to be the vector space with orthonormal basis elements $\{e_{\alpha}\}_{\alpha\in\mc A}$, which is a $\msf G$-representation with action $g\cdot e_{\alpha}=e_{g\cdot \alpha}$. 
\begin{definition}[Permutation modules]\label{def:perm_rep}
    Let $\mscr V = \{\vct V_n\}$ be a consistent sequence of $\{\msf{G}_n\}$-representations. Let $\{\mc A_n\subseteq \vct V_n\}$ be finite $\msf{G}_n$-invariant sets satisfying $\mc A_n\subseteq \mc A_{n+1}$ for all $n$. Then the \emph{permutation $\mscr V$-module} generated by the sets $\{\mc A_n\}$ is the $\mscr V$-module $\{\RR^{\mc A_n}\}_n$.
\end{definition}
Permutation modules can be analyzed in terms of the orbits in the sets $\mc A_n$. In particular, indicators of orbits form a basis for the space of invariants in a permutation module.
\begin{proposition}\label{prop:pres_deg_perm_mod}
Let $\mscr V=\{\vct V_n\}$ be a consistent sequence of $\{\msf{G}_n\}$-representations, let $\{\mc A_n\subseteq \vct V_n\}$ be a nested sequence of finite group-invariant sets, and let $\mscr U = \{\vct U_n=\RR^{\mc A_n}\}$ be the associated permutation $\mscr V$-module. 
\begin{enumerate}[(a)]
    \item $\mscr U$ is generated in degree $d$ if and only if $\mc A_n=\bigcup_{g\in \msf{G}_n}g\mc A_d$ for all $n\geq d$.
    
    \item The projections $\mc P_{\vct U_n}\colon \vct U_{n+1}^{\msf{G}_{n+1}}\to \vct U_n^{\msf{G}_n}$ are isomorphisms for all $n\geq d$ if and only if \new{either of the equivalent conditions in } (a) holds and the number of orbits in $\mc A_n$, which equals $\dim \vct U_n^{\msf{G}_n}$, is constant for all $n\geq d$.
   
    \item Suppose $\mscr V = \{\RR^n\}$ and $\msf{G}_n=\msf B_n,\msf D_n,$ or $\msf S_n$ as in Example~\ref{ex:basic_consist_seq} and $\mscr U$ is generated in degree $d$. Then $\mscr U$ is free if $\msf{G}_n=\msf B_n,\msf S_n$, and agrees with a free module starting from degree $d+1$ if $\msf{G}_n=\msf D_n$.
\end{enumerate}
\end{proposition}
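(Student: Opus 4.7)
The plan is to reduce each part to a combinatorial statement about the $G_n$-orbit structure on $\mc A_n$. For part (a), since $G_n$ acts on $U_n = \RR^{\mc A_n}$ by permuting the canonical basis via $g \cdot e_\alpha = e_{g\alpha}$, the subspace $\RR[G_n]U_d$ equals $\mathrm{span}\{e_\beta : \beta \in G_n\mc A_d\}$, which in turn equals $U_n$ if and only if every $\beta \in \mc A_n$ lies in $\bigcup_{g \in G_n} g\mc A_d$. This gives the claimed equivalence directly from Definition~\ref{def:gen_deg}.

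For part (b), the starting point is the explicit basis of $U_n^{G_n}$ given by orbit indicators $\mathbbm{1}_\mc O := \sum_{\alpha \in \mc O} e_\alpha$, one per $G_n$-orbit $\mc O \subseteq \mc A_n$: since the action permutes basis vectors, any invariant must be constant on orbits, and each $\mathbbm{1}_\mc O$ is trivially invariant. Thus $\dim U_n^{G_n}$ equals the number of $G_n$-orbits in $\mc A_n$. Given that $\mscr U$ is generated in degree $d$ (equivalently, that (a) holds), Proposition~\ref{prop:gen_imp_surj} provides injectivity of $\mc P_{U_n}\colon U_{n+1}^{G_{n+1}} \to U_n^{G_n}$ for all $n \geq d$, so the maps are isomorphisms exactly when the orbit counts agree from level $n$ onward.

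For part (c), the plan is to decompose $U_n = \bigoplus_\mc O \RR^\mc O$ orbit-by-orbit and identify each summand with a $\mscr V$-induction sequence. For each orbit $\mc O$, I would choose a minimal-degree representative $\alpha \in \mc A_{d_\mc O}$; by minimality of the degree, one can arrange $\alpha$ so that its support is contained in $[d_\mc O]$ and reaches position $d_\mc O$. For $G_n = \msf S_n$ or $\msf B_n$, a (signed) permutation fixing $\alpha$ must match nonzero entries to nonzero entries and zero entries to zero entries, forcing it to preserve $[d_\mc O]$ and its complement setwise; consequently $\mathrm{Stab}_{G_n}(\alpha) = G_{d_\mc O}^\alpha \times H_{n,d_\mc O}$, where $G_{d_\mc O}^\alpha := \mathrm{Stab}_{G_{d_\mc O}}(\alpha)$ and $H_{n,d_\mc O}$ is the stabilizing subgroup from Definition~\ref{def:stab_subgps}. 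Using transitivity of induction together with $G_{d_\mc O} \cap H_{n,d_\mc O} = \{e\}$, one obtains $\RR^\mc O \cong \mathrm{Ind}_{G_{d_\mc O} H_{n,d_\mc O}}^{G_n} W_\mc O$ with $W_\mc O := \mathrm{Ind}_{G_{d_\mc O}^\alpha}^{G_{d_\mc O}} \RR$, which is precisely $\mathrm{Ind}_{G_{d_\mc O}}(W_\mc O)_n$ in the sense of Definition~\ref{def:free_mods}. Assembling across orbits and checking that the orbit-indexed decomposition is compatible with the embeddings $\psi_n$ (using the nesting $\mc A_n \subseteq \mc A_{n+1}$) then yields the required direct-sum decomposition $\mscr U = \bigoplus_\mc O \mathrm{Ind}_{G_{d_\mc O}}(W_\mc O)$.

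The main obstacle lies in the $\msf D_n$ case of part (c): the even-parity constraint can obstruct the clean factorization $\mathrm{Stab}_{G_n}(\alpha) = G_{d_\mc O}^\alpha \times H_{n,d_\mc O}$, because an odd-parity sign flip on $\mathrm{supp}(\alpha)$ that fixes $\alpha$ can be paired with an odd-parity flip outside $\mathrm{supp}(\alpha)$ to produce a stabilizer element not contained in the naive product. I would handle this by splitting into two subcases depending on whether the $\msf B_{d_\mc O}$-stabilizer of $\alpha$ is already contained in $\msf D_{d_\mc O}$; in the exceptional case the correct induction factor uses an enlarged $G_{d_\mc O}^\alpha$ that absorbs the parity ambiguity, and possibly merges a pair of parity-partner orbits into a single induction. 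I expect this parity bookkeeping to be the technical crux of the proof.
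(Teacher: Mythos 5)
Parts (a) and (b) of your proposal are correct and essentially identical to the paper's argument: (a) is the same one-line computation of $\RR[G_n]U_d$, and (b) uses the same orbit-indicator basis $\mathbbm{1}_{\mc O}=\sum_{\alpha\in\mc O}e_\alpha$ together with injectivity from Proposition~\ref{prop:gen_imp_surj}, which matches the paper's level of detail.

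The gap is in part (c), and specifically in the $\msf D_n$ case, which you explicitly leave unresolved ("I expect this parity bookkeeping to be the technical crux"). In fact the obstruction you describe cannot occur, and the paper treats all three groups uniformly. First, minimality of the degree of the representative $\alpha\in\mc A_{d_{\mc O}}$ gives more than "support contained in $[d_{\mc O}]$": it forces \emph{all} of the first $d_{\mc O}$ entries to be nonzero (otherwise a transposition in $\msf S_{d_{\mc O}}$ would lower the degree); you implicitly need this full support anyway, since your "zeros map to zeros" argument does not preserve $[d_{\mc O}]$ if there is a zero entry inside $[d_{\mc O}]$. Second, once $\alpha$ has all entries nonzero on $[d_{\mc O}]$, any signed permutation fixing $\alpha$ has, on each cycle of its underlying permutation restricted to $[d_{\mc O}]$, a product of signs equal to $+1$; hence the number of sign flips it performs on $[d_{\mc O}]$ is even, so the $[d_{\mc O}]$-component of any stabilizer element automatically lies in $\msf D_{d_{\mc O}}$ and the factorization $\mathrm{Stab}_{G_n}(\alpha)=\mathrm{Stab}_{G_{d_{\mc O}}}(\alpha)\,H_{n,d_{\mc O}}$ holds for $\msf D_n$ exactly as for $\msf S_n$ and $\msf B_n$. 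Your proposed fix (enlarging $G_{d_{\mc O}}^{\alpha}$ or merging "parity-partner" orbits) is therefore unnecessary, and as sketched it would not even produce summands of the form $\mathrm{Ind}_{G_d}(W)=\{\mathrm{Ind}_{G_dH_{n,d}}^{G_n}W\}$ required by Definition~\ref{def:free_mods}. One further point you defer but must actually verify is the consistency of the orbit decomposition across $n$: you need that distinct orbits at level $n$ do not merge at higher levels (equivalently, that a $G_n$-orbit meeting $\mc A_d$ meets it in a single $G_d$-orbit); the paper proves this by observing that if $g\in G_n$ maps an element of $V_d$ into $V_d$, then some $\tilde g\in G_d$ has the same effect on that element. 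Without this step the orbit-by-orbit identification with induction sequences is not compatible with the embeddings, so it is a needed (if short) argument rather than a routine check.
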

Here we say $\mscr U$ agrees with a free module starting from degree $d$ if there is a free $\mscr V$-module $\mscr F=\{\mbb F_n\}$ and a morphism of sequences $\mscr F\to \mscr U$ such that $\vct F_n\to\vct U_n$ is an isomorphism for all $n\geq d$.
\begin{proof}
\begin{enumerate}[(a)]
    \item We have $\RR[\msf{G}_n]\vct V_d = \sum_{\substack{\alpha\in\mc A_d\\ g\in \msf{G}_n}}\RR e_{g\alpha} = \bigoplus\limits_{\alpha\in \bigcup_{g\in \msf{G}_n}g\mc A_d}\RR e_{\alpha}$ which equals $\vct V_n=\bigoplus_{\alpha\in\mc A_n}\RR e_{\alpha}$ precisely under the stated condition.
    
    \item \new{Let $\mc O_1^{(n)},\ldots,\mc O_{D(n)}^{(n)}$ be the $\msf G_n$-orbits in $\mc A_n$ and set $\mc O_i^{(n+1)}=\msf G_{n+1}\mc O_i^{(n)}$ for $i\in[D(n)]$. 
    Note that $\vct U_n^{\msf G_n}\cong\RR^{D(n)}$ has a basis consisting of orbit indicators $\mathbbm{1}_{\mc O_i^{(n)}}=\sum_{\alpha\in\mc O_i^{(n)}}e_{\alpha}$. 
    Moreover, $\mc P_{\vct U_n}\mathbbm{1}_{\mc O_i^{(n+1)}}=\mathbbm{1}_{\mc O_i^{(n+1)}\cap\mc A_n}=\sum_{\alpha\in\mc O_i^{(n+1)}\cap\mc A_n}e_{\alpha}$. 
    If $\mc A_{n+1}=\bigcup_{g\in\msf G_{n+1}}g\mc A_n$ and the number of orbits in $\mc A_n$ and $\mc A_{n+1}$ are equal, then $\mc O_i^{(n+1)}$ are precisely the orbits in $\mc A_{n+1}$. 
    This implies that $\mc O_i^{(n+1)}\cap\mc A_n=\mc O_i^{(n)}$ for $i\in[D(n)]$, and hence thta $\mc P_{\vct U_n}$ is an isomorphism of invariants. Indeed, the inclusion $\supseteq$ is trivial while if $\alpha\in\left(\mc O_i^{(n+1)}\cap\mc A_n\right)\setminus\mc O_i^{(n)}$ then $\alpha\in\mc O_j^{(n)}$ for some $j\neq i$ and $\msf G_{n+1}\mc O_i^{(n)}=\msf G_{n+1}\mc O_j^{(n)}$ since these two $\msf G_{n+1}$-orbits are not disjoint (they both contain $\alpha$), and hence must be equal. This contradicts the fact that $\mc O_i^{(n+1)}$ are distinct. 
    Conversely, suppose $\mc P_{\vct U_n}$ is an isomorphism of invariants. If $\alpha\in\mc A_{n+1}\setminus\bigcup_{g\in\msf G_{n+1}}g\mc A_n$ then $\msf G_{n+1}\alpha\cap\mc A_n=\emptyset$ and $\mc P_{\vct U_n}\mathbbm{1}_{\msf G_{n+1}\alpha}=0$, a contradiction. Therefore, we have $\mc A_{n+1}=\bigcup_{g\in\msf G_{n+1}}g\mc A_n$. Furthermore, the number of orbits in $\mc A_n$ and $\mc A_{n+1}$ are equal, since these are the dimensions of $\vct U_n^{\msf G_n}$ and $\vct U_{n+1}^{\msf G_{n+1}}$, respectively. }
    
    %If $\hat{\mc A_n}\subseteq \mc A_n$ is a set of $\msf{G}_n$-orbit representatives, \new{then $\vct U_n^{\msf{G}_n}\cong \RR^{\hat{\mc A_n}}$ has a basis consisting of $\mathbbm{1}_{\alpha}^{(n)}=\sum_{\beta\in\msf G_n\alpha}e_{\beta}$ for $\alpha\in\hat{\mc A_n}$.
    %, where $\msf{Stab}_{\msf{G}_n}(\alpha)=\{g\in \msf{G}_n:g\cdot\alpha=\alpha\}$. 
    %Moreover, we have $\mc P_{\vct U_n}\mathbbm{1}_{\alpha}^{(n+1)} = \sum_{\beta\in\msf G_{n+1}\alpha\cap\mc A_n}e_{\beta}$ for each $\alpha\in \mc A_n$. If orbit representatives for $\mc A_n$ are also representatives for $\mc A_{n+1}$, then $\msf G_{n+1}\alpha\cap \mc A_n = \msf G_n\alpha$, and $\mc P_{\vct U_n}\mathbbm{1}_{\alpha}^{(n+1)}=\mathbbm{1}_{\alpha}^{(n)}$ so $\mc P_{\vct U_n}$ is an isomorphism. Conversely, if $\mc P_{\vct U_n}$ is an isomorphism, then the number of orbits in $\mc A_{n+1}$ and in $\mc A_n$ are equal and every element of $\mc A_{n+1}$, hence orbit representatives for $\mc A_n$ are also representatives for $\mc A_{n+1}$.}
    
    \item Let $d_0=0$ if $\msf{G}_n=\msf S_n,\msf B_n$ and $d_0=1$ if $\msf{G}_n=\msf D_n$. Let $\hat{\mc A}\subseteq \mc A_d$ be a set of minimal degree $\msf{G}_{d+d_0}$-orbit representatives in $\mc A_{d+d_0}$. We argue that these are also the $\msf{G}_n$-orbit representatives of $\mc A_n$ for all $n\geq d+d_0$. 
    Indeed, since $\mc A_n=\bigcup_{g\in \msf{G}_n}g\mc A_d = \bigcup_{g\in \msf{G}_n}g\hat{\mc A}$, it suffices to show that distinct elements in $\hat{\mc A}$ lie in distinct $\msf{G}_n$-orbits.
    To that end, observe that $\mscr V$ satisfies% the property
    \begin{equation}\label{eq:perm_rep_free_cond2}
        g\cdot \alpha\in \vct V_d \textrm{ for } \alpha\in \vct V_d,\ g\in \msf{G}_n \implies \exists \tilde g\in \msf{G}_{d+d_0} \textrm{ s.t. } g\cdot \alpha = \tilde g\cdot \alpha,
    \end{equation}
    since if $\msf{G}_n=\msf S_n,\msf B_n$ define $\tilde g$ to act as $g$ on the coordinates $I=\{i\in[d]:\alpha_i\neq0\}$ and act trivially on the others, and if $\msf{G}_n=\msf D_n$ and if $g$ flips oddly-many signs of coordinates in $I$ then in addition have $\tilde g$ flip the sign of coordinate $d+1$.
    Therefore, if $\alpha,\alpha'\in\mc A_d$ lie in the same $\msf{G}_n$-orbit for $n>d+d_0$, then they also lie in the same $\msf{G}_{d+d_0}$-orbit. 
    Thus, we have $\vct U_n=\bigoplus_{\alpha\in\hat{\mc A}}\RR[\msf{G}_n]e_{\alpha}$ for all $n\geq d+d_0$.

    Next, we argue that $\mscr U$ is free or agrees with a free module starting from degree $d+d_0$. Observe that $\mscr V$ satisfies the additional property 
    \begin{equation}\label{eq:perm_rep_free_cond}
        \alpha \in \vct V_d\setminus \vct V_{d-1} \textrm{ has min.\ degree in its orbit} \implies \msf{Stab}_{\msf{G}_n}(\alpha) = \msf{Stab}_{\msf{G}_d}(\alpha)\msf{H}_{n,d},
    \end{equation}
    Indeed, if $\alpha\in \vct V_d\setminus \vct V_{d-1}$ has minimal degree then all its $d$ entries are nonzero, hence any $g\in \msf{G}_n$ fixing $\alpha$ must fix the first $d$ coordinates.
    Therefore, if $\alpha\in \vct V_d\setminus \vct V_{d-1}$ has minimal degree and $g_1,g_2,\ldots,g_M$ are coset representatives for $\msf{G}_n/\msf{Stab}_{\msf{G}_d}(\alpha)\msf{H}_{n,d}$, then
    \begin{equation*}
        \RR[\msf{G}_n]e_{\alpha} = \bigoplus_{m=1}^M\RR e_{g_m\cdot \alpha} = \bigoplus_{m=1}^Mg_m\cdot \RR e_{\alpha} = \mathrm{Ind}_{\msf{Stab}_{\msf{G}_d}(\alpha)\msf{H}_{n,d}}^{\msf{G}_n}(\RR e_{\alpha}) = \mathrm{Ind}_{\msf{G}_d\msf{H}_{n,d}}^{\msf{G}_n}\left(\mathrm{Ind}_{\msf{Stab}_{\msf{G}_d}(\alpha)}^{\msf{G}_d}\RR e_{\alpha}\right),
    \end{equation*}
    where the first equality follows by~\eqref{eq:perm_rep_free_cond}, the second by the definition of a permutation representation (Definition~\ref{def:perm_rep}), and the third equality follows by~\eqref{eq:ind_def}, and the last equality follows because $\msf{G}_d/\msf{Stab}_{\msf{G}_d}(\alpha)\cong (\msf{G}_d\msf{H}_{n,d})/(\msf{Stab}_{\msf{G}_d}(\alpha)\msf{H}_{n,d})$.
    Thus, if $\alpha\in\hat{\mc A}$ has degree $d_{\alpha}$ and we define $\vct W_{\alpha}=\mathrm{Ind}_{\msf{Stab}_{\msf{G}_{d_{\alpha}}}(\alpha)}^{\msf{G}_{d_{\alpha}}}\RR e_{\alpha}$, then the map $\bigoplus_{\alpha\in\hat{\mc A}}\mathrm{Ind}_{\msf{G}_{d_{\alpha}}}(\vct W_{\alpha})_n\to \vct U_n$ sending $e_{\alpha}$ to itself is an isomorphism for all $n\geq d+d_0$. 
    Condition~\ref{eq:perm_rep_free_cond2} shows that $\hat{\mc A}\cap\mc A_j$ is a set of minimal degree $\msf{G}_j$-orbit representatives for each $j\leq d$ if $\msf{G}_n=\msf B_n,\msf S_n$, hence the above map is an isomorphism for all $n$.
    \qedhere
\end{enumerate}
\end{proof}
We remark that proposition~\ref{prop:pres_deg_perm_mod}(c) applies to any $\mscr V$ satisfying~\eqref{eq:perm_rep_free_cond2} and~\eqref{eq:perm_rep_free_cond}.  

\subsection{Uniform Representation Stability}\label{sec:uniform_rep_stab}
Proposition~\ref{prop:gen_imp_surj} shows that $\dim \vct V_n^{\msf{G}_n}$ stabilizes whenever $\{\vct V_n\}$ is a finitely-generated consistent sequence of $\{\msf{G}_n\}$-representations.
In fact, the theory of~\cite{FImods,WILSON2014269,sam_snowden_2015} and others shows that for many standard families $\{\msf{G}_n\}$ of groups, the entire decomposition of $\vct V_n$ into irreducibles stabilizes.
This phenomenon was called \emph{uniform representation stability} in~\cite{CHURCH2013250}, and we use it to derive constant-sized descriptions for many sequences of PSD cones in Section~\ref{sec:psd_cone_const_size}.
The following is a concrete instance of this phenomenon that we shall use there.
\begin{theorem}[{\cite[Thm.~1.13]{FImods},\cite[Thm.~4.27]{WILSON2014269}}]\label{thm:rep_stability_FIW}
    Let $\mscr V_0=\{\RR^n\}$ with $\msf{G}_n=\msf B_n,\msf D_n,$ or $\msf S_n$ be the consistent sequence from Example~\ref{ex:basic_consist_seq} and let $\mscr V = \{\vct V_n\}$ be a $\mscr V_0$-module generated in degree $d$ and presented in degree $k$. Then there exists a finite set $\Lambda$ and integers $(m_{\lambda})_{\lambda\in\Lambda}$, together with an assignment $\lambda\mapsto \vct W_{\lambda[n]}$ of a distinct $\msf{G}_n$-irreducible $\vct W_{\lambda[n]}$ to each $\lambda\in\Lambda$ for $n\geq k+d$ such that $\vct V_n \cong \bigoplus_{\lambda\in\Lambda}\vct W_{\lambda[n]}^{m_{\lambda}}$ as $\msf{G}_n$-representations.
\end{theorem}
\new{More precisely, there is a standard indexing for irreducibles of $\msf S_n, \msf B_n$, and $\msf D_n$ in terms of partitions of $n$, and a particular way to pad these partitions to form $\lambda[n]$, see~\cite[\S2]{WILSON2014269}.}
\begin{proof}
    %Note that $\mscr V_0$-modules are precisely the $\mathrm{FI}|_{\mc W}$-modules of~\cite{WILSON2014269}, see Appendix~\ref{sec:cat_reps}.
    The irreducibles of the groups $\msf S_n,\msf D_n,\msf B_n$ are indexed as in~\cite[\S2.1]{WILSON2014269}, and the consistent labelling of irreducibles for different $n$ is given in~\cite[\S2.2]{WILSON2014269}. Under this labelling, the $\mscr V_0$-module $\mscr V$ is uniformly representation stable with stable range $n\geq k+d$ by~\cite[Thms.~4.4, 4.27]{WILSON2014269}, which precisely says that the set of irreducibles appearing in the decomposition of the $\vct V_n$ and their multiplicities become constant for $n\geq k+d$ by~\cite[Def.~2.6]{CHURCH2013250}. 
\end{proof}
\begin{example}\label{ex:Rn_decomp_stab}
    Irreducibles of $\msf S_n$ are indexed by partitions of $n$. If $\lambda_1[n]=(n)$ is the trivial partition and $\lambda_2[n]=(n-1,1)$, then $\RR^n=\vct W_{\lambda_1[n]}\oplus \vct W_{\lambda_2[n]}$ for all $n\geq 1$, where $\vct W_{\lambda_1[n]}=\mathrm{span}\{\mathbbm{1}_n\}$ and $\vct W_{\lambda_2[n]}=\{x\in\RR^n:\mathbbm{1}_n^\top x=0\}$ are distinct irreducible representations of $\msf S_n$.
\end{example}

\subsection{Stabilization of Shifted Sequences}\label{sec:shifted_consistent_seqs}
Many of the phenomena in the representation stability literature, including Theorem~\ref{thm:rep_stability_FIW}, can be derived from properties of \emph{shifted} consistent sequences, which are sequences with group actions restricted to centralizing subgroups, see~\cite{gan2019inductive} and references therein.
In particular, we shall need the following result for such shifted sequences to derive constant-sized descriptions for relative entropy cones in Section~\ref{sec:rel_ent_cones}.
\begin{proposition}[{\cite[Lemma~4.19]{WILSON2014269}}]\label{prop:stab_deg}
    Let $\mscr V = \{\RR^n\}$ and $\msf{G}_n=\msf B_n,\msf D_n$, or $\msf S_n$ as in Example~\ref{ex:basic_consist_seq} and let $\mscr U$ be a $\mscr V$-module presented in degree $k$. If $\{\msf{H}_{n,d}\}$ are the centralizing subgroups of $\mscr V$ and $\ell\in\NN$, then the projections $\mc P_{\vct U_n}\colon \vct U_{n+1}^{\msf{H}_{n+1,\ell}}\to \vct U_n^{\msf{H}_{n,\ell}}$ are isomorphisms for all $n\geq \ell + k$. 
\end{proposition}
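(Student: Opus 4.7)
The plan is to reduce the claim to Proposition~\ref{prop:presentation_implies_isom_of_canon} via a shift functor on consistent sequences. For $\ell\in\NN$, define $\Sigma^\ell\mscr U = \{(\Sigma^\ell\mscr U)_n\}$ by $(\Sigma^\ell\mscr U)_n = U_{n+\ell}$, with $G_n$ acting through the embedding $G_n\hookrightarrow G_{n+\ell}$, $g\mapsto I_\ell\oplus g$, which identifies $G_n$ with the stabilizer $H_{n+\ell,\ell}$; the embeddings and inner products are those inherited from $\mscr U$. Under this embedding, the image of $H_{n,d}\subseteq G_n$ is precisely $H_{n+\ell,\ell+d}$, so the $\mscr V$-module inclusion $U_{d+\ell}\subseteq U_{n+\ell}^{H_{n+\ell,\ell+d}}$ becomes $(\Sigma^\ell\mscr U)_d\subseteq (\Sigma^\ell\mscr U)_n^{H_{n,d}}$, showing $\Sigma^\ell\mscr U$ is again a $\mscr V$-module. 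Moreover $(\Sigma^\ell\mscr U)_n^{G_n} = U_{n+\ell}^{H_{n+\ell,\ell}}$, and the canonical projections for $\Sigma^\ell\mscr U$ coincide with the maps $\mc P_{U_{n+\ell}}$ restricted to these invariant subspaces. Hence it suffices to show that $\Sigma^\ell\mscr U$ is presented in degree at most $k$, from which Proposition~\ref{prop:presentation_implies_isom_of_canon} gives isomorphisms at indices $m\geq k$, i.e., at $n = m+\ell\geq k+\ell$.

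Since $\Sigma^\ell$ only restricts the acting group and reindexes, it commutes with images and kernels of morphisms and is therefore exact. Applying $\Sigma^\ell$ to a presentation $\mscr F\twoheadrightarrow\mscr U$ with $\mscr F$ algebraically free generated in degree $\leq k$ and kernel generated in degree $\leq k$ yields a corresponding diagram $\Sigma^\ell\mscr F\twoheadrightarrow\Sigma^\ell\mscr U$ with shifted kernel. Because generation degrees do not increase under surjective morphisms of $\mscr V$-modules (cf.\ the proof of Proposition~\ref{prop:ind_surj_vs_fg}), the problem reduces to a single claim: for each $d\leq k$ and each $G_d$-representation $W$, the shifted induction $\Sigma^\ell\mathrm{Ind}_{G_d}(W)$ is a $\mscr V$-module generated in degree at most $d$.

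The main obstacle is establishing this last claim. I would analyze $\mathrm{Ind}_{G_dH_{n+\ell,d}}^{G_{n+\ell}}(W)$ as an $H_{n+\ell,\ell}$-representation via the double coset decomposition $H_{n+\ell,\ell}\backslash G_{n+\ell}/G_dH_{n+\ell,d}$. For $G_n = \msf S_n,\msf B_n,\msf D_n$, these double cosets are indexed by the number $j\in\{0,\ldots,\min(d,\ell)\}$ of elements of $[d]$ that a representative sends into $[\ell]$; by Mackey-style bookkeeping each contributes a summand of induced form $\mathrm{Ind}_{G_{d-j}H_{n+\ell,\ell+d-j}}^{H_{n+\ell,\ell}}(W_j)$ for a suitable $G_{d-j}$-representation $W_j$ built by restricting $W$ to the subgroup fixing the chosen $j$ letters. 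Under the identification $G_n\cong H_{n+\ell,\ell}$, this exhibits $\Sigma^\ell\mathrm{Ind}_{G_d}(W)$ as a direct sum of $\mscr V$-induction sequences each generated in degree $d-j\leq d$, as required. The combinatorics is transparent for $\msf S_n$ (double cosets are parametrized solely by $j$), but requires careful tracking of the sign data for $\msf B_n$ and $\msf D_n$; this bookkeeping is the bulk of the remaining technical work, and mirrors the proof in the cited reference.
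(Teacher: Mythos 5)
The paper does not actually prove this proposition: it is imported from \cite[Lemma~4.19]{WILSON2014269}, and the remark immediately following it (about ``shifted'' consistent sequences) points to exactly the mechanism you employ. Your reduction is sound. The shifted sequence $\Sigma^\ell\mscr U$ with $(\Sigma^\ell\mscr U)_n=U_{n+\ell}$ and $G_n$ acting through $g\mapsto I_\ell\oplus g$ is a consistent sequence of $\{G_n\}$-representations; it is a $\mscr V$-module because the image of $H_{n,d}$ under this embedding is $H_{n+\ell,\ell+d}$; its $G_n$-invariants are $U_{n+\ell}^{H_{n+\ell,\ell}}$ and its canonical projections are precisely the maps in the statement, so Proposition~\ref{prop:presentation_implies_isom_of_canon} yields the conclusion in the stated range $n\geq \ell+k$ once $\Sigma^\ell\mscr U$ is known to be presented in degree at most $k$. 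Your further reduction to the structure of $\Sigma^\ell\mathrm{Ind}_{G_d}(W)$ is also correct, with the caveat that for Definition~\ref{def:relations} you need $\Sigma^\ell$ of the algebraically free module to be \emph{algebraically free}, not merely generated in degree $\leq k$; your last paragraph does assert this stronger statement, so the outline is internally consistent.

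Two caveats. First, for $G_n=\msf S_n$ the double cosets $H_{n+\ell,\ell}\backslash G_{n+\ell}/G_dH_{n+\ell,d}$ are indexed by the \emph{subsets} $A\subseteq[\ell]$ of size at most $d$ met by the image of $[d]$, not solely by the cardinality $j$: the left factor fixes $[\ell]$ pointwise, so the subset itself is a double-coset invariant. This is harmless (grouping by $j=|A|$ just introduces multiplicities $\binom{\ell}{j}$ absorbed into your $W_j$), but the parenthetical claim as written is off. Second, and more substantively, the entire weight of the argument rests on the claim that $\Sigma^\ell\mathrm{Ind}_{G_d}(W)$ decomposes \emph{compatibly with the embeddings across $n$}---i.e.\ as a direct sum of $\mscr V$-induction sequences generated in degree at most $d$, not merely level-wise as $G_n$-representations---including the sign bookkeeping for $\msf B_n$ and $\msf D_n$. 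You state this and sketch the Mackey analysis but do not carry it out; that decomposition is exactly the shifted-free-module lemma established in \cite{FImods,WILSON2014269} which the paper is citing. So what you have is a correct reduction together with an outline of the one lemma that does the work: acceptable relative to the paper, which offers no proof at all, but not yet a self-contained argument.
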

\begin{corollary}\label{cor:stab_deg_for_stabs}
    Let $\mscr V = \{\RR^n\}$ and $\msf{G}_n=\msf B_n,\msf D_n$, or $\msf S_n$ as in Example~\ref{ex:basic_consist_seq}, and let $\mscr U$ be a $\mscr V$-module presented in degree $k$. If $\beta\in \RR^d\setminus\RR^{d-1}$ has minimal degree in its $\msf{G}_d$-orbit, then the projections $\mc P_{\vct U_n}\colon \vct U_{n+1}^{\msf{Stab}_{\msf{G}_{n+1}}(\beta)}\to \vct U_n^{\msf{Stab}_{\msf{G}_n}(\beta)}$ are isomorphisms for all $n\geq d + k$. 
\end{corollary}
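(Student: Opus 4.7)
The plan is to reduce the corollary to Proposition~\ref{prop:stab_deg} by showing that $\mathrm{Stab}_{G_n}(\beta)$ factorizes as a product of a \emph{fixed} subgroup of $G_d$ together with the stabilizing subgroup $H_{n,d}$, and then taking invariants under the fixed factor in the isomorphism provided by Proposition~\ref{prop:stab_deg}.

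First, I would establish the identity
\begin{equation*}
    \mathrm{Stab}_{G_n}(\beta) = \mathrm{Stab}_{G_d}(\beta)\cdot H_{n,d},
\end{equation*}
with the right-hand side being a subgroup because $\mathrm{Stab}_{G_d}(\beta)\subseteq G_d$ and $H_{n,d}$ commute (for $G_n = \msf B_n,\msf D_n$, or $\msf S_n$, elements of $G_d$ act only on the first $d$ coordinates while elements of $H_{n,d}$ act only on the last $n-d$). The containment $\supseteq$ is immediate. For $\subseteq$, since $\beta$ has minimal degree in its $G_d$-orbit, all $d$ of its entries are nonzero (this is exactly the observation used in the proof of Proposition~\ref{prop:pres_deg_perm_mod}(c)); consequently any $g\in G_n$ fixing $\beta$ must preserve the set $[d]$ of nonzero coordinates, so $g$ factors as a product $g_d h$ with $g_d\in G_d$ acting on $[d]$ and $h\in H_{n,d}$. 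From $g\beta=\beta$ and $h\beta=\beta$, we deduce $g_d\in\mathrm{Stab}_{G_d}(\beta)$.

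Taking invariants gives
\begin{equation*}
    U_n^{\mathrm{Stab}_{G_n}(\beta)} \;=\; \bigl(U_n^{H_{n,d}}\bigr)^{\mathrm{Stab}_{G_d}(\beta)},
\end{equation*}
where the outer action makes sense because $\mathrm{Stab}_{G_d}(\beta)$ commutes with $H_{n,d}$ and hence preserves $U_n^{H_{n,d}}$. Next I would apply Proposition~\ref{prop:stab_deg} with $\ell=d$ to obtain that the orthogonal projection
\begin{equation*}
    \mc P_{U_n}\colon U_{n+1}^{H_{n+1,d}} \longrightarrow U_n^{H_{n,d}}
\end{equation*}
is an isomorphism for all $n\geq d+k$. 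Since $\mathrm{Stab}_{G_d}(\beta)\subseteq G_d\subseteq G_n\subseteq G_{n+1}$ acts orthogonally and the embeddings are $G_n$-equivariant isometries, the projection $\mc P_{U_n}$ is $\mathrm{Stab}_{G_d}(\beta)$-equivariant.

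Finally, restricting an isomorphism of $\mathrm{Stab}_{G_d}(\beta)$-representations to the invariant subspace still yields an isomorphism, so
\begin{equation*}
    \mc P_{U_n}\colon \bigl(U_{n+1}^{H_{n+1,d}}\bigr)^{\mathrm{Stab}_{G_d}(\beta)} \longrightarrow \bigl(U_n^{H_{n,d}}\bigr)^{\mathrm{Stab}_{G_d}(\beta)}
\end{equation*}
is an isomorphism for $n\geq d+k$, which by the identification above is exactly the claim. No step here seems genuinely hard: the only non-routine ingredient is the factorization of $\mathrm{Stab}_{G_n}(\beta)$, which is a direct consequence of the minimal-degree assumption on $\beta$ combined with the combinatorics of the groups $\msf B_n,\msf D_n,\msf S_n$.
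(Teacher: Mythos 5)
Your proposal is correct and follows essentially the same route as the paper: both establish the factorization $\mathrm{Stab}_{G_n}(\beta)=\mathrm{Stab}_{G_d}(\beta)H_{n,d}$ from the minimal-degree assumption, invoke Proposition~\ref{prop:stab_deg} with $\ell=d$, and then pass to $\mathrm{Stab}_{G_d}(\beta)$-invariants using the commutation of $\mathrm{Stab}_{G_d}(\beta)$ with $H_{n,d}$. The only cosmetic difference is that the paper verifies the induced map on invariants is onto by an explicit injectivity argument, whereas you cite the general fact that an equivariant isomorphism restricts to an isomorphism between invariant subspaces.
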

\begin{proof}
    As shown in~\eqref{eq:perm_rep_free_cond}, we have $\msf{Stab}_{\msf{G}_n}(\beta)=\msf{Stab}_{\msf{G}_d}(\beta) \msf{H}_{n,d}$ for all $n\geq d$, hence 
    \begin{equation*}
        \vct U_n^{\msf{Stab}_{\msf{G}_n}(\beta)}=\vct U_n^{\msf{H}_{n,d}}\cap \vct U_n^{\msf{Stab}_{\msf{G}_d}(\beta)}.
    \end{equation*}
    By Proposition~\ref{prop:stab_deg}, the projections $\mc P_{\vct U_n}\colon \vct U_{n+1}^{\msf{H}_{n+1,d}}\to \vct U_n^{\msf{H}_{n,d}}$ are isomorphisms for all $n\geq d+k$. It thus suffices to show that if $u\in \vct U_{n+1}^{\msf{H}_{n+1,d}}$ satisfies $\mc P_{\vct U_n}u\in \vct U_n^{\msf{Stab}_{\msf{G}_d}(\beta)}$, then $u\in \vct U_{n+1}^{\msf{Stab}_{\msf{G}_d}(\beta)}$. For any such $u$ and $g\in \msf{Stab}_{\msf{G}_d}(\beta)$ we have $g\cdot u\in \vct U_{n+1}^{\msf{H}_{n+1,d}}$ because $\msf{H}_{n+1,d}$ and $\msf{Stab}_{\msf{G}_d}(\beta)\subseteq \msf{G}_d$ commute for our specific $\mscr V$. As $\mc P_{\vct U_n}(u-g\cdot u)=0$ and $\mc P_{\vct U_n}$ is injective on ${\vct U_{n+1}^{\msf{H}_{n+1,d}}}$, we get $u=g\cdot u$. 
\end{proof}

%Proposition~\ref{prop:stab_deg} is one of the main ingredients in the proof of uniform representation stability in Theorem~\ref{thm:rep_stability_FIW}. More generally, p

%\VC{Give a bit more language to provide context at the beginning of this section.} \TODO{done} \VC{Great!}

\subsection{Limits of Consistent Sequences}\label{sec:lims_of_consist_seqs}
Lastly, we consider limits of consistent sequences. We shall use the following definitions and results to describe limits of freely-described sequences of convex sets in Section~\ref{sec:lim_of_sets}.
We define limits of consistent sequences, and interpret our definitions in terms of these limits.
\begin{definition}
    For a consistent sequence $\mscr V=\{\vct V_n\}$ of $\{\msf{G}_n\}$-representations, define its \emph{limiting representation} as the vector space $\vct V_{\infty}=\bigcup_n\vct V_n$, viewed as a representation of $\msf{G}_{\infty}=\bigcup_n\msf{G}_n$.
\end{definition}
There is an approach to representation stability studying limiting representations of limiting groups as above, instead of representations of categories as in Appendix~\ref{sec:cat_reps}. 
For example, the authors of~\cite{sam_snowden_2015} analyze representations of five standard infinite groups, including $\msf O_{\infty},\msf S_{\infty}$, that occur as quotients or subrepresentations of tensor powers of $\RR^{\infty}$ and its dual. 

We remark that freely-described elements and morphisms of sequences can equivalently be defined in terms of limits. Indeed, given two consistent sequences $\{\vct V_n\},\{\vct U_n\}$ of $\{\msf{G}_n\}$-representations, a sequence of equivariant linear maps $\{A_n\in\mc L(\vct V_n,\vct U_n)^{\msf{G}_n}\}$ is a morphism of sequences if and only if there exists $A_{\infty}\in\mc L(\vct V_{\infty},\vct U_{\infty})^{\msf{G}_{\infty}}$ satisfying $A_{\infty}|_{\vct V_n} = A_n$ for all $n$, i.e., iff $\{A_n\}$ extends to the limit. 
Similarly, a sequence of invariants $\{v_n\in \vct V_n^{\msf{G}_n}\}$ defines a sequence of invariant linear functionals $\ell_n(x)=\langle v_n,x\rangle\colon \vct V_n\to\RR$, and these extend to a $\msf{G}_{\infty}$-invariant linear functional on $\vct V_{\infty}$ if and only if $\{v_n\}$ is a freely-described element. Every invariant functional on $\vct V_{\infty}$ arises in this way, so freely-described elements are in one-to-one correspondence with invariant linear functionals on the limit of a consistent sequence.  
%\VC{This preceding paragraph seems to state a number of results -- these should be referenced properly.  Or is `extends to the limit' a definition?  I'm a bit confused with the way this is written.  [p.s.: I circled back to the above paragraph after reading the definition below.  I'm a bit confused now about the placement of the preceding paragraph.]}
%\TODO{I tried clarifying it above, this is simply a re-interpretation of our earlier definitions in terms of limits}\VC{Looks much cleaner and less confusing -- I like it.}

We also consider continuous limits of consistent sequences by taking the completion of $\vct V_{\infty}$ with respect to some norm.
It is then natural to consider sequences of linear maps which extend to \emph{continuous} maps between these limits.
The inner products on each $\vct V_n$ extend to the limit $\vct V_{\infty}$, so we can always complete with respect to the induced norm and obtain a Hilbert space. 
For many of the examples we consider, however, we obtain more meaningful completions with respect to other norms. 
For the purpose of obtaining limiting descriptions of convex sets in this completion, we consider the following class of norms.
\begin{definition}[Continuous limits]
    \label{def:cont_lims}
    Let $\mscr V = \{\vct V_n\}$ be a consistent sequence of $\{\msf{G}_n\}$-representations, let $\vct V_{\infty}=\bigcup_n\vct V_n$, and let $\mc P_n\colon \vct V_{\infty}\to \vct V_n$ be the orthogonal projection. Fix a norm $\|\cdot\|_c$ on $\vct V_{\infty}$ (not necessarily induced by the inner product) that satisfies $\|\mc P_nx\|_c \leq C\|x\|_c$ for some $C>0$ and  all $n$ and $x\in \vct V_{\infty}$. 
    We call the completion $\overline{\vct V_{\infty}}$ of $\vct V_{\infty}$ with respect to $\|\cdot\|_c$ a \emph{continuous limit} of the sequence $\mscr V$.

    Let $\{\vct V_n\}$ and $\{\vct U_n\}$ be consistent sequences of $\{\msf G_n\}$-representations with associated continuous limits $\overline{\vct V_{\infty}}$ and $\overline{\vct U_{\infty}}$.  A sequence of maps $\{A_n\colon \vct V_n\to \vct U_n\}$ \emph{extends continuously to the limit} if there exists a bounded linear operator $\overline{A_{\infty}}\colon \overline{\vct V_{\infty}}\to\overline{\vct U_{\infty}}$ such that $\overline{A_{\infty}}|_{\vct V_n}=A_n$ for all $n$. 
    A freely-described element $\{u_n\in \vct U_n^{\msf{G}_n}\}$ \emph{extends continuously to the limit} if there exists $u_{\infty}\in \overline{\vct U_{\infty}}$ satisfying $\mc P_nu_{\infty}=u_n$ for all $n$. 
\end{definition}
Note that a morphism of sequences $\{A_n\colon \vct V_n\to \vct U_n\}$ extends to the continuous limit if and only if the sequence of operator norms $\{\|A_n\|_{\mathrm{op},c}\}$ with respect to $\|\cdot\|_c$ is bounded.

Our condition on the norm $\|\cdot\|_c$ ensures that each projection $\mc P_n$ extends to a bounded linear map on the continuous limit, and that the $\mc P_n$ converge to the identity in the strong operator topology, as we show in the following lemma.
\begin{lemma}\label{lem:projections_converge}
    In the notation of Definition~\ref{def:cont_lims}, there exists $C>0$ such that $\|\mc P_nx\|_c\leq C\|x\|_c$ for all $n$ and all $x\in\overline{\vct V_{\infty}}$ if and only if $\mc P_n$ extends to a bounded linear map $\overline{\vct V_{\infty}}\to \vct V_n$ for all $n$ and $\|x-\mc P_nx\|_c\to 0$ as $n\to\infty$ for all $x\in \overline{\vct V_{\infty}}$. 
\end{lemma}
\begin{proof}
    %Let $\|\mc P_n\|_{\mathrm{op},c}$ be the operator norm of $\mc P_n\colon \overline{\vct V_{\infty}}\to \vct V_n\subseteq \overline{\vct V_{\infty}}$ with respect to $\|\cdot\|_c$. 
    Suppose $\|\mc P_nx\|_c\leq C\|x\|_c$ for all $n$ and $x$. Then $\|\mc P_n\|_{\mathrm{op},c}\leq C$ so that $\mc P_n$ extends to a bounded linear map on $\overline{\vct V_{\infty}}$ for all $n$. Furthermore, for each $x\in \overline{\vct V_{\infty}}$ there exists $x_n\in \vct V_n$ such that $\|x-x_n\|_c\to 0$. Since $x_n = \mc P_nx_n$, we have
    \begin{equation*}
        \|x-\mc P_nx\|_c \leq \|x-x_n\|_c + \|\mc P_nx_n - \mc P_nx\|_c\leq (1+C)\|x-x_n\|_c \to 0.
    \end{equation*}
    Conversely, if $\|\mc P_n\|_{\mathrm{op},c}<\infty$ for all $n$ and $\mc P_n$ converge strongly to the identity, then for any $x\in\overline{\vct V_{\infty}}$ we have $\|\mc P_nx\|_c<\infty$ for all $n$ and $\|\mc P_nx\|_c\to \|x\|_c$ as $n\to\infty$, hence $\sup_n\|\mc P_nx\|_c<\infty$. By the uniform boundedness principle, we have $\sup_n\|\mc P_n\|_{\mathrm{op},c}=C<\infty$.
\end{proof}

We close this section by noting that, more generally, if a sequence of convex functions $\{f_n\colon\vct V_n\to\RR\cup\{\infty\}\}$ is intersection-compatible, then it extends to $f_{\infty}\colon \vct V_{\infty}\to\RR\cup\{\infty\}$. By taking the closure of its epigraph it can further be extended to $\overline{f_{\infty}}\colon \overline{\vct V_{\infty}}\to\RR\cup\{\infty\}$, in which case $\overline{\gamma_{C_{\infty}}}=\gamma_{\overline{C_{\infty}}}$ and $\overline{h_{C_{\infty}}}=h_{\overline{C_{\infty}}}$. Thus, the conic descriptions we obtain in Theorem~\ref{thm:descriptions_of_lims} below for continuous limits of convex sets yield conic descriptions for limits of functions as well.

\section{Structural Results on \new{Dimension-}Free Descriptions}\label{sec:free_and_compatible}

With the relevant background from representation stability in hand, we now proceed to prove the structural results discussed in Section~\ref{sec:intro_structural} pertaining to freely-described convex sets.
%In particular, we give conditions for a freely-described sequence of sets to satisfy our compatibility conditions in Definition~\ref{def:compatibility_conds}, and for a single convex set in a fixed dimension to extend to a freely-described and compatible sequence. We then prove that free descriptions certifying our compatibility conditions often extend to a description of a continuous limit of the sequence of sets.
We leverage these results to provide a range of examples of \new{dimension-}free descriptions arising in various applications in Section~\ref{sec:examples}.
We begin with the following instructive example, which shows that freely-described sequences of convex sets need not satisfy either of our compatibility conditions. It also shows that a fixed convex set can extend to different freely-described sequences, depending on its conic description.
\begin{example}[(In)compatible sequence of hypercubes]\label{ex:bad_cube}
In this example, we construct two freely-described sequences of hypercubes with respect to the same description spaces, one of which is both intersection and projection compatible and the other is neither.
Let $\mscr V = \{\RR^n\}$ with embeddings by zero-padding and the action of $\msf G_n=\msf B_n$ as in Example~\ref{ex:basic_consist_seq}. Let $s=\diag(-1,1,\ldots,1)\in\msf B_n$, which together with $\msf S_n$ generates $\msf B_n$. 

We define two consistent sequences that will be used to construct our description spaces. Let $\mscr W^{(1)}=\{\RR^n\}$ with embeddings by zero-padding but on which $s$ acts trivially and $\msf S_n$ acts as usual. Let $\mscr W^{(2)} = \{\RR^{n}\oplus\RR^n\}$ with embeddings by zero-padding each of the two summands, on which $\msf B_n$ acts as follows. We set $s\cdot(x,y)=([y_1,x_2,\ldots,x_n]^\top,[x_1,y_2,\ldots,y_n]^\top)$ and $\sigma\cdot(x,y)=(\sigma x,\sigma y)$ for $\sigma\in\msf S_n$.
We set our description spaces to
\begin{equation*}
    \mscr U = \mscr W^{(1)}\oplus \mscr W^{(2)}\oplus\RR = \{(\RR^n)^{\oplus 3}\oplus \RR\},\quad \mscr K = \{0\oplus (\RR^n_+)^{\oplus 2}\oplus\RR_+\}, \quad \mscr W=\mscr W^{(1)}\oplus\RR=\{\RR^n\oplus\RR\}.
\end{equation*}
Consider the following two sequences of convex sets, both freely-described with respect to these description spaces. 
The first is the sequence of unit hypercubes $\{\cvx C_n^{(1)} = [-1,1]^n\}$ given by~\eqref{eq:preim_seq} with
\begin{equation}\label{eq:normal_cube}
    A_n^{(1)}x = (0, x, -x, 0),\qquad B_n^{(1)}(y,\beta) = (y, y, y, 0),\qquad u_n^{(1)} = (-\mathbbm{1}_n, 0, 0, 0),
\end{equation}
where $x,y\in\RR^n$ and $\beta\in\RR$. 
The second is the sequence of scaled hypercubes $\{\cvx C_n^{(2)}=\frac{3}{2n-1}[-1,1]^n\}$ given by~\eqref{eq:preim_seq} with
\begin{equation}\label{eq:scaled_cube}
    A_n^{(2)} = A_n^{(1)},\qquad B_n^{(2)}(y,\beta) = (y-\mathbbm{1}_n\mathbbm{1}_n^\top y + \beta\mathbbm{1}_n, y, y, -\mathbbm{1}_n^\top y - \beta),\qquad u_n = (0,0,0,3).
\end{equation}
% \begin{equation}\label{eq:unit_cube_descr_fancy}
%         \cvx C_n^{(1)} = \left\{x\in\RR^n: \exists (y,\beta)\in \RR^n\oplus\RR \textrm{ s.t. } \begin{bmatrix} I, & 0\end{bmatrix}\begin{bmatrix} y \\ \beta\end{bmatrix} - \mathbbm{1}_n=0,\ \begin{bmatrix} I\\ -I\\ 0\end{bmatrix} x + 
%         \begin{bmatrix} I & 0\\I & 0 \\ 0 & 0\end{bmatrix}\begin{bmatrix} y \\ \beta\end{bmatrix} \geq 0\right\} = [-1,1]^n,
% \end{equation}
% and the second is the following sequence of scaled hypercubes
% \begin{equation*}\begin{aligned}
%     \cvx C_n^{(2)} &= \left\{x\in\RR^n: \exists (y,\beta)\in\RR^n\oplus\RR \textrm{ s.t. } \begin{bmatrix} I - \mathbbm{1}_n\mathbbm{1}_n^\top, & \mathbbm{1}_n\end{bmatrix}\begin{bmatrix} y \\ \beta\end{bmatrix} = 0,\ \begin{bmatrix} I \\ -I\\ 0\end{bmatrix}x + \begin{bmatrix} I & 0\\ I & 0 \\ -\mathbbm{1}_n^\top & -1\end{bmatrix}\begin{bmatrix} y\\ \beta\end{bmatrix} + \begin{bmatrix} 0 \\ 0 \\ 3\end{bmatrix} \geq 0\right\}\\ &= \frac{3}{2n-1}[-1,1]^n .
% \end{aligned}\end{equation*}
It is straightforward to check that both sequences are freely-described. 
In particular, all the relevant spaces of invariants stabilize starting at $n=2$, so that the extension of either description of $[-1,1]^2$ to a freely-described sequence is unique.  However, the two sequences of sets have notably different properties.  The first sequence is both intersection and projection compatible while the second is neither. Furthermore, note that $\cvx C_2^{(1)} = \cvx C_2^{(2)} = [-1,1]^2$ but $\cvx C_n^{(1)}\neq \cvx C_n^{(2)}$ for all $n \neq 2$. Thus, the extension of the unit square $[-1,1]^2$ to a freely-described sequence is not unique, but depends on its conic description.
\end{example}
Motivated by this example, we give conditions on \new{dimension-}free descriptions ensuring that the corresponding sequence of sets satisfies compatibility conditions.  In turn, these yield conditions on a description of a fixed set ensuring that it extends to a freely-described and compatible sequence of sets as well as conditions on the existence of limiting descriptions.

%The resulting conditions form the basis of our algorithm for fitting freely-described convex sets to data in Section~\ref{sec:numerical_examples}.

\subsection{\new{Dimension-}Free Descriptions Certifying Compatibility}
Example~\ref{ex:bad_cube} shows that a freely-described sequence of convex sets need not satisfy either compatibility condition in Definition~\ref{def:compatibility_conds}. 
In this section, we therefore give conditions on \new{dimension-}free descriptions ensuring that our compatibility conditions are satisfied.
%\VC{The below result should be a proposition.}
\begin{proposition}\label{prop:compatible_descriptions}
Let $\{\vct V_n\},\{\vct W_n\},\{\vct U_n\}$ be consistent sequences of $\{\msf G_n\}$-representations, let $\{\cvx K_n\subseteq \vct U_n\}$ be an intersection- and projection-compatible sequence of convex cones, and let $\mscr C=\{\cvx C_n\subseteq \vct V_n\}$ be described by linear maps $\{A_n\colon \vct V_n\to \vct U_n\},\{B_n\colon \vct W_n\to \vct U_n\}$ and elements $\{u_n\in \vct U_n^{\msf G_n}\}$ as in~\eqref{eq:preim_seq}.
%Assume that the cones $\{\cvx K_n\}$ are both intersection-compatible and projection-compatible.
    \begin{enumerate}[(a)]
        \item If $\{A_n\},\{B_n\},\{B_n^*\}$ are morphisms, $\{u_n\}$ is freely-described, and $u_{n+1}-u_n\in \cvx K_{n+1}$ for all $n$, then $\mscr C$ is intersection-compatible. If, in addition, $\{A_n^*\}$ is a morphism, then $\mscr C$ is also projection-compatible.

        \item If $\{A_n\},\{A_n^*\}\{B_n\},\{B_n^*\}$ are morphisms, $\{u_n\}$ is freely-described, and $u_{n+1}-u_n\in \cvx K_{n+1} + A_{n+1}(\vct V_n^{\perp}) + B_{n+1}(\vct W_n)$, then $\mscr C$ is projection-compatible.
    \end{enumerate}
\end{proposition}
\begin{proof}
    \begin{enumerate}[(a)]
        \item We show $\cvx C_n\subseteq \cvx C_{n+1}$. If $x\in \cvx C_n$ then there is $y\in \vct W_n$ satisfying $A_nx + B_ny + u_n\in \cvx K_n$. 
        Then
        \begin{equation*}
            A_{n+1}x + B_{n+1}y + u_{n+1} = A_n x + B_n y + u_n + (u_{n+1}-u_n) \in \cvx K_{n+1},
        \end{equation*}
        where we used the facts that $\{A_n\},\{B_n\}$ are morphisms, that $\cvx K_n\subseteq \cvx K_{n+1}$ by intersection-compatiblity, and that $u_{n+1}-u_n\in \cvx K_{n+1}$.
        Thus, $x\in \cvx C_{n+1}$.
        Next, we show $\cvx C_{n+1}\cap \vct V_n\subseteq  \cvx C_n$. If $x\in  \cvx C_{n+1}\cap \vct V_n$, there exists $y\in \vct W_{n+1}$ satisfying $A_{n+1}x + B_{n+1}y + u_{n+1}\in \cvx K_{n+1}$. 
        Because $\{A_n\}$ is a morphism, we have $A_{n+1}x=A_nx$ and hence $\mc P_{\vct U_n}A_{n+1}x = A_nx$. 
        Because $\{B_n^*\}$ is a morphism, we have $\mc P_{\vct U_n}B_{n+1} = B_n\mc P_{\vct W_n}$, hence $\mc P_{\vct U_n}B_{n+1}y=B_{n+1}(\mc P_{\vct W_n}y)$. 
        Finally, we have $\mc P_{\vct U_n}u_{n+1}=u_n$ because $\{u_n\}$ is freely-described, and $\mc P_{\vct U_n}\cvx K_{n+1}\subseteq \cvx K_n$ because $\{\cvx K_n\}$ is projection-compatible. Thus, applying $\mc P_{\vct U_n}$ we obtain $A_nx + B_n(\mc P_{\vct W_n}y) + u_n\in \cvx K_n$, thus yielding that $x\in \cvx C_n$. We conclude that $\mscr C$ is intersection-compatible. 

        Because $\mscr C$ is intersection-compatible, we have $\mc P_{\vct V_n} \cvx C_{n+1}\supseteq  \cvx C_n$. Conversely, if $x\in \cvx C_{n+1}$ then $A_{n+1}x+B_{n+1}y+u_{n+1}\in \cvx K_{n+1}$ for some $y\in \vct W_{n+1}$. If $\{A_n^*\}$ is a morphism, then $\mc P_{\vct U_n}A_{n+1}=A_n\mc P_{\vct V_n}$. Applying $\mc P_{\vct U_n}$ to both sides we obtain $A_n\mc P_{\vct V_n}x + B_n\mc P_{\vct W_n}y + u_n\in \cvx K_n$ and hence $\mc P_{\vct V_n}x\in \cvx C_n$, which implies that $\mscr C$ is projection-compatible.

        \item First, we show $\mc P_{\vct V_n}\cvx C_{n+1}\subseteq \cvx C_n$. If $x\in \cvx C_{n+1}$ then there is $y\in \vct W_{n+1}$ satisfying $A_{n+1}x + B_{n+1}y + u_{n+1}\in \cvx K_{n+1}$. Applying $\mc P_{\vct U_n}$ to both sides and using the facts that $\{A_n^*\},\{B_n^*\}$ are morphisms, that $\{u_n\}$ is freely-described, and that $\{\cvx K_n\}$ is projection-compatible, we obtain $A_n(\mc P_{\vct V_n}x) + B_n(\mc P_{\vct W_n}y) + u_n\in \cvx K_n$, showing that $\mc P_{\vct V_n}x\in \cvx C_n$.
        Second, we show $\cvx C_n\subseteq \mc P_{\vct V_n}\cvx C_{n+1}$. Suppose $A_nx + B_ny+u_n\in \cvx K_n$ for $x\in \vct V_n$. Let $x_{\perp}\in \vct V_n^{\perp}$ and $y'\in \vct W_n$ satisfy $u_{n+1}-u_n + A_{n+1}x_{\perp} + B_{n+1}y' \in \cvx K_{n+1}$. As $\{A_n\},\{B_n\}$ are morphisms, 
        \begin{equation*}
            A_{n+1}(x + x_{\perp}) + B_{n+1}(y + y') + u_{n+1} = A_nx + B_ny + u_n + (u_{n+1}-u_n + A_{n+1}x_{\perp} + B_{n+1}y') \in \cvx K_{n+1},
        \end{equation*}
        hence $x+x_{\perp}\in \cvx C_{n+1}$ and $\mc P_{\vct V_n}(x+x_{\perp})=x$. This shows $\mscr C$ is projection-compatible. 
        \qedhere
    \end{enumerate}
\end{proof}
%Interestingly, the conditions in Proposition~\ref{prop:compatible_descriptions}, which arise naturally when certifying compatibility, actually \emph{imply} that the descriptions are free. \VC{This preceding sentence is confusing.  The conditions of the theorem do require that $\{u_n\}$ is freely-described.  Do you mean that requiring $\{A_n\},\{A_n^*\}\{B_n\},\{B_n^*\}$ to be morphisms implies that these linear maps are freely-described equivariant linear maps?  This would be a little less confusing, but it should be stated precisely.  If this remark doesn't have utility later, I'd just remove it.}
%
We say that a sequence of conic descriptions \emph{certifies compatibility} when it satisfies the hypotheses of Proposition~\ref{prop:compatible_descriptions}.
\begin{remark}\label{rmk:compatible_descrip}
We make a number of remarks about the conditions in Proposition~\ref{prop:compatible_descriptions}.    
\begin{itemize}

    \item Standard sequences of cones such as nonnegative orthants and PSD cones satisfy both intersection and projection compatibility.

    \item The set of linear maps $\{A_n\}$ and $\{B_n\}$ satisfying the hypotheses of Proposition~\ref{prop:compatible_descriptions}(a) form \emph{linear subspaces} of the corresponding spaces of freely-described elements. The set of sequences $\{u_n\}$ satisfying those hypotheses form a \emph{convex cone}. 
    Similarly, we can parametrize descriptions satisfying the hypotheses of Proposition~\ref{prop:compatible_descriptions}(b) by a convex cone, by considering $\{u_n\}$ of the form $u_n=A_n(v_n) + B_n(w_n) + z_n$ where $\{v_n\in \vct V_n^{\msf G_n}\}$, $\{w_n\in \vct W_n^{\msf G_n}\}$, and $\{z_n\in \cvx K_n^{\msf G_n}\}$ are freely-described.
    
    \item Freely-described sets satisfying the hypotheses of Proposition~\ref{prop:compatible_descriptions}(b) need not be intersection-compatible, as the elliptope in~\eqref{eq:elliptope} studied below demonstrates.
    
    \item \new{Dimension-}free descriptions certifying compatibility often extend to descriptions of infinite-dimensional limits, see Theorem~\ref{thm:descriptions_of_lims} below.
\end{itemize}
\end{remark}

Returning to Example~\ref{ex:bad_cube}, we note that the description~\eqref{eq:normal_cube} satisfies all the hypotheses of Proposition~\ref{prop:compatible_descriptions}(a) and hence certifies the intersection and projection compatibility of the sequence of hypercubes it describes. On the other hand, the description~\eqref{eq:scaled_cube} does not satisfy the hypotheses of either part of the above theorem because neither $\{B_n^{(2)}\}$ nor $\{(B_n^{(2)})^*\}$ are morphisms.

\subsection{Extending a Convex Set to a Freely-Described Sequence}
Let $\mscr V = \{\vct V_n\}$ be a consistent sequence of $\{\msf G_n\}$-representations. 
In this section, we fix $n_0$ and seek to extend a convex subset $\cvx C_{n_0}\subseteq \vct V_{n_0}$ to a freely-described and compatible sequence.
As we saw in Example~\ref{ex:bad_cube}, different extensions may be obtained from different fixed-dimensional conic descriptions of $\cvx C_{n_0}$. 
We therefore further fix consistent sequences $\mscr U=\{\vct U_n\},\mscr W=\{\vct W_n\}$ and cones $\{\cvx K_n\subseteq \vct U_n\}$ satisfying both intersection and projection compatibility, and assume $\cvx C_{n_0}$ is described by~\eqref{eq:preim_seq}. 
We now ask when the description of $\cvx C_{n_0}$ can be extended to a \new{dimension-}free description that yields a compatible sequence of convex sets. 

If $n_0$ exceeds the presentation degrees of $\mscr W,\mscr U,\mscr V\otimes\mscr U$ and $\mscr W\otimes\mscr U$, then Proposition~\ref{prop:presentation_implies_isom_of_canon} shows that we can uniquely extend all the elements in the description of $\cvx C_{n_0}$ to freely-described elements, and hence extend $\cvx C_{n_0}$ to a freely-described sequence. 
To ensure that this unique extension satisfies our compatibility conditions, we give conditions on these invariants guaranteeing that their extensions to freely-described elements satisfy the conditions of Proposition~\ref{prop:compatible_descriptions}.

We begin by characterizing when a fixed equivariant map $A_{n_0}\in\mc L(\vct V_{n_0},\vct U_{n_0})^{\msf G_{n_0}}$ extends to a morphism of sequences $\{A_n\in\mc L(\vct V_n,\vct U_n)^{\msf G_n}\}$. 
%Note that if $A_{n_0}$ extends to a morphism, then $A_{n_0}(\vct V_j)=A_j(\vct V_j)\subseteq \vct U_j$ for all $j\leq n_0$. The following theorem shows that the converse holds when $n_0$ is large enough, giving the desired characterization.
%To that end, we prove the following result using the definitions from Section~\ref{sec:backgrnd_pres_deg}.
\begin{theorem}\label{thm:extending_maps_to_lims}
Let $\mscr V_0$ be a consistent sequence of $\{\msf G_n\}$-representations and let $\mscr V=\{\vct V_n\},\mscr U=\{\vct U_n\}$ be $\mscr V_0$-modules. Assume $\mscr V$ is generated in degree $d$ and presented in degree $k$, and fix $n_0\geq k$. Then $A_{n_0}$ extends to a morphism of sequences if and only if $A_{n_0}\in \mc L(\vct V_{n_0},\vct U_{n_0})^{\msf G_{n_0}}$ satisfies $A_{n_0}(\vct V_j)\subseteq \vct U_j$ for $j\leq d$.
\end{theorem}
\begin{proof}
If $A_{n_0}$ extends to a morphism then $A_{n_0}(\vct V_j) = A_j(\vct V_j)\subseteq \vct U_j$ for all $j\leq n_0$. Conversely, assume $A_{n_0}(\vct V_j)\subseteq \vct U_j$ for $j\leq d$, and let $\{\msf H_{n,d}\}$ be the centralizing subgroups of $\mscr V_0$.
    Suppose first that $\mscr V = \mscr F = \bigoplus_j\mathrm{Ind}_{\msf G_{d_j}}\vct W_{d_j}$ is free. Note that it is generated in degree $\max_jd_j\leq d$.
    
    Let $A_{d_j} = A_{n_0}|_{\vct W_{d_j}}$ and fix $n\geq d_j$.
    Because $\mscr U$ is a $\mscr V_0$-module, we have $\vct U_{d_j}\subseteq \vct U_n^{\msf H_{n,d_j}}$, so we can view $\vct U_{d_j}$ as a representation of $\msf G_{d_j}\msf H_{n,d_j}$ on which $\msf H_{n,d_j}$ acts trivially. 
    As $A_{d_j}(\vct W_{d_j})\subseteq \vct U_{d_j}$ and is $\msf G_{d_j}\msf H_{n,d_j}$-equivariant, the following composition defines an equivariant map
    \begin{equation*}
        A_{n,j}\colon \mathrm{Ind}_{\msf G_{d_j}\msf H_{n,d_j}}^{\msf G_n}(\vct W_{d_j}) \xrightarrow{\mathrm{Ind}(A_{d_j})} \mathrm{Ind}_{\msf G_{d_j}\msf H_{n,d_j}}^{\msf G_n}\vct U_{d_j} \xrightarrow{g\otimes u\mapsto g\cdot u} \vct U_n,
    \end{equation*}
    where the induced map $\mathrm{Ind}(A_{d_j})$ was defined in Section~\ref{sec:intro_notation}.
    Note that $A_{n_0,j}=A_{n_0}|_{\mathrm{Ind}_{\msf G_{d_j}}(\vct W_{d_j})_{n_0}}$, since $A_{n_0,j}(g\otimes w) = g\cdot A_{n_0}w$ for all $g\in \msf G_n$ and $w\in \vct W_{d_j}$. Also, $\{A_{n,j}\}$ defines a morphism $\mathrm{Ind}_{\msf G_{d_j}}(\vct W_{d_j})\to \mscr U$. Therefore, the desired extension of $A_{n_0}$ to a morphism $\{A_n\}$ is given by $A_n = \bigoplus_jA_{n,j}\colon \vct V_n\to \vct U_n$.

    Now suppose $\mscr F=\{\vct F_n\}$ is an algebraically free $\mscr V$-module as above with a surjection $\mscr F\to \mscr V$ whose kernel $\mscr K=\{\vct K_n\}$ is generated in degree $k$. Define the composition 
    \begin{equation*}
        \widetilde A_{n_0}\colon \vct F_{n_0}\to \vct V_{n_0} \xrightarrow{A_{n_0}} \vct U_{n_0},
    \end{equation*}
    which satisfies $\widetilde A_{n_0}(\vct F_j)\subseteq \vct U_j$ for all $j\leq d$ by assumption and $\widetilde A_{n_0}(\vct K_{n_0})=0$ by its definition. By the previous paragraph, it extends to a morphism $\{\widetilde A_n\colon \vct F_n\to \vct U_n\}$. Because $\mscr K$ is generated in degree $k$ and $n_0\geq k$, we have $\vct K_n = \RR[\msf G_n]\vct K_{n_0}$. Because $\widetilde A_n$ is equivariant, we have $\widetilde A_n(\vct K_n)=0$. Therefore, $\widetilde A_n$ can be factored as $\vct F_n\to \vct F_n/\vct K_n = \vct V_n\xrightarrow{A_n} \vct U_n$, where the maps $A_n$ in this factorization give the desired extension of $A_{n_0}$ to a morphism $\mscr V\to \mscr U$.
\end{proof}
To satisfy the conditions in Proposition~\ref{prop:compatible_descriptions}, we also use Theorem~\ref{thm:extending_maps_to_lims} to ensure $\{A_n^*\}$ defines a morphism. To that end, note that $A_{n_0}^*(\vct U_j)\subseteq \vct V_j$ if and only if $A_{n_0}(\vct V_j^{\perp})\subseteq \vct U_j^{\perp}$, where orthogonal complements are taken inside $\vct V_{n_0}$ and $\vct U_{n_0}$.
We can now give conditions guaranteeing extendability of a convex set to a freely-described and compatible sequence. 
%We use the following theorem to computationally parametrize and search over such sequences of sets in Section~\ref{sec:numerical_examples}.
\begin{theorem}[Parametrizing freely-described and compatible sequences]\label{thm:extending_compatible_descriptions}
    Let $\mscr V_0$ be a consistent sequence of $\{\msf G_n\}$ representations and let $\mscr V=\{\vct V_n\}$, $\mscr W=\{\vct W_n\}$, and $\mscr U=\{\vct U_n\}$ be $\mscr V_0$-modules generated in degrees $d_V,d_U,d_W$, respectively, and presented in degree $k$. Let $\{\cvx K_n\subseteq \vct U_n\}$ be an intersection and projection-compatible sequence of convex cones. Fix $n_0\geq k$. 

    Let $u_{n_0}\in \vct U_{n_0}^{\msf G_{n_0}}$ and let $A_{n_0}\in\mc L(\vct V_{n_0},\vct U_{n_0})^{\msf G_{n_0}}$, $B_{n_0}\in\mc L(\vct W_{n_0},\vct U_{n_0})^{\msf G_{n_0}}$ such that
    \begin{equation}\label{eq:ext_cond_on_maps}
        A_{n_0}(\vct V_j)\subseteq \vct U_j \textrm{ for } j\leq d_V,\quad B_{n_0}(\vct W_j)\subseteq \vct U_j \textrm{ for } j\leq d_W,\quad B_{n_0}(\vct W_j^{\perp})\subseteq \vct U_j^{\perp} \textrm{ for } j\leq d_U. 
    \end{equation}
    Then there are unique extensions of $A_{n_0}$ and $B_{n_0}$ to morphisms $\{A_n\in\mc L(\vct V_n,\vct U_n)^{\msf G_n}\}$ and $\{B_n\in\mc L(\vct W_n,\vct U_n)^{\msf G_n}\}$, where $\{B_n^*\}$ is a morphism as well. Furthermore, there is a unique extension of $u_{n_0}$ to a freely-described element $\{u_n\in\vct U_n^{\msf G_n}\}$.  
    Let $\mscr C = \{\cvx C_n\}$ be the freely-described sequence of convex sets given by~\eqref{eq:preim_seq}.

    \begin{enumerate}[(a)]
        \item If $u_{n+1}-u_n\in \cvx K_{n+1}$ for all $n$, then $\mscr C$ is intersection-compatible.
        If, in addition, we have $A_{n_0}(\vct V_j^{\perp})\subseteq \vct U_j^{\perp}$ for $j\leq d_U$, then $\mscr C$ is also projection-compatible.
        
        \item If $u_{n+1}-u_n\in \cvx K_{n+1} + A_{n+1}(\vct V_n^{\perp}) + B_{n+1}(\vct U_{n+1})$ for all $n$, then $\mscr C$ is projection-compatible. 
    \end{enumerate}
\end{theorem}
\begin{proof}
    Theorem~\ref{thm:extending_maps_to_lims} shows that $A_{n_0}$ and $B_{n_0}$ uniquely extend to morphisms $\{A_n\},\{B_n\}$ such that $\{B_n^*\}$ is also a morphism. Proposition~\ref{prop:presentation_implies_isom_of_canon} shows that $u_{n_0}$ extends to a freely-described element $\{u_n\}$. Under the stated conditions on these extensions, Proposition~\ref{prop:compatible_descriptions} yields the claimed compatibility conditions for the sequence of convex sets $\{\cvx C_n\}$ given by~\eqref{eq:preim_seq}.
\end{proof}
In Section~\ref{sec:numerical_examples}, we use Theorem~\ref{thm:extending_compatible_descriptions} to computationally parametrize and search over \new{dimension-}free descriptions certifying compatibility.  

\subsection{Limits of Freely-Described Convex Sets}\label{sec:lim_of_sets}
Our last structural result gives conditions under which \new{dimension-}free descriptions extend to descriptions of continuous limits of convex sets. 
%Such descriptions are natural to seek when the elements of the convex sets in the sequence are viewed as elements of an infinite-dimensional space, as in the case of sets of graphons in Example~\ref{ex:graphon_limit_set}.
%We also use limiting descriptions to prove an infinite-dimensional analogue of the Schur--Horn theorem in Proposition~\ref{prop:S\msf H_theorem_special}.
The certificates of compatibility in Proposition~\ref{prop:compatible_descriptions} play a major role once again in the existence of these limiting descriptions.
If $\mscr C = \{\cvx C_n\subseteq \vct V_n\}$ is an intersection-compatible sequence of convex subsets of a consistent sequence $\{\vct V_n\}$, define $\cvx C_{\infty} = \bigcup_n\cvx C_n$, which is a convex subset of $\vct V_{\infty}$.
\begin{theorem}[Descriptions of limits]\label{thm:descriptions_of_lims}
    Suppose $\mscr C=\{\cvx C_n\subseteq \vct V_n\}$ is given by~\eqref{eq:preim_seq}. If all the hypotheses of Proposition~\ref{prop:compatible_descriptions}(a) are satisfied (so $\mscr C$ is intersection and projection compatible) and if $\{A_n\},\{B_n\},\{u_n\}$ extend continuously to the limit, then
    \begin{equation}\label{eq:lim_preim}
        \left\{x\in \overline{\vct V_{\infty}}: \exists y\in\overline{\vct W_{\infty}} \textrm{ s.t. } \overline{A_{\infty}}x + \overline{B_{\infty}}y + u_{\infty} \in \overline{\cvx K_{\infty}}\right\},
    \end{equation}
    contains $\cvx C_{\infty}$ and is dense in its closure.
    If $B_{\infty}=0$, then~\eqref{eq:lim_preim} equals $\overline{\cvx C_{\infty}}$.
\end{theorem}
\begin{proof}
        To prove that $\cvx C_{\infty}$ is contained in~\eqref{eq:lim_preim}, observe that $u_{\infty}-u_n = \lim_{N\to\infty}(u_N-u_n)\in \overline{\cvx K_{\infty}}$ for all $n$ by Lemma~\ref{lem:projections_converge}. Therefore, if $x\in \cvx C_n$ and $y\in \vct W_n$ satisfy $A_nx+B_ny+u_n\in \cvx K_n$, then $\overline{A_{\infty}}x + \overline{B_{\infty}}y + u_{\infty} = A_nx + B_ny + u_n + (u_{\infty}-u_n)\in \overline{\cvx K_{\infty}}$, proving that $x$ is in~\eqref{eq:lim_preim}. 
        
        To prove that~\eqref{eq:lim_preim} is contained in $\overline{\cvx C_{\infty}}$, suppose $x\in \overline{\vct V_{\infty}}$ and $y\in\overline{\vct W_{\infty}}$ satisfy $\overline{A_{\infty}}x + \overline{B_{\infty}}y + u_{\infty}\in\overline{\cvx K_{\infty}}$. Because $\{A_n^*\},\{B_n^*\}$ are morphisms and $\{\cvx K_n\}$ is projection-compatible, applying $\mc P_{\vct U_n}$ we obtain $A_n(\mc P_{\vct V_n}x) + B_n(\mc P_{\vct W_n}y) + u_n\in \cvx K_n$, hence $\mc P_{\vct V_n}x\in \cvx C_n$ for all $n$ and $x = \lim_n\mc P_{\vct V_n}x\in\overline{\cvx C_{\infty}}$ by Lemma~\ref{lem:projections_converge}.

        If $B_{\infty}=0$ then~\eqref{eq:lim_preim} is the preimage under the continuous map $x\mapsto \overline{A_{\infty}}x + u_{\infty}$ of the closed cone $\overline{\cvx K_{\infty}}$, hence~\eqref{eq:lim_preim} is closed and must equal $\overline{\cvx C_{\infty}}$.
\end{proof}
If the set~\eqref{eq:lim_preim} is dense in $\overline{\cvx C_{\infty}}$, then optimizing a continuous function over~\eqref{eq:lim_preim} and over $\overline{\cvx C_{\infty}}$ are equivalent, yielding an finitely-parametrized conic program in $\overline{\vct V_{\infty}}$. 

\section{Examples}\label{sec:examples}
In this section, we present examples of freely-described and compatible sequences of sets and functions arising in a variety of applications. For several of these, we derive finitely-parametrized families of freely-described sets, and we also give limiting descriptions and some related consequences.
\subsection{Simplices and Norms}\label{sec:simplex}
Let $\mscr V=\{\vct V_n=\RR^n\}$ with embedding by zero-padding and the action of $\msf G_n=\msf S_n$ as in Example~\ref{ex:basic_consist_seq}.
\paragraph{Simplex:}  The sequence of simplices $\Delta^{n-1}=\{x\in\RR^n:x\geq0,\ \mathbbm{1}_n^\top x=1\}$ is freely-described, as the associated description is of the form~\eqref{eq:preim_seq} with description spaces
%\begin{equation*}
   $\mscr U=\mscr V\oplus\RR = \{\vct U_n=\RR^{n+1}\}$, $\mscr W = \{\vct W_n=0\}$ and cones $\mscr K = \{\cvx K_n=\RR^n_+\oplus\{0\}\}$,
and with $A_n = [I_n, -\mathbbm{1}_n]^\top$, $B_n=0$, and $u_n=[0,1]^\top$.
%Note that $\{A_n\},\{B_n\},\{w_n\}$ are all freely-described elements, hence the above description is free. 
Moreover, the simplices $\{\Delta^{n-1}\}$ are intersection-compatible (but not projection-compatible); the above description satisfies the hypotheses of Proposition~\ref{prop:compatible_descriptions}(a) and hence certifies this compatibility.

\paragraph{Solid simplex:} In contrast, consider the sequence of solid simplices $\Delta^n_s=\{x\in\RR^n:x\geq0, \mathbbm{1}_n^\top x\leq 1\}$. This sequence of conic descriptions is the same as above except that here $\cvx K_n=\RR^{n+1}_+$, and in particular is \new{dimension-}free and certifies intersection compatibility of $\{\Delta^n_s\}$. However, the sequence $\{\Delta^n_s\}$ is also projection-compatible, but the above \new{dimension-}free description of it does \emph{not} certify this compatibility. That is because $\{A_n^*\}$ is not a morphism, hence the hypotheses of the second part of Proposition~\ref{prop:compatible_descriptions}(a) are not satisfied. 
Indeed, the above description of the solid simplex does not make its projection compatibility apparent, since $\mathbbm{1}_n^\top\mc P_{\vct V_n}x$ can be arbitrary compared to $\mathbbm{1}_{n+1}^\top x$ for general $x\in\RR^{n+1}$, but must be smaller if $x\geq 0$. 

Instead, the following is a \new{dimension-}free \emph{semidefinite} description of the solid simplices that certifies both intersection and projection compatibilities:
\begin{equation}\label{eq:sdp_of_simplex}
    \Delta^n_s = \left\{x\in\RR^n:\begin{bmatrix} 1 & x^\top \\ x & \mathrm{diag}(x)\end{bmatrix}=A_nx + u_n\succeq0\right\},
\end{equation}
which is of the form~\eqref{eq:preim_seq} with $\mscr U = \mathrm{Sym}^2(\mathrm{Sym}^{\leq 1}\mscr V)=\{\mbb S^{n+1}\}$, $\mscr K = \{\mbb S^{n+1}_+\}$, and $\mscr W=0$. Here both $\{A_n\}$ and $\{A_n^*\}$ are morphisms.
It would be interesting to find a \new{dimension-}free linear programming description of the solid simplices which certifies both compatibilities, or to show that none exists.

\paragraph{Norm balls:} Several related \new{dimension-}free descriptions are derived from the above. The sequence of $\ell_1$ unit balls $\cvx B_{\ell_1}^n=\{x\in\RR^n:\|x\|_1\leq 1\}$ is intersection and projection compatible. It can be written as $\cvx B_{\ell_1}^n=\{x\in\RR^n:\exists y\in\Delta^n_s \textrm{ s.t. } -y\leq x\leq y\}$, which combined with~\eqref{eq:sdp_of_simplex} yields a \new{dimension-}free description certifying both compatibilities. 
Similarly, the sequence of $\ell_2$ unit balls $\cvx B_{\ell_2}^n=\{x\in\RR^n:\|x\|_2\leq1\}$ is intersection and projection compatible, and its standard SDP description $\cvx B_{\ell_2}^n=\left\{x\in\RR^n:\begin{bmatrix} 1 & x^\top \\ x & I_n\end{bmatrix} =A_nx+u_n\succeq0\right\}$ is \new{dimension-}free and certifies these compatibilities similarly to~\eqref{eq:sdp_of_simplex}.

Notice that both sequences of norm balls are invariant under the larger group $\msf G_n=\msf B_n$. All of the above descriptions are in fact equivariant under this larger group, when $\mscr U,\mscr W$ are endowed with the corresponding group action.

\subsection{Regular Polygons}\label{sec:regular_polygons}
The following example illustrates a natural sequence of convex sets that is freely-described but satisfies neither intersection nor projection compatibility.
Let $\mscr V=\{(\vct V_n,\varphi_n)\}$ be the consistent sequence $\vct V_n=\RR^2$ with $\varphi_n=\mathrm{id}_{\RR^2}$ and the standard action of the dihedral group $\msf G_n = \mathrm{Dih}_{2^n}$. Consider the sequence of regular $2^n$-gons $\mscr C = \{\cvx C_n\subseteq \RR^2\}$ defined by
\begin{equation}\label{eq:regular_polys}
    \cvx C_n = \mathrm{conv}\left\{\begin{bmatrix} \cos\theta_i\\ \sin\theta_i\end{bmatrix}\right\},\quad \theta_i = \frac{2\pi i}{2^n},\quad i\in \{0,\ldots,2^n-1\}.
\end{equation}
Because $\vct V_n=\vct V_{n+1}$ while $\cvx C_n\neq \cvx C_{n+1}$, the sequence $\mscr C$ satisfies neither intersection nor projection compatibility. Nevertheless, it admits the \new{dimension-}free description
\begin{equation*}
    \cvx C_n = \left\{x\in\RR^2: \exists y\in\RR^{2^n} \textrm{ s.t. } \begin{bmatrix} -I \\ 0 \\ 0 \end{bmatrix}x + \begin{bmatrix}\begin{bmatrix}\multirow{2}{*}{$\cdots$} & \cos(2\pi i/2^n) & \multirow{2}{*}{$\cdots$} \\ & \sin(2\pi i/2^n) & \end{bmatrix}\\ \multicolumn{1}{c}{I_{2^n}} \\ \multicolumn{1}{c}{\mathbbm{1}_{2^n}^\top}\end{bmatrix}y + \begin{bmatrix} 0 \\ 0 \\ -1\end{bmatrix} \in 0\oplus\RR^{2^n}_+\oplus 0\right\},
\end{equation*}
where $\mscr W=\{\RR^{2^n}\}_n$ with embeddings $y\mapsto y\otimes [1,0]^\top$, and $\mscr U = \mscr V\oplus\mscr W\oplus \RR$. We put the standard inner products on $\RR^{2^n}$. The group permutes the $2^n$ vertices of $\cvx C_n$, defining a permutation action on $[2^n]$, and it acts on $\RR^{2^n}$ by applying these permutations to coordinates. 

If $\theta_i=\pi(2i+1)/2^n$ in~\eqref{eq:regular_polys} instead, we mention that the semidefinite description of $\cvx C_n$ given in~\cite{regular_polygons} is also \new{dimension-}free when $\mscr U,\mscr W,\mscr K$ are chosen appropriately.

\subsection{Permutahedra, Schur--Horn Orbitopes, and Limits}\label{sec:DS_permuta}
%In this section, we consider permutahedra, which are convex hulls of all permutations of a vector, and their matrix analogous, the Schur--Horn orbitopes, which are convex hulls of all symmetric matrices with a given spectrum.
%Permutahedra appear as constraint sets in the relaxation of the 2-SUM problem proposed in~\cite{NIPS2013_647bba34}, with applications ranging from gene sequencing to archaeology. These polytopes have also been studied in~\cite{ONN199331,ziegler1995lectures}.
%Schur--Horn orbitopes were studied in~\cite{sanyal_sottile_sturmfels_2011}, and their efficient description as projected spectrahedra was derived in~\cite{efficient_SH_descr}. 

\paragraph{Permutahedra:}
Consider the sequence of standard permutahedra
\begin{equation}\label{eq:permuta}
    \mathrm{Perm}_n = \mathrm{conv}\left\{g\cdot [1,2,\ldots,n]^\top\right\} = \left\{ M[1,2,\ldots,n]^\top: M\in\RR^{n\times n}_+,\ M\mathbbm{1}_n=M^\top\mathbbm{1}_n = \mathbbm{1}_n\right\},
\end{equation}
where the second equality follows by the Birkhoff--von Neumann theorem. 
The sequence $\{\mathrm{Perm}_n\}$, viewed as subsets of the consistent sequence in Example~\ref{ex:basic_consist_seq} with $\msf G_n=\msf S_n$, is neither intersection- nor projection-compatible. Furthermore, their description~\eqref{eq:permuta} is not \new{dimension-}free because the map $M \mapsto M[1,2,\ldots,n]^\top$ is not $\msf G_n$-equivariant. The smaller descriptions of these permutahedra in~\cite{goemans2015smallest,sorting_network_wright} are also not \new{dimension-}free because they are not equivariant.
However, there is a sequence of permutahedra arising naturally from a limiting perspective that is both intersection- and projection-compatible and whose description certifies this compatibility. 

Fix $q,m\in\NN$ and a vector $\lambda\in\RR^q$ with distinct entries, and define $\widetilde \lambda=(\lambda_1,\ldots,\lambda_1,\ldots,\lambda_q,\ldots,\lambda_q)\allowbreak \in\RR^m$ in which $\lambda_i$ appears $m_i$ times (so $\sum_im_i=m$). 
Let $\msf G_n=\msf S_{m2^n}$ embedded in $\msf G_{n+1}$ by sending a $m2^n\times m2^n$ matrix $g\in \msf G_n$ to $g\otimes I_2$. 
%\VC{For my own understanding, does all this stuff go through for any increasing sequence of natural numbers other than $2^n$?} \TODO{yes, indexing the consistent sequence by the divisibility poset. Shall we move that remark to the end of this section? } \VC{Great!} 
Let $\mscr V=\{\RR^{m2^n}\}_n$ with embeddings $x\mapsto x\otimes\mathbbm{1}_2$, the normalized inner product $\langle x,y\rangle = (m2^n)^{-1}x^\top y$, and the standard action of $\msf G_n$. 
We consider convex hulls of all the vectors in $\mscr V$ containing $\lambda_i$ in a fraction $m_i/m$ of its entries, which are given using the Birkhoff--von Neumann theorem by
\begin{equation}\label{eq:permuta_good}
\begin{aligned}
    \mathrm{Perm}(\lambda)_n &= \mathrm{conv}\{g\cdot(\widetilde \lambda\otimes\mathbbm{1}_{m2^n})\}_{g\in \msf S_{m2^n}}\\ &= \left\{M\lambda: M\in \RR^{m2^n\times q}_+,\ M\mathbbm{1}_q = \mathbbm{1}_{m2^n},\ M^\top\mathbbm{1}_{m2^n} = 2^n[m_1,\ldots,m_q]^\top\right\}.
\end{aligned}
\end{equation}
This is an intersection- and projection-compatible sequence of subsets of $\mscr V$. Moreover, the description~\eqref{eq:permuta_good} is \new{dimension-}free and certifies intersection and projection compatibility.
Indeed, let $\mscr W = \mscr V^{\oplus q} = \{\RR^{m2^n\times q}\}_n$ and $\mscr U = \mscr W\oplus \mscr V^{\oplus 2}\oplus\RR^q$ containing cones $\{\RR^{m2^n\times q}_+\oplus 0\oplus0\}$. 
Then~\eqref{eq:permuta_good} is of the form~\eqref{eq:preim_seq} with
\begin{align*}
    &A_nx = (0,-x,0,0),\quad B_nM = (M,\ M\lambda,\ M\mathbbm{1}_q,\ (m2^n)^{-1}M^\top\mathbbm{1}_{m2^n}),
\end{align*}
and $u_n = (0, 0, -\mathbbm{1}_{m2^n}, -\left[\frac{m_1}{m},\ldots,\frac{m_q}{m}\right]^\top)$.
%Note that $B_n^*(M,x_1,x_2,\mu) = M + x_1\lambda^\top - x_2\mathbbm{1}_q + \mathbbm{1}_{m2^n}\mu^\top$ as the adjoint is taken with respect to the normalized inner product. 
Since $\{A_n\},\{A_n^*\},\{B_n\},\{B_n^*\}$ are morphisms and $u_n=u_{n+1}$ under our embedding, Proposition~\ref{prop:compatible_descriptions}(a) applies (note that the inner product here is nonstandard, so $A_n^*\neq A_n^\top$ and similarly for $B_n$).

The insight behind this construction comes from the limiting perspective of Section~\ref{sec:lim_of_sets}. 
Note that $\vct V_{\infty}$ can be viewed as a space of piecewise-constant functions on $[0,1)$. 
Indeed, define intervals $I_i^{(n)}=[(i-1)/m2^n, i/m2^n)$ and associate to each $v\in \vct V_n$ the piecewise-constant function $f_v(x)=\sum_{i=1}^{m2^n}v_i\mathbbm{1}_{I_i^{(n)}}(x)$ where $\mathbbm{1}_S(x)=1$ if $x\in S$ and 0 otherwise. 
Note that $v\in \vct V_n$ and $v\otimes\mathbbm{1}_2\in \vct V_{n+1}$ define the same function in this way, and that $\langle v,w\rangle = \langle f_v,f_w\rangle_{L^2([0,1))}$. 
Also, we have for $f\in \vct V_{\infty}$
\begin{equation}\label{eq:local_avg}
    (\mc P_nf)(x) = m2^n\int_{I_i^{(n)}}f\quad \textrm{if } x\in I_i^{(n)},
\end{equation}
hence Jensen's inequality implies $\|\mc P_nf\|_{L^p}\leq \|f\|_{L^p}$ for all $p\in[1,\infty]$.
Thus, we can identify $\vct V_{\infty}$ with the space of such step functions, which is contained in $L^p([0,1))$ for all $p\in[1,\infty]$, and we can consider the completion of $\vct V_{\infty}$ with respect to any of the $L^p$ norms. 
If $p<\infty$ we get $\overline{\vct V_{\infty}}=L^p([0,1))$ while if $p=\infty$ then $\overline{\vct V_{\infty}}$ is the space of functions continuous on $[0,1)\setminus\{m/2^n: m,n\in\NN\}$ and having left and right limits everywhere~\cite[\S7.6]{foundationsAnalysis}. 
Then $\overline{\mathrm{Perm}(\lambda)_{\infty}}$ is the closed convex hull of functions in $\overline{\vct V_{\infty}}$ that take values $\lambda_i$ on a subset of $[0,1]$ of measure $m_i/m$. 
Furthermore, Theorem~\ref{thm:descriptions_of_lims} applies to the description in~\eqref{eq:permuta_good} and yields the following dense subset of $\overline{\mathrm{Perm}(\lambda)_{\infty}}$.
\begin{proposition}\label{prop:limts_of_permuta}
    Endowing $\vct V_{\infty}$ with the $L^p([0,1))$ norm as above for any $p\in[1,\infty]$, we obtain
    \begin{equation}\begin{aligned}
        \overline{\mathrm{Perm}(\lambda)_{\infty}} &= \overline{\mathrm{conv}}\left\{f\in\overline{\vct V_{\infty}}: f([0,1))=\{\lambda_1,\ldots,\lambda_q\},\ |f^{-1}(\lambda_i)|=\frac{m_i}{m}\right\}\\ &= \overline{\left\{\sum_{i=1}^q\lambda_if_i: f_i\in\overline{\vct V_{\infty}},\ f_i\geq 0 \textrm{ a.e. }, \sum_{i=1}^qf_i = 1,\ \int_{[0,1)} f_i = \frac{m_i}{m}\right\}}.
    \end{aligned}\end{equation}
\end{proposition}
\begin{proof}
    For the first equality, the inclusion $\subseteq$ is clear. For the reverse inclusion, if $f\in\overline{\vct V_{\infty}}$ takes values $\lambda_i$ on sets $\Omega_i=f^{-1}(\lambda_i)$ of measure $m_i/m$ partitioning $[0,1)$, we can write $f=\sum_{i=1}^q\lambda_i\mathbbm{1}_{\Omega_i}$. Then $\mc P_nf = \sum_{i=1}^q\lambda_i(\mc P_n\mathbbm{1}_{\Omega_i})$, and under the above identification of $\RR^{m2^n}$ with piecewise-constant functions, we can view $\mc P_n\mathbbm{1}_{\Omega_i}\in \RR^{m2^n}_+$ as nonnegative vectors. 
    %\VC{What do you mean by `can be viewed as vectors'?  Sounds a bit confusing.} \TODO{I tried clarifying} \VC{Looks good!} 
    The matrix $M = [\mc P_n\mathbbm{1}_{\Omega_1},\ldots,\mc P_n\mathbbm{1}_{\Omega_q}]\in \RR_+^{m2^n\times q}$ then satisfies the conditions in~\eqref{eq:permuta_good}, hence $\mc P_nf\in \mathrm{Perm}(\lambda)_n$ and $f=\lim_n\mc P_nf\in \overline{\mathrm{Perm}(\lambda)_{\infty}}$ by Lemma~\ref{lem:projections_converge}, giving the reverse inclusion.

    The second equality follows from Theorem~\ref{thm:descriptions_of_lims}. Indeed, endow $\vct W_{\infty}=\vct V_{\infty}^q$ with the norm $\|[f_1,\ldots,f_q]\|=\max_i\|f_i\|_p$ and $\vct U_{\infty}$ with the norm $([f_1,\ldots,f_q],g_1,g_2,\mu) = \max\{\|f_i\|_p,\|g_j\|_p,\|\mu\|_{\infty}\}$. Then $\overline{\vct W_{\infty}}=\overline{\vct V_{\infty}}^q$ and $\overline{\vct U_{\infty}}=\overline{\vct W_{\infty}}\oplus \overline{\vct V_{\infty}}^2\oplus \RR^q$, and we have $\|A_n\|_{\mathrm{op}}=1$, $\|B_n\|_{\mathrm{op}}\leq \max\{\sum_i|\lambda_i|,q\}$, and $u_n=u_{n+1}$ for all $n$.
\end{proof}
%We can extend our consistent sequence to include vectors and matrices of any size by having our consistent sequence be indexed by the poset $\NN$ with the divisibility partial order, see Remark~\ref{rmk:poset_gen}. 

\paragraph{Schur--Horn orbitopes:}
We consider the matrix analogs of the above permutahedra, which are convex hulls of all matrices with a given spectrum. Let $\msf G_n = \msf O_{m2^n}$ embed in $\msf G_{n+1}$ by sending a $m2^n\times m2^n$ matrix $g$ to $g\otimes I_2$, let $\vct V_n=\mbb S^{m2^n}$ embed in $\vct V_{n+1}$ by $X\mapsto X\otimes I_2$ with normalized inner product $\langle X,Y\rangle = (m2^n)^{-1}\mathrm{Tr}(X^\top Y)$, and finally let $\msf G_n$ act by conjugation on $\vct V_n$. 
Consider the sequence of Schur--Horn orbitopes~\cite[Eq.~(19)]{finding_planted}
\begin{equation}\begin{aligned}\label{eq:schur_horn}
    \mathrm{SH}(\lambda)_n &= \mathrm{conv}\{g\cdot\mathrm{diag}(\lambda\otimes \mathbbm{1}_{2^n})\}_{g\in \msf O_{m2^n}}\\
    &= \left\{\sum_{i=1}^q\lambda_iY_i: Y_1,\ldots,Y_q\in \vct V_n \textrm{ s.t. } \sum_{i=1}^qY_i=I,\ Y_i\succeq 0,\ \mathrm{Tr}(Y_i) = m_i2^n \textrm{ for } i=1,\ldots,q\right\},
\end{aligned}\end{equation}
which is the matrix analog of~\eqref{eq:permuta_good}.
This is again a \new{dimension-}free description certifying both intersection- and projection-compatibility. 
Indeed, let $\mscr W = \mscr V^{\oplus q}$ and $\mscr U = \mscr W\oplus\mscr V^{\oplus 2}\oplus\RR^q$ containing the cones $\{\cvx K_n=(\mbb S^{m2^n}_+)^{\oplus q}\oplus 0\oplus 0\}$. Then~\eqref{eq:schur_horn} is of the form~\eqref{eq:preim_seq} with
\begin{align*}
    &A_nX = (0,-X,0,0),\quad B_n[Y_i]_{i=1}^q = \left([Y_i]_{i=1}^q,\ \sum\nolimits_i\lambda_iY_i,\ \sum\nolimits_iY_i,\ (m2^n)^{-1}[\mathrm{Tr}(Y_1),\ldots, \mathrm{Tr}(Y_q)]\right),
\end{align*}
and $u_n = (0,0,-I_{m2^n},-[\tfrac{m_1}{m},\ldots,\tfrac{m_q}{m}]^\top)$.
%Again, note that $B_n^*([Z_i]_i,X_1,X_2,\mu)=[Z_i+\lambda_iX_1 + X_2 + \mu_iI_{m2^n}]_i$ since the adjoint is taken with respect to the above normalized inner product.
Here $\{A_n\},\{A_n^*\},\{B_n\},\{B_n^*\}$ are all morphisms and $u_n=u_{n+1}$ under the above embedding, hence Proposition~\ref{prop:compatible_descriptions}(a) applies.

Again, we can use Theorem~\ref{thm:descriptions_of_lims} to describe an infinite-dimensional limit $\overline{\mathrm{SH}(\lambda)_{\infty}}$ of these Schur--Horn orbitopes. 
We would like to interpret the elements of $\mathrm{SH}(\lambda)_{\infty}$ as operators with spectral measure $\sum_{i=1}^q\frac{m_i}{m}\delta_{\lambda_i}$, then describe the convex hull of all such operators in $\overline{\vct V_{\infty}}$. 
To that end, we complete $\vct V_{\infty}$ with respect to the operator norm and consider spectral measures with respect to the normalized trace $\tau$ on $\vct V_{\infty}$, which also extends to the limit. 
Such spectral measures and associated orbitopes exist in more general algebras. In fact, our framework yields descriptions for Schur--Horn orbitopes, as well as a generalization of the Schur--Horn theorem, to self-adjoint elements in so-called approximately finite-dimensional (AF) algebras. 
These orbitopes generalize both the permutahedra and Schur--Horn orbitopes in this section.
We construct these algebras and apply our framework to the resulting orbitopes in Appendix~\ref{apdx:SH_in_AF_algebras}. 
In particular, we obtain the following description for the limit in our present setting, proved in Proposition~\ref{prop:infinite_SchurHorn_descrip} of that appendix.
\begin{proposition}\label{prop:lims_of_SH}
    Endow $\vct V_{\infty}$ with the operator norm and let $\tau\colon \vct V_{\infty}\to\RR$ be the normalized trace given by $\tau(X)=(m2^n)^{-1}\mathrm{Tr}(X)$ for $X\in \vct V_n$. Then $\tau$ extends to $\overline{\vct V_{\infty}}$, and each $X\in \overline{\vct V_{\infty}}$ has a spectral measure $\mu_X^{\tau}$ with respect to $\tau$. Furthermore,
    \begin{equation*}
        \overline{\mathrm{SH}(\lambda)_{\infty}} = \overline{\mathrm{conv}}\left\{X\in\overline{\vct V_{\infty}}: \mu_X^{\tau} = \sum\nolimits_{i=1}^q\frac{m_i}{m}\delta_{\lambda_i}\right\} = \overline{\left\{\sum\nolimits_{i=1}^q\lambda Y_i: Y_i\succeq0,\ \sum\nolimits_iY_i=I,\ \tau(Y_i)=\frac{m_i}{m}\right\}}.
    \end{equation*}
\end{proposition}
Furthermore, the diagonal map $\diag_n\colon \mbb S^{m2^n}\to \RR^{m2^n}$ satisfies $\diag_n(\mathrm{SH}(\lambda)_n)=\mathrm{Perm}(\lambda)_n$ by the Schur--Horn theorem. Our limiting descriptions yield the following limiting version of this theorem, proved in greater generality in Proposition~\ref{prop:infinite_SchurHorn_theorem} of the above appendix.
\begin{proposition}\label{prop:SH_theorem_special}
    In the setting of Proposition~\ref{prop:lims_of_SH}, we have $\diag(\overline{\mathrm{SH}(\lambda)_{\infty}}) = \overline{\mathrm{Perm}(\lambda)_{\infty}}$ where we complete the permutahedra as in Proposition~\ref{prop:limts_of_permuta} with respect to the $L^{\infty}$ norm.
\end{proposition}

%They appear as relaxations of problems such as the inverse eigenvalue problem~\cite{candogan2021note}, computation of the graph edit distance~\cite{candogan2020convex}, and finding a planted subgraph~\cite{finding_planted}.
Schur--Horn orbitopes are special cases of so-called \emph{spectral polyhedra} studied in~\cite{sanyal2020spectral}. It would be interesting to identify further examples of freely-described sequences of spectral polyhedra arising in applications and to consider their limits.

\begin{remark}\label{rmk:poset_gen_moved}
    We remark that the above constructions can be extended to handle vectors and matrices of all sizes, not just of sizes $(m2^n)_{n\in\NN}$. This can be done by indexing our consistent sequences by posets.  In particular, our results generalize to the following, more complicated, sequences of group representations. If $\mc N$ is a strict poset, an $\mc N$-indexed consistent sequence of $\{\msf{G}_n\}_{n\in\mc N}$-representations is a sequence $\{(\vct \vct V_n,\varphi_{N,n})\}_{n<N\in\mc N}$ of $\msf{G}_n$-representations and embeddings $\varphi_{N,n}\colon \vct \vct V_n\hookrightarrow \vct \vct V_N$ for each $n<N$ such that $\varphi_{N,n}$ is $\msf{G}_n$-equivariant, and $\varphi_{M,N}\circ \varphi_{N,n}=\varphi_{M,n}$ whenever $n<N<M$. All our results apply in this setting after replacing all occurrences of $n+1$ by $N>n$.  To handle permutahedra and Schur--Horn orbitopes of any sizes we let $\mc N=\NN$ with the divisibility partial order, whereby $n\leq m$ iff $m=nk$ for some $k\in\NN$, with embeddings $\varphi_{kn,n}\colon\RR^n\hookrightarrow\RR^{nk}$ sending $x\mapsto x\otimes\mathbbm{1}_k$ or $\varphi_{kn,n}\colon \mbb S^n\hookrightarrow\mbb S^{nk}$ sending $X\mapsto X\otimes I_k$.
%
    %This allows us to consider sequences such as $\RR^{n_1\times n_2}\hookrightarrow \RR^{m_1\times m_2}$ (where the embedding is padding by zeros) and $\mbb R^n\hookrightarrow \mbb R^{nk}$ for all $n,k\in\NN$ by sending $x\mapsto x\otimes\mathbbm{1}_{k}$.
\end{remark}
%\VC{Should this next bit also be part of the above remark?  If the above remark is meant to be general, it might be worth expanding on the word `results' at the beginning and end of the remark.}

\subsection{Free Spectrahedra}\label{sec:free_spectr}
The family of free spectrahedra in Example~\ref{ex:free_descr_intro_param_reorg}(b) may be obtained as a special case of our framework with appropriate selection of description spaces and by imposing compatibility.

Let $\mscr V_0 = \{\mbb S^n\}$ with embeddings by zero-padding and the action of $\msf G_n=\msf O_n$ by conjugation. Fix $d,k\in\NN$, and let $\mscr V=\mscr V_0^{\oplus d}$, $\mscr U=\mbb S^k\otimes \mscr V_0$, and $\mscr W = \{\vct W_n=0\}$.

%Observe that $\mc L(\mbb S^n)^{\msf O_n} = \mathrm{span}\{X\mapsto X,\ X\mapsto \mathrm{Tr}(X)I_n\}$. Viewing an element of $\mbb S^k\otimes\mbb S^n$ as $\dim\mbb S^k=\binom{k+1}{2}$ blocks of $n\times n$ symmetric matrices, we get
%\begin{equation*}
%    \mc L(\vct V_n,\vct U_n)^{\msf O_n} = \left\{(X_1,\ldots,X_d)\mapsto \sum_iL_i\otimes X_i + \sum_i\mathrm{Tr}(X_i)\widetilde L_i\otimes I_n:L_i,\widetilde L_i\in \mbb S^k\right\}.
%\end{equation*}
%To get a morphism of sequences $\mscr V\to \mscr U$, we must have $\widetilde L_i\equiv 0$, since $I_n$ does not zero-pad to $I_{n+1}$. Thus,
As the only morphisms $\mscr V_0\to\mscr V_0$ are multiples of the identity, and elements of $\mbb S^k\otimes\mbb S^n$ can be viewed as $k\times k$ symmetric block matrices with symmetric $n\times n$ blocks of $n\times n$ symmetric matrices, we conclude that the morphisms $\mscr V\to\mscr U$ are precisely maps of the form $(X_1,\ldots,X_d)\mapsto \sum_iL_i\otimes X_i$ for some $L_1,\ldots,L_d\in\mbb S^k$. Note that sequences of adjoints of such maps are also morphisms in this case.
As the only $\msf G_n$-invariants in $\mbb S^n$ are multiples of $I_n$, the space of freely-described elements in $\mscr U$ is $\{\{L_0\otimes I_n\}_n: L_0\in\mbb S^k\}$, which satisfy the condition $L_0\otimes(I_{n+1}-I_n)\succeq0$ from Proposition~\ref{prop:compatible_descriptions}(a) if and only if $L_0\succeq0$.
Thus, the parametric family of \new{dimension-}free descriptions certifying compatibility as in Proposition~\ref{prop:compatible_descriptions}(a) is
\begin{equation*}
    (\mc D_{\mc L})_n = \left\{(X_1,\ldots,X_d)\in(\mbb S^n)^d: L_0\otimes I_n + \sum_{i=1}^dL_i\otimes X_i\succeq0 \right\},\quad L_0\succeq0.
\end{equation*}
which are free spectrahedra parametrized by $\mc L = (L_0,\ldots,L_d)$.
It is common to take either $L_0 = I_k$ (the \emph{monic} case) or $L_0=0$ (the \emph{homogeneous} case)~\cite{kriel2019introduction}.
As discussed in Section~\ref{sec:related_work}, free spectrahedra are fundamental objects in noncommutative free convex and algebraic geometry, see~\cite{free_AG_chap,kriel2019introduction} for an introduction. 
In particular, they satisfy both intersection and projection compatibility, and more generally closure under so-called \emph{matrix-convex combinations}.  

We remark that orthogonal invariance and compatibility alone do not yield closure under matrix-convex combinations. For example, the nuclear norm balls $\{X\in\mbb S^n:\|X\|_{\ast}\leq 1\}$ are $\msf O_n$-invariant and form a compatible sequence which is not matrix-convex. However, a compatible sequence of orthogonally-invariant convex \emph{cones} is indeed matrix convex, as the following result shows.
%\VC{Is the key both forms of compatibility or the fact that you're considering cones or both?  As written, this paragraph seems a bit confusing.  Perhaps identify an example where matrix convexity fails with just one assumption lacking rather than two?}
\begin{proposition}\label{prop:compatible_cones_mtx_cvx}
    Let $\{\vct V_n=(\mbb S^n)^d\}$ be the consistent sequence of $\{\msf O_n\}$-representations above, and suppose $\{\cvx C_n\subseteq \vct V_n\}$ is an intersection- and projection-compatible sequence of convex cones such that $\cvx C_n$ is $\msf O_n$-invariant for all $n$. Then $\{\cvx C_n\}$ is matrix-convex.
\end{proposition}
\begin{proof}
    By~\cite[\S2.3]{helton2016matrix}, it suffices to show that if $(X_1,\ldots,X_d)\in \cvx C_n$ and $(Y_1,\ldots,Y_d)\in\cvx C_m$ then $(X_1\oplus Y_1,\ldots,X_d\oplus Y_d)\in\cvx C_{n+m}$, and that $(V^\top X_1V,\ldots,V^\top X_dV)\in \cvx C_k$ for any isometry $V\in \RR^{n\times k}$. For the first, intersection compatibility shows that $(X_1\oplus 0_m,\ldots,X_d\oplus 0_m)\in \cvx C_{n+m}$ and $(Y_1\oplus 0_n,\ldots,Y_d\oplus 0_n)\in \cvx C_{n+m}$. Conjugating the latter tuple by appropriate permutation matrices, we conclude that $(0_n\oplus Y_1,\ldots,0_n\oplus Y_d)\in \cvx C_{n+m}$. Finally, since $\cvx C_{n+m}$ is a convex cone we get $(X_1\oplus Y_1,\ldots,X_d\oplus Y_d)\in\cvx C_{n+m}$, so $\{\cvx C_n\}$ is closed under direct sums. For the second, any isometry $V\in\RR^{n\times k}$ can be written as $V=U\iota_{n,k}$ where $U\in\msf O_n$ and $\iota_{n,k}\colon \RR^k\to \RR^n$ is the zero-padding embedding. Observe that $(\iota_{n,k}^\top X_1\iota_{n,k},\ldots,\iota_{n,k}^\top X_d\iota_{n,k})=\mc P_k(X_1,\ldots,X_d)$ is the orthogonal projection of $(X_1,\ldots,X_d)$ onto $\vct V_k$ when embedded in $\vct V_n$. Thus, $(V^\top X_1V,\ldots,V^\top X_dV) = \mc P_k(U^\top X_1U,\ldots,U^\top X_dU)\in\cvx C_k$ since $\cvx C_n$ is $\msf O_n$-invariant and since $\{\cvx C_n\}$ is projection-compatible. 
\end{proof}

To recap, orthogonal invariance and compatibility of a sequence of convex sets do not yield matrix convexity in general, but they do so for convex cones.  With additional choices of description spaces, our framework yields a particular parametric family of matrix-convex sets, namely free spectrahedra.
%\VC{This result does not have to do with descriptions, correct?  Just to clarify for my own understanding -- compatibility of cones along implies matrix convexity, while compatibility + suitable description spaces yields free spectraheda?  Might be worth clarifying this here to more precisely connect with the preceding discussion.}

%\VC{Can closure under matrix-convex combinations be derived from your consistency structure and compatibility assumptions?}\TODO{added the above paragraph to address this question} \VC{Added a couple of to-do's above.}

\subsection{Spectral Functions, (Quantum) Entropy, and Variants}\label{sec:spectral_funcs}
Let $\mscr V = \{\mbb S^n\}$ with embeddings by zero-padding and with the action of $\msf O_n$ by conjugation, and let $\mscr V' = \{\RR^n\}$ with embeddings by zero-padding and the standard action of $\msf S_n$.
Recall (e.g.,~\cite{spectral_funcs}) that a convex function $F_n\colon \mbb S^n\to\RR$ is $\msf O_n$-invariant if and only if there exists an $\msf S_n$-invariant convex function $f_n\colon \RR^n\to\RR$ satisfying $F_n(X)=f_n(\lambda(X))$ where $\lambda(X)\in\RR^n$ is the vector of eigenvalues of $X\in\mbb S^n$. Also, the sequence $\{F_n\colon \mbb S^n\to\RR\}$ is intersection-compatible if and only if the sequence $\{f_n\colon \RR^n\to\RR\}$ is.

Examples of such sequences of functions $\mfk F=\{F_n\}$ and $\mfk f=\{f_n\}$ arise in (quantum) information theory, where $\mfk F$ is the quantum analog of classical information-theoretic parameters $\mfk f$. These are often intersection-compatible as distributions on $n$ states can be viewed as distributions on $n+1$ states with zero probability on the last state. For example, the negative entropy and relative entropy and their quantum variants are given by
\begin{equation}\label{eq:entropies}
\begin{aligned}
    &h_n(x) = \sum_ix_i\log x_i, && \msf H_n(X) = h_n(\lambda(X)) = \mathrm{Tr}(X\log X),\\
    &D_n(x, y) = \sum_ix_i\log\frac{x_i}{y_i}, && S_n(X, Y) = D_n(\lambda(X),\lambda(Y))=\mathrm{Tr}(X(\log X - \log Y)).
\end{aligned}\end{equation}
Here $\mathrm{dom}(h_n) = \Delta^{n-1}$ and $\mathrm{dom}(D_n) = (\RR^n_+)^2$, while $\mathrm{dom}(\msf H_n)=\mc D^{n-1}$ is the spectraplex from Example~\ref{ex:free_descr_intro}(b) and $\mathrm{dom}(S_n) = (\mbb S^n_+)^2$. We use the standard convention that $0\log\frac{0}{y}=0$ even if $y=0$, and $x\log\frac{x}{0}=\infty$ when $x\neq0$~\cite[\S2.3]{cover1999elements}. 
These sequences of functions are intersection-compatible but not projection-compatible (e.g., their domains are not projection-compatible). 

\paragraph{Semidefinite approximations:} The functions~\eqref{eq:entropies} are not semidefinite-representable (i.e., cannot be evaluated using semidefinite programming), though semidefinite approximations of them have been proposed in the literature~\cite{fawzi2019semidefinite}. We show that these approximations are freely-described, but that these descriptions do not certify intersection compatibility. 
The family of approximations of~\cite{fawzi2019semidefinite} to the negative quantum entropy is parametrized by $m,k\in\NN$ via their epigraphs 
\begin{align*}
    E_n^{(m,k)} &= \Bigg\{(X,t)\in \mbb S^n\oplus \RR \Bigg| \exists T_0,\ldots,T_m,Z_0,\ldots,Z_k\in \mbb S^{n} \textrm{ s.t. } Z_0=I_{n},\ \sum_{j=1}^m w_jT_j = -2^{-k}T_0,\\ &\quad \begin{bmatrix} Z_i & Z_{i+1}\\ Z_{i+1} & X\end{bmatrix}\succeq 0, \textrm{ for } i=0,\ldots,k-1,\ \begin{bmatrix} Z_k - X - T_j & -\sqrt{s_j}T_j\\ -\sqrt{s_j}T_j & X - s_jT_j\end{bmatrix}\succeq0,\\ &\quad \textrm{for } j=1,\ldots,m,\  \mathrm{Tr}(T_0)\leq t
    \Bigg\},
\end{align*}
where $s,w\in \RR^m$ are the nodes and weights for Gauss-Legendre quadrature. 

This is a \new{dimension-}free description of the form~\eqref{eq:preim_seq} which almost, but not quite, satisfies the conditions of Proposition~\ref{prop:compatible_descriptions}. 
Indeed, let $\mscr W = \mscr V^{\oplus (m+k+1)}$ and $\mscr U = \mscr V\oplus(\mbb S^2\otimes\mscr V)^{\oplus(m+k)}\oplus\RR$ containing the cones $\{\cvx K_n = \{0\}\oplus (\mbb S^2\otimes\mbb S^n)_+^{\oplus(m+k)}\oplus \RR_+\}$. 
Define
\begin{align*}
    &A_n(X,t) = \left(0, \left(\begin{bmatrix} 0 & 0\\ 0 & 1\end{bmatrix}\otimes X\right)^{\oplus k}, \left(\begin{bmatrix} -1 & 0 \\ 0 & 1\end{bmatrix}\otimes X\right)^{\oplus m}, t\right),\\
    &B_n(T_0,\ldots,T_m,Z_1,\ldots,Z_k) = \Bigg(2^{-k}T_0 + \sum_{j=1}^mw_jT_j, \begin{bmatrix}0 & 1\\ 1 & 0\end{bmatrix}\otimes Z_1, \bigoplus_{i=1}^{k-1}\left(\begin{bmatrix} 1 & 0\\ 0 & 0\end{bmatrix}\otimes Z_i +  \begin{bmatrix}0 & 1\\ 1 & 0\end{bmatrix}\otimes Z_{i+1}\right),\\&\qquad \bigoplus_{j=1}^m\left(\begin{bmatrix} 1 & 0\\ 0 & 0\end{bmatrix}\otimes Z_k - \begin{bmatrix} 1 & \sqrt{s_j}\\ \sqrt{s_j} & s_j\end{bmatrix}\otimes T_j\right), -\mathrm{Tr}(T_0)\Bigg),\\
    &u_n = \left(0, \begin{bmatrix} 1 & 0\\ 0 & 0\end{bmatrix}\otimes I_n, 0,\ldots,0\right).
\end{align*}
Note that $\{A_n\}$ and $\{B_n\}$ are morphisms, that $\{u_n\}$ is a freely-described element of $\mscr U$ satisfying $u_{n+1}-u_n\in \cvx K_{n+1}$, but that $\{B_n^*\}$ is \emph{not} a morphism because of the $\mathrm{Tr}(T_0)$ term in $B_n$. 
By the proof of Proposition~\ref{prop:compatible_descriptions}, this description certifies that $E_n^{(m,k)}\subseteq E_{n+1}^{(m,k)}$ but not $E_{n+1}^{(m,k)}\cap (\mbb S^n\oplus\RR)\subseteq E_n^{(m,k)}$. 
Analogously to the na\"ive description of the solid simplex in Section~\ref{sec:simplex}, this \new{dimension-}free description does not make the intersection compatibility of $\{E_n^{(m,k)}\}$ obvious.
%We fit a semidefinite approximation to a variant of the quantum entropy from data in Section~\ref{sec:numerical_examples}, which is both freely-described and certifies intersection compatibility. 

%\VC{What is the point of the next bit on `Parametric families'?  Is it because you use this stuff later for deriving numerical free approximations to } \TODO{It's just another example of dimension counts arising from our framework, we do not use these parametric families. Happy to remove this.}\VC{Let's discuss this.}

\paragraph{Parametric families:} 
We can use the description spaces of~\cite{fawzi2019semidefinite} above to derive parametric families of freely-described sets.  When compatibility is not required, the resulting family includes the approximation $\{E_n^{(m,k)}\}_n$ of~\cite{fawzi2019semidefinite}; when compatibility is imposed, we obtain a smaller family excluding $\{E_n^{(m,k)}\}_n$.

Note that $\mbb S^2\otimes\mbb S^n\cong (\mbb S^n)^3$ as $\msf O_n$-representations, and these isomorphisms commute with zero-padding, so that $\mscr U\cong \mscr V^{1+3(m+k)}\oplus\RR$ as consistent sequences.
As $\dim(\mbb S^n)^{\msf O_n}=1$ and $\dim\mc L(\mbb S^n)^{\msf O_n}=2$, the dimension of invariants parametrizing \new{dimension-}free descriptions are
\begin{align*}
    &\dim\mc L(\vct V_n,\vct U_n)^{\msf O_n} = 3[2(m+k)+1],\quad \dim\mc L(\vct W_n,\vct U_n)^{\msf O_n} = 3(m+k+1)[2(m+k)+1],\\ &\dim \vct U_n^{\msf O_n} = 2+3(m+k).
\end{align*}
When $m=k=3$ (the default values in the implementation of~\cite{fawzi2019semidefinite}), we get 
\begin{equation*}
    \dim\mc L(\vct V_n,\vct U_n)^{\msf O_n}=39,\quad \dim\mc L(\vct W_n,\vct U_n)^{\msf O_n}=273,\quad \dim \vct U_n^{\msf O_n}=20.
\end{equation*}
As the only morphisms of sequences $\mscr V\to\mscr V$ are multiples of the identity, and the only morphisms $\mscr V\to\RR$ are multiples of the trace, the dimensions of $\{A_n\},\{B_n\}$ satisfying the conditions of Proposition~\ref{prop:compatible_descriptions}(a) are
\begin{align*}
    &\dim \Big\{\{A_n\colon \vct V_n\to \vct U_n\} \textrm{ morphism}\Big\} = 3(m+k)+2=20,\\ &\dim \Big\{\{B_n\colon \vct W_n\to \vct U_n\}: \textrm{both } \{B_n\} \textrm{ and } \{B_n^*\} \textrm{ morphisms}\Big\} = (m+k+1)[3(m+k)+1] = 133.
\end{align*}

\subsection{Graph Parameters}\label{sec:graph parameters}
Let $\mscr V=\{\mbb S^n\}$ with embeddings by zero-padding and the action of $\msf G_n=\msf S_n$ by conjugation. As discussed in Section~\ref{sec:intro_structural} and Example~\ref{ex:graph_params}, graph parameters are sequences $\{f_n\colon\mbb S^n\to\RR\}$ of $\msf G_n$-invariant functions, and many standard graph parameters are convex (or concave) and satisfy either intersection or projection compatibility.
Furthermore, it is desirable to find relaxations for standard graph parameters satisfying the same compatibility conditions.
%In fact, a major obstacle to proving exactness of such relaxations arises when they do \emph{not} satisfy the appropriate compatibility conditions~\cite{laurent2023exactness}.
We consider here the examples of max-cut and inverse stability number, and we show that their usual relaxations are freely-described with descriptions certifying their compatibility. We then derive parametric families of \new{dimension-}free descriptions from our framework that may be used to fit graph parameters to data.

\paragraph{Max-cut:}
Computing the max-cut value of a weighted undirected graph amounts to evaluating the support function of the \emph{cut polytope} $\mathrm{CUT}_n = \mathrm{conv}\{xx^\top: x\in\{\pm 1\}^n\}$.
The sequence of cut polytopes $\{\mathrm{CUT}_n\}$ viewed as subsets of $\mscr V$ is projection-compatible and compact, hence the sequence of their support functions and the max-cut value itself are intersection-compatible (see Section~\ref{sec:intro_notation}). 
Approximation of the max-cut value reduces to approximation of the cut polytopes.
A standard outer approximation of the sequence of cut polytopes is the sequence of elliptopes 
\begin{equation}\label{eq:elliptope}
    \mc E_n=\{X\in\mbb S^n: X\succeq0, \mathrm{diag}(X)=\mathbbm{1}_n\}.
\end{equation}
The sequence $\{\mc E_n\}$ also satisfies projection compatibility, and the above is a \new{dimension-}free description certifying this compatibility. 
Indeed, let $\mscr W=\{0\}$ and $\mscr U = \mscr V\oplus \{\RR^n\}$ where the latter sequence is the usual one from Example~\ref{ex:basic_consist_seq}, with cones $\mscr K = \{\mbb S^n_+\oplus\{0\}\}$. Then~\eqref{eq:elliptope} is of the form~\eqref{eq:preim_seq} with $A_nX = (X, \diag(X))$, $B_n=0$, and $u_n=(0,-\mathbbm{1}_n)$. Note that $\{A_n\},\{A_n^*\}$ are morphisms and $u_{n+1}-u_n = (0,-e_{n+1}) = (e_{n+1}e_{n+1}^\top,0)-A_{n+1}e_{n+1}e_{n+1}^\top$, hence Proposition~\ref{prop:compatible_descriptions}(b) applies. 
Neither the cut polytopes nor the elliptopes is intersection-compatible, as zero-padding a matrix with all-1's diagonal does not yield such a matrix. The sequences $\{\mathrm{CUT}_n-I_n\}$ and $\{\mc E_n-I_n\}$ are, however, both intersection- and projection-compatible, and the shifted elliptopes admit \new{dimension-}free descriptions certifying their compatibility.
%\VC{Is there a way to restrict to the affine space of all-ones on the diagonal so that this is not an issue?  Typically the cut polytope with $n \times n$ symmetric matrices is viewed as an object in $\mathbb{R}^{n(n-1) / 2}$.}\TODO{indeed, shifting by identities makes them both intersection and projection compatible, I added this to the preceding sentence} \VC{Good.}

\paragraph{Inverse stability number:} Computing the inverse stability number reduces to evaluating the support functions of $\mc D_n = \mathrm{conv}\{xx^\top: x\in\Delta^{n-1}\}$, see~\cite{motzkin_straus_1965}. A natural SDP relaxation for this problem is evaluating the support function of
\begin{align*}
     \widetilde{\mc D}_n = \{X\in\mbb S^n: X\succeq0, X\geq0, \mathbbm{1}_n^\top X\mathbbm{1}_n=1\},
\end{align*}
where $X\geq 0$ denotes an entrywise nonnegative matrix.
Both $\{\mc D_n\}$ and $\{\widetilde{\mc D}_n\}$ are intersection-compatible. Moreover, the above description of $\{\widetilde{\mc D}_n\}$ is \new{dimension-}free and certifies this compatibility as in Proposition~\ref{prop:compatible_descriptions}(b). Indeed, let $\mscr W=\{0\}$ and $\mscr U = \mscr V\oplus\RR$ with cones $\cvx K_n=(\mbb S^n_+\cap \RR^{n\times n}_+)\oplus 0$. Then the above description of $\widetilde{\mc D}_n$ is of the form~\eqref{eq:preim_seq} with $A_nX = (X, \mathbbm{1}_n^\top X\mathbbm{1}_n)$, $B_n=0$, and $u_n=(0,-1)$. Note that $\{A_n\}$ is a morphism and $u_n=u_{n+1}$, hence Proposition~\ref{prop:compatible_descriptions}(a) applies.
Neither $\mc D_n$ nor $\widetilde{\mc D}_n$ is projection-compatible since their support functions, which are the inverse stability number and its semidefinite approximation above, are not intersection compatible (see Section~\ref{sec:intro_notation}). Indeed, appending isolated vertices to a graph increases its stability number.

\paragraph{Parametric families:} Beyond the two particular preceding examples, we derive here an expressive parametric family of convex graph parameters; in addition to above examples, our family also includes several other examples from~\cite{cvx_grph_invar}.
%\VC{Reworded this preceding sentence -- please check!}\TODO{This is correct, the following family contains both max-cut and inverse stability sets above, as well as some spectral and degree invariants.}\VC{Good.}
%Convex graph parameters can be obtained from permutation-invariant convex subsets of $\mscr V$, see~\cite{cvx_grph_invar} for examples, hence it is desirable to obtain expressive parametric families of such sets.
To that end, let $\mscr W = \mscr U = \mathrm{Sym}^2(\mathrm{Sym}^{\leq 2}\{\RR^n\})=\{\mbb S^{\binom{n+2}{2}}\}$. 
The dimensions of invariants parametrizing \new{dimension-}free descriptions in this case is too large for us to write explicit bases for them as functions of $n$.
Instead, we compute these dimensions using the algorithm in Section~\ref{sec:implementation} below, see Example~\ref{ex:dim_counts}(b):
\begin{equation*}
    \dim\mc L(\vct V_n,\vct U_n)^{\msf G_n}=93,\quad \dim\mc L(\vct W_n,\vct U_n)^{\msf G_n}=1068,\quad \dim \vct W_n^{\msf G_n} = 17,\quad \textrm{for all } n\geq 8.
\end{equation*}
Using the same algorithm, the dimensions of sequences $\{A_n\},\{B_n\}$ certifying intersection compatibility as in Proposition~\ref{prop:compatible_descriptions}(a) are
\begin{equation}\label{eq:dim_graph_morphisms}\begin{aligned}
    &\dim \Big\{\{A_n\colon \vct V_n\to \vct U_n\} \textrm{ morphism}\Big\} = 19,\\ &\dim \Big\{\{B_n\colon \vct W_n\to \vct U_n\}: \textrm{both } \{B_n\} \textrm{ and } \{B_n^*\} \textrm{ morphisms}\Big\} = 104.
\end{aligned}\end{equation}
The algorithm we present in Section~\ref{sec:numerical_examples} further allows us to compute bases for these invariants in a fixed dimension and extend them to any other; in turn, these are useful for fitting graph parameters defined for graphs of all sizes given data.
%Our framework thus yields expressive families of convex sets which can be used to search over graph parameters.

\subsection{Graphon Parameters}\label{sec:graphons}
A different embedding between graphs arises in the theory of graphons~\cite{lovasz2012large}, where a weighted graph $X\in \mbb S^{2^n}$ is viewed as a step function $W_X\colon[0,1]^2\to\RR$ defined by $W_X(x,y)=X_{i,j}$ if $(x,y)\in[(i-1)/2^n,i/2^n)\times[(j-1)/2^n,j/2^n)$; see Figure~\ref{fig:graphon}. 
Note that $X$ and $X\otimes\mathbbm{1}_2\mathbbm{1}_2^\top\in \mbb S^{2^{n+1}}$ correspond to the same step function, and that the inner product of two such step functions $W_X, W_Y$ in $L^2([0,1]^2)$ equals the normalized Frobenius inner product $\langle X,Y\rangle = 2^{-2n}\mathrm{Tr}(X^\top Y)$. 
We therefore define the graphon consistent sequence $\mscr V=\{\vct V_n = \mbb S^{2^n}\}$ with embeddings $\varphi_n(X) = X\otimes\mathbbm{1}_{2\times 2}$, the above normalized inner products, and the action of $\msf G_n=\msf S_{2^n}$ by conjugation.
Here $\msf G_n$ is embedded into $\msf G_{n+1}$ by sending a permutation matrix $g$ to $g\otimes I_2$. 
We can extend our consistent sequence to include symmetric matrices of any size by having our consistent sequence be indexed by the poset $\NN$ with the divisibility partial order, see Remark~\ref{rmk:poset_gen_moved}. 
%Equivalently, we send a permutation $\sigma$ of $[2^n]$ to the permutation $\widetilde\sigma$ on $[2^{n+1}]$ defined by $\widetilde\sigma(2j+k)=2\sigma(j)+k$ where $j\in[2^n]$ and $k\in\{-1,0\}$. 
%See also Example~\ref{ex:lims}(d) for a limiting perspective.
\begin{figure}
    \centering
    \includegraphics[width=\linewidth]{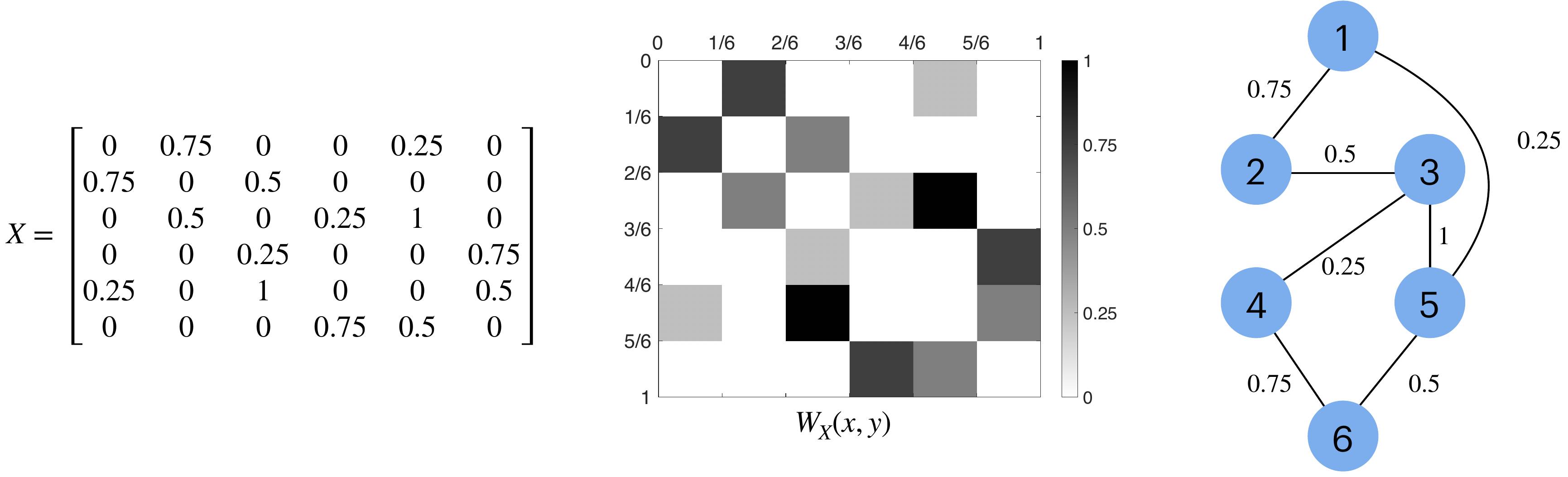}
    \caption{Weighted undirected graph represented as a graph, an adjacency matrix $X$, and a symmetric function (graphon) $W_X$ on $[0,1]^2$.}
    \label{fig:graphon}
\end{figure}
The graphon sequence is finitely-generated, as the following computer-assisted proof shows.
\begin{proposition}\label{prop:graphon_gen_deg}
    The graphon sequence $\{\vct V_n=\mbb S^{2^n}\}$ is generated in degree $2$.
\end{proposition}
\begin{proof}
    Define $E_1^{(n)} = e_1^{(2^n)}(e_1^{(2^n)})^\top$ and $E_2^{(n)} = e_1^{(2^n)}(e_2^{(2^n)})^\top + e_2^{(2^n)}(e_1^{(2^n)})^\top$, whose $\msf G_n$-orbits span $\vct V_n$. 
    We verify computationally that $\dim \sum_{i=1}^2\RR[\msf S_{2^3}]\varphi_2(E_i^{(2)}) = \dim \vct V_3$, see the \href{https://github.com/eitangl/anyDimCvxSets/blob/main/check_graphon_gen_deg.m}{GitHub repository}.
    Therefore, 
    \begin{equation}\label{eq:graphon_gen_deg}
        E_i^{(3)} = \sum\nolimits_{j=1}^2r_{i,j}\varphi_2(E_j^{(2)}),\quad \textrm{ for } i\in[2],\ r_{i,j}\in \RR[S_{2^3}].
    \end{equation}
    Let $\psi_n(X) = X\oplus 0$ be an embedding of $\vct V_n$ into $\vct V_{n+1}$ by zero-padding, and note that $E_i^{(n+1)}=\psi_n(E_i^{(n)})$ and that $\psi_n$ commutes with a different embedding of $\msf S_{2^n}$ into $\msf S_{2^{n+1}}$, namely, one that sends $g\mapsto g\oplus I_{2^n}$. Applying $\psi_n$ to~\eqref{eq:graphon_gen_deg}, we conclude that $E_i^{(n+1)}$ can be written as $\RR[\msf S_{2^{n+1}}]$-linear combinations of $\varphi_n(E_j^{(n)})$ for all $n\geq 2$, hence that $\RR[\msf S_{2^{n+1}}]\varphi_n(\vct V_n)=\vct V_{n+1}$ for all $n\geq 2$.
\end{proof}
%\TODO{move to Sec.~3: It would be interesting to extend the above computer-assisted proof technique to find the presentation degree of the graphon sequence as well.}

\paragraph{Graphon parameters:} A permutation-invariant and intersection-compatible sequence of functions $\mfk f=\{f_n\colon \vct V_n\to \RR\}$ is called a \emph{graphon parameter}, since these are precisely the functions of graphs that do not depend on their inputs' vertex labels, and that only depend on their input graphs via their associated graphons.
A family of graphon parameters that plays a central role in the theory of graphons and in extremal combinatorics are graph homomorphism densities~\cite{lovasz2012large}. Their convexity is related to weakly-norming graphs and Sidorenko's conjecture, a major open problem in extremal combinatorics~\cite{sidorenko1991inequalities,lee2021convex}.

We can obtain parametric families of graphon parameters by taking the gauge functions of parametric families of intersection-compatible and freely-described convex sets.
For example, let $\mscr U = \mbb S^k\otimes \mscr V$ with cones $\mscr K = \{\cvx K_n=(\mbb S^k\otimes \mbb S^{2^n})_+\}$ and $\mscr W = \{\vct W_n=0\}$.
Using Proposition~\ref{prop:compatible_descriptions}(a), we get the following parametric family of freely-described and intersection-compatible sets, parametrized by $L_1,\ldots,L_7\in\mbb S^k$:
\begin{equation}\label{eq:graphon_sets}\begin{aligned}
    \cvx C_n = &\Bigg\{X\in\mbb S^{2^n}:\frac{\mathbbm{1}^\top X\mathbbm{1}}{2^{2n}} L_1\otimes\mathbbm{1}\mathbbm{1}^\top + \frac{\mathrm{Tr}(X)}{2^n}L_2\otimes \mathbbm{1}\mathbbm{1}^\top + L_3\otimes\frac{1}{2^n}\left(X\mathbbm{1}\mathbbm{1}^\top + \mathbbm{1}\mathbbm{1}^\top X\right)\\ &\ + L_4\otimes \left(\diag(X)\mathbbm{1}^\top + \mathbbm{1}\diag(X)^\top\right) + L_5\otimes X + L_6\otimes \mathbbm{1}\mathbbm{1}^\top + L_7\otimes (2^nI_{2^n}) \succeq0\Bigg\},\quad L_7\succeq0.
\end{aligned}\end{equation} 
\new{Note that all the functions of $X$ appearing in the above description can be interpreted as natural functions of the associated step graphon $W_X$. For example, $\frac{\mathbbm{1}^\top X\mathbbm{1}}{2^{2n}}=\int_{[0,1]^2}W_X(t,s)\, \mathrm{d}t\, \mathrm{d}s$ and $\frac{\mathrm{Tr}(X)}{2^n}=\int_0^1W_X(t,t)\, \mathrm{d}t$. Similarly, $\frac{1}{2^n}(X\mathbbm{1})_i = \int_{(i-1)/2^n}^{i/2^n}W_X((i-1/2)/2^n,y)\, dy$ gives the values of the degree function of the graphon $W_X$, and $\mathrm{diag}(X)$ gives the values of the diagonal function $W_X(x,x)$ as $x$ ranges over $[0,1]$, see~\cite[Chap.~7]{lovasz2012large}.}

Expressing the above \new{dimension-}free description in terms of graphons yields a description of a limiting convex set by Theorem~\ref{thm:descriptions_of_lims}. Its gauge function is, in turn, a convex graphon parameter extending continuously to the corresponding limit. Endow $\vct V_{\infty}$ with the $L^{\infty}([0,1]^2)$-norm, noting that $\|\mc P_nW\|_{\infty}\leq \|W\|_{\infty}$ for all $W\in L^{\infty}([0,1]^2)$ by Jensen's inequality. 
We view elements $[W_{i,j}(x,y)]_{i,j=1}^k\in \mbb S^k\otimes L^{\infty}([0,1]^2)$ as symmetric matrices with entries in $L^{\infty}([0,1]^2)$ (so $W_{j,i}=W_{i,j}$), and denote 
\begin{equation}\label{eq:psd_kernel}
    [W_{i,j}(x,y)]_{i,j=1}^k\succeq0\quad \textrm{ if }\quad \int_{[0,1]^2}\sum_{i,j=1}^kW_{i,j}(x,y)f_i(x)f_j(y)\, dx\, dy\geq0\quad \textrm{ for all }\quad f_1,\ldots,f_k\in L^2([0,1]).
\end{equation}
Remarkably, convex graphon parameters that extend continuously to this limit are also projection-compatible by~\cite[Thm.~3.17]{dolevzal2018cut}.
\begin{proposition}\label{prop:graphon_descrip_prop}
     If $L_7=0$ in~\eqref{eq:graphon_sets} then
    \begin{equation*}\begin{aligned}
        \overline{C_{\infty}} = &\Bigg\{W\in \overline{\vct V_{\infty}}: \overline{A_{\infty}}(W) + u_{\infty} \coloneqq \Big[(L_1)_{i,j}\int_{[0,1]^2}W(s,t)\, ds\, dt + (L_2)_{i,j}\int_{[0,1]}W(t,t)\, dt\\ &+ (L_3)_{i,j}\int_{[0,1]}[W(x,t) + W(t,y)]\, dt + (L_4)_{i,j}[W(x,x) + W(y,y)] + (L_5)_{i,j}W(x,y)\Big]_{i,j=1}^k + L_6 \succeq0\Bigg\},
    \end{aligned}\end{equation*}
    where $\overline{\vct V_{\infty}}\subseteq L^{\infty}([0,1]^2)$ is a subspace of symmetric bounded measurable functions on $[0,1]^2$.
\end{proposition}
\begin{proof}
    Endow $\vct U_{\infty} = \mbb S^k\otimes \vct V_{\infty}$ with the norm $\|[W_{i,j}]_{i,j=1}^k\| = \max_{i,j}\|W_{i,j}\|_{\infty}$, so $\overline{\vct U_{\infty}}=\mbb S^k\otimes \overline{\vct V_{\infty}}$. Then $\overline{\cvx K_{\infty}}$ is the cone of PSD matrices $[W_{i,j}(x,y)]_{i,j=1}^k\succeq 0$ in $\overline{\vct U_{\infty}}$. 
    Furthermore, for any $W\in \vct V_{\infty}$ we have
    \begin{equation*}
        \|A_{\infty}\|\leq \max_{i,j}\Big(|(L_1)_{i,j}| + |(L_2)_{i,j}| + 2|(L_3)_{i,j}| + 2|(L_4)_{i,j}| + |(L_5)_{i,j}|\Big)\|W\|_{\infty},
    \end{equation*}
    so $A_{\infty}$ extends continuously to the limit, and $u_{\infty}=L_6\in U_1\subseteq \overline{\vct U_{\infty}}$ satisfies $\mc P_nu_{\infty}=L_6$ for all $n$. 
    Thus, Theorem~\ref{thm:descriptions_of_lims} yields the claim.
\end{proof}
We require $L_7=0$ since there is no $u_{\infty}\in\overline{\vct V_{\infty}}$ satisfying $\mc P_nu_{\infty}=2^nI_{2^n}$ for all $n$.

\subsection{Compatibility in Inverse Problems}\label{sec:inverse_probs}
Our compatibility conditions naturally arise in the context of inverse problems, where we demonstrate that it is desirable to use regularizers which are both intersection- and projection-compatible.

Consider a consistent sequence $\mscr V=\{\vct V_n\}$ of $\{\msf G_n\}$-representations. A popular approach to recover $x\in \vct V_n$ from $m\in\NN$ linear observations takes as input a forward map $A\colon \vct V_n\to \RR^m$ and data $y\in \RR^m$ and outputs 
\begin{equation}\label{eq:inv_prob_recovery}
    F_{m,n}(A,y) = \argmin{x\in \vct V_n}\ f_n(x) + \lambda\|Ax - y\|_2^2,
\end{equation}
where $f_n\colon \vct V_n\to\RR$ is a convex regularizer promoting desired structure in the solution. 
It is desirable for the maps $F_{m,n}$ defined in~\eqref{eq:inv_prob_recovery}---which can be instantiated for any $(A,y)\in\mc L(\vct V_n,\RR^m)\oplus\RR^m$ and for any $n,m\in\NN$---to satisfy
\begin{equation}\label{eq:argmin_compatibility}
    F_{m,n+1}(A\mc P_n, y) = F_{m,n}(A,y),    
\end{equation}
whenever the corresponding minimizers are unique. Indeed, condition~\eqref{eq:argmin_compatibility} requires the recovered solution to lie in $\vct V_n$ if the data only depends on the component of $x\in \vct V_{n+1}$ in $\vct V_n$, as this property avoids overfitting. 
Condition~\eqref{eq:argmin_compatibility} holds when the sequence of regularizers is both intersection and projection-compatible. Indeed, if the sequence of regularizers $\mfk f = \{f_n\}$ is \emph{projection-compatible}, then 
\begin{equation*}
        \min_{\widetilde x\in \vct V_{n+1}} f_{n+1}(\widetilde x) + \lambda\|A\mc P_n \widetilde x - b\|_2^2 = \min_{x\in \vct V_n}\min_{\substack{\widetilde x\in \vct V_{n+1}\\ \mc P_n\widetilde x = x}} f_{n+1}(\widetilde x) + \lambda \|Ax - y\|_2^2 = \min_{x\in \vct V_n}f_n(x) + \lambda\|Ax - y\|_2^2
\end{equation*}
Moreover, if $x_{\ast}=F_{m,n}(A,y)$ minimizes $f_n(x)+\lambda\|Ax-y\|_2^2$ and $\mfk f$ is \emph{intersection-compatible}, then $f_n(x_{\ast})+\lambda\|Ax_{\ast}-y\|_2^2 = f_{n+1}(x_{\ast}) + \lambda\|A\mc P_nx_{\ast}-y\|_2^2$ and hence $x_{\ast} = F_{m,n+1}(A\mc P_n,y)$, showing~\eqref{eq:argmin_compatibility}.

\section{Constant-Sized Invariant Conic Programs}\label{sec:const_sized_progs}
In the previous sections, we studied freely-described sequences of convex sets $\{\cvx C_n\}$ contained in a consistent sequence $\mscr V$. These convex sets are group-invariant whenever the cones $\cvx K_n$ in their descriptions~\eqref{eq:preim_seq} are group-invariant, which is the case for all the standard sequences of cones we consider. 
In this section, we further consider optimizing sequences of invariant linear functionals over such sequences of sets.
Each program in the sequence can be simplified by restricting its domain to invariant vectors~\cite[\S3]{GATERMANN200495}.
As we have seen in Proposition~\ref{prop:gen_imp_surj}, when $\mscr V$ is finitely-generated the dimensions of its spaces of invariants stabilize, so the size of the variables in such programs stabilizes as well. 
However, the size of the constraints may not stabilize, because the invariant sections of the cones $\{\cvx K_n^{\msf G_n}\}$ may grow in complexity.
For example, if $\cvx K_n$ is the cone of $n$-variate degree $k$ polynomials that are nonnegative over all of $\RR^n$ and $\msf G_n=\msf S_n$, the best-known description of $\cvx K_n^{\msf G_n}$ has complexity which is a (nonconstant) polynomial in $n$~\cite{timofte2003positivity,riener2016symmetric,ACEVEDO20162936}.
We therefore seek conditions for the existence of \emph{constant-sized} descriptions for $\{\cvx K_n^{\msf G_n}\}$, and bounds on the value of $n$ after which the size stabilizes in the sense of Definition~\ref{def:const_sized_descrp}.
Constant-sized descriptions for symmetric PSD and relative entropy cones have been obtained on a case-by-case basis in the literature~\cite{riener2013exploiting,raymond2018symmetric,debus2020reflection,moustrou2021symmetry}.
In this section, we explain how these results can be generalized and derived systematically from an interplay between representation stability and the structure of the cones in question.

\subsection{The PSD Cone and Variants}\label{sec:psd_cone_const_size}
%Suppose $\mscr V=\{\vct V_n\}$ is an $\mathrm{FI}|_{\mc W}$-module (hence a consistent sequence of $\{\msf G_n\}$-representations where $\msf G_n=B_n,\msf D_n,$ or $\msf S_n$) which is generated in degree $d$ and presented in degree $k$. Let $\mscr U=\mathrm{Sym}^2(\mscr V)=\{U_n\}$ and let $\mscr K = \mathrm{Sym}^2_+(\mscr V)=\{\cvx K_n\}$ as in Proposition~\ref{prop:extendable_vecs_ids}. 
We begin by giving constant-sized descriptions for certain sequences of PSD cones. We do so by deriving constant-sized bases for spaces of invariants in terms of which membership in the cones is simply expressed. The following is a precise statement of Theorem~\ref{thm:informal} stated informally in Section~\ref{sec:intro}.
\begin{theorem}\label{thm:sym2_const_size}
Let $\mscr V_0=\{\RR^n\}$ with $\msf G_n=\msf B_n,\msf D_n$, or $\msf S_n$ as in Example~\ref{ex:basic_consist_seq}, and let $\mscr V=\{\vct V_n\}$ be a $\mscr V_0$-module generated in degree $d$ and presented in degree $k$. Then the cones $\{\mathrm{Sym}^2_+(\vct V_n)^{\msf G_n}\}$ admit constant-sized descriptions for $n\geq k+d$.
\end{theorem}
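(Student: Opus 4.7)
My plan is to construct the constant-sized description by block-diagonalizing $G_n$-invariant self-adjoint operators on $V_n$ using the isotypic decomposition whose combinatorics stabilize thanks to Theorem~\ref{thm:rep_stability_FIW}.

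\textbf{Step 1 (Stable isotypic decomposition).} For $n \geq k+d$, Theorem~\ref{thm:rep_stability_FIW} yields a finite index set $\Lambda$ and multiplicities $m_\lambda \in \NN$ (independent of $n$) together with distinct $G_n$-irreducibles $W_{\lambda[n]}$ such that $V_n \cong \bigoplus_{\lambda \in \Lambda} W_{\lambda[n]}^{m_\lambda}$. Since the inner product on $V_n$ is $G_n$-invariant, the isotypic components are pairwise orthogonal, and within each isotypic component we may pick an orthogonal decomposition into irreducibles, so that the above isomorphism can be realized as a $G_n$-equivariant isometry $\Phi_n \colon \bigoplus_\lambda \RR^{m_\lambda} \otimes W_{\lambda[n]} \xrightarrow{\sim} V_n$.

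\textbf{Step 2 (Real type of irreducibles).} For $G_n = \msf S_n, \msf B_n, \msf D_n$, every irreducible real representation is absolutely irreducible (these are real reflection groups / Weyl groups, whose complex character tables are rational), so $\mathrm{End}_{G_n}(W_{\lambda[n]}) = \RR \cdot \id$ by real-type Schur's lemma. Consequently, a $G_n$-equivariant self-adjoint operator $X$ on the isotypic component $\RR^{m_\lambda} \otimes W_{\lambda[n]}$ must be of the form $M_\lambda \otimes \id_{W_{\lambda[n]}}$ for a unique self-adjoint $M_\lambda \in \mathrm{Sym}^{m_\lambda}$.

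\textbf{Step 3 (Equivariant isomorphism preserves PSD).} Setting
\[
U \;=\; \bigoplus_{\lambda \in \Lambda} \mathrm{Sym}^{m_\lambda}, \qquad K \;=\; \bigoplus_{\lambda \in \Lambda} \mathrm{Sym}^{m_\lambda}_+, \qquad L_n = U,
\]
define the linear map $T_n \colon U \to \mathrm{Sym}^2(V_n)^{G_n}$ by
\[
T_n\bigl((M_\lambda)_{\lambda \in \Lambda}\bigr) \;=\; \Phi_n \circ \Bigl(\bigoplus_\lambda M_\lambda \otimes \id_{W_{\lambda[n]}}\Bigr) \circ \Phi_n^{-1}.
\]
By Steps 1--2, $T_n$ is a linear isomorphism onto $\mathrm{Sym}^2(V_n)^{G_n}$. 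Because $\Phi_n$ is an isometry and each $M_\lambda \otimes \id_{W_{\lambda[n]}}$ is PSD on $\RR^{m_\lambda} \otimes W_{\lambda[n]}$ iff $M_\lambda \in \mathrm{Sym}^{m_\lambda}_+$, we obtain $T_n(K) = \mathrm{Sym}^2_+(V_n)^{G_n} = \mathrm{Sym}^2_+(V_n) \cap \mathrm{Sym}^2(V_n)^{G_n}$, which is the desired constant-sized description (with $L_n = U$ trivial).

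\textbf{Expected obstacle.} The main subtlety is to rule out nonreal endomorphism algebras (which would force blocks to live in Hermitian or quaternionic-Hermitian matrices rather than symmetric matrices, yielding a different constant cone). This is why the theorem is stated for $\msf S_n, \msf B_n, \msf D_n$: their real irreducibles are of real type, giving uniformly $\mathrm{Sym}^{m_\lambda}_+$ blocks. A secondary bookkeeping point is that the isomorphism $\Phi_n$ is only canonical up to the action of $\prod_\lambda \mathrm{O}(m_\lambda)$, but conjugation by such an orthogonal change of basis preserves $\mathrm{Sym}^{m_\lambda}_+$, so the cone $K$ is independent of this choice.
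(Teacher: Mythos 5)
Your proposal is correct and takes essentially the same route as the paper's proof: the stable isotypic decomposition from Theorem~\ref{thm:rep_stability_FIW} for $n\geq k+d$, Schur's lemma together with the real type of the irreducibles to force each invariant self-adjoint operator into block form $X_\lambda\otimes I_{\dim W_{\lambda[n]}}$ with $X_\lambda\in\mbb S^{m_\lambda}$, and the resulting map $T_n$ from $U=\bigoplus_\lambda \mbb S^{m_\lambda}$ carrying $K=\bigoplus_\lambda\mbb S^{m_\lambda}_+$ onto $\mathrm{Sym}^2_+(V_n)^{G_n}$ (with trivial $L_n$). The only caveat is your parenthetical justification of real type via rationality of the character table, which by itself does not rule out quaternionic type (the quaternion group has a rational character table); the correct and standard fact, which the paper simply cites, is that all irreducibles of these Weyl groups are realizable over $\RR$.
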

%Recall from Section~\ref{sec:backgrnd_fi_mods} that $\mscr V=\{\vct V_n\}$ is an $\mathrm{FI}|_{\mc W}$-module if it is a consistent sequence of representations of $\msf G_n=\msf B_n,\msf D_n$, or $\msf S_n$ satisfying $V_d\subseteq \vct V_n^{\msf H_{n,d}}$ for all $n\geq d$ where $\msf H_{n,d}\subseteq \msf G_n$ is the subgroup of (signed) permutations fixing the first $d$ letters.
\begin{proof}
By Theorem~\ref{thm:rep_stability_FIW}, there exists a finite set $\Lambda$ satisfying 
\begin{equation*}
    \vct V_n = \bigoplus_{\lambda\in\Lambda} \underbrace{\vct W_{\lambda[n]}^{m_{\lambda}}}_{=:\vct V_{\lambda[n]}}    
\end{equation*}
where $\vct W_{\lambda[n]}$ is a $\msf G_n$-irreducible and $\vct V_{\lambda[n]}$ is the corresponding isotypic component.
Invariant elements of $\vct U_n=\mathrm{Sym}^2(\vct V_n)$ are equivariant and self-adjoint endomorphisms of $\vct V_n$. 
If $X\in \vct U_n^{\msf G_n}$ is such an endomorphism and $\lambda\neq\mu\in \Lambda$ index distinct irreducibles, then $\mc P_{\vct V_{\mu[n]}}X|_{\vct V_{\lambda[n]}}=0$ by Schur's Lemma~\cite[\S1.2]{fulton2013representation}. 
Because the irreducibles of $\msf G_n=\msf B_n,\msf D_n,$ and $\msf S_n$ are of real type~\cite{WANG1988299} (meaning they remain irreducible when complexified), Schur's Lemma also implies that $\mc P_{\vct W_{\lambda[n]}}X|_{\vct W_{\lambda[n]}}$ is a multiple of the identity for each $\lambda\in\Lambda$, hence $\mc P_{\vct V_{\lambda[n]}}X|_{\vct V_{\lambda[n]}}=X_{\lambda}\otimes I_{\dim \vct W_{\lambda[n]}}$ for some $X_{\lambda}\in \mbb S^{m_{\lambda}}$.
We conclude that there exists an orthogonal matrix $Q_n$ depending on the irreducible decomposition of $\vct V_n$ satisfying
\begin{equation}\label{eq:blkdiag_psd_cone}
    \vct U_n^{\msf G_n} = \left\{Q_n\left(\bigoplus_{\lambda\in\Lambda}(X_{\lambda}\otimes I_{\dim \vct W_{\lambda[n]}})\right)Q_n^*:X_{\lambda}\in\mbb S^{m_{\lambda}}\right\},
\end{equation}
hence
\begin{equation}\label{eq:psd_cones_const_size}
    \mathrm{Sym}^2_+(\vct V_n)^{\msf G_n} = \left\{X\in \vct U_n^{\msf G_n}:X\succeq0\right\} = \left\{Q_n\left(\bigoplus_{\lambda\in\Lambda_n}(X_{\lambda}\otimes I_{\dim \vct W_{\lambda}})\right)Q_n^*:X_{\lambda}\in\mbb S^{m_{\lambda}}_+\right\}.
\end{equation}
Thus, we obtain constant-sized descriptions by defining $\vct U=\bigoplus_{\lambda\in\Lambda}\mbb S^{m_{\lambda}}$ and $T_n\colon \vct U\to \vct U_n^{\msf G_n}$ sending $(X_{\lambda})_{\lambda\in\Lambda}$ to $Q_n\bigoplus_{\lambda\in\Lambda}(X_{\lambda}\otimes I_{\dim \vct W_{\lambda}})Q_n^*$, which maps $\cvx K=\bigoplus_{\lambda\in\Lambda}\mbb S^{m_{\lambda}}_+$ onto $\mathrm{Sym}^2_+(\vct V_n)^{\msf G_n}$. 
\end{proof}
We now instantiate $\mscr V$ to obtain more concrete corollaries.
\begin{corollary}\label{cor:const_size_moment_mats}
    If $\msf G_n=\msf S_n,\msf D_n$ or $\msf B_n$ acts on $\RR^n$ as in Example~\ref{ex:basic_consist_seq}, then the cones $\mathrm{Sym}^2_+(\mathrm{Sym}^{\leq d}\RR^n)^{\msf G_n}\cong \left(\mbb S^{\binom{n+k}{k}}_+\right)^{\msf G_n}$ admit constant-sized descriptions by~\eqref{eq:psd_cones_const_size} for $n\geq 2d$ if $\msf G_n=\msf S_n, \msf B_n$ and $n\geq 2d+1$ if $\msf G_n=\msf D_n$.
\end{corollary}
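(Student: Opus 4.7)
The plan is to instantiate Theorem~\ref{thm:sym2_const_size} with the consistent sequence $\mscr V = \mathrm{Sym}^{\leq d}\mscr V_0 = \{\mathrm{Sym}^{\leq d}\RR^n\}$. First I would verify that $\mscr V$ is a $\mscr V_0$-module: since it is built from $\mscr V_0$ via the constructions of Remark~\ref{rmk:ops_with_seqs}, and the $\mscr V_0$-module property is preserved under direct sums and symmetric powers, this is immediate.

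Next I would determine the generation and presentation degrees of $\mscr V$. By Example~\ref{ex:gen_pres_deg_fiw}(a) (which itself rests on the calculus in Theorem~\ref{thm:calc_for_pres_degs_FIW}), $\mathrm{Sym}^{\leq d}\mscr V_0$ is generated and presented in degree $d$. Thus, in the notation of Theorem~\ref{thm:sym2_const_size}, we have $k = d$ and generation degree also equal to $d$, so the hypothesis ``$n \geq k + d$'' becomes $n \geq 2d$.

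Applying Theorem~\ref{thm:sym2_const_size} then yields constant-sized descriptions for $\{\mathrm{Sym}^2_+(\mathrm{Sym}^{\leq d}\RR^n)^{G_n}\}$ of the explicit block-diagonal form~\eqref{eq:psd_cones_const_size} valid for all $n \geq 2d$. The identification with $\left(\mbb S^{\binom{n+d}{d}}_+\right)^{G_n}$ comes from the standard fact that $\dim\mathrm{Sym}^{\leq d}\RR^n = \binom{n+d}{d}$, so $\mathrm{Sym}^2(\mathrm{Sym}^{\leq d}\RR^n) \cong \mbb S^{\binom{n+d}{d}}$ as $G_n$-representations, and the cone of PSD self-adjoint endomorphisms corresponds to the cone of PSD matrices under this identification.

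There is no real obstacle here, as the corollary is a direct specialization of Theorem~\ref{thm:sym2_const_size} combined with the calculus of presentation degrees. The only minor subtlety is bookkeeping between the $d$ appearing in the corollary (the degree of the polynomial ring being considered) and the $d, k$ appearing in Theorem~\ref{thm:sym2_const_size} (the generation and presentation degrees of the module). Both happen to coincide for this particular $\mscr V$, which is why the threshold becomes precisely $2d$.
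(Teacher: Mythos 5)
Your proposal is correct and follows essentially the same route as the paper: the paper's proof simply notes that $\mscr V=\{\mathrm{Sym}^{\leq d}\RR^n\}$ is generated and presented in degree $d$ (via Theorem~\ref{thm:calc_for_pres_degs_FIW}) and then invokes Theorem~\ref{thm:sym2_const_size}, giving the threshold $n\geq d+d=2d$ exactly as you argue. Your additional remarks on the $\mscr V_0$-module property and the identification $\mathrm{Sym}^2(\mathrm{Sym}^{\leq d}\RR^n)\cong\mbb S^{\binom{n+d}{d}}$ are just the bookkeeping the paper leaves implicit.
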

\begin{proof}
    The sequence $\mscr V = \mathrm{Sym}^{\leq d}\mscr V_0$ is generated in degree $d$ and presented in degree $d$ if $\msf G_n=\msf S_n,\msf B_n$ or in degree $d+1$ if $\msf G_n=\msf D_n$, by Theorem~\ref{thm:calc_for_pres_degs_FIW} and Example~\ref{ex:basic_consist_seq_degs}.
\end{proof}
% \begin{remark}
%     The basis for $\vct U_n^{\msf G_n}$ obtained by mapping the standard basis for $U$ via $T_n$ in the proof of Theorem~\ref{thm:sym2_const_size} is not freely-described because $\mc P_{\vct V_n}Q_{n+1}\neq Q_n$ in general, as $\mc P_{\vct V_n}$ does not map isotypic components $V_{\lambda[n+1]}$ to $V_{\lambda[n]}$. For example, if $\mscr V$ is the sequence from Example~\ref{ex:basic_consist_seq}, which decomposes into irreducibles as in Example~\ref{ex:Rn_decomp_stab}, we have $\mc P_{\vct V_n}W_{\lambda_2[n+1]} = \RR^n$. 
% %
%     In contrast, the basis that we shall use in Section~\ref{sec:rel_ent_cones} to give a constant-sized description of the relative entropy cone is freely-described. The choice of basis here is dictated by compatibility with the cone rather than by free descriptions.
% \end{remark}

To obtain constant-sized descriptions for cones of invariant sums-of-squares, we consider equivariant images of the PSD cones above.
%\begin{lemma}\label{lem:proj_vs_slice_of_cone}
%    Let $K\subseteq U$ be a convex cone in a representation $U$ of a compact group $G$ and let $W$ be another $G$-representation. If $\pi\in\mc L(U,W)^G$, then $\pi(K)^G = \pi(K^G)$.
%\end{lemma}
%\begin{proof}
%Denote by $\mc P_G\colon U\to U^G$ the orthogonal projection onto the invariant subspace. We claim that $K^G = \mc P_GK$. 
%Indeed, we always have $K^G\subseteq \mc P_GK$. Conversely, if $x\in K$ and $G$ is finite, then $\mc P_{G}x = \frac{1}{|G|}\sum_{g\in \msf G_n}gx$ is a convex combination of elements of $K$ (since $K$ is invariant), hence $\mc P_{G}x\in K^G$. If $G$ is merely compact, the same argument applies with averaging over the group replaced by integration with respect to its Haar measure.
%Thus, $\pi(K^G)\subseteq \pi(K)^G$ and $\pi(K)^G \subseteq \mc P_G\pi(K)=\pi(\mc P_GK)=\pi(K^G)$ where both equalities follow from equivariance.
%\end{proof}
\begin{proposition}\label{prop:proj_of_const_cones}
Let $\mscr U = \{\vct U_n\}$ and $\mscr W=\{\vct W_n\}$ be sequences of $\{\msf G_n\}$-representations (not necessarily consistent). Let $\{\cvx K_n\subseteq \vct U_n\}$ be a sequence of convex cones such that $\cvx K_n$ is $\msf G_n$-invariant for each $n$. If $\{\cvx K_n^{\msf G_n}\}$ admits constant-sized descriptions for $n\geq t$, then so does $\{\pi_n(\cvx K_n\cap \vct L_n)^{\msf G_n}\subseteq \vct W_n\}$ for any sequence $\{\pi_n\in\mc L(\vct U_n,\vct W_n)^{\msf G_n}\}$ and any sequence of $\msf G_n$-invariant subspaces $\vct L_n\subseteq \vct U_n$.
\end{proposition}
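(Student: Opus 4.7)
The plan is to unpack the statement into three routine manipulations: first, identifying the invariants of an image with the image of the invariants; second, intersecting constant-sized descriptions with an invariant subspace; and third, post-composing the ambient parametrizing map with $\pi_n$.

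First, I would observe that for each $n$,
\[
    (\pi_n(K_n\cap L_n))^{G_n} = \pi_n(K_n^{G_n}\cap L_n^{G_n}).
\]
The inclusion ``$\supseteq$'' is immediate since $\pi_n$ is $G_n$-equivariant, $K_n\cap L_n$ is $G_n$-invariant, and invariant vectors lie in $K_n^{G_n}\cap L_n^{G_n}=(K_n\cap L_n)^{G_n}$. For the reverse inclusion, given an invariant element $w\in \pi_n(K_n\cap L_n)$, pick any preimage $u\in K_n\cap L_n$ and average it over the (compact) group $G_n$ against Haar measure. Since $K_n$ and $L_n$ are convex and $G_n$-invariant, the average $\bar{u}$ lies in $K_n\cap L_n\cap U_n^{G_n}=K_n^{G_n}\cap L_n^{G_n}$; equivariance of $\pi_n$ together with invariance of $w$ gives $\pi_n(\bar{u})=w$. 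This identifies the target sequence with $\{\pi_n(K_n^{G_n}\cap L_n^{G_n})\}$.

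Second, I would invoke the hypothesized constant-sized description: there exist a single vector space $U$, a cone $K\subseteq U$, subspaces $L'_n\subseteq U$, and linear maps $T_n\colon U\to U_n^{G_n}$ with $K_n^{G_n}=T_n(K\cap L'_n)$ for all $n\geq t$. Because $L_n$ is $G_n$-invariant, $L_n^{G_n}=L_n\cap U_n^{G_n}$ is a subspace of $U_n^{G_n}$, so
\[
    K_n^{G_n}\cap L_n^{G_n} = T_n(K\cap L'_n)\cap L_n^{G_n} = T_n\!\left(K\cap L'_n\cap T_n^{-1}(L_n^{G_n})\right).
\]

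Third, I would define the new constant-sized data by keeping the same ambient pair $(U,K)$ and setting
\[
    \widetilde{L}_n := L'_n\cap T_n^{-1}(L_n^{G_n})\subseteq U,\qquad \widetilde{T}_n := \pi_n\circ T_n\colon U\to W_n^{G_n},
\]
where $\pi_n(U_n^{G_n})\subseteq W_n^{G_n}$ by equivariance. Then combining the two previous displays,
\[
    \widetilde{T}_n(K\cap \widetilde{L}_n) = \pi_n\!\left(T_n(K\cap L'_n\cap T_n^{-1}(L_n^{G_n}))\right) = \pi_n(K_n^{G_n}\cap L_n^{G_n}) = (\pi_n(K_n\cap L_n))^{G_n}
\]
for all $n\geq t$, which is the required constant-sized description.

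The only nontrivial step is the first one; everything else is bookkeeping. The main (mild) obstacle is justifying that averaging produces the desired preimage, which uses compactness of $G_n$ together with the convexity and $G_n$-invariance of $K_n$ and $L_n$. Everything else is essentially formal.
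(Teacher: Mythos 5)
Your proof is correct and follows essentially the same route as the paper: identify $\pi_n(K_n\cap L_n)^{G_n}=\pi_n(K_n^{G_n}\cap L_n^{G_n})$ via equivariance, then pull the subspace back through $T_n$ and post-compose with $\pi_n$ to get $\pi_n(K_n\cap L_n)^{G_n}=(\pi_n\circ T_n)\bigl(K\cap L_n'\cap T_n^{-1}(L_n^{G_n})\bigr)$. The only difference is that you spell out the first identity with a Haar-averaging argument, which the paper asserts without proof; this is a welcome (and correct) addition, modulo the harmless remark that for non-finite $G_n$ one tacitly uses that the average of a point of the convex cone over the compact group stays in the cone.
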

\begin{proof}
Suppose $\cvx K_n^{\msf G_n}=T_n(\cvx K\cap \vct L_n')$ for $n\geq t$ where $T_n\colon \vct U\to \vct U_n^{\msf G_n}$ and $\vct L_n'\subseteq \vct U$ are subspaces as in Definition~\ref{def:const_sized_descrp}. 
Because $\pi_n$ is $\msf G_n$-equivariant,
\begin{equation*}
    \pi_n(\cvx K_n\cap \vct L_n)^{\msf G_n}=\pi_n(\cvx K_n^{\msf G_n}\cap \vct L_n^{\msf G_n}) = (\pi_n\circ T_n)(\cvx K\cap \vct L_n'\cap T_n^{-1}(\vct L_n^{\msf G_n})).
\end{equation*}
Noting that $\vct L_n'\cap T_n^{-1}(\vct L_n^{\msf G_n})$ is a subspace of $\vct U$, we get the desired constant-sized descriptions.
\end{proof}
We now prove Theorem~\ref{thm:sym_sos_const_size} giving constant-sized descriptions for invariant sums-of-squares.
%For the following theorem, a polynomial $f$ is a sum of squares modulo an ideal $\mc I$ if there exist finitely-many polynomials $g_j$ such that $f-\sum_jg_j^2\in \mc I$.
%~\cite{blekherman2021symmetric} and~\cite[Cor.~3.23]{debus2020reflection}.
% \begin{theorem}\label{thm:sym_sos_const_size}
% Let $\msf G_n=\msf B_n,\msf D_n$ or $\msf S_n$, with their standard action on $\RR^n$. Let
% \begin{equation*}
%     \mc I_n\subseteq \bigoplus_{d\geq 0}\mathrm{Sym}^d\left(\bigwedge^k\RR^n\right) \cong \RR[x_{i_1,\ldots,i_k}]_{1\leq i_1<\cdots<i_k\leq n},    
% \end{equation*}
% be a $\msf G_n$-invariant ideal, and let $\vct U_n=\mathrm{Sym}^{\leq 2d}(\bigwedge^k\RR^n)/\mc I_n$ together with the sums-of-squares cone $\mathrm{SOS}_{\vct U_n} = \{f\in \vct U_n: f \textrm{ is a sum of squares mod } \mc I_n\}$. Then the sequence $\{\mathrm{SOS}_{\vct U_n}^{\msf G_n}\}$ admits a constant-sized description for $n\geq 2kd$ if $\msf G_n=\msf S_n$ or $\msf B_n$ and for $n\geq 2kd+1$ if $\msf G_n=\msf D_n$.
% \end{theorem}
%Here $f\in U_n$ is a sums-of-squares mod $\mc I_n$ if $f = \sum_j g_j^2 + \mc I_n$ for some $g_j\in \RR[x_{i_1,\ldots,i_k}]$.
s the vector whose coordinates are all the monomials in 
\begin{proof}[\textbf{Proof (Theorem~\ref{thm:sym_sos_const_size})}]
    If $v(x)$ is the vector whose coordinates are all the monomials \new{of degree at most $d$ in the variables $x_{i_1,\ldots,i_k}$ with $1\leq i_1\leq\ldots\leq i_k\leq n$}, then~\cite[Thm.~3.39]{SOS_chapter} yields
    \begin{equation*}
        \mathrm{SOS}_{\vct U_n} = \pi_n\left(\mathrm{Sym}_+^2\left(\mathrm{Sym}^{\leq d}\left(\new{\mathrm{Sym}^k}\RR^n\right)\right)\right),\quad \pi_n(M) = v(x)^\top Mv(x) + \mc I_n.
    \end{equation*}
    The map $\pi_n\colon\mathrm{Sym}^2\left(\mathrm{Sym}^{\leq d}\left(\new{\mathrm{Sym}^k}\RR^n\right)\right)\to \vct U_n$ is equivariant by definition of the action of $\msf G_n$ and the invariance of $\mc I_n$. 
    If $\mscr V_0=\{\RR^n\}$ as in Example~\ref{ex:basic_consist_seq}, then $\mscr V = \mathrm{Sym}^{\leq d}\left(\new{\mathrm{Sym}^k}\mscr V_0\right)$ is a $\mscr V_0$-module generated in degree $kd$, and presented in degree $kd$ if $\msf G_n=\msf S_n,\msf B_n$ and in degree $kd+1$ if $\msf G_n=\msf D_n$ by Theorem~\ref{thm:calc_for_pres_degs_FIW}. 
    Thus, the result follows from Theorem~\ref{thm:sym2_const_size} and Proposition~\ref{prop:proj_of_const_cones}.
\end{proof}
Note that Theorem~\ref{thm:sym_sos_const_size} applies to \emph{any} sequence of invariant ideals, not necessarily related to each other across dimensions, so that $\{\vct U_n\}$ in this result is not necessarily a consistent sequence. Nevertheless, it is a sequence of equivariant images of a consistent sequence, a fact crucial to the proof.
% If $k=1$, $\mc I_n=(0)$, and $\msf G_n=\msf S_n$ then we recover~\cite[Thms.~4.7, 4.10]{riener2013exploiting}, and when $\msf G_n=\msf D_n$ or $\msf B_n$ we recover~\cite[Cor.~3.23]{debus2020reflection}. If $k\geq 2$, if $\mc I_n=(x_I-x_I^2)_{I\subseteq\binom{[n]}{k}}$ is the ideal generated by $x_I^2-x_I$ where $I$ ranges over all $k$-subsets of $[n]$, and if $\msf G_n=\msf S_n$, we recover~\cite[Thm.~2.4]{raymond2018symmetric}.
% Theorem~\ref{thm:sym_sos_const_size} generalizes all of these results to include any of the classical Weyl groups and any sequence of invariant ideals.

\subsection{The Relative Entropy Cone and Variants}\label{sec:rel_ent_cones}
For a finite set $\mc A$, define the associated relative entropy cone
\begin{equation}\label{eq:rel_ent_cone}
    \mathrm{RE}_{\mc A} = \{(\nu, c, t)\in \RR^{\mc A}\oplus\RR^{\mc A}\oplus\RR: \nu,c\geq 0,\ D(\nu, c)\leq t\},
\end{equation}
where $D(\nu,c)=\sum_{\alpha\in\mc A}\nu_{\alpha}\log\left(\frac{\nu_{\alpha}}{c_{\alpha}}\right)$ is the relative entropy. 
\begin{proposition}\label{prop:rel_ent_const_size}
    Let $\mscr V_0$ be a consistent sequence of $\{\msf G_n\}$ representations and $\mscr V=\{\RR^{\mc A_n}\}$ be a permutation $\mscr V_0$-module for finite $\mc A_n$ satisfying $\mc A_n \subseteq \mc A_{n+1}$ (Definition~\ref{def:perm_rep}).  
    If $\dim \vct V_n^{\msf G_n}$ is constant for all $n\geq d$, then the invariant section $\{\mathrm{RE}_{\mc A_n}^{\msf G_n}\}$ of the cones~\eqref{eq:rel_ent_cone} admit constant-sized descriptions for $n\geq d$. 
\end{proposition}
\begin{proof}
Let $k=\dim (\RR^{\mc A_d})^{\msf G_d}$ and fix $n\geq d$. Let $\{\alpha_j\}_{j\in[k]}\subseteq \mc A_n$ be a set of $\msf G_n$-orbit representatives, and let $\mathbbm{1}_{j,n}=\sum_{g\in \msf G_n/\msf{Stab}_{\msf G_n}(\alpha_j)}e_{g\alpha_j}$ for each $j\in[k]$, so that $\{\mathbbm{1}_{j,n}\}_{j\in[k]}$ is a basis for $(\RR^{\mc A_n})^{\msf G_n}\cong\RR^k$.
Let $\mscr U = \mscr V^{\oplus 2}\oplus \RR=\{\vct U_n\}$, which contains the relevant relative entropy cones $\{\mathrm{RE}_{\mc A_n}\}$.
Then a basis for $\vct U_n^{\msf G_n}$ consists of $\{(\mathbbm{1}_{j,n},0,0),(0,\mathbbm{1}_{j,n},0)\}_{j\in[k]}\cup\{(0,0,1)\}$. 

If $(\nu,c,t)\in \vct U_n^{\msf G_n}$ for $n\geq d$ is expanded as $\nu=\sum_{j=1}^k\hat \nu_j\mathbbm{1}_{j,n}$ and similarly for $c$, then
\begin{equation*}
    \mathrm{RE}_{\mc A_n}^{\msf G_n} = \left\{(\nu, c, t)\in \vct U_n^{\msf G_n}: \hat \nu,\hat c\geq 0,\ \sum_{j=1}^k|\msf G_n/\msf{Stab}_{\msf G_n}(\alpha_j)| \hat\nu_j\log\left(\frac{\hat \nu_j}{\hat c_j}\right)\leq t\right\}.
\end{equation*}
Let $\vct U = \RR^{L}\oplus\RR^{L}\oplus\RR$, define
\begin{equation*}
    \cvx K = \left\{(\hat \nu, \hat c, t)\in \RR^{L}\oplus\RR^{L}\oplus\RR: \hat \nu,\hat c\geq 0,\ \sum_{j=1}^k\hat\nu_j\log\left(\frac{\hat \nu_j}{\hat c_j}\right)\leq t\right\},
\end{equation*}
and define $T_n\colon \vct U\to \vct U_n^{\msf G_n}$ sending $(e_j,0,0)\mapsto |\msf G_n/\msf{Stab}_{\msf G_n}(\alpha_j)|^{-1}(\mathbbm{1}_{j,n},0,0)$, sending $(0,e_j,0)\mapsto$ \linebreak $|\msf G_n/\msf{Stab}_{\msf G_n}(\alpha_j)|^{-1}(0,\mathbbm{1}_{j,n},0)$, and sending $(0,0,1)\mapsto (0,0,1)$. Then $\mathrm{RE}_{\mc A_n}^{\msf G_n} = T_n(\cvx K)$ for all $n\geq d$, giving the desired constant-sized descriptions.
\end{proof}
%As in the case of the PSD cone, the above constant-sized descriptions are made possible thanks to the compatibility of well-chosen bases for $U_n^{\msf G_n}$ with the cones $\cvx K_n^{\msf G_n}$.

Now suppose $\mscr W=\{\RR^{\mc B_n}\}$ is another permutation $\mscr V_0$-module. Let $\widetilde{\mscr U} = \mscr W\otimes\mscr U = \{\widetilde{\vct U_n}=\mc L(\RR^{\mc B_n},\vct U_n)\}$ and consider the cones of maps
\begin{equation}\label{eq:rel_ent_maps_cone}
    \mathrm{REM}_{\mc A_n,\mc B_n} = \left\{M\in \widetilde{\vct U_n}:M(\RR^{\mc B_n}_+)\subseteq \mathrm{RE}_{\mc A_n}\right\} = \left\{M\in \widetilde{\vct U_n}: Me_{\beta}\in \mathrm{RE}_{\mc A_n} \textrm{ for all } \beta\in\mc B_n\right\}.
\end{equation}
We obtain constant-sized descriptions for these cones for a specific $\mscr V_0$.
\begin{proposition}\label{prop:rel_ent_map_cone_const_size}
    Suppose $\mscr V_0=\{\RR^n\}$ with $\msf G_n=\msf B_n,\msf D_n,$ or $\msf S_n$ as in Example~\ref{ex:basic_consist_seq} and that $\{\RR^{\mc A_n}\},\ \{\RR^{\mc B_n}\}$ are permutation $\mscr V_0$-modules generated in degrees $d_V,d_W$, respectively. Then the sequence of cones $\{\mathrm{REM}_{\mc A_n,\mc B_n}^{\msf G_n}\}$ in~\eqref{eq:rel_ent_maps_cone} admits constant-sized descriptions for $n\geq d_V+d_W$ if $\msf G_n=\msf S_n,\msf B_n$ and $n\geq d_V+d_W+1$ if $\msf G_n=\msf D_n$.
\end{proposition}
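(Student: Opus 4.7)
The plan is to reduce the proposition to Proposition~\ref{prop:rel_ent_const_size} by exploiting that an equivariant map out of a permutation module is determined by its values on orbit representatives, and then observing that these values must be invariant under the corresponding stabilizer subgroups, whose spaces of invariants in $\RR^{\mc A_n}$ stabilize at the right rate thanks to Corollary~\ref{cor:stab_deg_for_stabs}.

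First I would decompose the invariant maps. By Proposition~\ref{prop:pres_deg_perm_mod}(a), since $\mscr W=\{\RR^{\mc B_n}\}$ is generated in degree $d_W$, we have $\mc B_n=\bigcup_{g\in G_n}g\mc B_{d_W}$ for $n\geq d_W$; moreover, by Proposition~\ref{prop:pres_deg_perm_mod}(c) the module $\mscr W$ is algebraically free, so by the argument in its proof the minimal-degree $G_{d_W}$-orbit representatives $\{\beta_j\}_{j=1}^L\subseteq\mc B_{d_W}$ are simultaneously the $G_n$-orbit representatives in $\mc B_n$ for all $n\geq d_W$, with $L$ independent of $n$. Setting $H_n^{(j)}=\mathrm{Stab}_{G_n}(\beta_j)$, the standard identification (Frobenius reciprocity, or direct check) gives a linear isomorphism
\begin{equation*}
    \widetilde U_n^{G_n}\;\xrightarrow{\ \sim\ }\;\bigoplus_{j=1}^L U_n^{H_n^{(j)}},\qquad M\mapsto (Me_{\beta_j})_{j=1}^L,
\end{equation*}
and because $\mathrm{RE}_{\mc A_n}$ is $G_n$-invariant, $M$ lies in $\mathrm{REM}_{\mc A_n,\mc B_n}^{G_n}$ iff each $Me_{\beta_j}$ lies in $\mathrm{RE}_{\mc A_n}^{H_n^{(j)}}\subseteq (\RR^{\mc A_n}\oplus\RR^{\mc A_n}\oplus\RR)^{H_n^{(j)}}$.

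Next I would give a constant-sized description of each slice $\mathrm{RE}_{\mc A_n}^{H_n^{(j)}}$. The permutation module $\mscr V=\{\RR^{\mc A_n}\}$ is algebraically free (Proposition~\ref{prop:pres_deg_perm_mod}(c)), so it is presented in degree equal to its generation degree $d_V$. Each $\beta_j$ has minimal degree $d_j\leq d_W$ in its $G_{d_W}$-orbit, so by Corollary~\ref{cor:stab_deg_for_stabs} the projections $\mc P_{V_n}\colon V_{n+1}^{H_{n+1}^{(j)}}\to V_n^{H_n^{(j)}}$ are isomorphisms for all $n\geq d_j+d_V$, and in particular for all $n\geq d_V+d_W$. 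Letting $M_j$ denote the stable dimension, a basis for $(\RR^{\mc A_n})^{H_n^{(j)}}$ is provided by indicators $\{\mathbbm{1}_{O_{j,i}^{(n)}}\}_{i=1}^{M_j}$ of $H_n^{(j)}$-orbits in $\mc A_n$. Writing $\nu=\sum_i\tilde\nu_i\mathbbm{1}_{O_{j,i}^{(n)}}$ and similarly for $c$, and using $\sum_{\alpha}\nu_\alpha\log(\nu_\alpha/c_\alpha)=\sum_i|O_{j,i}^{(n)}|\tilde\nu_i\log(\tilde\nu_i/\tilde c_i)$, the change of variables $\hat\nu_i=|O_{j,i}^{(n)}|\tilde\nu_i$, $\hat c_i=|O_{j,i}^{(n)}|\tilde c_i$ turns the defining inequality into $\sum_i\hat\nu_i\log(\hat\nu_i/\hat c_i)\leq t$, exactly as in the proof of Proposition~\ref{prop:rel_ent_const_size}. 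Concretely, define the fixed standard relative entropy cone $K^{(j)}\subseteq\RR^{M_j}\oplus\RR^{M_j}\oplus\RR$ and the linear map $T_n^{(j)}(\hat\nu,\hat c,t)=\bigl(\sum_i|O_{j,i}^{(n)}|^{-1}\hat\nu_i\mathbbm{1}_{O_{j,i}^{(n)}},\ \sum_i|O_{j,i}^{(n)}|^{-1}\hat c_i\mathbbm{1}_{O_{j,i}^{(n)}},\ t\bigr)$; then $T_n^{(j)}(K^{(j)})=\mathrm{RE}_{\mc A_n}^{H_n^{(j)}}$ for all $n\geq d_V+d_W$.

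Finally, I would assemble these into a single constant-sized description. Set $U=\bigoplus_{j=1}^L(\RR^{M_j}\oplus\RR^{M_j}\oplus\RR)$ and $K=\bigoplus_{j=1}^LK^{(j)}$, and define $T_n\colon U\to\widetilde U_n^{G_n}$ as the composition of $\bigoplus_jT_n^{(j)}$ with the inverse of the orbit-restriction isomorphism above; no slicing subspace $L_n\subseteq U$ is even needed. Then $T_n(K)=\mathrm{REM}_{\mc A_n,\mc B_n}^{G_n}$ for every $n\geq d_V+d_W$, which is exactly a constant-sized description in the sense of Definition~\ref{def:const_sized_descrp}. The main subtlety is ensuring that the stabilization thresholds for the orbit count of $\mc B_n$ (which is $d_W$) and for the $H_n^{(j)}$-invariants of $\RR^{\mc A_n}$ (which is $d_j+d_V\leq d_W+d_V$ once $\beta_j$ is chosen of minimal degree) line up to give the claimed bound $n\geq d_V+d_W$; this is where Proposition~\ref{prop:pres_deg_perm_mod}(c) (to get algebraic freeness and hence the equality of generation and presentation degrees) together with Corollary~\ref{cor:stab_deg_for_stabs} are essential.
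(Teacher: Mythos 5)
Your proposal is correct and follows essentially the same route as the paper: restrict an invariant map to the minimal-degree orbit representatives $\beta_j$ of $\mc B_n$ (valid for $n\geq d_W$ by Proposition~\ref{prop:pres_deg_perm_mod}), identify $\widetilde U_n^{G_n}\cong\bigoplus_j U_n^{\mathrm{Stab}_{G_n}(\beta_j)}$ and $\mathrm{REM}_{\mc A_n,\mc B_n}^{G_n}\cong\bigoplus_j\mathrm{RE}_{\mc A_n}^{\mathrm{Stab}_{G_n}(\beta_j)}$, invoke Corollary~\ref{cor:stab_deg_for_stabs} to stabilize the stabilizer-invariants for $n\geq d_V+d_W$, and then describe each block by the orbit-indicator/rescaling argument of Proposition~\ref{prop:rel_ent_const_size}. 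The only cosmetic difference is that you re-derive that last step explicitly rather than citing Proposition~\ref{prop:rel_ent_const_size} directly, and you spell out the implicit fact that algebraic freeness makes the presentation degree of $\{\RR^{\mc A_n}\}$ equal to $d_V$, which the paper leaves tacit.
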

\begin{proof} 
Set $d_0=0$ if $\msf G_n=\msf S_n,\msf B_n$ and $d_0=1$ if $\msf G_n=\msf D_n$. Let $\hat{\mc B}\subseteq\mc B_{d_W}$ be a set of minimal-degree $\msf G_{d_W+d_0}$-orbit representatives in $\mc B_{d_W+d_0}$, which are also orbit representatives for $\mc B_n$ for all $n\geq d_W+d_0$ by Proposition~\ref{prop:pres_deg_perm_mod}(c).
Any $M\in \widetilde{\vct U_n}^{\msf G_n} = \mc L(\RR^{\mc B_n},\vct U_n)^{\msf G_n}$ for $n\geq d_W+d_0$ is fully determined by the images $Me_{\beta}\in \vct U_n^{\msf{Stab}_{\msf G_n}(\beta)}$ of the basis elements $e_{\beta}$ for $\beta\in\hat{\mc B}$ and conversely, for any collection $\left\{u_{\beta}\in \vct U_n^{\msf{Stab}_{\msf G_n}(\beta)}\right\}_{\beta\in\hat{\mc B}}$ there is a unique $M\in \widetilde{\vct U_n}^{\msf G_n}$ satisfying $Me_{\beta}=u_{\beta}$. Moreover, $M\in \widetilde{\cvx K_n}$ if and only if $Me_{\beta}\in \cvx K_n^{\msf{Stab}_{\msf G_n}(\beta)}$ for all $\beta\in\hat{\mc B}$. Thus, we have
\begin{equation*}
    \widetilde{\vct U_n}^{\msf G_n} = \bigoplus_{\beta\in\hat{\mc B}}\vct U_n^{\msf{Stab}_{\msf G_n}(\beta)} \supseteq \bigoplus_{\beta\in\hat{\mc B}}\mathrm{RE}_{\mc A_n}^{\msf{Stab}_{\msf G_n}(\beta)} = \mathrm{REM}_{\mc A_n,\mc B_n}^{\msf G_n}.
\end{equation*}
Applying Corollary~\ref{cor:stab_deg_for_stabs} to the free module with which $\{\RR^{\mc A_n}\}$ agrees for $n\geq d_V+d_0$ by Proposition~\ref{prop:pres_deg_perm_mod}(c), and which is presented in the same degree, we conclude that the projections $\mc P_{\vct V_n}\colon (\RR^{\mc A_{n+1}})^{\msf{Stab}_{\msf G_{n+1}(\beta)}} \to (\RR^{\mc A_n})^{\msf{Stab}_{\msf G_n}(\beta)}$ are isomorphisms for all $n\geq d_V+d_W+d_0$.
Proposition~\ref{prop:rel_ent_const_size} then gives constant-sized descriptions for $\left\{\mathrm{RE}_{\mc A_n}^{\msf{Stab}_{\msf G_n}(\beta)}\right\}_n$ for each $\beta\in\hat{\mc B}$.
\end{proof}

As an application of Proposition~\ref{prop:rel_ent_map_cone_const_size}, we obtain constant-sized descriptions for SAGE cones of signomials. Indeed, if $\mc A_n,\mc B_n\subseteq\RR^n$ as in the above proposition, define the sequence $\mscr F=\{\vct F_n\}$ of functions on $\{\RR^n\}$
\begin{equation*}
    \vct F_n = \left\{f(x) = \sum_{\alpha\in\mc A_n}c_{\alpha}e^{\langle \alpha, x\rangle} + \sum_{\beta\in\mc B_n}t_{\beta}e^{\langle \beta,x\rangle}:c_{\alpha},t_{\beta}\in\RR\right\} \cong \RR^{\mc A_n}\oplus \RR^{\mc B_n},
\end{equation*}
with $\msf G_n$ acting by $g\cdot f = f\circ g^{-1}$. Note that $\mscr F = \mscr V\oplus\mscr W$ as consistent sequences, where $\mscr V = \{\RR^{\mc A_n}\}$ and $\mscr W=\{\RR^{\mc B_n}\}$ are as above.
Sums of exponentials as in $\vct F_n$ are called signomials, and optimization problems involving such functions arise in a number of applications~\cite{murray2021signomial}. As usual, minimizing a signomial $f$ over $\RR^n$ can be recast as maximizing $\gamma\in\RR$ such that $f-\gamma\geq 0$ on $\RR^n$, so that optimizing signomials can be reduced to certifying their nonnegativity. This is NP-hard in general, but it can be done efficiently if only a single coefficient of $f$ is nonnegative or if $f$ is a sum of such signomials~\cite{sage}.  Formally, define the cones of (Sums of) nonnegative AM/GM Exponential functions, called AGE (resp., SAGE) functions, by
\begin{align*}
    &\mathrm{AGE}_{\mc A_n,\beta} = \left\{f(x)=\sum_{\alpha\in\mc A_n}c_{\alpha}e^{\langle \alpha, x\rangle} + te^{\langle \beta,x\rangle}:f\geq 0 \textrm{ on } \RR^n \textrm{ and } c_{\alpha}\geq 0 \textrm{ for all } \alpha\in\mc A_n\right\},\\
    &\mathrm{SAGE}_{\mc A_n,\mc B_n} = \sum_{\beta\in\mc B_n}\mathrm{AGE}_{\mc A_n,\beta}.
\end{align*}
\begin{theorem}\label{thm:sage_cones_const_size}
    Suppose $\mc A_n,\mc B_n\subseteq \RR^n$ where $\RR^n$ is embedded in $\RR^{n+1}$ by zero-padding and with the standard action of $\msf G_n=\msf S_n,\msf D_n$ or $\msf B_n$. If $\mc A_n=\bigcup_{g\in \msf G_n}g\mc A_{d_A}$ for all $n\geq d_A$ and $\mc B_n=\bigcup_{g\in \msf G_n}g\mc A_{d_B}$ for all $n\geq d_B$, then the invariant SAGE cones $\{\mathrm{SAGE}_{\mc A_n,\mc B_n}^{\msf G_n}\}_n$ admit constant-sized descriptions for $n\geq d_A+d_B$ if $\msf G_n=\msf S_n,\msf B_n$ and $n\geq d_A+d_B+1$ if $\msf G_n=\msf D_n$.
\end{theorem}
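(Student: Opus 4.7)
The plan is to express each SAGE cone as the equivariant image of an invariant linear slice of a relative entropy map cone, and then to invoke Propositions~\ref{prop:rel_ent_map_cone_const_size} and~\ref{prop:proj_of_const_cones}. First, set $\mscr V_0=\{\RR^n\}$ as in Example~\ref{ex:consistent_seqs}(a). By Proposition~\ref{prop:pres_deg_perm_mod}(a), the hypotheses $\mc A_n=\bigcup_{g\in G_n}g\mc A_{d_A}$ and $\mc B_n=\bigcup_{g\in G_n}g\mc B_{d_B}$ say precisely that the permutation $\mscr V_0$-modules $\mscr V=\{\RR^{\mc A_n}\}$ and $\mscr W=\{\RR^{\mc B_n}\}$ are generated in degrees $d_A$ and $d_B$. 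Proposition~\ref{prop:rel_ent_map_cone_const_size} then produces constant-sized descriptions of the invariant cones $\{\mathrm{REM}_{\mc A_n,\mc B_n}^{G_n}\}$ for all $n\geq d_A+d_B$.

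Next, recall the standard relative-entropy characterization of AGE cones (see e.g.\ \cite{sage,murray2021signomial}): for a fixed $\beta$, a function $\sum_{\alpha\in\mc A_n}c_\alpha e^{\langle\alpha,x\rangle}+t e^{\langle\beta,x\rangle}$ with $c\in\RR^{\mc A_n}_+$ lies in $\mathrm{AGE}_{\mc A_n,\beta}$ if and only if there exists $\nu\in\RR^{\mc A_n}_+$ with $\sum_\alpha\nu_\alpha(\alpha-\beta)=0$ and $(\nu,ec,t+\mathbbm{1}^\top\nu)\in\mathrm{RE}_{\mc A_n}$ (up to the customary sign convention). Summing over $\beta\in\mc B_n$, an element $(c,t)\in\RR^{\mc A_n}\oplus\RR^{\mc B_n}$ belongs to $\mathrm{SAGE}_{\mc A_n,\mc B_n}$ precisely when there exist vectors $\nu^{(\beta)},c^{(\beta)}\in\RR^{\mc A_n}$ for each $\beta\in\mc B_n$ satisfying $c=\sum_\beta c^{(\beta)}$, the orthogonality conditions $\sum_\alpha\nu^{(\beta)}_\alpha(\alpha-\beta)=0$, and $(\nu^{(\beta)},ec^{(\beta)},t_\beta+\mathbbm{1}^\top\nu^{(\beta)})\in\mathrm{RE}_{\mc A_n}$ for each $\beta$.

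Now package these auxiliary variables into a single linear map $M\in\mc L(\RR^{\mc B_n},\RR^{\mc A_n}\oplus\RR^{\mc A_n}\oplus\RR)$ defined by $M e_\beta=(\nu^{(\beta)},c^{(\beta)},t_\beta)$. Then the relative-entropy conditions say exactly that $M\in\mathrm{REM}_{\mc A_n,\mc B_n}$ (after a fixed linear change of variables absorbing the $\mathbbm{1}^\top\nu^{(\beta)}$ shift and the factor $e$). The orthogonality constraints and the summation $c=\sum_\beta c^{(\beta)}$ are $G_n$-invariant linear conditions on $M$, because the $G_n$-action permutes the coordinates $\beta\in\mc B_n$ compatibly with its action on the $\nu^{(\beta)}$ and on the vectors $\alpha-\beta$; write the joint kernel of these constraints as an invariant subspace $L_n$. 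Finally, the extraction $\pi_n(M)=(\sum_\beta c^{(\beta)},(t_\beta)_\beta)$ is a $G_n$-equivariant linear map into $\RR^{\mc A_n}\oplus\RR^{\mc B_n}$, and the discussion above gives
\begin{equation*}
    \mathrm{SAGE}_{\mc A_n,\mc B_n}=\pi_n\bigl(\mathrm{REM}_{\mc A_n,\mc B_n}\cap L_n\bigr).
\end{equation*}
Applying Proposition~\ref{prop:proj_of_const_cones} to the constant-sized descriptions of $\mathrm{REM}_{\mc A_n,\mc B_n}^{G_n}$ obtained above yields constant-sized descriptions of $\mathrm{SAGE}_{\mc A_n,\mc B_n}^{G_n}$ for all $n\geq d_A+d_B$, as required.

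The main obstacle I anticipate is bookkeeping the precise form of the AGE-cone characterization so that it truly presents $\mathrm{SAGE}_{\mc A_n,\mc B_n}$ as a $G_n$-equivariant image of a linear slice of $\mathrm{REM}_{\mc A_n,\mc B_n}$ (rather than, say, an image of a cone depending on $n$ in a more subtle way). In particular, one must be careful with the case $\beta\in\mc A_n$ and with the various normalizations of the relative-entropy inequality used in the literature; verifying that these bookkeeping choices can be made once and for all and are equivariant under $G_n$ (as opposed to varying with $\beta$ in a nontrivial way across orbits) is the only nonroutine point.
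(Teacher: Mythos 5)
Your proposal is correct and follows essentially the same route as the paper: both express $\mathrm{SAGE}_{\mc A_n,\mc B_n}$ as $\pi_n(\mathrm{REM}_{\mc A_n,\mc B_n}\cap L_n)$ for an equivariant $\pi_n$ and an invariant subspace $L_n$ cut out by the balance conditions $\sum_\alpha\nu^{(\beta)}_\alpha(\alpha-\beta)=0$, and then conclude via Propositions~\ref{prop:rel_ent_map_cone_const_size} and~\ref{prop:proj_of_const_cones}. The normalization bookkeeping you flag (the factor $e$ and the $\mathbbm{1}^\top\nu$ shift, where your stated form slightly double-counts the shift) is immaterial, since any of these conventions differ by a $G_n$-equivariant linear isomorphism of the description space, which the same two propositions absorb; the paper simply cites the characterization from the SAGE literature in the already-normalized form.
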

\begin{proof}
Identify $M\in\widetilde{\vct U_n}$ with tuples $(Me_{\beta})_{\beta\in\mc B_n} = (\nu^{(\beta)},c^{(\beta)},t_{\beta})_{\beta\in\mc B_n}\in \RR^{\mc A_n}\oplus\RR^{\mc A_n}\oplus\RR$ for each $\beta\in \mc B_n$. The authors of~\cite{sage} show that, in our notation,
\begin{align*}
    \mathrm{SAGE}_{\mc A_n,\mc B_n} &= \Big\{(c,t)\in\RR^{\mc A_n}\oplus\RR^{\mc B_n}: \exists M=(\nu^{(\beta)},c^{(\beta)},t_{\beta})_{\beta\in\mc B_n}\in \mathrm{REM}_{\mc A_n,\mc B_n} \textrm{ s.t. } \sum\nolimits_{\beta\in\mc B_n}c^{(\beta)} = c,\\ &\qquad \sum\nolimits_{\alpha\in\mc A_n}\nu^{(\beta)}_{\alpha}(\alpha-\beta)=0 \textrm{ for all } \beta\in\mc B_n\Big\}\\
    &= \pi_n(\mathrm{REM}_{\mc A_n,\mc B_n}\cap \vct L_n),
\end{align*}
where 
\begin{align*}
    \vct L_n &= \left\{M=(\nu^{(\beta)},c^{(\beta)},t_{\beta})_{\beta\in\mc B_n}\in \widetilde{\vct U_n}: \sum\nolimits_{\alpha\in\mc A_n}\nu^{(\beta)}_{\alpha}(\alpha-\beta)=0 \textrm{ for all } \beta\in\mc B_n\right\},\\
    \pi_n(M) &= \left(\sum\nolimits_{\beta\in\mc B_n}c^{(\beta)},(t_{\beta})_{\beta\in\mc B_n}\right)\in\RR^{\mc A_n}\oplus\RR^{\mc B_n}.
\end{align*}
%In other words, $K^{(SAGE)}_n(\mc A_n,\mc B_n)$ is the linear projection $\pi_n$ of the intersection of $\widetilde{\cvx K_n}$ with the subspace $\vct L_n$. 
Note that $\pi_n$ is equivariant, since 
\begin{equation*}
    (g\cdot M)e_{\beta}=gMg^{-1}e_{\beta}=gMe_{g^{-1}\beta} = g\cdot (\nu^{(g^{-1}\beta)}, c^{(g^{-1}\beta)},t_{g^{-1}\beta})=(g\cdot \nu^{(g^{-1}\beta)},g\cdot c^{(g^{-1}\beta)}, t_{g^{-1}\beta}),
\end{equation*}
hence
\begin{equation*}
    \pi_n(g\cdot M) = \left(\sum_{\beta\in\mc B_n}g\cdot c^{(g^{-1}\beta)}, (t_{g^{-1}\beta})_{\beta\in\mc B_n}\right) = \left(g\cdot \sum_{\beta\in\mc B_n}c^{(\beta)}, g\cdot(t_{\beta})_{\beta\in\mc B_n}\right) = g\cdot \pi_n(M).
\end{equation*}
Similarly, if $\sum_{\alpha\in\mc A_n}\nu^{(\beta)}_{\alpha}(\alpha-\beta)=0$ for all $\beta\in\mc B_n$ then
\begin{equation*}
    \sum_{\alpha\in\mc A_n}(g\cdot \nu^{(g^{-1}\beta)})_{\alpha}(\alpha-\beta) = g\sum_{\alpha\in\mc A_n}\nu^{(g^{-1}\beta)}_{g^{-1}\alpha}(g^{-1}\alpha-g^{-1}\beta) = g\sum_{\alpha\in\mc A_n}\nu^{(g^{-1}\beta)}_{\alpha}(\alpha-g^{-1}\beta) = 0,
\end{equation*}
hence $\vct L_n$ is $\msf G_n$-invariant.
Thus, the result follows from Proposition~\ref{prop:rel_ent_map_cone_const_size} and Proposition~\ref{prop:proj_of_const_cones}.
\end{proof}
%We proceed to prove that these cones admit constant-sized descriptions.

Theorem~\ref{thm:sage_cones_const_size} generalizes~\cite[Thm.~5.3]{moustrou2021symmetry} beyond $\msf S_n$ to the other classical Weyl groups $\msf D_n$ and $\msf B_n$. It would be interesting to further generalize to signomials defined on more general consistent sequences than $\{\RR^n\}$, which would require generalizing the description of the AGE cone from~\cite{sage}.
%\end{comment}

\section{\new{Dimension-}Free Convex Regression}\label{sec:numerical_examples}
In this section, we use our framework to describe a solution to the \new{dimension-}free convex regression problem.  Recall that in this problem we are given evaluation data in different dimensions and we seek a sequence of convex functions that best fit the data and can be instantiated in any desired dimension (including those not represented in the data).  To that end, we use the framework we developed in Section~\ref{sec:free_and_compatible} to obtain parametric families of freely-described convex sets in a fully algorithmic manner. We then present a numerical procedure to fit elements of these families to the given data.  Our implementation of the resulting algorithms can be found at \url{https://github.com/eitangl/anyDimCvxSets}.

%\VC{Expand briefly on this with a sentence or two to remind the reader about the free convex regression problem?  Perhaps use a sentence from 6.2?}

%\VC{This whole section feels a touch confusing in its organization.  Section 6.1 concerns how one obtains a parametric family in a fully algorithmic way.  But it also has a discussion surrounding how to extend once you fit in a given dimension that is sufficiently large (greater than the presentation degree).  The fitting itself is described in Section 6.2.  So the logic is a bit distorted.  Definitely the fitting part needs to be in its own section.  But I wonder if Section 6.1 could be tweaked a bit to remove any reference to fitting a function and `abstractly' discussing extendability.  As you'll see, I have a couple of ideas about how to do this -- would these make sense?}

\subsection{Computationally Parametrizing Descriptions}\label{sec:implementation}
Suppose we seek a parametric family of convex subsets $\{\cvx C_n\subseteq \vct V_n\}$ of a consistent sequence $\mscr V = \{\vct V_n\}$ of $\mscr G=\{\msf G_n\}$-representations. Both $\mscr V$ and $\mscr G$ are usually dictated by the application at hand and the symmetries it exhibits, as in Example~\ref{ex:graph_params}.
We then select description spaces $\mscr U=\{\vct U_n\},\mscr W=\{\vct W_n\}$, usually constructed from $\mscr V$ as described in Section~\ref{sec:consist_seq_construct}. Their choice is dictated by the desired richness of the family of the freely-described sets and, as we shall see in Remark~\ref{rmk:identif} below, by the dimensions of the available data.  
%As described in Section~\ref{sec:consist_seq_construct}, rich description spaces are often constructed from simpler ones, in which case their generation and presentation degrees can be bounded using Theorem~\ref{thm:calc_for_pres_degs_FIW}.
%sing the operations which are usually constructed from $\mscr V$ using Remark~\ref{rmk:ops_with_seqs} and are therefore $\mscr V$-modules, and convex cones $\mscr K=\{\cvx K_n\subseteq \vct U_n\}$. 
%There is a trade-off in choosing description spaces; large description spaces yield larger and more expressive families, but optimization over the convex sets in these families becomes more expensive.
Once all of these are chosen, parametrizing the associated family of freely-described convex sets amounts to identifying bases for the spaces of freely-described vectors and linear maps appearing in~\eqref{eq:preim_seq}.
Up to this point we have obtained such bases analytically, as the relevant spaces of invariants have been fairly low-dimensional, but this approach becomes impractical for richer description spaces.  To address this challenge, we present a computational method for obtaining the relevant bases, which yields a fully algorithmic approach for deriving parametrized families of freely-described convex sets.
%Instead, in this section we access the desired bases computationally. 

%\VC{The rest of this section should be reorganized based on my points above.  In particular, I would remove Step 10 of the algorithm.  For Step 11, I would state in terms of basis elements rather than in terms of $A_n, B_n$.  Once you extend basis elements, you can realize a convex set in the desired dimension by using the same coefficients.  In particular, I don't imagine anything significant changing upto Step 8 of your algorithm below.}

Computing a basis for the space of freely-described elements in a finitely-generated consistent sequence proceeds in two steps. First, we compute a basis for the space of invariants in a fixed, sufficiently large dimension. Second, we extend the elements of this basis to any other dimension, which is done by solving a linear system.  
%\VC{This preceding sentence is exactly the right idea -- we should frame our description as extending the basis rather than $A_n, B_n$.}
Our procedure is summarized in Algorithm~\ref{algo:free_descr}, which we proceed to describe in more detail.

\begin{algorithm}
	\caption{Computationally parametrize a freely-described (and possibly compatible) sequence of convex sets.}
	\label{algo:free_descr}
	\begin{algorithmic}[1]
		\State \textbf{Input:} \new{Finitely-presented} consistent sequences $\mscr V,\mscr W,\mscr U$.
        \State \textbf{Output:} Bases $\{\mscr A^{(i)}=\{A_n^{(i)}\}_{n\in\NN}\}_{i=1}^{d_A}, \{\mscr B^{(j)}=\{B_n^{(j)}\}_{n\in\NN}\}_{j=1}^{d_B}$ for freely-described equivariant maps or morphisms.

        %\State Fix , as in Proposition~\ref{prop:extendable_vecs_ids}.

        \If{compatibility not required}
        \State Fix $n_0\geq$ presentation degrees of $\mscr V\otimes \mscr U$ and $\mscr W\otimes\mscr U$.
        \State \label{step:equivariant_basis} Find bases $\{A_{n_0}^{(i)}\}_{i=1}^{d_A},\{B_{n_0}^{(j)}\}_{j=1}^{d_B}$ for $\mc L(\vct V_{n_0},\vct U_{n_0})^{\msf G_{n_0}}$ and $\mc L(\vct W_{n_0},\vct U_{n_0})^{\msf G_{n_0}}$. %the space of equivariant $A_{n_0}, B_{n_0}$. 
        %, by finding a basis for the kernels of $K_{\mscr V\otimes\mscr U,n_0}$ and $K_{\mscr W\otimes\mscr U,n_0}$ given by~\eqref{eq:ker_for_invars}.
        \Else 
        \State Fix $n_0\geq$ presentation degrees of $\mscr V,\mscr W,\mscr U$.
        \State \label{step:extendable_basis} Find bases $\{A_{n_0}^{(i)}\}_{i=1}^{d_A},\{B_{n_0}^{(j)}\}_{j=1}^{d_B}$ for subspaces of $\mc L(\vct V_{n_0},\vct U_{n_0})^{\msf G_{n_0}}$ and $\mc L(\vct W_{n_0},\vct U_{n_0})^{\msf G_{n_0}}$ satisfying the hypotheses of Theorem~\ref{thm:extending_compatible_descriptions}. 
        %, by finding a basis for the kernels of $\widetilde K_{\mscr V,\mscr U,n_0}$ and $\widetilde K^+_{\mscr W,\mscr U,n_0}$ given by~\eqref{eq:kernel_matrix} and~\eqref{eq:kernel_matrix_extra}. 
        
        \EndIf
%        \State Search over coefficients in above basis to learn $A_{n_0},B_{n_0}$ at level $n_0$. \VC{This step is not relevant at this stage.} \label{step:learning}

        \State For any $n>n_0$ and each $i,j$, find unique equivariant $A_n^{(i)},B_n^{(j)}$ projecting onto $A_{n_0}^{(i)},B_{n_0}^{(j)}$. For each $n<n_0$, project $A_{n_0}^{(i)},B_{n_0}^{(j)}$ to dimension $n$ to obtain $A_n^{(i)},B_n^{(j)}$.\label{step:extend}
	\end{algorithmic}
\end{algorithm}
Fix $n_0\in\NN$ as in Algorithm~\ref{algo:free_descr}\new{, computed for instance using Theorem~\ref{thm:calc_for_pres_degs_FIW}}. 
%When parametrizing free descriptions, the degree $n_0$ must exceed the presentation degrees of $\mscr V\otimes\mscr U,\mscr W\otimes\mscr U$ by Proposition~\ref{prop:pres_deg_of_im_morph}, and when parametrizing descriptions certifying compatibility, it must exceed the presentation degrees of $\mscr V,\mscr W,\mscr U$ by Theorem~\ref{thm:extending_maps_to_lims}. 
We now explain how to find bases for the desired spaces of equivariant linear maps $A_{n_0}$ and $B_{n_0}$ in a fixed dimension, and how to extend them to bases of freely-described elements of $\mscr V\otimes\mscr U$ and $\mscr W\otimes\mscr U$. 
%We also specify how large $n_0$ needs to be for existence and uniqueness of the extensions of our learned equivariant maps to higher dimensions. 
These elaborate on steps~\ref{step:equivariant_basis}, \ref{step:extendable_basis} and~\ref{step:extend} in Algorithm~\ref{algo:free_descr}. 
%We fix a sequence of vectors $\{u_n\}$ satisfying the hypotheses of that theorem in advance, using Proposition~\ref{prop:extendable_vecs_ids} in the numerical examples below.
\paragraph{Step~\ref{step:equivariant_basis}: Computing basis for equivariant maps.}
We explain how to compute a basis for invariants in a fixed vector space, which we then instantiate in the context of Algorithm~\ref{algo:free_descr} to perform step~\ref{step:equivariant_basis}.
If $\vct V$ is a representation of a group $\msf G$, a vector $v\in \vct V$ is $\msf G$-invariant iff $g\cdot v = v$ for all $g\in \msf G$, which can be rewritten as $v\in\ker(g-I)$ for all $g\in \msf G$. 
Thus, finding a basis for invariants in a fixed vector space reduces to finding a basis for the kernel of a matrix, though this matrix may be very large or even infinite.
We can dramatically reduce the size of this matrix by only considering discrete and continuous generators of $\msf G$, as proposed in~\cite{pmlr-v139-finzi21a}.
% Formally, \VC{Feels like an abrupt pause ...}
\begin{theorem}[{\cite[Thm.~1]{pmlr-v139-finzi21a}}]\label{thm:practical_invariance}
    Let $\msf G$ be a real Lie group with finitely-many connected components acting on a vector space $\vct V$ via the homomorphism $\rho\colon \msf G\to \mathrm{GL}(\vct V)$. Let $\{H_i\}$ be a basis for the Lie algebra $\mfk g$ of $\msf G$ and $\{h_j\}\subseteq\msf G$ be a finite collection \new{such that $\msf G$ is generated by $\mathrm{exp}(\mfk g)$ and $\{h_j\}$\footnote{Such $\{h_j\}$ exist by~\cite[Appdx.~H]{pmlr-v139-finzi21a}.}}. Then 
    \begin{equation*}
        v\in \vct V^{\msf G} \iff \D\rho(H_i)v=0 \textrm{  and  } (\rho(h_j)-\mathrm{id}_{\vct V
        })\cdot v=0\quad \textrm{for all } i,j.
    \end{equation*}
\end{theorem}
Here $\D\rho\colon\mfk g\to \mc L(\vct V)$ is the differential of $\rho$.
Sets of Lie algebra bases and discrete generators for various standard groups are given in~\cite{pmlr-v139-finzi21a}. For example, $\msf G=\msf S_n$ is generated by two elements, namely, the transposition $(1,2)$ and the $n$-cycle $(1,\ldots,n)$, reducing the number of group elements that must be considered from the na\"ive $n!$ to two. For $\msf G=\msf O_n$, a basis for the Lie algebra $\mfk g = \mathrm{Skew}(n)$ is given by $E_{i,j}=e_ie_j^\top - e_je_i^\top$ for $i<j$, and only one discrete generator, e.g., $h_1=\mathrm{diag}(-1,1,\ldots,1)$, is needed, for a total of $\binom{n}{2} + 1$ elements.
    
As equivariant linear maps are precisely the invariants in the space of linear maps, Theorem~\ref{thm:practical_invariance} allows us to obtain a basis for equivariant maps between fixed vector spaces. Explicitly, if $\rho_V\colon \msf G_{n_0}\to \mathrm{GL}(\vct V_{n_0})$ and $\rho_U\colon \msf G_{n_0}\to \mathrm{GL}(\vct U_{n_0})$ are the group homomorphisms defining the actions of $\msf G_{n_0}$ on $\vct V_{n_0},\vct U_{n_0}$, then $A_{n_0}\in\mc L(\vct V_{n_0},\vct U_{n_0})^{\msf G_{n_0}}$ if and only if
\begin{equation}\label{eq:ker_for_invars}
    \D\rho_U(H_i)A_{n_0} - A_{n_0}\D\rho_V(H_i) = 0,\quad \rho_U(h_j)A_{n_0} - A_{n_0}\rho_V(h_j) = 0,\quad \textrm{for all } i,j.
\end{equation}
The equations~\eqref{eq:ker_for_invars} express the space $\mc L(\vct V_{n_0},\vct U_{n_0})^{\msf G_{n_0}}$ as the kernel of a matrix, which is often very large and sparse. \new{Forming this matrix only requires specifying the matrices representing $\{H_i\}$ and $\{h_j\}$ for $\msf G_{n_0}$. Computing a basis for the kernel of this matrix can then by done} using its LU decomposition as in~\cite{computing_nullSpace,spspaces}, or using the algorithm of~\cite[\S5]{pmlr-v139-finzi21a}.
A basis for the space $\mc L(\vct W_{n_0},\vct U_{n_0})^{\msf G_{n_0}}$ is obtained analogously.     

\noindent\paragraph{Step~\ref{step:extendable_basis}: Computing a basis for extendable equivariant linear maps.}
We know from Example~\ref{ex:bad_cube} that extending arbitrary invariants $A_{n_0},B_{n_0},u_{n_0}$ to freely-described elements does not yield a compatible sequence of convex sets.
However, Theorem~\ref{thm:extending_compatible_descriptions} identifies subspaces of invariant linear maps $A_{n_0},B_{n_0}$ whose extensions do yield a compatible sequence of sets, provided we fix a freely-described element $\{u_n\}$ satisfying $u_{n+1}-u_n\in\cvx K_n$. 
Specifically, we need to find a basis for equivariant linear maps satisfying $A_{n_0}(\vct V_j)\subseteq \vct U_j$ for $j\leq d_V$ where $d_V$ is the generation degree of $\mscr V$. This is again a linear condition on $A_{n_0}$; defining $\varphi_{n_0,j} = \varphi_{n_0-1}\cdots\varphi_j\colon \vct V_j\hookrightarrow \vct V_{n_0}$ if $j<n_0$ and $\varphi_{n_0,n_0}=\id_{\vct V_{n_0}}$, and similarly for $\psi_{n_0,j}$, we have
\begin{equation}\label{eq:kernel_matrix}
    A_{n_0}(\vct V_j)\subseteq \vct U_j \iff (I-\mc P_{\vct U_j})A_{n_0}|_{\vct V_j} = 0 \iff (I-\psi_{n_0,j}\psi_{n_0,j}^*)A_{n_0}\varphi_{n_0,j} = 0.
\end{equation}
The subspace of $\mc L(\vct V_{n_0},\vct U_{n_0})^{\msf G_{n_0}}$ satisfying the hypotheses of Theorem~\ref{thm:extending_compatible_descriptions} is thus again the kernel of a matrix obtained by combining~\eqref{eq:ker_for_invars} and~\eqref{eq:kernel_matrix}.
%\begin{equation}
%    \widetilde K_{\mscr V,\mscr U,n_0} = \begin{bmatrix} K_{\mscr V\otimes\mscr U, n_0}^* & \varphi_{n_0,1}\otimes (I-\psi_{n_0,1}\psi_{n_0,1}^*) & \cdots & \varphi_{n_0,d_V}\otimes (I-\psi_{n_0,d_V}\psi_{n_0,d_V}^*) \end{bmatrix}^*.
%\end{equation}
%Therefore, finding a basis for such $A_{n_0}$ again reduces to finding a basis for the kernel of a matrix. 
To also impose $A_{n_0}(\vct V_j^{\perp})\subseteq \vct U_j^{\perp}$ for $j\leq d_U$ where $d_U$ is the generation degree of $\mscr U$, so that $A_{n_0}^*$ extends to a morphism, note that
\begin{equation}\label{eq:kernel_matrix_extra}
    A_{n_0}(\vct V_j^{\perp})\subseteq \vct U_j^{\perp} \iff \mc P_{\vct U_j}A_{n_0}|_{V_i^{\perp}}=0\iff \psi_{n_0,i}^*A_{n_0}(I-\varphi_{n_0,i}\varphi_{n_0,i}^*) = 0,
\end{equation}
hence the corresponding subspace of $\mc L(\vct V_{n_0},\vct U_{n_0})^{\msf G_{n_0}}$ is the kernel of the matrix obtained by combining~\eqref{eq:ker_for_invars}-\eqref{eq:kernel_matrix_extra}.
\new{Forming this matrix requires specifying the matrices representing $\{H_i\}$ and $\{h_j\}$ for $\msf G_{n_0}$, as well as the matrices representing the embeddings $\varphi_{n_0,i}$ and $\psi_{n_0,i}$.}
%\begin{equation}\label{eq:kernel_matrix_extra}
%    \widetilde K_{\mscr V,\mscr U,n_0}^+ = \begin{bmatrix} \widetilde K_{\mscr V,\mscr U, n_0}^* & (I-\varphi_{n_0,1}\varphi_{n_0,1}^*)\otimes\psi_{n_0,1} & \cdots & (I-\varphi_{n_0,d_U}\varphi_{n_0,d_U}^*)\otimes\psi_{n_0,d_U} \end{bmatrix}^*.
%\end{equation}
%We find a basis for the kernels of~\eqref{eq:kernel_matrix} and~\eqref{eq:kernel_matrix_extra} similarly to~\eqref{eq:ker_for_invars}.
The subspace of $\mc L(\vct W_{n_0},\vct U_{n_0})^{\msf G_{n_0}}$ satisfying the hypotheses of Theorem~\ref{thm:extending_compatible_descriptions} is again the kernel of a matrix and its basis is computed similarly.

\paragraph{Step~\ref{step:extend}: Extending bases to higher dimensions.}

Given bases of equivariant $A_{n_0}^{(i)},B_{n_0}^{(j)}$, we wish to extend them to freely-described elements. 
%The resulting freely-described elements are morphisms if the hypotheses of Theorem~\ref{thm:extending_maps_to_lims} are satisfied.
We do so computationally by applying a linear map for $n<n_0$ and solving a linear system for each $n>n_0$ to which we wish to extend.
%If $A_{n_0},B_{n_0}$ are in the kernel of~\eqref{eq:kernel_matrix}, then their extensions are morphism by Theorem~\ref{thm:extending_maps_to_lims}. 
For each $n<n_0$, we set $A_{n}^{(i)} = \psi_{n_0,n}^*A_{n_0}^{(i)}\varphi_{n_0,n}$ and similarly for $B_n^{(j)}$. 
For each $n>n_0$, we set $A_n^{(i)}$ to be the unique solution to the linear system~\eqref{eq:ker_for_invars} (with $n_0$ replaced by $n$) and $\psi_{n,n_0}^*A_n^{(i)}\varphi_{n,n_0} = A_{n_0}^{(i)}$.
%\begin{equation}\label{eq:extending_lin_maps}
%    \begin{bmatrix} K_{\mscr V\otimes\mscr U,n}\\ \varphi_{n,n_0}^* \otimes \psi_{n,n_0}^*\end{bmatrix} \mathrm{vec}(A_n) = \begin{bmatrix} 0 \\ \mathrm{vec}(A_{n_0})\end{bmatrix},
%\end{equation}
%where vec denotes vectorization of a matrix by stacking its columns one on top of another.
%The first set of equations imposes equivariance on $A_n$, and the second imposes $\mc P_{\vct U_n}A_n|_{\vct V_{n_0}} = A_{n_0}$.
%If $A_{n_0}$ is equivariant and $n_0$ exceeds the presentation degree of $\mscr V\otimes\mscr U$, or if $A_{n_0}$ also satisfies~\eqref{eq:kernel_matrix} and $n_0$ exceeds the presentation degree of $\mscr V$, then this system has a unique solution for any $n>n_0$ by Proposition~\ref{prop:pres_deg_of_im_morph} and Theorem~\ref{thm:extending_maps_to_lims}.
\new{Forming this linear system requires sepcifying the matrices representing $\{H_i\}$ and $\{h_j\}$ for $\msf G_n$, as well as matrices representing $\varphi_{n,n_0}$ and $\psi_{n,n_0}$.}
The above linear system is typically large and sparse, and we solve it using LSQR~\cite{paige1982lsqr}. The extension of $B_{n_0}^{(j)}$ is handled similarly, except that $n_0$ needs to exceed the presentation degrees of both $\mscr W$ and $\mscr U$ to guarantee that both $B_{n_0}^{(j)}$ and $(B_{n_0}^{(j)})^*$ extend to morphisms.

\begin{example}[Dimension counts]\label{ex:dim_counts}
We use the above algorithm to obtain dimension counts for parametric families of \new{dimension-}free descriptions. See the functions \href{https://github.com/eitangl/anyDimCvxSets/blob/main/compute_dims_a.m}{\texttt{compute\_dims\_a}}, \href{https://github.com/eitangl/anyDimCvxSets/blob/main/compute_dims_b.m}{\texttt{compute\_dims\_b}}, and \href{https://github.com/eitangl/anyDimCvxSets/blob/main/compute_dims_c.m}{\texttt{compute\_dims\_c}} on GitHub for the code computing these dimensions using Algorithm~\ref{algo:free_descr}.
\begin{enumerate}[(a)]
    \item Let $\mscr V = \{\RR^n\}$ with $\msf G_n=\msf S_n$ as in Example~\ref{ex:basic_consist_seq}, and let $\mscr W = \mscr U = \mathrm{Sym}^2(\mathrm{Sym}^{\leq 2}\mscr V)$. 
    Then $\mscr V,\mscr U,\mscr V\otimes\mscr U,\mscr W\otimes\mscr U$ are all $\mscr V$-modules and are presented in degrees $1,4,5,8$, respectively, by Theorem~\ref{thm:calc_for_pres_degs_FIW}. 
    
    The dimensions of invariants parametrizing \new{dimension-}free descriptions are $\dim\mc L(\vct V_n,\vct U_n)^{\msf G_n}=39$, $\dim\mc L(\vct W_n,\vct U_n)^{\msf G_n}\allowbreak =1068$, and $\dim \vct U_n^{\msf G_n}=17$ for $n\geq 8$. The dimensions of linear maps $\{A_n\}$ and $\{B_n\}$ satisfying Proposition~\ref{prop:compatible_descriptions}(a), yielding intersection-compatible convex sets, are
    \begin{equation*}\begin{aligned}
        &\dim \Big\{\{A_n\colon \vct V_n\to \vct U_n\} \textrm{ morphism}\Big\} = 6,\\ &\dim \Big\{\{B_n\colon \vct W_n\to \vct U_n\}: \textrm{both } \{B_n\} \textrm{ and } \{B_n^*\} \textrm{ morphisms}\Big\} = 104.
    \end{aligned}\end{equation*}
    If we further require $\{A_n^*\}$ to be a morphism to obtain projection compatibility, the dimension of $\{A_n\}$ decreases to 5.
    
    \item Let $\mscr V = \{\mbb S^n\}$ with $\msf G_n=\msf S_n$ used to obtain graph parameters in Example~\ref{ex:graph_params}, and let $\mscr W,\mscr U$ be as in (a). 
    Then $\mscr V,\mscr U,\mscr V\otimes\mscr U,\mscr W\otimes\mscr U$ are all $\mscr V$-modules and are presented in degrees $2,4,6,8$, respectively. 
    
    The dimension of invariant $\{A_n\}$ in this case is $\dim \mc L(\vct V_n,\vct U_n)^{\msf G_n}=93$, and the dimensions of $\dim \mc L(\vct W_n,\vct U_n)^{\msf G_n}$ and $\dim \vct W_n^{\msf G_n}$ are as in (a), giving the number of parameters describing freely-described convex sets with these description spaces. The dimensions of linear maps $\{A_n\}$ and $\{B_n\}$ satisfying Proposition~\ref{prop:compatible_descriptions}(a), and hence describing intersection-compatible sets, are 19 and 104, respectively, as given in~\eqref{eq:dim_graph_morphisms}. If we further require $\{A_n^*\}$ to be a morphism to get projection compatibility, the dimension of $\{A_n\}$ decreases to 12.

    \item Let $\mscr V=\{\RR^n\}$ with $\msf G_n=\msf B_n$ as in Example~\ref{ex:basic_consist_seq}, used below to learn regularizers defined for vectors of any length, let $\mscr V' = \{\RR^{2n+1}\}=\mscr W^{(2)}\oplus\RR$ as in Example~\ref{ex:bad_cube}, and let $\mscr W = \mscr U = \mathrm{Sym}^2(\mathrm{Sym}^{\leq 1}\mscr V')$. 
    Then $\mscr V,\mscr U,\mscr V\otimes\mscr U,\mscr W\otimes\mscr U$ are all $\mscr V$-modules and are presented in degrees $1,2,3,4$, respectively.  
    
    The dimensions of invariants parametrizing freely-described convex sets in this case are
    \begin{equation*}
        \dim \mc L(\vct V_n,\vct U_n)^{\msf G_n} = 4,\quad \dim\mc L(\vct W_n,\vct U_n)^{\msf G_n} = 108,\quad \dim \vct U_n^{\msf G_n} = 8,\quad \textrm{for all } n\geq 4.
    \end{equation*}
    The dimensions of linear maps $\{A_n\}$ and $\{B_n\}$ satisfying Proposition~\ref{prop:compatible_descriptions}(a) and parametrizing intersection-compatible sets are
    \begin{equation}\begin{aligned}
        &\dim \Big\{\{A_n\colon \vct V_n\to \vct U_n\} \textrm{ morphism}\Big\} = 3,\\ &\dim \Big\{\{B_n\colon \vct W_n\to \vct U_n\}: \textrm{both } \{B_n\} \textrm{ and } \{B_n^*\} \textrm{ morphisms}\Big\} = 37.
    \end{aligned}\end{equation}
    If we further require $\{A_n^*\}$ to be a morphism, the dimension does not decrease in this case, so all of these intersection-compatible sets are also projection-compatible. 
    %The dimension of $\{A_n\}$ such that $\{A_n^*\}$ is also a morphism is the same as above, meaning that if $\{A_n\}$ is a morphism then so is $\{A_n^*\}$ in this case. 
\end{enumerate}

%\VC{In each of these three examples example, give the implications of the math -- e.g., if something is a morphism, then we obtain intersection compatibility, if in addition something else is a morphism, then we also obtain projection compatibility, etc.}  \VC{Also, remind the reader (to jog their memory) in each example in a few words about the application that the consistent sequence and description space from the example might be relevant for, in addition to providing a pointer to something from many sections ago.}\TODO{done}\VC{Good.}
\end{example}

\subsection{Fitting Freely-Described Convex Functions to Data}\label{sec:cvx_regr}
%We leverage the methods described in the preceding section to develop a solution to the free convex regression problem.  Recall in this problem that we are given data $\{(x_i,y_i)\in \vct V_{n_i}\oplus\RR\}$ in finitely-many dimensions $n_i$ of a sequence of vector spaces $\{vct V_n\}$, and our objective is to fit a freely-described convex function $\{f_n\}$ defined on consistent sequence $\mscr V = \{\vct V_n\}$, and our objective is to fit a freely-described convex function $\{f_n\}$ defined on $\mscr V$ satisfying $f_{n_i}(x_i)\approx y_i$ in the dimensions in which data is available.  We assume that the underlying consistent space $\mscr V$ is known based on the application domain,

%\VC{I made lots of changes to this section -- please take a look.  There are also a bunch of to-do's for you.}

We now present an algorithm for the \new{dimension-}free convex regression problem from Section~\ref{sec:intro_cvx_regr}. Recall that in this problem we are given data $\{(x_i,\phi_i)\in \vct V_{n_i}\oplus\RR\}$ in finitely-many dimensions $n_i$ corresponding to a sequence of vector spaces $\{\vct V_n\}$, and our objective is to identify a sequence of convex functions $\{f_n : \vct V_n \rightarrow \RR\}$ such that $f_{n_i}(x_i)\approx \phi_i$ in the dimensions in which data is available. We tackle this problem by endowing $\{\vct V_n\}$ with the structure of a consistent sequence of $\{\msf G_n\}$-representations, and choosing description spaces $\{\vct W_n\},\{\vct U_n\}$ that yield a finitely-parametrized family of freely-described and compatible convex functions which we fit to the given data. We explain how to perform this fitting below in two key steps.  

The first step is to define a finitely-parametrized family of freely-described convex functions. Let $\{\{A_n^{(i)}\}\}_{i=1}^{d_A}$ and $\{\{B_n^{(j)}\}\}_{j=1}^{d_B}$ be the bases for freely-described maps computed by Algorithm~\ref{algo:free_descr} where we choose $n_0\geq\max_in_i$ so that we do not have to extend the basis elements computed there to access the data dimensions. Further, we select intersection and projection compatible cones $\mscr K=\{\cvx K_n\subseteq \vct U_n\}$, and freely-described $\{u_n\in \vct U_n^{\msf G_n}\}$ satisfying $u_{n+1}-u_n\in \cvx K_n$ and $u_n\in\mathrm{int}(\cvx K_n)$ for all $n$. We consider families of nonnegative and positively-homogeneous convex functions $f_n$ parametrized by $\alpha\in\RR^{d_A},\ \beta\in\RR^{d_B}$, and $\lambda\in\RR$ of the form
\begin{align}
    f_n(x;\alpha,\beta,\lambda) &= \inf_{\substack{t\geq 0\\y\in \vct W_n}} t + \lambda\|y\|\quad \textrm{ s.t. } \sum_{i=1}^{d_A}\alpha_i A_n^{(i)}x + \sum_{j=1}^{d_B}\beta_jB_n^{(j)}y + tu_n \in \cvx K_n \label{eq:fn_primal_form}\tag{P}\\ 
    &= \sup_{z\in \vct U_n} -\sum_{i=1}^{d_A}\alpha_i\langle z, A_n^{(i)}x\rangle \quad \textrm{ s.t. } \left\|\sum_{j=1}^{d_B}\beta_j(B_n^{(j)})^*z\right\| \leq \lambda,\ \langle z, u_n\rangle\leq 1,\ z\in \cvx K_n^*, \tag{D}\label{eq:fn_dual_form}
\end{align}
where the primal and dual programs above are equal since Slater's condition is satisfied by our choice of $u_n$.  Here $\|\cdot\|$ is the norm on $\vct W_{n}$ induced by its inner-product and the parameter $\lambda$ is chosen to be positive; the purpose of this term in \eqref{eq:fn_primal_form} is to prevent numerical issues arising in our subsequent regression procedure (see \eqref{eq:cvx_regr_cost}).  Note that this is indeed a finitely-parametrized (by $d_A+d_B+1$ parameters) infinite sequence of functions. Also note that if we require compatibility in Algorithm~\ref{algo:free_descr}, then the sequence $\{f_n\}$ is intersection (and if desired, projection) compatible for any value of the parameters, since it is the sequence of gauge functions of a correspondingly compatible sequence of sets (see Section~\ref{sec:intro_notation}).
%If $\mfk f^{\dagger}$ is also projection-compatible, then we wish the fitted sequence $\{f_n\}$ to be projection-compatible as well.
%Here $\|\cdot\|$ is an arbitrary intersection- and projection-compatible norm on $\mscr U$, and $\|\cdot\|_{\ast}$ is its dual norm. 

%\VC{In the above paragraph, I tried to build up from what you wrote.  I removed the bit on choosing the PSD cones and the identity matrices for $u_n$ as this section should remain general.  I'm still unsatisfied with a few things.  First, $f_n$ does depend on $u_n$, although as written it is not included in the arguments of $f_n$.  Second, is it possible to state things in terms of bases rather than in terms of $A_n, B_n$?  This would make closer contact with the preceding section.  One idea is to state after the equation environment that $A_n$ and $B_n$ must lie in the span of the relevant basis elements obtained from Algorithm~\ref{algo:free_descr}.  Stating things in terms of a basis also connects better with your algorithm, as you want to optimization problem below to be evidently finite-dimensional.}\TODO{done} \VC{Good, added a remark above -- please take a look.}

The second step concerns optimizing over the parameters $\alpha,\beta,\lambda$ to fit $\{f_n\}$ to the available data. To this end, we consider the following optimization problem, with a user-specified $\lambda_{\min} > 0$:
\begin{alignat}{2}
    &\min_{\substack{\varepsilon\in\RR^D_+\\\alpha\in\RR^{d_A},\beta\in\RR^{d_B},\lambda\geq\lambda_{\min}\\ \{(t_i,y_i)\},\{z_i\}}} &&\|\varepsilon\|_{\ell_2} \qquad \mathrm{s.t.} \label{eq:cvx_regr_cost}\tag{Regress}\\
    %&\mathrm{s.t.}\ && \varepsilon \geq 0,\ \lambda \geq \lambda_{\min}>0, \nonumber\\ 
    &\ && (y_i,t_i) \textrm{ feasible for~\eqref{eq:fn_primal_form} with $n=n_i$ and cost } \leq \phi_i+\varepsilon_i, \label{eq:cvx_regr_primal_constr}\tag{PC}\\
    &\ && z_i \textrm{ feasible for~\eqref{eq:fn_dual_form} with $n=n_i$ and cost } \geq \phi_i - \varepsilon_i, \label{eq:cvx_regr_dual_constr}\tag{DC}.
    %&\ && A_{n_0},B_{n_0} \textrm{ satisfy~\eqref{eq:ext_cond_on_maps}.} \tag{Ext + Comp}\label{eq:cvx_regr_equivar_constr}
\end{alignat}
%where we use the $\ell_2$ norm for the objective $\|\varepsilon\|$ above. \VC{Should we simply say $\|\varepsilon\|_{\ell_2}$ in the objective?  I worry that the norm in the description of $f$ will confuse with the norm in the above fitting problem.}
The constraints~\eqref{eq:cvx_regr_primal_constr} and~\eqref{eq:cvx_regr_dual_constr} are required to hold for all $i\in[D]$ and they ensure that $\phi_i-\varepsilon_i\leq f_{n_i}(x_i;\alpha,\beta,\lambda)\leq \phi_i+\varepsilon_i$, so that minimizing $\|\varepsilon\|_{\ell_2}$ yields a good fit to the data.  We emphasize that this problem is finite-dimensional even though the it yields an infinite sequence of convex functions $\{f_n\}$.
%The constraint~\eqref{eq:cvx_regr_equivar_constr} ensures that the fitted function is freely-described and extendable, and if desired, satisfies additional compatibility conditions.  In particular, this constraint is enforced by requiring $A_n$ and $B_n$ to lie in the span of the bases obtained from Algorithm~\ref{algo:free_descr}.
%\VC{Which norm is in the objective here?  Also, state this problem explicitly in terms of bases so that it is absolutely evident that it is finite-dimensional; in particular, the coefficients in the basis from Algorithm~\ref{algo:free_descr} should be the decision variables.  Allow for data in multiple dimensions, not just a fixed $n_0$.}\TODO{done} \VC{Good, added a remark above -- please take a look.  Also, add a punchline that this optimization problem is finite-dimensional.}
%$f_{n_0}$ extends to a freely-described and compatible sequence $\mfk f=\{f_n\colon \vct V_n\to\RR\}$. We enforce the last constraint by finding a basis for feasible $A_{n_0}$ and $B_{n_0}$ using the algorithm from Section~\ref{sec:implementation} and optimizing over the coefficients in that basis.  We omit constraint~\eqref{eq:cvx_regr_equivar_constr} when optimizing over all freely-described sequences (not necessarily compatible). 

As~\eqref{eq:cvx_regr_cost} involves bilinear constraints, we solve the problem via \emph{alternating minimization}, where we alternate between fixing $\alpha,\beta,\lambda$ and $\{(t_i,y_i)\},\{z_i\}$ while optimizing over the rest of the variables.
Note that Slater's condition holds in~\eqref{eq:cvx_regr_cost} for both steps of alternating minimization when $u_n\in\mathrm{int}(\cvx K_n)$.

\begin{remark}\label{rmk:identif}
The quantity $n_0$ in Algorithm~\ref{algo:free_descr} is governed by the choice of description spaces and leads to a tradeoff between the richness of the parametric family and the dimensions of the available data.  The data we are given only contains information about $\{f_n\}_{n\leq \max_in_i}$ which, in turn, only depends on $\{\{A_n^{(i)}\}_{n\leq\max_in_i}\}_{i=1}^{d_A}$ and $\{\{B_n^{(j)}\}_{n\leq\max_in_i}\}_{j=1}^{d_B}$. If $n_0$ in Algorithm~\ref{algo:free_descr} is strictly larger than the maximum dimension $\max_in_i$ in which data is available, then the number of distinct basis elements for $n\leq\max_in_i$ might be strictly smaller than $d_A$ and $d_B$, respectively, in which case our parameters $\alpha,\beta$ are not identifiable from such low-dimensional data.  Imposing compatibility decreases the bound on $n_0$, thus facilitating \new{dimension-}free convex regression from lower-dimensional data.  More broadly, even when $\max_in_i \geq n_0$, it is of interest to investigate the landscape of~\eqref{eq:cvx_regr_cost}.
\end{remark}

\subsection{Numerical Results}\label{sec:numerical_results}

In all experiments below, we use $\mscr U=\mathrm{Sym}^2(\mathrm{Sym}^{\leq k}\mscr V')$ for some $\mscr V$-module $\mscr V'$ and some $k$, with the corresponding PSD cones $\mscr K = \mathrm{Sym}^2_+(\mathrm{Sym}^{\leq k}\mscr V')$. We choose $\{u_n\}$ to be the sequence of identity matrices, which satisfy $u_n\in\mathrm{int}(\cvx K_n)$ and $u_{n+1}-u_n\in \cvx K_n$ as required.  
We apply our algorithm to learn semidefinite approximations of two non-SDP-representale functions, comparing the results obtained with and without imposing compatibility in Algorithm~\ref{algo:free_descr}.

The first function we approximate is the $\ell_{\pi}$ norm $\|x\|_{\pi}=\left(\sum_i|x_i|^{\pi}\right)^{1/\pi}$, which is not SDP-representable because $\pi$ is irrational. We view the $\ell_{\pi}$ norm as defined on the sequence $\mscr V = \{\RR^n\}$ with $\msf G_n=\msf B_n$ from Example~\ref{ex:basic_consist_seq}. It satisfies both intersection and projection compatibility. 
We choose description spaces $\mscr W=\mscr U = \{\mathrm{Sym}^2(\mathrm{Sym}^{\leq 1}\mbb R^{2n+1}) = \mbb S^{2n+2}\}$ as in Example~\ref{ex:dim_counts}(c), with the corresponding PSD cones $\{\cvx K_n=\mbb S^{2n+2}_+\}$. 
We used 50 data points in $\RR^n$ for $n_i\in\{1,2\}$. 
When we impose compatibility on our family of functions, we use $n_0=2=\max_in_i$ in Algorithm~\ref{algo:free_descr}.
If we do not impose compatibility and search over all freely-described (possibly incompatible) sequences, we take $n_0=4$ which is the presentation degree of $\mscr W\otimes\mscr U$, and which strictly exceeds $\max_in_i$.
The constraints in the fitting program~\eqref{eq:cvx_regr_cost} do not depend on some of the entries of $\alpha,\beta$ in this case, and we set such entries to zero.
This highlights the advantage of imposing compatibility mentioned in Remark~\ref{rmk:identif}--it allows us to uniquely identify a \new{dimension-}free description from lower-dimensional data.
%On the other hand, $\dim\mc L(\vct W_n,\vct U_n)^{\msf G_n}$ stabilizes starting from $n = 4$ (Example~\ref{ex:dim_counts}(c)), hence the program~\eqref{eq:cvx_regr_cost} does not depend on some of the parameters of our family of descriptions.
%This illustrates another advantage of imposing compatibility conditions--the resulting parameters can be identified from lower-dimensional data.

The second sequence of functions we approximate is the nonnegative and positively-homogeneous variant of the quantum entropy given by~\eqref{eq:quant_ent_mod} given in Section~\ref{sec:intro_cvx_regr}, defined on the sequence $\{\mbb S^n\}$ with embeddings by zero-padding and the action of $\msf G_n=\msf O_n$ by conjugation.
Once again, the function~\eqref{eq:quant_ent_mod} cannot be evaluated using semidefinite programming, though a family of semidefinite approximations is analytically derived in~\cite{fawzi2019semidefinite}. Here we aim to learn a semidefinite approximation entirely from evaluation data.
To that end, we choose description spaces $\mscr W=\{\vct W_n=\mathrm{Sym}^2(\mathrm{Sym}^{\leq 1}\RR^n)=\mbb S^{n+1}\}$ and $\mscr U = \{\vct U_n=\mathrm{Sym}^2(\mathrm{Sym}^{\leq 2}\mbb R^n) = \mbb S^{\binom{n+2}{2}}\}$, with corresponding PSD cones $\{\cvx K_n=\mbb S^{\binom{n+2}{2}}_+\}$. 
Our data consists of 200 PSD matrices in $\mbb S^n$ for $n\leq n_0=4$.
Without a calculus for presentation degrees for $\msf G_n=\msf O_n$, our theory does not guarantee the existence of an $\msf O_n$-invariant extension of our learned description. Our theory does however guarantee a unique $\msf B_n$-invariant extension, and in practice we observe that the extension is, in fact, $\msf O_n$-invariant. 

To approximate the above functions, which we generically denote $\{f_n^{\mathrm{true}}\}$, we used~\eqref{eq:cvx_regr_cost} with 100 random initializations to fit the data in degree $n_0$. For the above two examples, not only is $f_{n_0}^{\mathrm{true}}$ positively-homogeneous and nonnegative, but also $f_{n_0}^{\mathrm{true}}(x)\neq 0$ for $x\neq 0$ in the domain. We therefore normalize the data $x_i$ by $x_i\mapsto x_i/f_{n_0}^{\mathrm{true}}(x_i)$, so that $f_{n_0}^{\mathrm{true}}(x_i)= 1$ for all $i$ and all points contribute equally to the objective of~\eqref{eq:cvx_regr_cost}.
We impose $\lambda\geq \lambda_{\min}=10^{-3}$ in~\eqref{eq:cvx_regr_cost}.
To evaluate the results, we extended our learned descriptions to $n=20$, sampled $10^3$ unit-norm points (also PSD for the quantum entropy example) and computed the average normalized errors $\frac{|f_n(x)-f_n^{\mathrm{true}}(x)|}{f_n^{\mathrm{true}}(x)}$ in each $n$ up to 20. 

The resulting errors are plotted in Figure~\ref{fig:regr_results}, shown and discussed in Section~\ref{sec:intro_cvx_regr}.
Since imposing compatibility conditions decreases the search space in~\eqref{eq:cvx_regr_cost}, we expect the optimal solution of~\eqref{eq:cvx_regr_cost} with compatibility conditions to exhibit larger errors in dimensions in which data is available compared to the optimal freely-described (but possibly incompatible) solution. That is not the case in Figure~\ref{fig:regr_results}(b), illustrating the nonconvexity of the fitting problem~\eqref{eq:cvx_regr_cost} and demonstrating another advantage of imposing compatibility---the resulting smaller parametric family allows our algorithm to better fit the data.

\section{Conclusions and Future Work}\label{sec:conclusions}
We developed a systematic framework to study convex sets that can be instantiated in different dimensions using representation stability, as well as a computational method to parametrize such sets and fit them to data.
We did so by formally defining \new{dimension-}free descriptions of convex sets and compatibility conditions relating sets in different dimensions. We then proved a number of structural results pertaining to \new{dimension-}free descriptions, namely, characterizing such descriptions certifying compatibility; giving conditions on fixed-dimensional descriptions ensuring their extendability to \new{dimension-}free ones; and studying infinite-dimensional limits of freely-described sequences of sets.
%, and showed that descriptions of convex sets appearing in various applications are, in fact, free and certify compatibility. 
%We used our characterization to develop and implement an algorithm to computationally obtain parametric families of freely-described and compatible sequences of sets, and to fit them to data.
%We characterized the extendability of a fixed convex set to a freely-described and compatible sequence (Theorem~\ref{thm:extending_compatible_descriptions}), and used this characterization to computationally parametrize and search over such sequences (Section~\ref{sec:numerical_examples}). 
%In particular, we learned SDP approximations for two non-SDP-representable functions, including a variant of the quantum entropy. 
%We then considered sequences of invariant conic programs, whose size often stabilizes. 
We further used representation stability to systematically derive constant-sized descriptions for sequences of invariant sections of PSD and relative entropy cones. 
Finally, we developed an algorithm to computationally parametrize and search over \new{dimension-}free descriptions to fit them to data.
%In particular, our Theorems~\ref{thm:sym_sos_const_size} and~\ref{thm:sage_cones_const_size} unify strengthen results in the literature for symmetric SOS and SAGE cones.
%Finally, we showed that free descriptions certifying compatibility often extend to descriptions of continuous limits of sequences of convex sets.
Our work can be viewed as identifying and exploiting a new point of contact between representation stability and convex geometry through conic descriptions of convex sets. %which play a prominent role in our framework.

Our work suggests questions and directions for future research in several areas.
\begin{enumerate}[align=left, font=\textbf]
    \item[(Computational algebra)] Is there an algorithm to compute the generation and presentation degrees of a given consistent sequence? 
    %What is the presentation degree of the graphon sequence from Section~\ref{sec:graphons}? More generally, what can be said about representations of the graphon category in Example~\ref{ex:cats}(d)? 

    \item[(Lie groups)] Can we extend our calculus for presentation degrees in Theorem~\ref{thm:calc_for_pres_degs_FIW} to Lie groups such as $\msf G_n=\msf O_n$? 
    
    %\item Under what conditions on a fixed invariant $u_{n_0}\in U_{n_0}^{G_{n_0}}$ does its extension to a freely-described element $\{u_n\}$ satisfy $u_{n+1}-u_n\in K_{n+1}$? Having such a condition would allow us to optimize over $\{u_n\}$ as well.
    
    %\item How can we learn convex sets and functions in other problems, beyond regression? For example, can we learn dictionaries and regularizers from data, and if so, what algorithm should be used to search over free descriptions?

    %\item Are there other families of symmetric cones admitting constant-sized descriptions? Can our results in Section~\ref{sec:const_sized_progs} be further unified and strengthened? 
    
    \item[(Constructing descriptions)] Given a sequence of convex sets instantiable in any relevant dimension, can we systematically construct freely-described, and possibly compatible, approximations for the sequence? When are approximations derived from sums-of-squares hierarchies such as~\cite{equivariant_lifts} \new{dimension-}free and certify compatibility?

    \item[(Complexity)] Is there a systematic framework to study the smallest possible size of a \new{dimension-}free description for a given sequence of sets, extending the slack operator-based approach for fixed convex sets~\cite{gouveia2013lifts}?

    \item[(\new{Dimension-}free separation)] Under what conditions can a point outside a compatible sequence of convex sets be separated by a freely-described sequence, generalizing the Effros-Winkler theorem~\cite{EFFROS1997117}?

    \item[(Statistical inference)] How much data do we need to learn a given sequence of sets or functions, and in what dimensions should this data lie? 
\end{enumerate}

\section*{Acknowledgements}
We thank Mateo Díaz for helpful discussions and suggestions for improving our implementation, Jan Draisma for suggesting relevant references, and Jorge Garza Vargas for helpful discussions regarding approximately finite-dimensional algebras. \new{We are grateful to reviewer X for carefully reading our paper and offering numerous detailed suggestions for its improvement.} The authors were supported in part by Air Force Office of Scientific Research grants FA9550-22-1-0225 and FA9550-23-1-0070, and by National Science Foundation grant DMS 2113724.
%\TODO{connection with slack matrices / ops!}
\bibliographystyle{unsrtnat}
\bibliography{free_cvx_refs}

\appendix
\section{Representations of Categories}\label{sec:cat_reps}
As Remark~\ref{rmk:V-mods_vs_subgps} shows, the set of embeddings from low to high dimensions in a consistent sequence $\mscr V$ of $\{\msf G_n\}$-representations, determined by $\{\msf G_n\}$ and the centralizing subgroups $\{\msf H_{n,d}\}$ from Definition~\ref{def:stab_subgps}, play a central role in our framework. 
These sets of embeddings are conveniently encoded in a category, whose representations are precisely the $\mscr V$-modules of Definition~\ref{def:V_mods}. 
Morphisms between such representations in the categorical sense coincide with morphisms of sequences. 
This categorical approach to representation stability was introduced in~\cite{FImods} for the case $\msf G_n=\msf S_n$ and the $\msf H_{n,d}$ from Example~\ref{ex:basic_consist_seq_degs}, and has since been extended to other groups in~\cite{WILSON2014269,GADISH2017450,sam2017grobner}.

\begin{definition}\label{def:rep_of_cats}
    A (real) \emph{representation} of a category $\mc C$, also called a \emph{$\mc C$-module}, is a functor $\mc C\to\mathrm{Inn}_{\RR}$ from $\mc C$ to the category \new{whose objects are real inner product spaces and whose morphisms are isometries.\footnote{Representations of categories are typically defined as functors to the category of all vector spaces over a given field. The results in this section continue to hold for functors to the category of all real vector spaces if we allow consistent sequences consisting of non-orthogonal representations and non-isometric linear maps between them.}}
\end{definition}
In other words, a $\mc C$-module is an assignment of a vector space $\vct V_n$ to each object $n\in\mc C$ and a linear map $\varphi_{N,n}\colon \vct V_n\to \vct V_N$ to each morphism in $\mathrm{Hom}_{\mc C}(n,N)$ such that compositions  are respected. 
Each $\vct V_n$ is then a representation of the group $\msf G_n=\mathrm{End}_{\mc C}(n)^{\times}$ of the automorphisms of $n$ in $\mc C$.
Every consistent sequence is a representation of a suitable category.
\begin{definition}\label{def:cat_from_seq}
    Given a consistent sequence $\mscr V=\{(\vct V_n,\varphi_n)\}$ of $\{\msf G_n\}$-representations, define a category $\mc C_{\mscr V}$ whose set of objects is $\NN$ and whose morphisms are $\mathrm{Hom}_{\mc C_V}(n,N) = \{g\varphi_{N-1}\cdots\varphi_n:g\in \msf G_N\}$ for $n\leq N$ and zero otherwise. 
    Note that $\mathrm{Hom}_{\mc C_V}(n,N)=\msf G_N/\msf H_{N,n}$ where $\msf H_{N,n}\subseteq \msf G_N$ is the subgroup of elements acting trivially on $\vct V_n$.

    \new{If $n\leq N\leq M$, we compose morphisms by setting $$(g_M\varphi_{M-1}\cdots\varphi_N)\circ(g_N\varphi_{N-1}\cdots\varphi_n)=(g_Mg_N)\varphi_{M-1}\cdots \varphi_n,$$
    which is well-defined and satisfies the necessary axioms since $\varphi_N,\ldots,\varphi_{M-1}$ are $\msf G_N$-equivariant when acting on the vector spaces in $\mscr V$. }
\end{definition}
This definition clearly extends to consistent sequences indexed by posets (Remark~\ref{rmk:poset_gen_moved}).
If $\mscr U=\{(\vct U_n,\psi_n)\}$ is a $\mscr V$-module (Definition~\ref{def:V_mods}), then $\mscr U$ is a $\mc C_{\mscr V}$-module, since sending $n\in\mbb N$ to $\vct U_n$ and $g\varphi_{N-1}\cdots\varphi_n$ to the map $g\psi_{N-1}\cdots\psi_n$ for each $g\in \msf G_N$ is a well-defined functor $\mc C_{\mscr V}\to\mathrm{Vect}_{\mathbb{R}}$. 
Conversely, if $\mscr U$ is a $\mc C_{\mscr V}$-module then it is a $\mscr V$-module since $\msf H_{n,d}$ acts trivially on the image of $\psi_{N-1}\cdots\psi_d$ by definition of a functor. 
Furthermore, if $\mscr W,\mscr U$ are $\mscr C_{\mscr V}$-modules, then a morphism of functors $\mscr W\to\mscr U$ (also called a natural transformation) 
coincides with a morphism of sequences in Definition~\ref{def:morphisms_of_seqs}.
Applying the constructions in Section~\ref{sec:consist_seq_construct} to $\mc C$-modules yields other $\mc C$-modules.

\begin{example}\label{ex:cats}
Here are some examples of the categories resulting from Definition~\ref{def:cat_from_seq}.
    \begin{enumerate}[(a)]
        \item The category corresponding to Example~\ref{ex:basic_consist_seq} with $\msf G_n=\msf S_n$ is (the skeleton of) $\mc C = \mathrm{FI}$, the category whose objects are finite sets and whose morphisms are injections. 

        \item The category corresponding to Example~\ref{ex:basic_consist_seq} with $\msf G_n=\msf B_n$ (resp., $\msf D_n$) is $\mc C = \mathrm{FI}_{BC}$ (resp., $\mc C = \mathrm{FI}|_D$) defined in~\cite{WILSON2014269}, whose objects are the sets $[\pm n]\coloneqq \{\pm 1,\ldots,\pm n\}$ for $n\in\NN$ and whose morphisms are injections $f\colon [\pm n]\hookrightarrow [\pm N]$ satisfying $f(-i)=-f(i)$ (and reverse evenly-many signs if $\msf G_n=\msf D_n$). 

        %\item The category corresponding to Example~\ref{ex:consistent_seqs}(c) is the opposite of the category with objects $\ZZ/2^n\ZZ$ and morphisms which are injections of sets $f\colon \ZZ/2^n\ZZ\hookrightarrow \ZZ/2^N\ZZ$ satisfying $f(i+1) = f(i) + 2^{N-n}\mod 2^N$ for all $i\in\ZZ/2^n\ZZ$.
        
        \item The category corresponding to the graphon sequence in Section~\ref{sec:graphons} is the opposite category $\mc C = \mc P_2^{\mathrm{op}}$ of the category $\mc P_2$ with objects $[2^n]$ and morphisms which are $2^{N-n}$-to-one surjections $[2^N]\to[2^n]$, or equivalently, partitions of $[2^N]$ into $2^n$ equal parts. 
    \end{enumerate}
\end{example}
Following~\cite{WILSON2014269}, we say $\mc C=\mathrm{FI}|_{\mc W}$ if $\mc C=\mathrm{FI},\mathrm{FI}|_{BC}$ or $\mathrm{FI}|_D$.
(Algebraically) free $\mc C$-modules are defined exactly as in Definition~\ref{def:free_mods}, see~\cite[Def.~2.2.2]{FImods} and~\cite[Def.~1.8,3.1]{GADISH2017450} for example. 
The theory of~\cite{GADISH2017450} gives the following result for $\mc C=\mathrm{FI}|_{\mc W}$, which extends to categories of FI-type introduced in~\cite{GADISH2017450}.
\begin{theorem}[{\cite[Thm.~B(1)]{GADISH2017450}}]\label{thm:tensor_prods_free}
    Tensor products of free $\mathrm{FI}|_{\mc W}$-modules are free.
\end{theorem}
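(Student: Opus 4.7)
\textbf{Proof proposal for Theorem~\ref{thm:tensor_prods_free}.}

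The plan is to reduce the statement to a purely combinatorial assertion about orbits of pairs of morphisms in $\mathrm{FI}|_{\mc W}$, and then verify that these orbits index a direct sum decomposition of the tensor product into induction modules. Since tensor product distributes over direct sums and any algebraically free $\mc C$-module is, by definition, a direct sum of induction sequences $\mathrm{Ind}_{G_d}(W_d)$, it suffices to show that the tensor product of two induction sequences
\[
\mscr F_1 = \mathrm{Ind}_{G_{d_1}}(W_1), \qquad \mscr F_2 = \mathrm{Ind}_{G_{d_2}}(W_2),
\]
is itself a direct sum of induction sequences.

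At level $n$, identifying $\mathrm{Ind}_{G_{d_i}H_{n,d_i}}^{G_n}(W_i)$ with $\bigoplus_{gG_{d_i}H_{n,d_i}} g \cdot W_i$ via~\eqref{eq:ind_def}, the tensor product is a direct sum indexed by pairs of cosets in $G_n/G_{d_1}H_{n,d_1} \times G_n/G_{d_2}H_{n,d_2}$, or equivalently by pairs of morphisms $([d_1] \to [n], [d_2] \to [n])$ in $\mathrm{FI}|_{\mc W}$. The first step is then to partition these pairs into $G_n$-orbits. For $\mathrm{FI}|_{\mc W}$ the orbit of a pair $(f_1, f_2)$ is determined by the combinatorial type of the union $f_1([d_1]) \cup f_2([d_2])$ together with the induced incidence pattern (in the signed/even-signed case, also with the sign data on the overlap). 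Concretely, letting $e$ range over integers with $\max\{d_1,d_2\} \le e \le d_1+d_2$ and letting $T_e$ index the isomorphism classes of triples $([d_1] \hookrightarrow [e] \hookleftarrow [d_2])$ whose combined image equals $[e]$, the $G_n$-orbits of pairs are parametrized by $T_e$ for some $e$, and each orbit is $G_n$-isomorphic to $G_n / G_e H_{n,e}$.

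Collecting the summands orbit by orbit yields, for every $n$,
\[
(\mscr F_1 \otimes \mscr F_2)_n \;\cong\; \bigoplus_{e=\max\{d_1,d_2\}}^{d_1+d_2} \; \bigoplus_{\tau \in T_e} \mathrm{Ind}_{G_e H_{n,e}}^{G_n}\bigl( W_{\tau} \bigr),
\]
where $W_\tau$ is the $G_e$-representation built from $W_1 \otimes W_2$ via the pair of inclusions specified by $\tau$. The next step is to check that these isomorphisms are compatible with the transition maps $\varphi_n$, i.e.\ that they assemble into an isomorphism of consistent sequences $\mscr F_1 \otimes \mscr F_2 \cong \bigoplus_{e,\tau} \mathrm{Ind}_{G_e}(W_\tau)$; this is a direct check once one observes that $\varphi_n$ acts by postcomposition with the inclusion $[n] \hookrightarrow [n+1]$, and that postcomposition preserves the combinatorial type of the union and the orbit labelling.

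The main obstacle is the combinatorial step: proving that the pairwise orbits on morphisms in $\mathrm{FI}|_{\mc W}$ really are parametrized by a fixed (finite, $n$-independent) set $\bigsqcup_e T_e$, with each orbit stabilizer equal to $G_e H_{n,e}$ rather than something larger. For $\mathrm{FI}$ this is a standard double-coset calculation based on the fact that an element of $S_n$ fixing both cosets acts as a permutation of $[e]^c$ together with a permutation of $[e]$ preserving both injections, and this structure is exactly what places the stabilizer inside $G_e H_{n,e}$. For $\mathrm{FI}|_{BC}$ and $\mathrm{FI}|_D$ one must additionally track sign data, which requires choosing representatives $\tau$ whose signs on $[e]$ are rigid; once this is done, the same argument goes through. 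I would organize the proof so that this combinatorial analysis is carried out once for $\mathrm{FI}$ and then transferred to $\mathrm{FI}|_{BC}$ and $\mathrm{FI}|_D$ by tracking sign characters, thereby obtaining the statement uniformly for $\mc C = \mathrm{FI}|_{\mc W}$.
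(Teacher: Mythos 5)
The paper offers no proof of this statement at all: it is imported verbatim from the cited reference (Gadish, Thm.~B(1)), so there is nothing internal to compare against. Your sketch is essentially a reconstruction of the argument in that reference: reduce to a tensor product of two induction sequences, decompose level $n$ over pairs of cosets of $G_{d_1}H_{n,d_1}$ and $G_{d_2}H_{n,d_2}$, and regroup the summands by the amalgamation type of the two images to exhibit a direct sum of $\mscr V$-induction sequences indexed by an $n$-independent set, compatibly with the transition maps. That is the right route, and it is the one the citation covers.

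Two steps in your write-up need repair before it is a proof. First, the claim that each $G_n$-orbit of pairs is $G_n$-isomorphic to $G_n/G_eH_{n,e}$ is not literally correct: the stabilizer of a pair of cosets is in general a proper subgroup of $G_eH_{n,e}$ (for $\mathrm{FI}$ it preserves the finer partition into $S_1\setminus S_2$, $S_1\cap S_2$, $S_2\setminus S_1$ and the complement of the union). What actually produces the induction structure is grouping the summands by the union $T$ of the two images: the blocks indexed by $e$-subsets $T$ are permuted by $G_n$ with setwise stabilizer $G_eH_{n,e}$, the block over $T=[e]$ is an $n$-independent $G_e$-representation $W_\tau$ which is \emph{not} a single copy of $W_1\otimes W_2$ but the representation induced to $G_e$ from the pair-stabilizer of the restriction of $W_1\otimes W_2$, and one must verify that $H_{n,e}$ acts trivially on this block (it does, since conjugating an element of $H_{n,e}$ by a coset representative lands in $H_{n,d_i}$), which is exactly what Definition~\ref{def:free_mods} demands of a $\mscr V$-induction sequence. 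Second, your parametrization of cosets by subsets is specific to $\msf S_n$ and $\msf B_n$; for $\mathrm{FI}|_D$ the index of $G_dH_{n,d}$ in $\msf D_n$ is $2\binom{n}{d}$, so cosets are subsets decorated with an extra sign datum, and the orbit and stabilizer analysis you defer under "tracking sign characters" is precisely where the cited proof does its genuine case work. With those two points carried out, your outline is correct; it buys nothing beyond the citation, but it is the standard argument behind it.
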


The following result illustrates two further properties of $\mathrm{FI}|_{\mc W}$-modules, the second of which is included in our Theorem~\ref{thm:calc_for_pres_degs_FIW} stated above.
\begin{theorem}[Noetherianity and tensor products]\label{thm:fin_gen_calc}
    Let $\mc C=\mathrm{FI}|_{\mc W}$.
    \begin{enumerate}[align=left, font=\emph]
        \item[(Noetherianity)] Any submodule of a finitely-generated $\mc C$-module is finitely-generated. 
        \item[(Tensor products)] If $\mscr V_1$ and $\mscr V_2$ are $\mc C$-modules generated in degrees $d_1$ and $d_2$, respectively, then $\mscr V_1\otimes \mscr V_2$ is generated in degree $d_1+d_2$.
    \end{enumerate}
\end{theorem}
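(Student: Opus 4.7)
The two parts of the theorem are largely independent and I would tackle them separately, though both rest on the structure of algebraically free $\mc C$-modules developed in Section~\ref{sec:backgrnd_pres_deg} and exploited via the category $\mc C_{\mscr V}$ of Definition~\ref{def:cat_from_seq}.

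For the tensor product statement, the plan is purely combinatorial. Fix $n \geq d_1 + d_2$ and take any element $w \in (V_1 \otimes V_2)_n$, which I may assume is a simple tensor $w = v_1 \otimes v_2$. Using that $\mscr V_1,\mscr V_2$ are generated in degrees $d_1,d_2$, I would write $v_1 = \sum_i g_i x_i$ with $x_i \in (V_1)_{d_1}$ and $g_i \in G_n = \mathrm{End}_{\mc C}(n)^{\times}$, and similarly $v_2 = \sum_j h_j y_j$ with $y_j \in (V_2)_{d_2}$, obtaining
\[
  w \;=\; \sum_{i,j} (g_i x_i) \otimes (h_j y_j).
\]
For each pair $(i,j)$ I would argue that the combined ``support'' $g_i([d_1]) \cup h_j([d_2])$ inside $[n]$ has size at most $d_1+d_2$. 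Since $n \geq d_1+d_2$, by post-composing with a suitable element $g \in G_n$ I can arrange that this combined support lies in $[d_1+d_2]$, which exhibits $(g_i x_i)\otimes (h_j y_j)$ as the image under $g \in G_n$ of an element of $(V_1)_{d_1+d_2}\otimes (V_2)_{d_1+d_2} = (V_1\otimes V_2)_{d_1+d_2}$. The subtle part of this argument is that for general (not necessarily free) $\mc C$-modules, ``support'' is not intrinsically defined; the clean way to handle this is to first reduce to free $\mc C$-modules via Proposition~\ref{prop:ind_surj_vs_fg} and Theorem~\ref{thm:tensor_prods_free} (noting that a surjection $\mscr F_1 \twoheadrightarrow \mscr V_1$ of $\mc C$-modules generated in degree $d_1$ induces a surjection $\mscr F_1 \otimes \mscr F_2 \twoheadrightarrow \mscr V_1 \otimes \mscr V_2$, so it suffices to handle free $\mc C$-modules where each basis element has a well-defined underlying injection). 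For $\mc C = \mathrm{FI}|_{\mc W}$, the set of injections of $[d_1]$ and $[d_2]$ into $[n]$ with combined image at most $d_1+d_2$ supports precisely this combinatorial argument.

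For Noetherianity, the plan is to reduce the problem to finitely-generated free $\mc C$-modules and then invoke a Gr\"obner-theoretic structure theorem. Specifically, by Proposition~\ref{prop:ind_surj_vs_fg} every finitely-generated $\mc C$-module $\mscr U$ is a quotient $\mscr F \twoheadrightarrow \mscr U$ of a finitely-generated algebraically free $\mc C$-module $\mscr F$; any submodule $\mscr U' \subseteq \mscr U$ pulls back to a submodule $\widetilde{\mscr U} \subseteq \mscr F$, and a finite generating set for $\widetilde{\mscr U}$ descends to one for $\mscr U'$. Thus it suffices to prove that finitely-generated free $\mc C$-modules are Noetherian. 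For this I would follow the approach of Sam--Snowden in~\cite{sam2017grobner}: one equips the morphism sets of $\mc C$ with a well-partial order (for $\mathrm{FI}$, this is a variant of the subword order on injections; the analogous orders for $\mathrm{FI}|_{BC}$ and $\mathrm{FI}|_D$ are obtained by incorporating the $\pm$ signs), shows that $\mc C$ is \emph{Gr\"obner} in their sense, and then deduces that principal free $\mc C$-modules $\mathrm{Ind}_{G_d}(W)$ with $W$ finite-dimensional are Noetherian, since any submodule has a finite Gr\"obner basis.

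The main obstacle is clearly the Noetherianity half: the Gr\"obner-theoretic argument requires verifying Higman's lemma / Kruskal-tree-theorem style statements for the morphism posets of $\mathrm{FI}|_{\mc W}$, which is the technical heart of~\cite{sam2017grobner} and~\cite{FImods,WILSON2014269}. By contrast, the tensor-product half reduces to the elementary combinatorial fact that two subsets of $[n]$ of sizes at most $d_1$ and $d_2$ together occupy at most $d_1+d_2$ elements, combined with the reduction to free modules enabled by Theorem~\ref{thm:tensor_prods_free}; I would expect the full write-up of this part to fit in a few lines once the reduction is set up.
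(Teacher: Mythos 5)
Your proposal is correct in substance, but it takes a different route from the paper: the paper's entire proof of Theorem~\ref{thm:fin_gen_calc} is a two-line citation (Noetherianity to \cite[Thm.~1.13]{FImods} and \cite[Thm.~4.21]{WILSON2014269}, the tensor bound to \cite[Prop.~2.3.6]{FImods} and \cite[Prop.~5.2]{WILSON2014269}), whereas you reconstruct the arguments behind those citations. Your tensor-product argument is essentially the standard one underlying those references: reduce via Proposition~\ref{prop:ind_surj_vs_fg} to algebraically free modules, where basis elements carry (signed) injections as supports, and observe that two supports of sizes at most $d_1$ and $d_2$ can be moved into $[d_1+d_2]$ by an element of $G_n$ once $n\geq d_1+d_2$; one small simplification is that you do not actually need Theorem~\ref{thm:tensor_prods_free} here, only the surjection $\mscr F_1\otimes\mscr F_2\twoheadrightarrow\mscr V_1\otimes\mscr V_2$ obtained from free presentations of each factor, since the support argument runs on $\mscr F_1\otimes\mscr F_2$ regardless of whether it is itself free. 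For Noetherianity your plan (reduce a submodule of a finitely-generated module to a submodule of a finitely-generated free module, then apply Gr\"obner-theoretic well-partial-order arguments) is a valid alternative to the proofs the paper cites, with the minor terminological caveat that $\mathrm{FI}$ and its signed variants are \emph{quasi}-Gr\"obner in the sense of \cite{sam2017grobner} (one passes to an ordered version of the category before the Higman-type argument); and, as you acknowledge, the well-partial-order verification is the technical heart and is left to the literature --- which is consistent with how the paper itself treats the entire theorem. What your write-up buys is an indication of why the degree bound $d_1+d_2$ is the natural answer and where the genuine difficulty (Noetherianity) lies; what the paper's citation buys is brevity and precise attribution of the stable ranges used elsewhere (e.g.\ in Theorem~\ref{thm:calc_for_pres_degs_FIW}).
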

\begin{proof}
    Noetherianity is shown for FI in~\cite[Thm.~1.13]{FImods} and for $\mathrm{FI}|_{BC},\mathrm{FI}|_D$ in~\cite[Thm.~4.21]{WILSON2014269}. The generation degree bound is shown in~\cite[Prop.~2.3.6]{FImods} for FI and in~\cite[Prop.~5.2]{WILSON2014269} for $\mathrm{FI}|_{BC},\mathrm{FI}|_D$.
\end{proof}
Noetherianity helps explain the ubiquity of representation stability, while the generation degree bound for tensor products allows us to bound the generation degrees of complicated sequences from degrees of simple ones. 
The two properties in Theorem~\ref{thm:fin_gen_calc} hold over more general categories than $\mathrm{FI}|_{\mc W}$, including for categories of FI-type and certain quasi-Gr\"obner categories introduced in~\cite{sam2017grobner}. 
We remark that these two types of categories only include representations of finite groups, and do not include the graphon category in Example~\ref{ex:cats}(c), whose properties would be interesting to study in future work. 
%Representation stability has also been shown for many of these categories.
%As far as we are aware, all of these categories arise from sequences of representations of \emph{finite} groups. 
\begin{definition}[Property (TFG)]
    We say that a category $\mc C$ satisfies \emph{property (TFG)} if\new{:
    \begin{enumerate}[(a)]
        \item tensor products of free $\mc C$-modules are free;
        \item if $\mscr V_1$ and $\mscr V_2$ are $\mc C$-modules generated in degrees $d_1$ and $d_2$, respectively, then $\mscr V_1\otimes \mscr V_2$ is generated in degree $d_1+d_2$.
    \end{enumerate}}
\end{definition}
An example of a category not satisfying (TFG) is given in~\cite[Rmk.~7.4.3]{sam2017grobner}.
We can use property (TFG) to obtain a calculus for presentation degrees from which Theorem~\ref{thm:calc_for_pres_degs_FIW} may be deduced.
\begin{proposition}\label{prop:pres_calc_for_tensors}
    Suppose $\mc C$ is a category satisfying (TFG). If $\mscr V,\mscr U$ are $\mc C$-modules which are generated in degrees $d_V,d_U$ and presented in degrees $k_V,k_U$, respectively, then $\mscr V\otimes\mscr U$ is presented in degree $\max\{d_V+k_U, d_U+k_V\}$.
\end{proposition}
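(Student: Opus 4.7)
The plan is to construct an explicit presentation of $\mscr V\otimes\mscr U$ from presentations of its factors, and then read off its presentation degree using (TFG). Specifically, fix short exact sequences of $\mc C$-modules
\[
0 \to \mscr K_V \to \mscr F_V \xrightarrow{\pi_V} \mscr V \to 0, \qquad 0 \to \mscr K_U \to \mscr F_U \xrightarrow{\pi_U} \mscr U \to 0,
\]
where $\mscr F_V,\mscr F_U$ are algebraically free and generated in degrees $d_V,d_U$, and $\mscr K_V,\mscr K_U$ are generated in degrees $k_V,k_U$. By the ``Free'' half of (TFG), the levelwise tensor product $\mscr F_V\otimes \mscr F_U$ is again an algebraically free $\mc C$-module, and by the ``Generation'' half it is generated in degree $d_V+d_U$. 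The tensor map $\pi_V\otimes\pi_U\colon \mscr F_V\otimes\mscr F_U \to \mscr V\otimes \mscr U$ is a morphism of $\mc C$-modules and is surjective at each object, since tensor products of surjections of vector spaces are surjections.

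Next I identify the kernel. Because the $\mc C$-module tensor product is computed levelwise and vector space tensor is exact, at each $n\in\mc C$ the standard identity for kernels of tensored surjections gives
\[
\ker(\pi_V\otimes\pi_U)_n = (K_V)_n\otimes (F_U)_n + (F_V)_n\otimes (K_U)_n,
\]
which assembles globally into the image of the natural morphism
\[
\Phi\colon (\mscr K_V\otimes\mscr F_U)\oplus (\mscr F_V\otimes\mscr K_U) \longrightarrow \mscr F_V\otimes\mscr F_U.
\]
Thus $\ker(\pi_V\otimes\pi_U)=\operatorname{im}\Phi$ is a quotient of the displayed direct sum, so its generation degree is at most the generation degree of the source.

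Applying the generation degree bound from (TFG) to each summand, $\mscr K_V\otimes\mscr F_U$ is generated in degree $k_V+d_U$ and $\mscr F_V\otimes\mscr K_U$ is generated in degree $d_V+k_U$. Since the generation degree of a direct sum is the maximum of the summands', and generation degrees are non-increasing under surjections (the image of a set of generators is a set of generators), the kernel $\ker(\pi_V\otimes\pi_U)$ is generated in degree $\max\{k_V+d_U,\,d_V+k_U\}$. Combined with the generation degree $d_V+d_U$ of $\mscr F_V\otimes\mscr F_U$, and noting that $k_V\geq d_V$ and $k_U\geq d_U$ so $d_V+d_U\le\max\{k_V+d_U,\,d_V+k_U\}$, this shows $\mscr V\otimes\mscr U$ is presented in degree $\max\{d_V+k_U,\,d_U+k_V\}$, as claimed.

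The only nontrivial step is the kernel identification: while pointwise it is elementary linear algebra, one should verify that the collection of subspaces $\{(K_V)_n\otimes (F_U)_n + (F_V)_n\otimes (K_U)_n\}$ really forms a $\mc C$-submodule of $\mscr F_V\otimes\mscr F_U$ equal to the image of $\Phi$, which follows from the fact that morphisms in $\mc C$ act diagonally on tensor products and preserve each summand. Everything else is a routine application of (TFG) together with the elementary facts that generation degree is subadditive under direct sum (max) and non-increasing under surjections.
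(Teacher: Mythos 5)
Your proof is correct and follows essentially the same route as the paper's: tensor the two presentations, use (TFG) to conclude $\mscr F_V\otimes\mscr F_U$ is free and generated in degree $d_V+d_U$, identify the kernel of the tensored surjection as $\mscr K_V\otimes\mscr F_U+\mscr F_V\otimes\mscr K_U$, and bound its generation degree by $\max\{k_V+d_U,\,d_V+k_U\}$ via the (TFG) generation bound applied to each summand. Your extra remarks (realizing the kernel as the image of the direct sum, and noting $d_V+d_U\le\max\{k_V+d_U,d_V+k_U\}$) just make explicit steps the paper leaves implicit.
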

\begin{proof}
    Suppose $\mscr F_V,\mscr F_U$ are free $\mc C$-modules generated in degrees $d_V,d_U$, respectively, and $\mscr F_V\to \mscr V$ and $\mscr F_U\to\mscr U$ are surjective morphisms whose kernels $\mscr K_V$ and $\mscr K_U$ are generated in degrees $r_V,r_U$, respectively. Then $\mscr F_V\otimes\mscr F_U$ is a free $\mc C$-module generated in degree $d_V+d_U$ by (TFG), and the morphism $\mscr F_V\otimes\mscr F_U\to \mscr V\otimes\mscr U$ is surjective with kernel $\mscr K_V\otimes\mscr F_U + \mscr F_V\otimes\mscr K_U$. Since $\mscr K_V\otimes\mscr F_U$ is generated in degree $r_V+d_U$ and similarly for $\mscr F_U\otimes\mscr K_V$, their sum is generated in degree $\max\{r_V+d_U, d_V + r_U\}$. 
\end{proof}
Our next goal is to understand presentation degrees for images, and in particular, for Schur functors. We begin with a number of elementary lemmas.
\begin{lemma}
    Let $\mscr V$ and $\mscr U$ be $\mc C$-modules. If $\mscr V,\mscr U$ are generated in degrees $d_V,d_U$ and presented in degrees $k_V,k_U$, respectively, then $\mscr V\oplus\mscr U$ is generated in degree $\max\{d_V,d_U\}$ and presented in degree $\max\{k_V,k_U\}$.
\end{lemma}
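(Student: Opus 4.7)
The plan is to prove both halves directly from the definitions, without invoking any of the heavier machinery for tensor products or Schur functors.

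First I would handle the generation degree. By Definition~\ref{def:gen_deg}, $\mscr V$ being generated in degree $d_V$ means $\RR[G_n]V_{d_V}=V_n$ for all $n\geq d_V$, and similarly $\RR[G_n]U_{d_U}=U_n$ for $n\geq d_U$. Setting $d=\max\{d_V,d_U\}$ and using that both $V_{d_V}\subseteq V_d$ and $U_{d_U}\subseteq U_d$ as subrepresentations (via the embeddings in the consistent sequence), one immediately gets $\RR[G_n](V_d\oplus U_d)=V_n\oplus U_n$ for all $n\geq d$. This shows $\mscr V\oplus\mscr U$ is generated in degree $\max\{d_V,d_U\}$.

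Next I would handle the presentation degree. By Definition~\ref{def:relations}, there exist algebraically free $\mc C$-modules $\mscr F_V,\mscr F_U$ generated in degrees $d_V,d_U$, together with surjective morphisms $\mscr F_V\to\mscr V$ and $\mscr F_U\to\mscr U$ whose kernels $\mscr K_V,\mscr K_U$ are generated in degrees at most $k_V,k_U$, respectively. The direct sum $\mscr F_V\oplus\mscr F_U$ is again algebraically free, because a direct sum of induction sequences is by definition a direct sum of induction sequences (Definition~\ref{def:free_mods}(b)). The componentwise morphism $\mscr F_V\oplus\mscr F_U\to\mscr V\oplus\mscr U$ is surjective and its kernel is precisely $\mscr K_V\oplus\mscr K_U$. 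By the first paragraph applied to $\mscr K_V$ and $\mscr K_U$, this kernel is generated in degree $\max\{k_V,k_U\}$, while $\mscr F_V\oplus\mscr F_U$ is generated in degree $\max\{d_V,d_U\}\leq\max\{k_V,k_U\}$ (since presentation degree is at least as large as generation degree, as noted after Definition~\ref{def:relations}). Hence $\mscr V\oplus\mscr U$ is presented in degree $\max\{k_V,k_U\}$.

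There is no real obstacle here: the proof is almost entirely bookkeeping once one observes that direct sums of free modules are free, and that the kernel of a componentwise surjection is a componentwise kernel. The only minor point worth stating explicitly is the reduction of $\max\{\max\{d_V,d_U\},\max\{k_V,k_U\}\}$ to $\max\{k_V,k_U\}$ using $d_V\leq k_V$ and $d_U\leq k_U$.
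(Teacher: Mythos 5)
Your proof is correct and follows essentially the same route as the paper: the generation-degree claim is immediate from the definition, and the presentation-degree claim is obtained by summing presentations, noting that $\mscr F_V\oplus\mscr F_U$ is again algebraically free, that the componentwise surjection onto $\mscr V\oplus\mscr U$ has kernel $\mscr K_V\oplus\mscr K_U$, and that this kernel is generated in degree $\max\{k_V,k_U\}$. Your extra remark reducing $\max\{\max\{d_V,d_U\},\max\{k_V,k_U\}\}$ to $\max\{k_V,k_U\}$ via $d_V\leq k_V$, $d_U\leq k_U$ is a harmless bit of bookkeeping the paper leaves implicit.
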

\begin{proof}
    The claim about the generation degree is immediate from its definition.
    Suppose that $\mscr F_{V}\to\mscr V$ and $\mscr F_U\to \mscr U$ are surjective morphisms with $\mscr F_V,\mscr F_U$ being free $\mc C$-modules generated in degrees $d_V,d_U$ with kernels $\mscr K_V,\mscr K_U$ generated in degrees $k_V,k_U$, respectively. Then $\mscr F_V\oplus\mscr F_U$ is free (by definition) and generated in degree $\max\{d_V,d_U\}$, and surjects onto $\mscr V\oplus\mscr U$ with kernel $\mscr K_V\oplus\mscr K_U$ which is generated in degree $\max\{k_V,k_U\}$.
\end{proof}
\begin{lemma}\label{lem:gen_deg_of_preim}
Let $\mscr V=\{\vct V_n\}$ and $\mscr U=\{\vct U_n\}$ be two $\mc C$-modules, let $\mscr A=\{A_n\}\colon\mscr V\to \mscr U$ be a surjective morphism, and let $\mscr W=\{\vct W_n\subseteq \vct U_n\}$ be a $\mc C$-submodule of $\mscr U$. If $\mathrm{ker}\mscr A$ is generated in degree $d$ and $\mscr W$ is generated in degree $d_W$, then $\mscr A^{-1}(\mscr W)=\{A_n^{-1}(\vct W_n)\}$ is a $\mc C$-module generated in degree $\max\{d,d_W\}$.
\end{lemma}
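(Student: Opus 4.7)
The plan is a two-step reduction: first use the generation of $\mscr W$ to account for the image $A_n v$ inside $W_n$, and then use the generation of $\ker\mscr A$ to account for the residual in the kernel. Before doing that, I would briefly verify that $\mscr A^{-1}(\mscr W)$ is a $\mc C$-submodule of $\mscr V$: if $v\in A_n^{-1}(W_n)$ and $\varphi\in\mathrm{Hom}_{\mc C}(n,N)$, then equivariance of $\mscr A$ (i.e.\ $A_N\circ\varphi=\psi\circ A_n$ where $\psi$ is the corresponding morphism on $\mscr U$) together with $\mscr W$ being a submodule gives $A_N(\varphi\cdot v)=\psi\cdot(A_n v)\in W_N$. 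So $\mscr A^{-1}(\mscr W)$ is indeed a $\mc C$-module.

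Set $D=\max\{d,d_W\}$ and fix $n\geq D$ and $v\in A_n^{-1}(W_n)$. Since $A_n v\in W_n$ and $\mscr W$ is generated in degree $d_W\leq D$, I can write
\begin{equation*}
    A_n v=\sum_i g_i\cdot w_i,\qquad w_i\in W_D,\ g_i\in \mathrm{Hom}_{\mc C}(D,n).
\end{equation*}
Because $A_D\colon V_D\to U_D$ is surjective (this follows from $\mscr A$ being a surjective morphism of sequences), I can pick preimages $v_i\in V_D$ with $A_D v_i=w_i$; automatically $v_i\in A_D^{-1}(W_D)$. Equivariance then gives $A_n(\sum_i g_i\cdot v_i)=\sum_i g_i\cdot w_i=A_n v$, so the residual
\begin{equation*}
    r\coloneqq v-\sum_i g_i\cdot v_i
\end{equation*}
lies in $\ker A_n$.

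Since $\ker\mscr A$ is generated in degree $d\leq D$, I can further write $r=\sum_j h_j\cdot k_j$ with $k_j\in\ker A_D$ and $h_j\in\mathrm{Hom}_{\mc C}(D,n)$. As $0\in W_D$, every $k_j$ lies in $A_D^{-1}(W_D)$. Putting both decompositions together,
\begin{equation*}
    v=\sum_i g_i\cdot v_i+\sum_j h_j\cdot k_j\in \RR[G_n]\cdot A_D^{-1}(W_D),
\end{equation*}
which shows $\mscr A^{-1}(\mscr W)$ is generated in degree $D=\max\{d,d_W\}$, as required.

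There is no real obstacle beyond bookkeeping: the only subtle point is the need for $\mscr A$ to be surjective in \emph{each} degree up to $D$ so that the lift $v_i$ can be chosen at level $D$ (not at some larger level $n$, which would ruin the degree bound). This is exactly what ``surjective morphism of sequences'' provides, and it is used only once, at level $D$.
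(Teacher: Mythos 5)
Your proposal is correct and follows essentially the same route as the paper's proof: both decompose an element of $A_n^{-1}(W_n)$ as (a lift of generators of $\mscr W$, obtained from surjectivity of $\mscr A$ in a fixed low degree) plus a residual in $\ker A_n$, and then invoke the generation degrees $d_W$ and $d$ of $\mscr W$ and $\ker\mscr A$; the paper merely packages the lift via the pseudoinverse $A_{d_W}^{\dagger}$ into an explicit subsequence $Z_n$ with $A_n^{-1}(W_n)=\ker A_n + Z_n$, whereas you argue element-wise with an arbitrary preimage at level $D$. The only cosmetic difference is lifting at degree $D=\max\{d,d_W\}$ instead of $d_W$, which does not affect the bound.
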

\begin{proof}
    \new{Suppose $\{y_i\}\subseteq\vct W_{d_W}$ are generators for $\mscr W$. Since $A_{d_W}$ is surjective, we can find $\{x_i\}\subseteq \vct V_{d_W}$ such that $A_{d_W}x_i=y_i$ for all $i$. Also let $\{z_j\}\subseteq \ker A_d$ be generators for $\ker\mscr A$. We claim that $\{x_i\}\cup\{z_j\}\subseteq A_{\max\{d,d_W\}}^{-1}(\vct W_{\max\{d,d_W\}})$ generate $\mscr A^{-1}(\mscr W)$. Indeed, if $n\geq \max\{d,d_W\}$ and $x\in A_n^{-1}(\vct W_n)$, then we can write $A_nx=\sum_i\alpha_ig_iy_i$ for some $\alpha_i\in\RR$ and $g_i\in\msf G_n$, in which case $x-\sum_i\alpha_ig_ix_i\in\ker A_n$ and thus, we have $x=\sum_i\alpha_ig_ix_i + \sum_j\beta_jg_jz_j$ for some $\beta_j\in\RR$ and $g_j\in\msf G_n$. }
    %Define the consistent sequence $\vct Z_n = \RR[\msf G_n]\left(A_{d_W}^{\dagger}\vct W_{d_W}\right)\subseteq \vct V_n$ if $n\geq d_W$ and $\vct Z_n=0$ otherwise, where $A_{d_W}^{\dagger}$ is the pseudoinverse of $A_{d_W}$. Note that $\{\vct Z_n\}$ is generated in degree $d_W$. Moreover, $A_n^{-1}(\vct W_n) = \ker A_n + \vct Z_n$. Indeed, we have $A_nA_{d_W}^{\dagger} = A_{d_W}A_{d_W}^{\dagger}=\mathrm{id}_{\vct U_{d_W}}$ because $\{A_n\}$ is a surjective morphism, hence $A_n(\ker A_n + \vct Z_n)=A_n(\vct Z_n)=\RR[\msf G_n]\vct W_{d_W} = \vct W_n$. Conversely, if $A_nx\in \vct W_n=\RR[\msf G_n]\vct W_{d_W}$ then we can write $A_nx = \sum_ig_iw_i$ for $g_i\in \msf G_n$ and $w_i\in \vct W_{d_W}$. Then $\hat x=\sum_ig_iA_{d_W}^{\dagger}w_i\in \vct Z_n$ and $A_n(x-\hat x)=0$, so $x\in \ker A_n+\vct Z_n$. Since $\ker\mscr A$ is generated in degree $d$ and $\{\vct Z_n\}$ is generated in degree $d_W$, their sum is generated in degree $\max\{d,d_W\}$.
\end{proof}
\begin{lemma}\label{lem:subseq_gen_bound}
    Suppose $\mscr V = \{\vct V_n\}$ and $\{\vct U_n\}$ are two $\mc C$-modules, and $\mscr A=\{A_n\}\colon\mscr V\to\mscr U$ is a morphism. If $\mscr V$ is generated in degree $d$, then $\mathrm{im}\mscr A$ is generated in degree $d$. If, moreover, $\mscr A^*=\{A_n^*\}$ is a morphism, then $\ker\mscr A$ is also generated in degree $d$.
\end{lemma}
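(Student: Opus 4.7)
The plan is to handle the two statements separately, treating the image first as a straightforward application of functoriality, and then reducing the kernel claim to an orthogonal decomposition argument that crucially uses the extra hypothesis that $\mscr A^*$ is a morphism.

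For the image: since $\mscr V$ is generated in degree $d$, we have $V_n = \sum_{k \leq d} \RR[G_n]\varphi_{n,k}(V_k)$, where $\varphi_{n,k} = \varphi_{n-1}\cdots\varphi_k$. Applying $A_n$ and using the morphism identity $A_n\varphi_{n,k} = \psi_{n,k}A_k$, I get $A_n(V_n) = \sum_{k \leq d} \RR[G_n]\psi_{n,k}(A_k(V_k))$. Since $\mathrm{im}\mscr A = \{A_n(V_n)\}$ with embeddings $\psi_n$ by Proposition~\ref{prop:im_and_ker_of_morph}, this shows $\mathrm{im}\mscr A$ is generated in degree $d$.

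For the kernel: the key input is that, because $\{A_n^*\}$ is also a morphism, the splitting $V_n = \ker A_n \oplus \mathrm{im} A_n^*$ is itself ``consistent'' across dimensions. Define the subspaces
\[
  K_n = \sum_{k \leq d} \RR[G_n]\varphi_{n,k}(\ker A_k), \qquad I_n = \sum_{k \leq d} \RR[G_n]\varphi_{n,k}(\mathrm{im} A_k^*).
\]
The morphism property $A_n\varphi_{n,k} = \psi_{n,k}A_k$ implies $\varphi_{n,k}(\ker A_k) \subseteq \ker A_n$, so $K_n \subseteq \ker A_n$. Dually, taking adjoints in $A_{n+1}^*\psi_n = \varphi_n A_n^*$ (the statement that $\mscr A^*$ is a morphism) gives $\varphi_{n,k}A_k^* = A_n^*\psi_{n,k}$, so $\varphi_{n,k}(\mathrm{im} A_k^*) \subseteq \mathrm{im} A_n^*$, hence $I_n \subseteq \mathrm{im} A_n^*$. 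Since $\mscr V$ is generated in degree $d$ and $V_k = \ker A_k \oplus \mathrm{im} A_k^*$ for each $k \leq d$, summing yields $K_n + I_n = V_n$. Combined with the orthogonal decomposition $V_n = \ker A_n \oplus \mathrm{im} A_n^*$ and the inclusions $K_n \subseteq \ker A_n$, $I_n \subseteq \mathrm{im} A_n^*$, a direct dimension count forces $K_n = \ker A_n$, which is exactly the statement that $\ker \mscr A$ is generated in degree $d$.

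The main conceptual point --- and the only nontrivial step --- is recognizing that $\{A_n^*\}$ being a morphism is what allows the ``kernel part'' and the ``image-of-adjoint part'' of the generating set in degree $d$ to be separated after taking the $G_n$-orbit in degree $n$; without this hypothesis, $\varphi_{n,k}(\mathrm{im} A_k^*)$ need not land inside $\mathrm{im} A_n^*$, and a relation in $\ker A_n$ could mix generators of the two types. I do not anticipate further obstacles, since everything else is a direct manipulation of the definitions and orthogonality.
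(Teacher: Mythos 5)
Your proof is correct, and the image half is exactly the paper's argument: apply $A_n$ to $V_n=\RR[G_n]V_d$ and use equivariance plus $A_n|_{V_d}=A_d$. For the kernel half you use the same key input as the paper---that $\mscr A^*$ being a morphism forces the embeddings to respect the orthogonal splitting $V_n=\ker A_n\oplus\mathrm{im}\,A_n^*$---but you execute it differently: the paper observes that $\{\mathrm{im}\,A_n^*=(\ker A_n)^{\perp}\}$ is then a submodule, so the orthogonal projections $\{\mc P_{\ker A_n}\}\colon\mscr V\to\mscr V$ form a morphism whose image is $\ker\mscr A$, and it concludes by citing the image statement just proved; you instead verify generation directly, showing the degree-$\leq d$ pieces of kernel and adjoint-image separately generate subspaces of $\ker A_n$ and $\mathrm{im}\,A_n^*$ that together span $V_n$, and then using orthogonality (or a dimension count) to force $K_n=\ker A_n$. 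The paper's packaging is a one-line reduction to the first claim; yours is a slightly longer but self-contained and equally valid argument, and your closing remark correctly identifies why the hypothesis on $\mscr A^*$ is indispensable.
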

\begin{proof}
    The first claim follows from $A_n(\vct V_n)=A_n(\RR[\msf G_n]\vct V_d)=\RR[\msf G_n]A_n(\vct V_d)=\RR[\msf G_n]A_d(\vct V_d)$, where we used the equivariance of $A_n$ and the fact that $A_n|_{\vct V_d}=A_d$. For the second claim, note that if $\mscr A^*$ is a morphism, then $\{\mathrm{im}\, A_n^*=(\ker A_n)^{\perp}\}$ is a $\mc C$-submodule of $\mscr V$. Therefore, $\{\mc P_{\ker A_n}\}\colon\mscr V\to\mscr V$ is a morphism, and its image is precisely $\ker\mscr A$.
\end{proof}
\begin{proposition}\label{prop:pres_deg_of_im_morph}
    Suppose $\mscr V=\{\vct V_n\},\mscr U=\{\vct U_n\}$ are two $\mc C$-modules and both $\mscr A=\{A_n\colon \vct V_n\to \vct U_n\}$ and $\{A_n^*\colon \vct U_n\to \vct V_n\}$ are morphisms. If $\mscr V$ is generated in degree $d$ and presented in degree $k$, then $\mathrm{im}\mscr A=\{A_n(\vct V_n)\}$ is generated in degree $d$ and presented in degree $k$.
\end{proposition}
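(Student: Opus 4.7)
The generation degree claim is immediate from Lemma~\ref{lem:subseq_gen_bound}: since $\mscr V$ is generated in degree $d$ and $\mscr A$ is a morphism, $\mathrm{im}\mscr A$ is generated in degree $d$ as well. So the real content is the bound on the presentation degree.

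My plan for presentation is to build a free presentation of $\mathrm{im}\mscr A$ by combining a free presentation of $\mscr V$ with the surjection $\mscr V\twoheadrightarrow \mathrm{im}\mscr A$. Concretely, since $\mscr V$ is presented in degree $k$, pick a free $\mc C$-module $\mscr F$ generated in degree $d$ together with a surjective morphism $\mscr F \twoheadrightarrow \mscr V$ whose kernel $\mscr K$ is generated in degree $k$. Composing with $\mscr V \twoheadrightarrow \mathrm{im}\mscr A$ yields a surjective morphism $\mscr F \twoheadrightarrow \mathrm{im}\mscr A$. Its kernel is exactly the preimage in $\mscr F$ of $\ker\mscr A\subseteq \mscr V$ under the surjection $\mscr F \twoheadrightarrow \mscr V$.

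To bound the generation degree of this preimage I invoke Lemma~\ref{lem:gen_deg_of_preim}. This needs two inputs: (i) the kernel of $\mscr F\twoheadrightarrow\mscr V$, namely $\mscr K$, is generated in degree $k$ by construction; and (ii) the submodule $\ker\mscr A\subseteq\mscr V$ is generated in some controlled degree. This is where the hypothesis that $\mscr A^*$ is also a morphism is used: by the second part of Lemma~\ref{lem:subseq_gen_bound}, $\ker\mscr A$ is then a $\mc C$-submodule of $\mscr V$ generated in degree $d$. Lemma~\ref{lem:gen_deg_of_preim} therefore produces a preimage generated in degree $\max\{k,d\}=k$. Combined with the fact that $\mscr F$ itself is generated in degree $d\leq k$, this exhibits $\mathrm{im}\mscr A$ as the quotient of a free $\mc C$-module generated in degree $d$ by a submodule generated in degree $k$, so $\mathrm{im}\mscr A$ is presented in degree $\max\{d,k\}=k$.

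The only subtle point, and where I would be careful, is verifying the hypotheses of Lemma~\ref{lem:gen_deg_of_preim}: one must check that $\ker\mscr A$ really is a $\mc C$-submodule of $\mscr V$ (which follows from $\mscr A$ being a morphism, since $A_{n+1}\varphi_n=\psi_n A_n$ forces $\varphi_n(\ker A_n)\subseteq \ker A_{n+1}$), and that $\mscr F\twoheadrightarrow\mscr V$ is a surjective morphism (given). Everything else is bookkeeping with the two preparatory lemmas, and no new categorical input beyond those lemmas and the definition of presentation degree is required.
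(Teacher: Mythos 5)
Your proposal is correct and follows essentially the same route as the paper: compose a free presentation $\mscr F\twoheadrightarrow\mscr V$ (with $\mscr F$ generated in degree $d$ and kernel generated in degree $k$) with the surjection onto $\mathrm{im}\mscr A$, then bound the generation degree of the kernel $\mscr B^{-1}(\ker\mscr A)$ by combining Lemma~\ref{lem:gen_deg_of_preim} with the fact (Lemma~\ref{lem:subseq_gen_bound}, using that $\mscr A^*$ is a morphism) that $\ker\mscr A$ is generated in degree $d$. The paper's proof is exactly this argument, so no further comment is needed.
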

\begin{proof}
    Let $\mscr F = \{\vct F_n\}$ be a free $\mc C$-module generated in degree $d$ and let $\mscr B=\{B_n\}\colon\mscr F\to \mscr V$ be a surjective morphism whose kernel $\mscr K=\{\vct K_n\}$ is generated in degree $k$. The composition $\mscr F\xrightarrow{\mscr B} \mscr V \xrightarrow{\mscr A} \mathrm{im}\mscr A$ is a surjective morphism from the free $\mc C$-module $\mscr F$ whose kernel is $\mscr B^{-1}(\mathrm{ker}\mscr A)$ and is generated in degree $\max\{d,k\}=k$ by Lemmas~\ref{lem:gen_deg_of_preim} and~\ref{lem:subseq_gen_bound}. 
\end{proof}
\begin{corollary}
    Suppose $\mscr C$ satisfies property (TFG). If $\mscr V$ is a $\mc C$-module generated in degree $d$ and presented in degree $k$, and $\lambda$ is a partition, then $\mbb S^{\lambda}\mscr V$ is generated in degree $d|\lambda|$ and presented in degree $k + d(|\lambda|-1)$. 
\end{corollary}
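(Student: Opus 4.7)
The plan is to realize $\mbb S^\lambda \mscr V$ as the image of a self-adjoint endomorphism of the tensor power $\mscr V^{\otimes |\lambda|}$, and then combine Proposition~\ref{prop:pres_calc_for_tensors} (tensor products) with Proposition~\ref{prop:pres_deg_of_im_morph} (images of morphisms whose adjoint is also a morphism). Property (TFG) enters only indirectly, through Proposition~\ref{prop:pres_calc_for_tensors}.

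First I would establish by induction on $m\geq 1$ that $\mscr V^{\otimes m}$ is generated in degree $md$ and presented in degree $(m-1)d+k$. The base case $m=1$ is the hypothesis on $\mscr V$. For the inductive step, applying Proposition~\ref{prop:pres_calc_for_tensors} to $\mscr V^{\otimes(m+1)}=\mscr V^{\otimes m}\otimes\mscr V$ gives generation degree $md+d=(m+1)d$ and presentation degree bounded by $\max\{md+k,\,(m-1)d+k+d\}=md+k$, as required.

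Next I would exhibit $\mbb S^\lambda\mscr V$ as the image of a morphism $\mscr V^{\otimes|\lambda|}\to\mscr V^{\otimes|\lambda|}$. For each $n$, let $\pi_\lambda\in\RR[S_{|\lambda|}]$ denote the orthogonal projector (a scalar multiple of the character idempotent) onto the $\lambda$-isotypic component for the action of $S_{|\lambda|}$ by permuting tensor factors of $V_n^{\otimes|\lambda|}$; by construction its image is $\mbb S^\lambda V_n$. The key observation is that $\pi_\lambda$ only permutes tensor slots and takes linear combinations, so it commutes with every map in $\mc C$, which acts diagonally on all $|\lambda|$ copies of $\mscr V$. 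Hence $\{\pi_\lambda\colon V_n^{\otimes|\lambda|}\to V_n^{\otimes|\lambda|}\}_n$ is a morphism of $\mc C$-modules whose image is $\mbb S^\lambda\mscr V$. Because $\pi_\lambda$ is an orthogonal projector its adjoint equals itself, so the adjoint is also a morphism.

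Finally, applying Proposition~\ref{prop:pres_deg_of_im_morph} to this morphism transfers the bounds from $\mscr V^{\otimes|\lambda|}$ to its image: the generation degree of $\mbb S^\lambda\mscr V$ is at most $|\lambda|d$, and its presentation degree is at most $(|\lambda|-1)d+k=k+d(|\lambda|-1)$, which is the desired bound. The only place I anticipate needing care is verifying that $\pi_\lambda$ really is a $\mc C$-module morphism (i.e.\ natural in $n$), but this follows at once from the fact that the external $S_{|\lambda|}$-action on tensor positions commutes with the diagonal action of the morphisms of $\mc C$; no nontrivial combinatorics of Young symmetrizers is required beyond knowing that $\pi_\lambda$ is a central-in-$S_{|\lambda|}$ idempotent lying in $\RR[S_{|\lambda|}]$.
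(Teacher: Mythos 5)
Your overall strategy is the paper's: bound the degrees of $\mscr V^{\otimes|\lambda|}$ (your induction via Proposition~\ref{prop:pres_calc_for_tensors}, together with the (TFG) generation bound, is exactly the intended argument) and then realize $\mbb S^{\lambda}\mscr V$ as the image of an element of $\RR[\msf S_{|\lambda|}]$ acting on the tensor factors, so that Proposition~\ref{prop:pres_deg_of_im_morph} applies. But there is a genuine error in the middle step: the image of the central (isotypic) idempotent $\pi_{\lambda}$ is \emph{not} $\mbb S^{\lambda}V_n$; it is the full $\lambda$-isotypic component of $V_n^{\otimes|\lambda|}$, which is isomorphic to $(\mbb S^{\lambda}V_n)^{\oplus \dim W_{\lambda}}$ where $W_{\lambda}$ is the Specht module. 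Your identification is correct only when $\dim W_{\lambda}=1$, i.e.\ for a single row or a single column; already for $\lambda=(2,1)$ the isotypic component of $V^{\otimes 3}$ consists of two copies of $\mbb S^{(2,1)}V$. So as written, Proposition~\ref{prop:pres_deg_of_im_morph} gives you the degrees of the isotypic component, not of $\mbb S^{\lambda}\mscr V$ itself, and the claim ``by construction its image is $\mbb S^{\lambda}V_n$'' is false in general.

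The slip comes from insisting on a self-adjoint projector, which is not needed. The paper instead takes $c_{\lambda}$ to be the Young symmetrizer, whose image \emph{is} $\mbb S^{\lambda}V_n$ (this is the definition of the Schur functor used here). The symmetrizer is neither central nor self-adjoint, but Proposition~\ref{prop:pres_deg_of_im_morph} only requires that both $c_{\lambda}$ and $c_{\lambda}^*$ be morphisms of $\mc C$-modules; since the $\msf S_{|\lambda|}$-action permuting tensor factors is orthogonal and commutes with the $\mc C$-structure, the adjoint of any element $\sum_{\sigma}a_{\sigma}\sigma$ of $\RR[\msf S_{|\lambda|}]$ is $\sum_{\sigma}a_{\sigma}\sigma^{-1}$, again an element of $\RR[\msf S_{|\lambda|}]$ and hence a morphism. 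With $\pi_{\lambda}$ replaced by $c_{\lambda}$ (or, if you prefer to keep $\pi_{\lambda}$, by further projecting the isotypic component onto a single natural copy of $\mbb S^{\lambda}\mscr V$, e.g.\ via $c_{\lambda}$, and applying Proposition~\ref{prop:pres_deg_of_im_morph} once more), your argument goes through and coincides with the paper's proof.
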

Schur functors generalize symmetric and alternating algebras, see~\cite[\S6.1]{fulton2013representation}. Their generation degree for $\mc C=\mathrm{FI}$ was bounded using a similar approach in~\cite[Prop.~3.4.3]{FImods}.
\begin{proof}
    By Proposition~\ref{prop:pres_calc_for_tensors}, the $\mc C$-module $\mscr V^{\otimes|\lambda|}$ is generated in degree $d|\lambda|$ and presented in degree $d(|\lambda|-1) + k$.
    Let $\msf S_{|\lambda|}$ act on each $\vct V_n^{\otimes|\lambda|}$ by permuting its factors $\sigma\cdot(v_1\otimes\cdots\otimes v_{|\lambda|})=v_{\sigma(1)}\otimes\cdots\otimes v_{\sigma(|\lambda|)}$, which is an orthogonal action commuting with the embeddings $\vct V_n^{\otimes|\lambda|}\subseteq \vct V_{n+1}^{\otimes|\lambda|}$. 
    In this way, any element $c_{\lambda}\in\RR[\msf S_{|\lambda|}]$ defines a morphism $c_{\lambda}\colon \mscr V^{\otimes|\lambda|}\to\mscr V^{\otimes|\lambda|}$ such that $c_{\lambda}^*\in\RR[\msf S_{|\lambda|}]$ is also a morphism. 
    If $c_{\lambda}\in \RR[\msf S_{|\lambda|}]$ is the Young symmetrizer corresponding to partition $\lambda$, then $\mathrm{im}\, c_{\lambda}=\mbb S^{\lambda}\mscr V$. The result follows from Proposition~\ref{prop:pres_deg_of_im_morph}.
\end{proof}
%We conjecture that, moreover, Schur functors of free modules remain free. This is true if $\mc C=\mathrm{FI}|_{\mc W}$ and $\mscr V=\{\RR^n\}$ by Proposition~\ref{prop:pres_deg_perm_mod}(c).
%\paragraph{Conjecture:} If $\mc C$ is a category of FI-type and $\mscr V$ is a free $\mc C$-module, then so is $\mbb S^{\lambda}\mscr V$ for any $\lambda$.
We conclude this appendix by summarizing our calculus for generation and presentation degrees. Instantiating the following theorem with $\mc C=\mathrm{FI}|_{\mc W}$ yields Theorem~\ref{thm:calc_for_pres_degs_FIW}.
\begin{theorem}[Calculus for generation and presentation degrees]\label{thm:calculus_for_degrees}
    Let $\mscr V=\{\vct V_n\},\mscr U=\{\vct U_n\}$ be $\mc C$-modules generated in degrees $d_V,d_U$ and presented in degrees $k_V,k_U$, respectively.
    \begin{enumerate}[align=left, font=\emph]
        \item[(Sums)] $\mscr V\oplus \mscr U$ is generated in degree $\max\{d_V,d_U\}$ and presented in degree $\max\{k_V,k_U\}$. 
        \item[(Images and kernels)] If $\mscr A\colon\mscr V\to\mscr U$ and $\mscr A^*$ are morphisms, then $\mathrm{im}\mscr A$ and $\ker\mscr A$ are generated in degree $d_V$ and presented in degree $k_V$.
    \end{enumerate}
    Suppose $\mc C$ satisfies (TFG). Then
    \begin{enumerate}[align=left, font=\emph]
        \item[(Tensors)] $\mscr V\otimes\mscr U$ is generated in degree $d_V+d_U$ and presented in degree $\max\{k_V+d_U, k_U + d_V\}$.
        \item[(Schur functors)] $\mbb S^{\lambda}\mscr V$ is generated in degree $d_V|\lambda|$ and presented in degree $d_V(|\lambda|-1)+k_V$ for any partition $\lambda$. 
    \end{enumerate}
\end{theorem}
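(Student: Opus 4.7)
The plan is to assemble the theorem from the lemmas and propositions already established in Appendix~\ref{sec:cat_reps}, with each of the four parts corresponding to one of the results proved there. No new ideas are required; the work is in matching the general formulation to the intermediate statements and unifying the bookkeeping.

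For the \emph{Sums} part, I would invoke the unnamed lemma proved earlier in the appendix that establishes exactly this claim by taking the direct sum of surjections from free modules $\mscr F_V \twoheadrightarrow \mscr V$ and $\mscr F_U \twoheadrightarrow \mscr U$: directness of sums of free $\mc C$-modules is immediate from the definition, and the generation and presentation degree bounds follow because the kernel of the direct sum surjection is the direct sum of the kernels. For the \emph{Images and kernels} part, the generation bound is Lemma~\ref{lem:subseq_gen_bound}, which uses equivariance of $A_n$ and the assumption $A_n|_{V_d} = A_d$ to show $A_n(V_n) = \RR[G_n] A_d(V_d)$, together with the observation that when $\mscr A^*$ is a morphism, $\ker\mscr A$ is the image of the morphism $\{\mc P_{\ker A_n}\}$. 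The presentation bound is Proposition~\ref{prop:pres_deg_of_im_morph}, which composes a surjection $\mscr F \twoheadrightarrow \mscr V$ from a free $\mc C$-module with $\mscr A$ and uses Lemma~\ref{lem:gen_deg_of_preim} to bound the generation degree of $\mscr B^{-1}(\ker\mscr A)$ by $\max\{d_V, k_V\} = k_V$.

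For the \emph{Tensors} part, I would cite Proposition~\ref{prop:pres_calc_for_tensors}: given surjections $\mscr F_V \twoheadrightarrow \mscr V$ and $\mscr F_U \twoheadrightarrow \mscr U$ from free modules with kernels $\mscr K_V, \mscr K_U$ generated in degrees $k_V, k_U$, property (TFG) ensures $\mscr F_V \otimes \mscr F_U$ is free and generated in degree $d_V + d_U$. The induced surjection onto $\mscr V \otimes \mscr U$ has kernel $\mscr K_V \otimes \mscr F_U + \mscr F_V \otimes \mscr K_U$, whose generation degree is bounded by $\max\{k_V + d_U, d_V + k_U\}$ using the generation degree part of (TFG). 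Finally, for the \emph{Schur functors} part, I would use the corollary following Proposition~\ref{prop:pres_deg_of_im_morph}: applying the tensor-power bound from the previous part gives that $\mscr V^{\otimes |\lambda|}$ is generated in degree $d_V |\lambda|$ and presented in degree $d_V(|\lambda|-1) + k_V$, and then the Young symmetrizer $c_\lambda \in \RR[\msf S_{|\lambda|}]$ defines a self-adjoint (in the appropriate sense) morphism $\mscr V^{\otimes|\lambda|} \to \mscr V^{\otimes|\lambda|}$ whose image is $\mbb S^\lambda \mscr V$, so the Images and kernels part applies to transfer the degrees without increase.

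There is no real obstacle here: all of the substantive work has been done in the appendix. The only thing to be careful about is the self-adjointness requirement in the Images and kernels part when applying it to Schur functors—here one uses that $c_\lambda^* \in \RR[\msf S_{|\lambda|}]$ also acts as a morphism because the $\msf S_{|\lambda|}$-action on $\mscr V^{\otimes |\lambda|}$ is orthogonal and commutes with the embeddings in the consistent sequence, a point already noted in the proof of the Schur functors corollary.
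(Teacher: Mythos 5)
Your proposal is correct and follows essentially the same route as the paper, which presents this theorem as a summary assembled from the appendix's sums lemma, Lemma~\ref{lem:subseq_gen_bound}, Propositions~\ref{prop:pres_calc_for_tensors} and~\ref{prop:pres_deg_of_im_morph}, and the Schur-functor corollary. The only bookkeeping point worth making explicit is that the presentation bound for $\ker\mscr A$ follows by applying Proposition~\ref{prop:pres_deg_of_im_morph} to the self-adjoint projection morphism $\{\mc P_{\ker A_n}\}$ whose image is $\ker\mscr A$ — exactly the observation you already invoke for the generation bound.
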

%\begin{corollary}
%Let $\msf G_n=\msf B_n,\msf D_n$, or $\msf S_n$, and let $\msf H_{n,d}\subseteq \msf G_n$ be the subgroup of (signed) permutations fixing the first $d$ letters.
%If $\mscr V=\{\vct V_n\}$ and $\mscr U=\{\vct U_n\}$ are consistent sequences of $\{\msf G_n\}$-representations such that $\msf H_{n,d}$ acts trivially on $V_d,U_d$ for all $d\leq n$, then all the statements in Theorem~\ref{thm:calculus_for_degrees} hold (when $\mscr V,\mscr U$ are viewed as $\mathrm{FI}|_{\mc W}$-modules).
%\end{corollary}

\section{Deriving the Presentation Degree from Extendability}\label{apdx:pres_deg}
In this appendix, we show how the definitions of algebraically free consistent sequences and the presentation degree naturally arise when trying to extend a fixed equivariant linear map to a morphism of sequences, necessary for our algorithm in Section~\ref{sec:numerical_examples}.

Let $\mscr V=\{\vct V_n\},\ \mscr U=\{\vct U_n\}$ be consistent sequences of $\{\msf G_n\}$-representations, fix $n_0\in\NN$ and consider a linear map $A_{n_0}\in\mc L(\vct V_{n_0},\vct U_{n_0})^{\msf G_{n_0}}$. When can we extend $A_{n_0}$ to a morphism of sequences $\{A_n\}$? 
%That is, under what conditions on $A_{n_0}$ can we find $\{A_n\}_{n<n_0}$ and $\{A_n\}_{n>n_0}$ such that $A_{n+1}|_{\vct V_n}=A_n$ for all $n$?
We seek conditions on $A_{n_0}$ which are easy to enforce computationally, so that they can be used to parametrize and search over compatible sequences of convex sets computationally.
%Performing such an extension is necessary to extend a fixed convex subset of $\vct V_{n_0}$ to a freely-described sequence satisfying Theorem~\ref{thm:compatible_descriptions}.
%Suppose $\mscr V$ is generated in degree $d$ and $n_0\geq d$. 
The following proposition gives an equivalent characterization for the existence of such an extension.
\begin{proposition}\label{prop:extend_using_rels}
    Let $\mscr V=\{\vct V_n\},\mscr U=\{\vct U_n\}$ be consistent sequences such that $\mscr V$ is generated in degree $d$, and fix $A_{n_0}\in \mc L(\vct V_{n_0},\vct U_{n_0})^{\msf G_{n_0}}$ for $n_0\geq d$. 
    \begin{enumerate}[(a)]
        \item There exists $\{A_n\in\mc L(\vct V_n,\vct U_n)^{\msf G_n}\}_{n<n_0}$ satisfying $A_{n+1}|_{\vct V_n}=A_n$ for all $n < n_0$ if and only if $A_{n_0}(V_j)\subseteq U_j$ for $j\leq d$. 

        \item There exists $\{A_n\in\mc L(\vct V_n,\vct U_n)^{\msf G_n}\}_{n>n_0}$ satisfying $A_{n+1}|_{\vct V_n}=A_n$ for all $n\geq n_0$ if and only if the following implication holds
        \begin{equation}\label{eq:relation_condition}
            \sum\nolimits_ig_ix_i=0\implies \sum\nolimits_ig_iA_{n_0}x_i = 0,\qquad \textrm{for all } g_i\in \msf G_n,\ x_i\in \vct V_d,\ n\in\NN. 
        \end{equation}
    \end{enumerate}
    If an extension $\{A_n\}$ of $A_{n_0}$ exists, then it is unique.
\end{proposition}
\begin{proof}
\begin{enumerate}[(a)]
    \item If such $\{A_n\}_{n<n_0}$ exists, then it is uniquely given in terms of $A_{n_0}$ by $A_n = A_{n_0}|_{\vct V_n}$. Therefore, we have $A_{n_0}(V_j)=A_j(V_j)\subseteq U_j$ for all $j\leq d$. Conversely, suppose $A_{n_0}(V_j)\subseteq U_j$ for $j\leq d$. We claim that $A_{n_0}(\vct V_n)\subseteq \vct U_n$ for all $d\leq n\leq n_0$ as well. Indeed, because $\mscr V$ is generated in degree $d$, we have $A_{n_0}(\vct V_n)=A_{n_0}(\RR[\msf G_n]\vct V_d)=\RR[\msf G_n]A_{n_0}(\vct V_d)\subseteq \RR[\msf G_n]\vct U_d\subseteq \vct U_n$ for such $n\geq d$. Defining $A_n=A_{n_0}|_{\vct V_n}$ for each $n<n_0$ yields the desired extension to lower dimensions.

    \item If such $\{A_n\}_{n>n_0}$ exists, it is unique and is explicitly given in terms of $A_{n_0}$ as follows. 
    For any $n>n_0$ and $x\in \vct V_n$, we can write $x=\sum_ig_ix_i$ for some $g_i\in \msf G_n$ and $x_i\in \vct V_d\subseteq \vct V_{n_0}$ by definition of the generation degree. 
    Because $A_n\colon \vct V_n\to \vct U_n$ is $\msf G_n$-equivariant and satisfies $A_n|_{\vct V_{n_0}}=A_{n_0}$, we have
    \begin{equation}\label{eq:nec_cond_for_ext}
        A_nx = A_n\left(\sum\nolimits_ig_ix_i\right) = \sum\nolimits_ig_i(A_{n_0}x_i),
    \end{equation}
    which expresses $A_n$ in terms of $A_{n_0}$. The expression~\eqref{eq:nec_cond_for_ext} shows that~\eqref{eq:relation_condition} is satisfied. 
    Conversely, suppose that~\eqref{eq:relation_condition} is satisfied. For each $n>n_0$ define $A_n\colon \vct V_n\to \vct U_n$ as follows. For any $x\in \vct V_n$, write $x=\sum_ig_ix_i$ for some $g_i\in \msf G_n$ and $x_i\in \vct V_d$, which is possible because $\mscr V$ is generated in degree $d$, and set $A_nx$ to the right-hand side of~\eqref{eq:nec_cond_for_ext}. This is well-defined because if $x=\sum_ig_ix_i=\sum_jg_j'x_j'$ for $g_i,g_j'\in \msf G_n$ and $x_i,x_j'\in \vct V_d$ then $\sum_ig_iA_{n_0}x_i = \sum_jg_j'A_{n_0}x_j'$ by~\eqref{eq:relation_condition}. Moreover, $A_n$ is linear, $\msf G_n$-equivariant, and extends $A_{n_0}$ by construction, so $\{A_n\}_{n>n_0}$ is the desired extension of $A_{n_0}$. \qedhere
\end{enumerate}
\end{proof}
The conditions on $A_{n_0}$ in Proposition~\ref{prop:extend_using_rels}(a) are easy to impose computationally, and we do so in Section~\ref{sec:implementation}. In contrast, while condition~\eqref{eq:relation_condition} in Proposition~\ref{prop:extend_using_rels}(b) fully characterizes extendability of $A_{n_0}$ to higher dimensions, it is unclear how to impose it computationally. We therefore proceed to study it further. 
In algebraic terms, elements $x_i\in \vct V_d$ are called the \emph{generators} of $\mscr V$, and expressions of the form $\sum_ig_ix_i=0$ with $g_i\in \msf G_n$ are called \emph{relations} between those generators over the group $\msf G_n$.
Proposition~\ref{prop:extend_using_rels}(b) shows that $A_{n_0}$ extends to a morphism iff any relation satisfied by the generators of $\mscr V$ is also satisfied by their images under $A_{n_0}$. 
We therefore need to understand the relations among the generators in $\vct V_d$.

We study these relations in two stages. First, we identify two simple types of relations that are satisfied by the images of the generators under $A_{n_0}$ for appropriate $\mscr V,\mscr U$. We then define algebraically free consistent sequences whose generators satisfy only these two types of relations. Second, we express an arbitrary consistent sequence as the quotient of an algebraically free one. The kernel of this quotient morphism is a consistent sequence that encodes all additional relations. To capture the degree starting from which both the generators and the relations between them stabilize, we define the presentation degree of a consistent sequence as the maximum of the generation degree of the sequence itself and that of the above kernel, see Definition~\ref{def:relations}. The presentation degree plays a prominent role in our structural results in Section~\ref{sec:free_and_compatible}. 

We begin carrying out the first stage of the above program and identify two simple types of relations. The first source for relations between generators in $\vct V_d$ arises from \new{the action of $\msf G_d$ on $\vct V_d$ itself, without involving any higher dimensions}. Indeed, if $\sum_ig_ix_i=0$ for $g_i\in \msf G_d$ and $x_i\in \vct V_d$ \new{is such a relation then $\sum_ig_iA_{n_0}x_i=0$ so the image $A_{n_0}x_i$ satisfies the same relation. Moreover, the above relations in $\vct V_d$ give rise to relations in $\vct V_n$ for $n>d$ since if $\sum_ig_ix_i=0$ as above then $\sum_igg_ix_i=0$ in $\vct V_n$ for any $g\in \msf G_n$. Once again, these higher-dimensional relations are always satisfied by $A_{n_0}x_i$.}
A second source for such relations arises from subgroups of $\msf G_n$ acting trivially on $\vct V_d$, which are precisely the centralizing subgroups of Definition~\ref{def:stab_subgps}.
Centralizing subgroups yield a second source for relations, namely, the relations $(h-\mathrm{id})x=0$ for all $x\in \vct V_d$ and $h\in \msf H_{n,d}$.
Thus, if $A_{n_0}$ extends to a morphism then $A_{n_0}(\vct V_d)\subseteq \bigcap_{n\geq d}\vct U_n^{\msf H_{n,d}}$.
%, which imposes infinitely-many constraints on the image of $A_{n_0}$.
Rather than attempt to enforce these constraints computationally, we make a standard simplifying assumption from the representation stability literature. Specifically, we assume that $\vct U_d$ is fixed by $\msf H_{n,d}$ (or a subgroup of it, see below) for all $n \geq d$, in which case there is a simple sufficient condition for the above constraints that we can impose computationally. This is precisely the assumption that $\mscr U$ is a $\mscr V$-module as in Definition~\ref{def:V_mods}.
This terminology comes from a categorical approach to representation stability, see Appendix~\ref{sec:cat_reps}. 
If $\mscr U$ is a $\mscr V$-module, then imposing $A_{n_0}(\vct V_d)\subseteq \vct U_d$ is sufficient to guarantee $A_{n_0}(\vct V_d)\subseteq \bigcap_{n\geq d}\vct U_n^{\msf H_{n,d}}$ since $\vct U_d$ is contained in the right-hand side of this inclusion. Imposing this sufficient condition can done computationally, as we do in Section~\ref{sec:implementation}.
This concludes the first stage.

To go beyond the above two simple types of relations satisfied by the generators, we define algebraically free consistent sequences (Definition~\ref{def:free_mods}), whose generators do not satisfy any additional types of relations. 
We then write any consistent sequence as the image under a morphism of sequences of an algebraically free one. The kernel of this morphism precisely captures the relations satisfied by the generators. 
To that end, note that any finitely-generated consistent sequence is the image under a morphism of sequences of an algebraically free sequence. 
\begin{proposition}\label{prop:ind_surj_vs_fg}
    Let $\mscr V$ be a consistent sequence of $\{\msf G_n\}$-representations and let $\mscr U=\{\vct U_n\}$ be a $\mscr V$-module. Then $\mscr U$ is generated in degree $d$ if and only if there exists an algebraically free $\mscr V$-module $\mscr F$ generated in degree $d$ and a surjective morphism of sequences $\mscr F\to \mscr U$.
\end{proposition}
\begin{proof}
    If $\mscr F = \{\vct F_n\}$ is a $\mscr V$-module and $\{A_n\colon \vct F_n\to \vct U_n\}$ is a surjective morphism, then for any $n\geq d$ we have $\vct U_n = A_n(\vct F_n) = A_n(\RR[\msf G_n]\vct F_d) = \RR[\msf G_n]A_n(\vct F_d) = \RR[\msf G_n]A_d(\vct F_d) = \RR[\msf G_n]\vct U_d,$
    where we used the fact that $\mscr F$ is generated in degree $d$; the equivariance of $A_n$; the fact that $A_n|_{\vct F_d}=A_d$ since $\{A_n\}$ is a morphism; and the surjectivity of $A_d$. 
    This shows $\mscr U$ is generated in degree $d$.

    Conversely, if $\mscr U$ is generated in degree $d$, define the algebraically free $\mscr V$-module $\mscr F = \bigoplus_{i=1}^d\mathrm{Ind}_{\msf G_i}(U_i)$ and consider the morphism $\mscr F\to\mscr U$ defined by $g\otimes u\mapsto g\cdot u$ for each $g\in \msf G_n$, $u\in U_i$, and $i\in[d]$ (see Section~\ref{sec:intro_notation}). The image of this morphism in $\vct U_n$ is precisely $\sum_{i=1}^{\min\{d,n\}}\RR[\msf G_n]U_i$, hence it is surjective for all $n$. Finally, $\mathrm{Ind}_{\msf G_i}(U_i)$ is generated in degree $i$, so $\mscr F$ is generated in degree $d$. 
\end{proof}
The kernel of the morphism in Proposition~\ref{prop:ind_surj_vs_fg} precisely encodes all the additional relations beyond the two simple types above satisfied by the generators of $\mscr V$.
The generation degree of this kernel then captures the point at which relations stabilize. We therefore define the presentation degree (Definition~\ref{def:relations}) as the maximum of the generation degree of $\mscr V$ and that of this kernel, which captures stabilization of the generators as well as of the relations between them.

The presentation degree allows us to ensure condition~\eqref{eq:relation_condition} is satisfied and hence to extend a fixed linear map to a morphism of sequences in Theorem~\ref{thm:extending_maps_to_lims}, thus answering the question posed in the beginning of this section. Indeed, comparing Theorem~\ref{thm:extending_maps_to_lims} with Proposition~\ref{prop:extend_using_rels}, we see that condition~\eqref{eq:relation_condition} is satisfied by any fixed equivariant map in dimension $n_0$ exceeding the presentation degree.
Thus, the presentation degree appears in our extendability result for convex sets (Theorem~\ref{thm:extending_compatible_descriptions}), and in our algorithm for computationally parametrizing such sets (Algorithm~\ref{algo:free_descr}). 
\section{Schur--Horn Orbitopes in AF Algebras}\label{apdx:SH_in_AF_algebras}
The permutahedra and Schur--Horn orbitopes studied in Section~\ref{sec:DS_permuta} are special cases of a more general construction arising in the field of operator algebras, and which is naturally analyzed using our framework. In this appendix, we present this unified view.
Let $\vct A_n\subseteq \vct M_n\coloneqq\CC^{m2^n\times m2^n}$ be a unital $\CC$-subalgebra with the faithful tracial state $\tau_n(x)=\mathrm{Tr}(x)/m2^n$ which induces the inner product $\langle x,y\rangle_n = \tau_n(x^*y)$, and the max-singular value operator norm $\|\cdot\|_{(n)}$. Let $\varphi_n\colon \vct M_n\hookrightarrow \vct M_{n+1}$ be the unital algebra embedding $\varphi_n(x)=x\otimes I_2$, and note that $\|x\otimes I_2\|_{(n+1)}=\|x\|_{(n)}$ and $\tau_{n+1}\circ\varphi_n = \tau_n$, so $\varphi_n$ is an isometry with respect to the above inner product. 
Thus, both the operator norm $\|\cdot\|$ and the normalized trace $\tau$ do not depend on $n$ and extend to the limit $\vct A_{\infty}=\bigcup_n\vct A_n$, and we omit their subscripts above.
Let $\overline{\vct A_{\infty}}$ be the closure of $\vct A_{\infty}$ with respect to the operator norm, which is now a $C^*$ algebra called an approximately finite-dimensional (AF) algebra~\cite[Chap.~3]{davidson1996c}. Note that $\tau$ extends continuously to $\overline{\vct A_{\infty}}$ since $|\tau(x)\|\leq \|x\|$ for all $x\in \vct A_{\infty}$, and hence $\tau$ defines a tracial state on $\overline{\vct A_{\infty}}$.

The spectral theorem for $C^*$ algebras gives an (isometric) isomorphism of algebras between continuous functions on the spectrum $\sigma(x)=\{\lambda\in\CC: x-\lambda I \textrm{ not invertible}\}$ of any self-adjoint $x\in \overline{\vct A_{\infty}}$ and the closed subalgebra it generates~\cite[Thm.~11.19]{rudinFuncAnal}. This allows us to apply continuous functions $f\colon \sigma(x)\to \CC$ to $x$ itself to obtain $f(x)\in\overline{\vct A_{\infty}}$. Using this functional calculus, for each self-adjoint $x\in \overline{\vct A_{\infty}}$ we get the positive unital linear functional on such functions sending $f\mapsto \tau(f(x))$. By the Riesz representation theorem~\cite[Thm.~6.19]{rudinRealComplex}, there is a unique probability measure $\mu_x^{\tau}$ on $\sigma(x)$ satisfying $\mu_x^{\tau}(f) = \tau(f(x))$ for all continuous $f\colon\sigma(x)\to\CC$, which is called the spectral measure of $x$ with respect to $\tau$.
Finally, if $\sigma(x)=\{\lambda_1,\ldots,\lambda_q\}$ is finite then we have a spectral decomposition $x=\sum_{i=1}^q\lambda_ip_i$ where $\{p_i\}$ are orthogonal projectors satisfying $p_ip_j=\delta_{i,j}p_i$, $\sum_ip_i=1$, and $\mu_x^{\tau} = \sum_{i=1}^q\tau(p_i)\delta_{\lambda_i}$.

Let $\vct V_n = \vct A_n\cap \vct M_n^{\mathrm{s.a.}}$ be the (finite-dimensional real) vector space of self-adjoint elements in $\vct A_n$, and note that $\varphi_n(\vct V_n)\subseteq \vct V_{n+1}$ and that $\overline{\vct V_{\infty}}$ is the collection of self-adjoints in $\overline{\vct A_{\infty}}$. Let $\msf G_n\subseteq \mathrm{U}(m2^n)$ be a group of unitaries acting on $\vct V_n$ by conjugation such that $\msf G_n\subseteq \msf G_{n+1}$ under the embedding $g\mapsto g\otimes I_2$, so that $\{\vct V_n\}$ is a consistent sequence of $\{\msf G_n\}$-representations.
\begin{lemma}
    Let $\mc P_n\colon \vct V_{\infty}\to \vct V_n$ be orthogonal projections. Then $\|\mc P_nx\|\leq \|x\|$ and $\tau\circ \mc P_n = \tau$ for all $n$.
\end{lemma}
\begin{proof}
Note that $\varphi_n^*\colon \vct V_{n+1}\to \vct V_n$ satisfies
\begin{equation}\label{eq:AF_orth_projs}
    \varphi_n^*(x) = \frac{1}{2}\sum_{i=1}^2(I\otimes e_i)^\top x(I\otimes e_i),\qquad \textrm{ hence } \|\varphi_n^*x\|\leq \|x\|,
\end{equation}
for all $x\in\vct V_{n+1}$. Because the orthogonal projection $\mc P_n\colon \vct V_{\infty}\to \vct V_n$ satisfies $\mc P_n|_{\vct V_N}=\varphi_n^*\circ\ldots\circ \varphi_{N-1}^*$ for each $N>n$, we conclude that $\|\mc P_nx\|\leq \|x\|$ for all $x\in \vct V_{\infty}$. 
Also, for any $x\in \vct V_{\infty}$ we have $\tau(\mc P_nx) = \langle \mc P_nx, 1\rangle = \langle x,\mc P_n1\rangle = \langle x,1\rangle = \tau(x)$, hence the second claim follows.
\end{proof}
In particular, the norm $\|\cdot\|$ satisfies the condition of Definition~\ref{def:cont_lims}.
Let $\lambda\in\RR^q$ and $\widetilde \lambda=[\lambda_1\mathbbm{1}_{m_i}^\top,\ldots,\lambda_q\mathbbm{1}_{m_q}^\top]^\top\allowbreak\in\RR^m$ has entry $\lambda_i$ repeated $m_i$ times (so $m=\sum_im_i$). Then $x_1 = \mathrm{diag}(\widetilde\lambda)$ has spectral measure $\mu_{x_1}^{\tau} = \sum_{i=1}^q\frac{m_i}{m}\delta_{\lambda_i}$.
The Shur-Horn orbitopes associated to $x_1$ is then
\begin{equation*}\begin{aligned}
    \mathrm{SH}(x_1)_n = \mathrm{conv}\{x\in \vct V_n: \mu_x^{\tau} = \mu_{x_1}^{\tau}\} =\mathrm{conv}\left\{\sum\nolimits_{i=1}^q\lambda_ip_i: p_i\in \vct V_n,\ p_ip_j=\delta_{i,j}p_i,\ \sum\nolimits_ip_i=1,\ \tau(p_i)=\frac{m_i}{m}\right\},
\end{aligned}\end{equation*}
%whose limit is $\overline{\mathrm{SH}(x_1)_{\infty}} = \{x\in \overline{\vct V_{\infty}}: \mu_x^{\tau} = \mu_{x_1}^{\tau}\}$.

To obtain conic descriptions for the above orbitopes, let $\cvx K_n = \{xx^*: x\in \vct A_n\}\subseteq \vct V_n$ be the cone of positive-semidefinite Hermitian matrices in $\vct A_n$, so that $\cvx K_n\subseteq K_{n+1}$ and $\overline{\cvx K_{\infty}}=\{xx^*:x\in\overline{\vct A_{\infty}}\}$. 
We also have $\mc P_n(K_{n+1}) = \cvx K_n$ by~\eqref{eq:AF_orth_projs}, hence $\mc P_n(\overline{\cvx K_{\infty}}) = \cvx K_n$.
%If $\mu_x^{\tau}=\mu_{x_1}^{\tau}$ as above, then the spectral decomposition of $x$ reads simply as $x=\sum_{i=1}^q\lambda_ip_i$ where $\{p_i\}$ are projections with $\tau(p_i)=m_i/m$ satisfying $p_ip_j=\delta_{i,j}p_i$ and $1=\sum_ip_i$. 
Therefore, the finite-dimensional description of Schur--Horn orbitopes~\cite[Eq.~(19)]{finding_planted} reads
\begin{equation}\label{eq:finite_SH_description}
    \mathrm{SH}(x_1)_n = \left\{\sum\nolimits_{i=1}^q\lambda_ip_i: p_i\in \cvx K_n,\ \sum\nolimits_ip_i=1,\ \tau(p_i)=m_i/m\right\}.
\end{equation}
Our Theorem~\ref{thm:descriptions_of_lims} now yields the following infinite-dimensional extension of these descriptions.
\begin{proposition}\label{prop:infinite_SchurHorn_descrip}
    Let $\lambda_1,\ldots,\lambda_q$ are distinct real numbers, and let $m_1,\ldots,m_q\in \NN$ with sum $m=\sum_im_i$. Then $\overline{\mathrm{SH}(x_1)_{\infty}}$ with $x_1=\diag(\widetilde\lambda)$ as above is
    \begin{equation}\label{eq:AF_schurHorn_descrip}
        \overline{\mathrm{conv}}\left\{x\in \overline{\vct V_{\infty}}: \mu_x^{\tau}=\sum\nolimits_{i=1}^q\frac{m_i}{m}\delta_{\lambda_i}\right\} = \overline{\left\{\sum\nolimits_{i=1}^q\lambda_ip_i: p_i\in \overline{\cvx K_{\infty}},\ \sum\nolimits_ip_i=1,\ \tau(p_i)=m_i/m\right\}}.
    \end{equation}
\end{proposition}
\begin{proof}
    First, the left-hand side of~\eqref{eq:AF_schurHorn_descrip} is just $\overline{\mathrm{SH}(x_1)_{\infty}}$. Indeed, if $x_n\in \overline{\vct V_{\infty}}$ is a sequence converging to $x$ with $\mu_{x_n}^{\tau}=\mu_{x_1}^{\tau}$ for all $n$, then for any continuous $f\colon\RR\to\CC$ we have $\mu_x^{\tau}(f) = \tau(f(x))=\lim_n\tau(f(x_n))=\mu_{x_1}^{\tau}(f)$ so $\mu_x^{\tau}=\mu_{x_1}^{\tau}$. Conversely, if $\mu_x^{\tau}=\sum_i\frac{m_i}{m}\delta_{\lambda_i}$ then $x$ admits a decomposition $x=\sum_i\lambda_ip_i$ with $\tau(p_i)=m_i/m$. 
    Define $x_n = \mc P_nx = \sum_{i=1}^q\lambda_i(\mc P_np_i)$ and note that $\mc P_np_i\in \cvx K_n$ for all $i$, that $\sum_i\mc P_np_i = \mc P_n1 = 1$, and that $\tau(\mc P_np_i) = \tau(p_i) = m_i/m$. Thus, $x_n\in \mathrm{SH}(x_1)_n$ for all $n$ and $x_n\to x$ by Lemma~\ref{lem:projections_converge}, hence $x\in \overline{\mathrm{SH}(x_1)_{\infty}}$.

    Second, we appeal to Theorem~\ref{thm:descriptions_of_lims} to obtain equality in~\eqref{eq:AF_schurHorn_descrip}. 
    Note that the conic description~\eqref{eq:finite_SH_description} is of the form~\eqref{eq:preim_seq} with $\mscr V = \{\vct V_n\}, \mscr W = \mscr V^{\oplus q}, \mscr U = \mscr W\oplus\mscr V^{\oplus 2}\oplus \RR^q$, and 
    \begin{equation*}
        A_nx = (0,-x,0,0),\quad B_n(p_1,\ldots,p_q) = \left(p_1,\ldots,p_q,\sum\nolimits_i\lambda_ip_i, \sum\nolimits_ip_i, \tau(p_1),\ldots,\tau(p_q)\right),
    \end{equation*}
    and $u_n = (0,0,-I,-m_1/m,\ldots,-m_q/m)$. Observing that $\{A_n\},\{A_n^*\},\{B_n\},\{B_n^*\}$ are all morphisms, that $u_n=u_{n+1}$ (under our embeddings), the descriptions in~\eqref{eq:finite_SH_description} are \new{dimension-}free and satisfy the hypotheses of Proposition~\ref{prop:compatible_descriptions}(a). Furthermore, putting the norm $\|(p_1,\ldots,p_q)\|=\max_i\|p_i\|$ on $W_{\infty}$ and $\|(p_1,\ldots,p_q,y_1,y_2,v)\|\leq \max\{\|p_i\|,\|y_j\|,\|v\|_{\infty}\}$ on $\vct U_{\infty}$, we have $\overline{W_{\infty}}=\overline{\vct V_{\infty}}^{\oplus q}$ and $\overline{\vct U_{\infty}}=\overline{W_{\infty}}\oplus \overline{\vct V_{\infty}}^{\oplus 2}\oplus \RR^q$. Moreover, $\|A_n\|_{\mathrm{op}}=1$, $\|B_n\|_{\mathrm{op}}\leq \max\{\sum_i|\lambda_i|,q\}$ for all $n$. Thus, both $A_n$ and $B_n$ extend to the continuous limit and Theorem~\ref{thm:descriptions_of_lims} indeed applies and yields~\eqref{eq:AF_schurHorn_descrip}.
\end{proof}
The Schur--Horn orbitopes $\mathrm{SH}(x_1)_n$ are precisely the Schur--Horn orbitopes $\mathrm{SH}(\lambda)_n$ in~\eqref{eq:schur_horn} from Section~\ref{sec:DS_permuta} if $\vct A_n = \RR^{m2^n\times m2^n}$ with $\msf G_n=\mathrm{O}(m2^n)$. They also reduce to the permutahedra $\mathrm{SH}(x_1)_n=\mathrm{Perm}(\lambda)_n$ as in~\eqref{eq:permuta_good} from Section~\ref{sec:DS_permuta} if $\vct A_n$ is the algebra of diagonal matrices in $\vct M_n$ with $\msf G_n=\mathrm{S}_{m2^n}$. 

Proposition~\ref{prop:infinite_SchurHorn_descrip} yields a generalization of the Schur--Horn theorem to AF algebras. 
Indeed, let $\vct D_n = \{x\in \vct V_n: x \textrm{ diagonal}\}$ which is a unital subalgebra of $\vct A_n$ embedded in $\vct D_{n+1}$ under $\varphi_n$. 
For $x_1\in \vct D_1$, define
\begin{equation*}
    \mathrm{Perm}(x_1)_n = \mathrm{SH}(x_1)_n\cap \vct D_n.
\end{equation*}
The sequence of linear maps $(\diag_n\colon \vct V_n\to \vct D_n)_n$ extracting the diagonal of a Hermitian matrix in $\vct V_n$ extend to a bounded linear map $\diag\colon \overline{\vct V_{\infty}}\to \overline{\vct D_{\infty}}$ since $\diag_{n+1}\circ\varphi_n=\varphi_n\circ \diag_n$ and $\|\diag_n\|_{\mathrm{op}} = 1$. 
The finite-dimensional Schur--Horn theorem states that $\diag_n(\mathrm{SH}(x_1)_n) = \mathrm{Perm}(x_1)_n$. 
Using Proposition~\ref{prop:infinite_SchurHorn_descrip}, we obtain the following infinite-dimensional extension.
\begin{proposition}\label{prop:infinite_SchurHorn_theorem}
    Let $x_1\in \vct D_1$ and let $\diag\colon\overline{\vct V_{\infty}}\to\overline{\vct D_{\infty}}$ be the bounded linear map extending the diagonal maps. Then $\diag(\overline{\mathrm{SH}(x_1)_{\infty}}) = \overline{\mathrm{Perm}(x_1)_{\infty}}$. 
\end{proposition}
\begin{proof}
Since $\overline{\mathrm{Perm}(x_1)_{\infty}}\subseteq \overline{\mathrm{SH}(x_1)_{\infty}}$ and $\diag$ is a projection onto $\overline{\vct D_{\infty}}$, we get $\diag(\overline{\mathrm{SH}(x_1)_{\infty}}) \supseteq \overline{\mathrm{Perm}(x_1)_{\infty}}$. Conversely, Proposition~\ref{prop:infinite_SchurHorn_descrip} and continuity of $\diag$ yields
\begin{equation*}\begin{aligned}
    \diag(\overline{\mathrm{SH}(x_1)_{\infty}}) &\subseteq \overline{\left\{\sum\nolimits_{i=1}^q\lambda_i\diag(p_i): p_i\in \overline{\cvx K_{\infty}},\ \sum\nolimits_ip_i=1,\ \tau(p_i)=m_i/m\right\}}\\ 
    &= \overline{\left\{\sum\nolimits_{i=1}^q\lambda_ip_i: p_i\in \overline{\cvx K_{\infty}}\cap \overline{\vct D_{\infty}},\ \sum\nolimits_ip_i=1,\ \tau(p_i)=m_i/m\right\}}\\
    &= \overline{\mathrm{Perm}(x_1)_{\infty}},
\end{aligned}\end{equation*}
where the last equality follows from Proposition~\ref{prop:infinite_SchurHorn_descrip} applied to $\overline{\vct D_{\infty}}$ instead of $\overline{\vct V_{\infty}}$.
\end{proof}
The finite-dimensional Schur--Horn theorem yields $\mathrm{Perm}(x_1)_{\infty} = \mathrm{diag}(\mathrm{SH}(x_1)_{\infty})$, and taking closures yields a weaker statement since it involves the closure of the image of diag. Proposition~\ref{prop:infinite_SchurHorn_theorem} shows that this closure can be removed. 

There is a considerable literature on extending the Schur--Horn theorem to infinite-dimensional setting, including the extension in~\cite{bownik2015schur} for operators with a finite spectrum, and in~\cite{ravichandran2012schur} for von Neumann algebras. As explained in~\cite[\S1]{ravichandran2012schur}, removing the closure over the image of diag has been a major challenge in these more general settings. Our descriptions of limiting convex sets from Theorem~\ref{thm:descriptions_of_lims} can be seen as resolving this challenge in our simpler setting of AF algebras.

\end{document}